\documentclass[graybox,envcountchap,sectrefs]{svmono}
\usepackage[utf8]{inputenc}

\usepackage{type1cm}

\usepackage{makeidx}         
\usepackage{graphicx}        
\usepackage{multicol}        
\usepackage[bottom]{footmisc}

\usepackage{newtxtext}       %

\usepackage[varvw]{newtxmath}       

\usepackage{amsmath,amssymb,amsthm}
\usepackage{graphicx}
\usepackage{booktabs,array,tabularx,rotating}
\usepackage{ragged2e}
\usepackage{caption}
\usepackage{listings,xcolor}
\usepackage[numbers,sort&compress]{natbib}
\usepackage[colorlinks=true,linkcolor=blue!45!black,
            citecolor=blue!45!black,urlcolor=blue!45!black]{hyperref}

\graphicspath{{figures/}}
\theoremstyle{plain}
\theoremstyle{definition}
\theoremstyle{remark}
\newtheorem{openproblem}{Open Problem}[chapter]

\newcommand{\dd}{\,\mathrm{d}}
\newcommand{\R}{\mathbb{R}}
\newcommand{\Prob}{\mathcal{P}}
\newcommand{\Ent}{\operatorname{Ent}}
\newcommand{\Hess}{\operatorname{Hess}}
\newcommand{\Ric}{\operatorname{Ric}}
\DeclareMathOperator{\divg}{div}
\newcommand{\ndivg}{\overline{\divg}}
\newcommand{\ngrad}{\overline{\nabla}}
\newcommand{\Lam}{\Lambda_{\log}}
\newcommand{\KL}{D_{\mathrm{KL}}}
\newcommand{\WW}{\mathcal{W}}
\newcommand{\eps}{\varepsilon}

\newcolumntype{L}[1]{>{\RaggedRight\arraybackslash}p{#1}}

\makeindex             

\date{\today   }

\begin{document}

\author{Jinqiao Duan\\Ting Gao\\ Qiao Huang \\ Yuanfei Huang}
\title{Geometric Methods \\for Stochastic Dynamical Systems}
\subtitle{An Introduction}
\maketitle

\frontmatter



\newcommand{\be}{\begin{equation}}
\newcommand{\ee}{\end{equation}}

\newcommand{\been}{\begin{enumerate}}
\newcommand{\enen}{\end{enumerate}}

\newcommand{\beeq}{\begin{eqnarray}}
\newcommand{\eneq}{\end{eqnarray}}

\newcommand{\beeqn}{\begin{eqnarray*}}
\newcommand{\eneqn}{\end{eqnarray*}}

\newcommand{\bd}{\begin{displaymath}}
\newcommand{\ed}{\end{displaymath}}


\newcommand{\e}{\epsilon}
\renewcommand{\eps}{\varepsilon}
\renewcommand{\a}{\alpha}
\renewcommand{\b}{\beta}
\newcommand{\om}{\omega}
\newcommand{\Om}{\Omega}
\newcommand{\De}{\Delta}
\newcommand{\de}{\delta}
\newcommand{\Dt}{\Delta t}

\newcommand{\s}{{\sigma}}
\newcommand{\la}{{\lambda}}

\renewcommand{\k}{\kappa}

\renewcommand{\phi}{\varphi}


\def\vX{\boldsymbol{X}}
\def\vx{\boldsymbol{x}}   
\def\RR{\mathbb{R}}

\def\PP{\mathbb{P}}
\def\ud{\mathrm{d}}
\def\vw{\mathbf{w}}

\newcommand{\0}{\mathbf{0}}



\newcommand{\p}{\partial}
\newcommand{\pp}{\frac{\partial}{\partial t}}
\newcommand{\ppn}{\frac{\partial}{\partial n}}

\newcommand{\ddt}{\frac{d}{dt}}

\newcommand{\cF}{\mathcal{F}}
\newcommand{\cG}{{\cal G}}
\newcommand{\cD}{{\cal D}}
\newcommand{\cO}{{\cal O}}

\newcommand{\F}{{\mathcal{F}}}
\newcommand{\B}{{\mathcal{B}}}
\newcommand{\cN}{{\mathcal{N}}}

\newcommand{\cL}{{\mathcal{L}}}

\newcommand{\EX}{{\mathbb{E}}}
\newcommand{\PX}{{\mathbb{P}}}

\newcommand{\sign}{\mbox{sign}}

\newcommand{\grad}{\nabla}
\newcommand{\n}{\nabla}
\newcommand{\curl}{\nabla \times}
\newcommand{\dive}{\nabla \cdot}



\def\cA{{\mathcal A}}
\def\cB{{\mathcal B}}
\def\cC{{\mathcal C}}

\newcommand{\dif}{\mathrm{d}}
\def\mR{{\mathbb R}}
\def\mE{{\mathbb E}}
\newcommand{\me}{\mathrm{e}}

\newcommand{\beQ}{\begin{eqnarray}}
\newcommand{\eeQ}{\end{eqnarray}}

\newcommand{\ceQ}{\begin{eqnarray*}}
\newcommand{\deQ}{\end{eqnarray*}}



\newcommand{\N}{\mathbb{N}}
\renewcommand{\R}{\mathbb{R}}
\newcommand{\Z}{\mathbb{Z}}

\newcommand{\Rn}{\mathbb{R}^n}
\newcommand{\Rone}{\mathbb{R}^1}

\newcommand{\ba}{{\bf a}}

\newcommand{\bu}{{\bf u}}
\newcommand{\bU}{{\bf U}}

\newcommand{\bv}{{\bf v}}
\newcommand{\bV}{{\bf V}}

\newcommand{\bk}{{\bf k}}

\newcommand{\bs}{{\bf s}}

\newcommand{\bz}{{\bf z}}
\newcommand{\bZ}{{\bf Z}}

\newcommand{\bn}{{\bf n}}

\newcommand{\bx}{{\bf x}}
\newcommand{\bX}{{\bf X}}

\newcommand{\bH}{{\bf H}}

\newcommand{\bL}{{\bf L}}

\newcommand{\bg}{{\bf g}}

\newcommand{\bj}{{\bf j}}

\newcommand{\br}{{\bf r}}


\renewcommand{\o}{\overline }


\newcommand{\Ae}{\emph{A}}
\newcommand{\Rb}{\mathbf{R}}
\newcommand{\Tb}{\mathbf{T}}
\newcommand{\Zb}{\mathbf{Z}}
\newcommand{\Nb}{\mathbf{N}}

\def\pvs{\vspace{11pt}}
\def\phs{\hspace{11pt}}


\newcommand{\uk}[1]{\ensuremath{u^{(#1)}(t,\omega)}}
\newcommand{\hse}{\ensuremath{h^s(\xi,\omega)}}
\newcommand{\hsk}[1]{\ensuremath{h^{(#1)}(\xi,\omega)}}

\newcommand{\sz}{\ensuremath{ {\int_s^t z(\theta_r (\omega))\,dr}}}
\newcommand{\sZ}{\ensuremath{ {\int_s^t Z(\theta_r (\omega))\,dr}}}
\newcommand{\zz}[1]{\ensuremath{{z(\theta_{#1} (\omega))}}}
\newcommand{\ZZ}[1]{\ensuremath{{Z(\theta_{#1} (\omega))}}}
\newcommand{\fu}[1]{\ensuremath{{F_u^{u_0 (#1)}}}}
\newcommand{\fus}[2]{\ensuremath{{\int^0_{#2} F_u^{u_0 (#1)}\,d{#1}}}}
\newcommand{\fuss}[2]{\ensuremath{{\int^{#2}_0 F_u^{u_0 (#1)}\,d{#1}}}}

\newcommand{\fuu}[1]{\ensuremath{{F_{uu}^{u_0(#1)}}}}
\newcommand{\rb}{\right)}
\newcommand{\lb}{\left(}
\newcommand{\rB}{\right]}
\newcommand{\lB}{\left[}

\newcommand{\nb}{\mathbf{n}}
\newcommand{\ub}{\mathbf{u}}
\newcommand{\xb}{\mathbf{x}}
\newcommand{\xnb}{\mathbf{x}_0}
\newcommand{\GaB}{\mathbf{\Gamma}}

\newcommand{\bo}{\mathcal {O}}
\newcommand{\so}{\mathcal {o}}

\newcommand{\BEN}{\begin{equation*}}
\newcommand{\EEN}{\end{equation*}}
\newcommand{\BAL}{\begin{align}}
\newcommand{\EAL}{\end{align}}
\newcommand{\BAN}{\begin{align*}}
\newcommand{\EAN}{\end{align*}}


\def\Var{\mbox{Var}}
\def\Cov{\mbox{Cov}}
\def\Corr{\mbox{Corr}}
\def\Tr{\mbox{Tr}}

\def\<{\left<}

\def\>{\right>}



\renewcommand{\E}{\mathbb E}
\renewcommand{\P}{\mathbb P}
\newcommand{\id}{\mathbf{Id}}
\renewcommand{\Ric}{\mathrm{Ric}}
\newcommand{\Pred}{\mathcal P}
\newcommand{\A}{\mathcal A}
\newcommand{\pt}{\partial}
\renewcommand{\D}{\mathbf{D}}
\newcommand{\Sym}{\mathrm{Sym}}
\newcommand{\vf}[1]{\frac{\partial}{\partial{#1}}}

\newtheorem{assumption}{Assumption}[chapter]

%
%

\preface
 
\textbf{Geometric Structures: Heteroclinic Orbits}

In deterministic dynamical systems, heteroclinic orbits are trajectories in the state space that connect distinct equilibrium states or invariant sets, over infinite time horizon. They are part of stable and unstable invariant manifolds. Serving as the geometric skeleton of a dynamical system, these connecting orbits do more than just define deterministic paths of state transitions; they act as a core mechanism driving global bifurcations, chaotic phenomena, and complex spatio-temporal behaviors. 
Consequently, geometric methods are indispensable for analyzing, predicting, and mitigating the complex behaviors inherent in nonlinear systems.

\textbf{The Most Probable Transition Path}

When noise is taken into account for modeling nonlinear phenomena, the geometric landscape shifts. In a stochastic dynamical system, a metastable state refers to a state or region in the state space where the system resides for a prolonged duration before undergoing a rare, fluctuation-induced transition to another regime, over finite time horizon. This phenomenon emerges from the intricate interplay between nonlinearity and uncertainty. For instance, a stable equilibrium of a underlying deterministic system manifests as a metastable state in its stochastic counterpart. When the deterministic system is gradient-like, these metastable states correspond precisely to the local minima of a potential energy landscape.
  
In this framework, the most probable transition path represents the optimal trajectory connecting one metastable state to another. In the case of Gaussian noise, this path minimizes the classical Onsager–Machlup action functional, effectively serving as the most likely route across the energy barrier.

\textbf{Schr\"odinger Bridges and Information Geodesics}

Lifting our perspective from individual sample paths in the state space to the infinite-dimensional space of probability densities allows for a more comprehensive geometric treatment of these transitions:

\textit{Schr\"odinger Bridges:} Traditionally framed as the minimizer of the Kullback–Leibler  divergence (relative entropy) subject to marginal constraints, a Schr\"odinger Bridge represents the optimal transition path between two boundary probability distributions. Notably, if the deterministic metastable states are idealized as Dirac delta distributions, the Onsager–Machlup most probable transition path can be mathematically recovered as a special case of the Schr\"odinger bridge.

\textit{Information Geodesics:} Beyond the Kullback–Leibler divergence, the space of probability densities can be endowed with richer dynamical information via $\alpha$-divergences. These functionals generalize relative entropy, quantifying the discrepancy between two probability densities, and are intimately related to   statistical mechanics (e.g., Tsallis and R\'enyi entropies). In nonequilibrium systems, they characterize the generalized thermodynamic cost of state transitions. Varying the parameter $\alpha$ yields distinct path-selection mechanisms and evolution modes. The minimizer of such a divergence functional defines an optimal distribution path, which we refer to as an ``information geodesic."

\bigskip 

\textbf{Who is this book for?}


 There is  growing interest in stochastic dynamics in the applied mathematics, interdisciplinary science, and artificial intelligence communities.
This book is written primarily for applied mathematicians and scientists who may not have  the   necessary background to go directly to advanced reference books or research literature in geometric methods for stochastic dynamics. 

Our goal   is to provide an   introduction   to    geometric methods  for understanding solutions of stochastic differential equations.   It is our hope that   this book will help    the reader in accessing  advanced monographs    and   research literature in  stochastic dynamics. We have tried to strike a balance between mathematical precision and accessibility for the readers of this book. For example,  some proofs are presented, whereas some are outlined and others are directed to references.  Some definitions are presented in   separate paragraphs starting with \emph{Definition}, but   others are introduced less formally as they occur in the body of the text.  



This book may be used as a textbook or  a reference for  researchers and  graduate students in applied mathematics, machine learning, artificial intelligence, engineering  and  applied science.

\bigskip

\textbf{What does this book do?}

 After discussing motivation  and reviewing stochastic differential equations  (Chapter 1), we focus on three topics:

  $\bullet$  \textit{The Most Probable Dynamics via the Onsager-Machlup Action Functional (Chapter 2)}: This chapter examines  the most probable transition paths between metastable states, as the minimizer of the Onsager-Machlup action functional (which is like a Lagrangian action functional in classical mechanics).

  $\bullet$ \textit{Stochastic Variational Principles (Chapter 3):}  This chapter  considers the stochastic variational principle and stochastic Lagrangian mechanics, linking   with the most probable transition path in the sense of the Onsager-Machlup action functional and the Schr\"odinger bridges. 

   $\bullet$ \textit{Schr\"odinger Bridges and Information Geodesics on the Space  of Probability Densities (Chapter 4):} This chapter is an introduction to local and nonlocal Otto calculus, Schrodinger bridges, and information geodesics.


At the end of every chapter there is  a list of Problems, to inspire the readers.


\bigskip

\textbf{Why is this book useful for  artificial intelligence and machine learning? }

Stochastic dynamical systems stand at a vibrant and rapidly evolving intersection of mathematics and machine learning. They provide a rigorous foundational framework for extracting latent governing laws from noisy, high-dimensional data and for predicting complex, random evolutionary behaviors over time.

In particular, the mathematical paradigms of the ``connecting orbits" --- the most probable transition paths, Schr\"odinger bridges, and information geodesics --- have recently emerged as a powerful cornerstone for deep generative modeling in artificial intelligence (such as diffusion models and flow matching). Beyond data synthesis, this framework equips machine learning with the prescriptive tools necessary for the early warning and active mitigation of critical transitions—or tipping phenomena—in complex engineering and natural systems.

These stochastic ``connecting orbits" delineate the optimal evolutionary trajectory of a complex system traversing between two observed macroscopic states (e.g., from a ``Healthy Brain" state to an ``Epileptic Seizure," or from a ``Stable Current" to a ``Collapsed Atlantic Circulation"). Crucially for modern AI architectures, these boundary states are typically available and modeled as empirical probability distributions or data manifolds, making the geometric and variational methods developed in this book  applicable to data-driven discovery and intelligent mitigation.

\bigskip

\textbf{What prerequisites are assumed?}

 For the reader, it is desirable to have   basic knowledge   of ordinary     differential equations, probability, and stochastic differential equations.   Realizing that some readers may not be familiar with stochastic differential equations, we   review this topic in Chapter 1.

\bigskip

\textbf{Notations} 


$\|\cdot\|$: Length or norm in Euclidean space $\Rn$

$A$ or $\mathcal{L}$: Generator for the solution process of a stochastic differential equation

$\E$:   Expectation 

$\E_x$: Conditional expectation, i.e., expectation conditioned on the initial state $x$.
        For example,  $\EX_x [f(X_t)] \triangleq  \EX [f(X_t) | X_0=x]$.

$\mathcal{N}(m, Q)$: Normal (or Gaussian) distribution with mean vector $m$ and covariance matrix $Q$

ODEs: Ordinary Differential Equations 

\noindent $(\Omega, \F, \P)$:   Probability space   equipped with a sample space $\Omega$,  a filtration $\{\F_t\}_{t \in \R}$, and probability $\P$ 

$\P$:  Probability or probability measure 

PDEs: Partial Differential Equations

$\R^n$:   Euclidean space of $n-$dimensions
$\mathcal{P}(\R^n)$:   Space of probability densities on $\R^n$  
$\mathcal{P}_2(\R^n)$:   Space probability densities on $\R^n$, with finite second moment

SDEs: Stochastic Differential Equations 

$W_2(\mu, \nu)$:  The  $2$-Wasserstein distance  between probability densities $\mu, \nu$ on $\R^n$

\bigskip

\textbf{Acknowledgments}

We would like to thank our many collaborators, friends and colleagues for inspiring discussions, especially Michal Branicki, Linan Chen, Rachel Kuske, Wuchen Li, Valerio Lucarini, Yan Luo, Henri Orland, Paolo Piccione, Peter Baxendale, Nils Berglund, Alexandra Blessing, Dirk Blomker, Ying Chao, Zhen-Qing Chen, Dan Crisan, Hans Crauel, Manfred Denker, Maximilian Engel,  Chunrong Feng,  Franco Flandoli, Hongjun Gao, Barbara Gentz, Martin Hairer, Darryl Holm, Peter Imkeller, Peter E. Kloeden, Christian Kuehn, Rachel Kuske, Jeroen Lamb, Xiaofan Li, Xue-Mei Li, Kening Lu, Michael Scheutzow,  Renming Song,  Richard Sowers, Ilya Pavlyukevich, Larissa Serdukova, and Huaizhong Zhao. 

Steve Wiggins, Yuri Bakhtin, Rachel Kuske, Babara Gentz, Wuchen Li,  Giovanni Conforti, Christian Leonard, Henri Orland, Valerio Lucarini, Chenchen Mou, Jiajie Zhu, Todd Young, Paolo Piccione, Dmitry Treschev, Jiang-Lun Wu, Liming Wu, Huijie Qiao, Pingyuan Wei, Qing Nie, Guowei Wei, Jie Wu, Hangxiao Wang, Jianbo Cui, Jiang-lun Wu, Yunnan Yang, Paolo Piccione, Henri Orland, Manuel de Leon, Nicolas Privault, Ana Bela Cruzeiro, Jean-Claude Zambrini, Johannes Zimmer 
 

We have benefited from our students and postdoctoral fellows, including .....  Zhihao Zhao.



 The origin of Jinqiao Duan’s research into non-Gaussian stochastic dynamics traces back to 2000, marked by a joint publication in the Journal of Mathematical Physics with collaborators Daniel Schertzer, Michèle Larchevêque, Vladimir Yanovsky, and Shaun Lovejoy.
 

\vspace{\baselineskip}
\begin{flushright}\noindent
Dongguan, China \hfill {\it Jinqiao Duan}\\
Wuhan, China \hfill {\it Ting Gao }\\
Nanjing, China \hfill {\it  Qiao Huang}\\
Pohang, South Korea \hfill {\it Yuanfei Huang}\\
May 2026 
\end{flushright}

\tableofcontents

\mainmatter
%
%
%
\chapter{Introduction  }
\label{chapter1intro} 

\abstract{  This chapter first discusses the motivation for applying geometric methods to stochastic dynamical systems, and the content of this book. It then provides a  review of stochastic calculus, focusing on stochastic differential equations with  both Gaussian Brownian motion and non-Gaussian L\'evy noise. }

\section{Motivation: Geometric Methods for Stochastic Dynamics  }
\label{sec1.0} 

\textbf{Deterministic Dynamics vs. Stochastic Dynamics.} 
Dynamical systems provide   mathematical frameworks for modeling phenomena across science and engineering \cite{GuckenHolmes1983, Wiggins}. 

However, real-world systems are inevitably subject to random influences, including external perturbations, internal fluctuations, and parameter uncertainties. When constructing mathematical models, researchers often omit highly fluctuating, small-scale, or poorly understood processes due to observational limits or computational constraints. While these modeling simplifications are useful, the underlying randomness may have a profound and delicate impact on the system's dynamical evolution. Consequently, accounting for stochasticity is now recognized as critical when modeling complex phenomena in biological, chemical, and physical systems \cite{Arnold, Gentz}.


Stochastic differential equations are models to govern randomly influenced nonlinear systems \cite{kloeden2025stochastic}. While stochastic calculus provides a rigorous theoretical foundation for these equations, a deeper understanding of their dynamical behaviors remains essential—particularly when the systems are driven by non-Gaussian, heavy-tailed fluctuations.
Various mathematical frameworks have already been employed to study dynamics under uncertainty. These include  the analysis of stochastic flows \cite{LeJan, Kunita, Baudoin2004,Fang}, topological approaches \cite{li2005sternberg, li2008rotation, liu2008conley, chen2010sufficient}, random periodic structures  \cite{Zhao, feng2023existence}, Lyapunov spectrum \cite{castro2025conditioned}, stochastic bifurcation \cite{crflan1998, doan2018hopf},  among others. See more advances in \cite{Arnold, Crauel, hairer2011asymptotic, huang2025dynamical, kuehn2013mathematical, pelayo2018poincare}. Furthermore,  examining deterministic quantities like mean exit times and escape probabilities offers crucial insights into the system's global behavior \cite[Ch. 5]{Duan2015}. Conceptually, these quantities serve a role analogous to eigenvalues or the Poincar\'e index in deterministic dynamics, or entropy in statistical physics.

\medskip
\textbf{Geometric Approaches for Deterministic \& Stochastic Dynamics.} 
Geometric approaches have been  widely utilized to investigate   complex behaviors of deterministic dynamical systems \cite{VIArnold1988, Palis1982, Wiggins1992, Wiggins2025}.

Geometric invariant structures, such as random invariant manifolds \cite{Arnold}, have also been a significant topic for stochastic dynamical systems. Additionally, recent efforts have extended these concepts to construct the most probable phase portraits of stochastic dynamical systems \cite[\S 5.3]{Duan2015}.

 \medskip

\textbf{Connecting Orbits as a Geometric Skeleton for Stochastic Dynamics.} 
 Heteroclinic orbits, as connecting orbits,  for a  deterministic dynamical system are     trajectories in the state space that connect distinct equilibrium states or invariant sets. As a geometric skeleton of a dynamical system, heteroclinic orbits, also called connecting orbits, not only define deterministic paths of state transitions but also serve as a core mechanism for global bifurcations, chaotic phenomena, and complex spatio-temporal behaviors.  Under random fluctuations, this geometric skeleton encodes rare transitions  \cite{Bakhtinetal2026, stone1990random}.
  
In a stochastic dynamical system,  a metastable state informally means a state or region in the state space, where the system spends a very long time before making a rare transition to another state or region due to the interactions of nonlinearity and uncertainty. For example, a stable equilibrium state for the corresponding deterministic dynamical system (i.e.,  with noise absent) is a metastable state. In particular, when this deterministic dynamical system is a gradient system with a potential energy, a metastable state  is a local minimum of the   energy.     The most probable transition path is the connecting orbit from one metastable state to another, which minimizes the associated Onsager–Machlup action functional \cite{Durr1978, Chao2019}. 

In the space of probability densities, a Schr\"odinger bridge is a minimizer of the Kullback–Leibler divergence   (i.e., relative entropy)  and represents the optimal transition path connecting two probability distributions \cite{leonard2014, chen2021stochastic, conforti2019second, orland2025}. If metastable states are viewed as Dirac distributions, the Onsager–Machlup most probable transition path may be seen as a special case of the Schr\"odinger bridge. 
Moreover,  there are other divergence type of functionals, such as the $\alpha$-divergences \cite{amari2016, li2021bregmanarXiv}, which carry dynamical information. These are generalizations of the Kullback–Leibler divergence or relative entropy, describing the discrepancy between two probability densities, and in certain cases can be interpreted as generalized entropies (e.g., Tsallis entropy or R\'enyi entropy). They characterize the cost of state transitions in nonequilibrium systems. Different values of the divergence parameter $\alpha$ imply different path-selection mechanisms or evolution modes of the system. Minimizing such divergence functionals yields the optimal transition path, which we   call   an ``information geodesic".

 \medskip

\textbf{Content of this book.} 
The most probable transition path,  Schr\"odinger bridges, and information geodesics may be regarded as the stochastic counterparts of heteroclinic orbits. 

This book  explores geometric methods for stochastic dynamical systems governed by stochastic differential equations (SDEs). As an introduction to this subject, we focus on       `connecting orbits' in stochastic dynamics.

(i) The Most Probable Transition Path via the Onsager-Machlup Action Formalism (Chapter 2): We examine the most probable dynamics of a stochastic system by analyzing the action functional, providing a variational pathway to determine the system's most likely trajectories.

(ii) Stochastic Geometric Mechanics  (Chapter 3): We provide a brief introduction to this framework  with stochastic variational principles. This  serves  as a conceptual link between the Onsager-Machlup most probable transition paths,   Schr\"odinger bridges, and information geodesics.

(iii)  Schr\"odinger bridges and information geodesics in the Space of Probability Densities  (Chapter 4):  We present a powerful modern paradigm by interpreting various random phenomena as transitions within the space of probability distributions, rather than merely as collections of individual particle trajectories. By adopting the Wasserstein space of probability densities as our primary geometric setting, we naturally connect the study of these transitions with Schr\"odinger bridges, information geodesics, optimal transport, and the Riemainnian-like structures of Otto calculus.

Viewed through this geometric lens, a solution to the Fokker-Planck equation associated with an SDE traces a continuous curve in the space of probability densities. A Schrödinger bridge or information geodesic represents a specific curve segment within this space. The intrinsic geometry of these curves directly encodes the vital dynamical information   and path-selection mechanisms of the underlying stochastic system.

This book synthesizes recent advances in geometric methods for stochastic dynamics, forging novel conceptual linkages and establishing a  geometric perspective on the subject.
These topics underlie many of the most vibrant current research  in artificial intelligence and machine learning—-particularly deep generative modeling, such as diffusion models and flow matching \cite{wei2022optimal, li2021machine, song2021scorebased,debortoli2021diffusion, huang2024levy,li2022transport}. Furthermore, these tools provide prescriptive, data-driven frameworks for the early warning and active mitigation of critical transitions or tipping phenomena across a vast spectrum of complex systems, including biomedicine, neuroscience, brain science, chemical physics, geophysics, ecology, and climate science \cite{budd2025critical,  lucarini2022levy, serdukova2016stochastic, wang2026, xu2026early, zhang2025action}.


\medskip

In the rest of this chapter, we review stochastic differential equations with (Gaussian) Brownian motion and (non-Gaussian) L\'evy motion; see \cite{Duan2015, Oksendal, Applebaum}.

\section{Brownian Motion and Stochastic Differential Equations  }
\label{sec1.1}

The physical phenomenon, \emph{Brownian motion}, which owes its
name to its discovery by the English botanist Robert Brown in
1827, is due to the incessant hitting of pollen by the much
smaller molecules of the liquid. The hits occur a large number of
times in any small time internal, independently of each other and
the effect of a particular hit is small compared to the total
effect. In 1900, Bachelier  \cite{Bachelier} discussed the use of Brownian motion to model stock price evolution.
The physical theory of this motion, set up by Albert
Einstein in 1905, suggests the following definition. 


 

\index{Brownian motion}

\subsection{Brownian Motion in $\R^1$}
\label{1DBMgood}

We first look at a scalar Brownian motion (also called Wiener process).

We adopt the following definition, from \cite[p.401]{Ash} and \cite[p.33]{Mikosch}.
\begin{definition} \label{BMdefn}
 A stochastic process
$\{B_{t}(\omega): t\geq0\}$ defined on a probability space
$(\Omega,\mathcal {F},P)$ is called a \emph{Brownian motion} or a
\emph{Wiener process} if   the following conditions
hold:\\
(i) $B_{0} =0$, a.s.; \\
(ii) The paths $t\rightarrow B_{t}(\omega)$ are continuous, a.s.;\\
(iii) $B_{t} $ has  independent increments, i.e., if $0 \leq t_1 < t_2 < \cdots <t_n$, then the
random variables $B_{t_2}-B_{t_1}, \cdots, B_{t_n}-B_{t_{n-1}}$ are independent;\\
(iv)  $B_{t} $ has stationary increments that are Gaussian distributed, i.e., $B_{t}(\omega)-B_{s}(\omega)$ has the normal
distribution with mean $0$ and variance $t-s$. Namely,
$B_{t}(\omega)-B_{s}(\omega) \sim \mathcal{N}(0,t-s)$ for any $0\leq s <
t$.
\end{definition}

\begin{remark}
The item (iv)   implies that $\EX B_t=0$ and $\EX (B_t-B_s)^2 = t-s$. It also says that $B_t-B_s$ and $B_{t-s}$ have the same distribution $\mathcal{N}(0,t-s)$, for $t>s>0$. However, this does not mean that
$B_t-B_s$ equals $B_{t-s}$ pathwisely. In fact, $B_t-B_s \neq B_{t-s}$, a.s.
\end{remark}


\index{Brownian motion}
\index{Wiener process}

The following two theorems are useful.
\begin{theorem} \label{BMdefn2}
A stochastic process $B_t$ is a Brownian motion, or Wiener process,  if and only if \\
(i) $B_{0} =0$ a.s.; \\
(ii) The paths $t\rightarrow B_{t}(\omega)$ are continuous, a.s.;\\
(iii) For every $n\geq 2$ and $0 \leq t_1 < t_2 < \cdots <t_n$, the random variable  $B_{t_n}- B_{t_{n-1} }$  is  independent  of   the
random variables $B_{t_1}$, $B_{t_2}, \cdots, B_{t_{n-1}}$;\\
(iv)  $B_{t}-B_{s}$ has the normal
distribution with mean $0$ and variance $t-s $ for every $s, t$ with $t >s \geq 0$. Namely,
$B_{t}(\omega)-B_{s}(\omega) \sim \mathcal{N}(0, t-s)$ for every $s, t$ with $t >s \geq 0$.
\end{theorem}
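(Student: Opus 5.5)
We prove the equivalence of Theorem~\ref{BMdefn2} with Definition~\ref{BMdefn} by checking that the two lists of conditions imply each other. Conditions (i) and (ii) coincide in both. Condition (iv) of the theorem is the Gaussian part of (iv) in the definition, which in turn implies stationarity of increments, since the law of $B_t-B_s$ depends only on $t-s$. So the whole task is to show that, given (i), (ii) and (iv), independence of increments (condition (iii) of the definition) is equivalent to condition (iii) of the theorem.

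\emph{Definition $\Rightarrow$ Theorem.} Fix $0\le t_1<\cdots<t_n$. Using $B_0=0$, write $B_{t_k}=\sum_{j=1}^{k}(B_{t_j}-B_{t_{j-1}})$ with $t_0=0$. If $t_1=0$, then $B_{t_1}=0$ a.s. and we simply drop it. Independent increments applied to the partition $0=t_0\le t_1<\cdots<t_n$ show that $B_{t_n}-B_{t_{n-1}}$ is independent of the vector $(B_{t_1}-B_{t_0},\dots,B_{t_{n-1}}-B_{t_{n-2}})$. Hence it is independent of any measurable function of that vector, in particular of $(B_{t_1},\dots,B_{t_{n-1}})$.

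\emph{Theorem $\Rightarrow$ Definition.} We argue by induction on $n$. The claim is that $B_{t_2}-B_{t_1},\dots,B_{t_n}-B_{t_{n-1}}$ are mutually independent. The vector $D=(B_{t_2}-B_{t_1},\dots,B_{t_{n-1}}-B_{t_{n-2}})$ is a measurable (linear) function of $(B_{t_1},\dots,B_{t_{n-1}})$. By (iii) of the theorem, $B_{t_n}-B_{t_{n-1}}$ is independent of $(B_{t_1},\dots,B_{t_{n-1}})$, hence of $D$. The components of $D$ are mutually independent by the induction hypothesis applied to $t_1<\cdots<t_{n-1}$. Combining these two facts gives mutual independence of all $n-1$ increments. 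We will verify this with characteristic functions: the joint characteristic function factors as $\E e^{i\langle u,D\rangle}\,\E e^{iv(B_{t_n}-B_{t_{n-1}})}$, and the first factor factors further by induction. The base case $n=2$ is trivial, since a single increment is vacuously independent.

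The main point needing care, rather than a genuine obstacle, is the step where independence from a random vector is transferred to independence from functions of it, together with the bookkeeping of the endpoint $t_1=0$ using $B_0=0$. We will state explicitly that independence of $\sigma$-algebras passes to sub-$\sigma$-algebras, since $\sigma(D)\subset\sigma(B_{t_1},\dots,B_{t_{n-1}})$. We will also note that pairwise independence would not suffice, which is why the induction tracks joint independence of $D$.
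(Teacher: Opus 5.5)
Your proof is correct. The paper states Theorem~\ref{BMdefn2} without proof, as a standard reformulation of Definition~\ref{BMdefn} with references to the literature, so there is no argument in the text to compare against. Your argument is the standard one: given (i) and (iv), everything reduces to showing that the two independence conditions are equivalent.

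Two small points are worth a line each in a write-up.

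\begin{itemize}
\item In the Definition $\Rightarrow$ Theorem direction, you use the grouping property without comment. Mutual independence of $X_1,\dots,X_n$ gives
\[
P\bigl((X_1,\dots,X_{n-1})\in C,\ X_n\in A\bigr)=P\bigl((X_1,\dots,X_{n-1})\in C\bigr)\,P(X_n\in A)
\]
first for rectangles $C$. A $\pi$--$\lambda$ argument then extends it to all Borel $C\subset\mathbb{R}^{n-1}$. Also note that replacing $B_{t_1}-B_0$ by $B_{t_1}$ changes things only on a null set, which does not affect independence.
\item You read (iii) of the theorem as independence of $B_{t_n}-B_{t_{n-1}}$ from the whole vector $(B_{t_1},\dots,B_{t_{n-1}})$, i.e.\ from $\sigma(B_{t_1},\dots,B_{t_{n-1}})$. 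This is the intended reading, and it is exactly what your converse uses when you pass to the sub-$\sigma$-algebra $\sigma(D)$.
\end{itemize}
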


From this definition, Brownian motion has the following basic properties:\\
$\diamond$ A Brownian motion $B_t$ has distribution $\mathcal{N}(0,t)$, i.e., its probability density function is
$\frac1{\sqrt{2\pi t}} e^{-\frac{x^2}{2t}}$ for $t>0$;   \\
$\diamond$  $\EX (B_sB_t) = \min\{s, t\}$;     \\
$\diamond$  For given $c>0$, the process $B_{t+c}-B_c$ is
  a Brownian motion.  Also, for any $c \neq 0$, the process $c B_{\frac{t}{c^2}}$ is a Brownian motion;  \\
$\diamond$   The process $-B_t$ is also  a Brownian motion.  \\

\subsection{Brownian Motion in $\Rn$}

  Brownian motion $B_t$, taking values in $\Rn$, is a
Gaussian stochastic process on an underlying probability space
$(\Om, \cF, \PX)$. Being a Gaussian process, $B_t$ is characterized by its
mean vector (taken to be the zero vector) and its covariance
    matrix (taken to be the identity matrix). More specifically, $B_t$ satisfies the
following conditions \cite{Ash, KS, Nelson, Morters}:

\noindent (i)\ \ \ $B_0=0, \;$  a.s.;  \\
(ii)\ \ \ $B_t$ has continuous paths, \; a.s.;\\
(iii)\ \ \ $B_t$ has independent increments;  \\
(iv)\ \ \ $B_t$ has stationary increments, and $B_t-B_s \sim \mathcal{N}(0, (t-s)I)$, for $  t
> s \geq 0$, where $I$ is the $n\times n$ identity matrix.

\bigskip

By this definition, we have the following conclusions:

$\diamond$ The covariance matrix for the Brownian motion $B_t $  in $\Rn$   is $ t \; I$, with $I$ the $n\times n$ identity matrix, and its trace is $ \Tr(t \; I)=n t$. For convenience,   we just call $I$ the covariance matrix for $B_t$.


$\diamond$  Because the  covariance matrix is $I$, the components of $B_t$ are pair-wise uncorrelated. The Gaussianity further implies that they are pair-wise independent.

$\diamond$  $B_t  \sim \mathcal{N}(0, t I)$, i.e., $B_t$ has probability density
function $p_t(x) = \frac1{(2\pi t)^{\frac{n}{2}}}
e^{-\frac{x_1^2+...+x_n^2}{2t}}$. This joint probability density function is the product of the probability density functions for the scalar components of $B_t$. Thus, the components of $B_t$ are independent scalar Brownian motions (not just pair-wise independent).


\bigskip

\begin{remark} \label{QQQ888}
 The Brownian motion so defined is   called the standard Brownian motion, as the covariance matrix is the identity matrix.
 We may   revise the preceding definition to allow   the covariance matrix to be a  general positive definite, symmetric matrix $Q$.
\end{remark}

 \begin{definition} \label{QBM}  (Brownian motion with covariance matrix $Q$)\\
An $n$-dimensional Brownian motion with covariance matrix $Q$  is defined by
 $$
 B_t^Q = \sigma B_t,
 $$
 where $\sigma$ is an $n \times m$ real non-zero matrix and $B_t$ is an $m-$dimensional standard Brownion motion,  such that
 $Q=\sigma \sigma^T$.
 \end{definition}


\index{Standard Brownian motion}
\index{$B_t^Q$}
\index{Brownian motion with covariance matrix $Q$}


\subsection{Stochastic Integration and Stochastic Differential Equations}

Recall that a deterministic ordinary differential equation may be interpreted as an integral equation, while the integral is in Riemann-Stieltjes  sense. To consider stochastic differential equations, we also need a concept of integration for stochastic  functions,   that is,   integration with respect to Brownian motion.
Indeed, the following stochastic differential equation
\begin{eqnarray} \label{SDE1234}
dX_t = f(X_t) dt + \s(X_t) dB_t, \; X_0=x,
\end{eqnarray}
may be interpreted as
$$
X_t = x + \int_0^t f(X_s) ds + \int_0^t \s(X_s) dB_s.
$$
This requires a meaning for $\int_0^t \s(X_s(\om)) dB_s(\om)$.

\subsection*{Definition of It\^{o} integral}

\index{Definition of It\^{o} integral}

 The It\^{o} integral $\int_{T_0}^{T_f} F(t, \om) dB_t(\om)$, on the   time interval $(T_0, T_f)$, is defined for a class of integrands as follows
 \cite[Ch. 3]{Oksendal}. We do this for scalar integrand $F$ and scalar Brownian motion $B_t$, as in vector case, we define It\^{o} integral component by component.  Let $(\Om, \cF, \PX)$ be a probability space and let $\cF_t  \triangleq \s(B_s, s\leq t)$ be the filtration generated by Brownian motion up to time $t$. In other words, $\cF_t$ is the smallest $\s-$field containing events of the form
 $$
 \{\om: \; B_{t_1}(\om) \in A_1, \cdots,  B_{t_k}(\om) \in A_k\},
 $$
 for all $t_1, \cdots, t_k \leq t$ and all Borel sets $A_1, \cdots, A_k$ in $\R^1$. Note that $\cF_t \subset \cF$, and $\cF_s \subset \cF_t$ when $s<t$ (i.e., $\cF_t$ is increasing).

  First, introduce a class of stochastic integrands. Define $\mathbb{S} (T_0, T_f) $ to be a class of measurable functions
  \begin{eqnarray*}
F: [0, \infty) \times \Omega \to \mathbb{R}^1, \\
(t, \omega) \to F(t, \omega)
\end{eqnarray*}
  such that  \\
  (i) $F$ is $\cF_t$-adapted, i.e., $F(t, \cdot)$ is measurable with respect to the $\s-$field $\cF_t$ (or  $F(t, \cdot)$ is $\cF_t$-measurable);  and\\
 (ii) $F$ is mean-square (Lebesgue) integrable in the sense that $\EX \int_{T_0}^{T_f} F^2(t, \om) dt < \infty$.

  \medskip

\index{Adapted}
\index{$\cF_t$-measurable }
  \index{$\mathbb{S} (T_0, T_f) $: stochastic integrands}
  \index{Stochastic integrands: $\mathbb{S}(T_0, T_f) $}

  Then, consider   elementary functions in   $\mathbb{S} (T_0, T_f) $  in the form
  $$
   h(t, \om) = \sum_i e_i(\om) \; I_{[t_i, t_{i+1})} (t),
  $$
  where $e_i$ is a random variable and $I_{[t_i, t_{i+1})}$ is the  (deterministic) indicator function for the subinterval $[t_i, t_{i+1})$, for each $i$. Such an elementary function is `randomly' constant on each subinterval $[t_i, t_{i+1})$ and the random constant $e_i(\om)$ `starts' at the left end point (not including the right end point $t_{i+1}$).  It is indeed adapted to $\cF_t$. Naturally, its It\^{o} integral is defined by
  $$
  \int_{T_0}^{T_f} h(t, \om) dB_t = \sum_i e_i(\om) (B(t_{i+1})-B(t_i)).
  $$

Third, for each $F \in \mathbb{S} (T_0, T_f) $, it can be shown that there exists a sequence of elementary functions $F_n$ in $\mathbb{S} (T_0, T_f) $ such that $F_n$ converges to $F$ in the following `integrated mean square' sense
\begin{eqnarray} \label{approximate888}
\EX \int_{T_0}^{T_f} (F(t, \om) -F_n(t, \om))^2 dt \to 0, \;\; \mbox{ as } n \to \infty.
\end{eqnarray}

Finally, define
$$
\int_{T_0}^{T_f} F(t, \om) dB_t = \lim_{n \to \infty}  \int_{T_0}^{T_f} F_n(t, \om) dB_t,
$$
where the limit is taken in $L^2(\Omega)$.  We summarize this in the following theorem \cite[p. 29]{Oksendal}.

\begin{theorem} \label{ito-exist}
For $F \in \mathbb{S} (T_0, T_f) $, the It\^{o} integral $\int_{T_0}^{T_f} F(t, \om) dB_t$ exists. Moreover,   its value can be evaluated by
$$
\int_{T_0}^{T_f} F(t, \om) dB_t = \lim_{n \to \infty}  \int_{T_0}^{T_f} F_n(t, \om) dB_t,
$$
for a sequence of elementary functions $F_n(t, \om)$ that approximates $F$ in the  `integrated mean square' sense
  \eqref{approximate888}.  The value of the It\^{o} integral does not depend on the specific choice of the elementary sequence $F_n$.
\end{theorem}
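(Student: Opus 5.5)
The plan follows the standard construction of the It\^{o} integral as in \cite[Ch. 3]{Oksendal}. There are three steps: the It\^{o} isometry for elementary integrands, the density of elementary functions in $\mathbb{S}(T_0,T_f)$, and an extension by completeness of $L^2(\Omega)$.

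\textbf{Step 1 (isometry on elementary functions).} Let $h=\sum_i e_i I_{[t_i,t_{i+1})}$ be elementary, with each $e_i$ being $\cF_{t_i}$-measurable and square integrable. Write $\Delta B_i = B_{t_{i+1}}-B_{t_i}$ and expand
\[
\EX\Big(\sum_i e_i\Delta B_i\Big)^2=\sum_{i,j}\EX\big(e_ie_j\Delta B_i\Delta B_j\big).
\]
For $i<j$, the increment $\Delta B_j$ is independent of $\cF_{t_j}$, which contains $e_i$, $e_j$ and $\Delta B_i$. Hence the cross term equals $\EX(e_ie_j\Delta B_i)\,\EX\Delta B_j=0$. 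For the diagonal terms, the same independence gives $\EX(e_i^2\Delta B_i^2)=\EX(e_i^2)(t_{i+1}-t_i)$. Summing yields
\[
\EX\Big(\int_{T_0}^{T_f} h\,dB_t\Big)^2=\EX\int_{T_0}^{T_f} h^2\,dt .
\]
The map $h\mapsto\int h\,dB_t$ is also linear and independent of how the partition is represented; this follows by passing to a common refinement.

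\textbf{Step 2 (approximation).} We must show that every $F\in\mathbb{S}(T_0,T_f)$ is the limit of elementary adapted functions in the sense \eqref{approximate888}. I would do this in three reductions.
(a) Truncate: replace $F$ by $F\,I_{\{|F|\le N\}}$, which converges to $F$ by dominated convergence.
(b) For bounded $F$ with continuous paths, take $F_n=\sum_j F(t_j)I_{[t_j,t_{j+1})}$ on refining partitions. These are adapted, and $F_n\to F$ by bounded convergence.
(c) For bounded $F$ in general, mollify in time using a kernel supported in $[0,\infty)$, so that $G_n(t)=\int \psi_n(s-t)F(s)\,ds$ depends only on $F(r)$ for $r\le t$. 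The $G_n$ are continuous and adapted, and $G_n\to F$ in $L^2(dt)$ pathwise by the standard $L^2$ approximation result, hence in $L^2(dt\times d\PX)$ by bounded convergence.

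I expect (c) to be the main obstacle. The difficulty is keeping adaptedness under smoothing, together with the measurability of $G_n(t,\cdot)$ with respect to $\cF_t$, which needs progressive measurability of $F$ (or at least joint measurability plus adaptedness). I would handle it with the one-sided mollifier, as in Oksendal.

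\textbf{Step 3 (extension and well-definedness).} Take elementary $F_n$ approximating $F$. By the isometry of Step 1 applied to $F_n-F_m$, the integrals $\int F_n\,dB_t$ form a Cauchy sequence in $L^2(\Omega)$. By completeness they converge, which gives existence. For uniqueness, suppose $F_n$ and $F_n'$ are two approximating sequences. Then
\[
\EX\Big(\int F_n\,dB_t-\int F_n'\,dB_t\Big)^2=\EX\int (F_n-F_n')^2\,dt\to0,
\]
so the two limits coincide almost surely. This proves the theorem, and the isometry passes to the limit as a byproduct.
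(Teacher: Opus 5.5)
Your proposal is correct and follows the standard construction of \cite[Ch.~3]{Oksendal}: the isometry for elementary integrands, density of elementary functions via truncation, one-sided time mollification and left-point discretization, and then extension by completeness of $L^2(\Omega)$. This is exactly the argument the paper sketches and defers to for this theorem.

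One small slip is in Step~2(c). With $G_n(t)=\int\psi_n(s-t)F(s)\,ds$, the kernel must be supported in $(-\infty,0]$ (e.g.\ in $[-1/n,0]$, as in Oksendal), not in $[0,\infty)$. Otherwise $G_n(t)$ would depend on future values of $F$ and would not be adapted.
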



We will not develop a rigorous stochastic integration theory here and interested readers may refer to, for example,  \cite{Durrett, KS, Oksendal} or \cite{Huang}.


\medskip
Stochastic integrals for vector functions are defined component by component, although enlarging the family of integrands is necessary (see \cite[\S 3.3]{Oksendal}).

\medskip

By Theorem \ref{ito-exist}, when an It\^{o} stochastic integral $\int_{T_0}^{T_f} F(t, \om) dB_t$ is known to exist, we could evaluate its value by $\lim_{n\to \infty} \mbox{in m. s.} \int_{T_0}^{T_f} f_n(t, \om) dB_t$, for one
  specific sequence of elementary functions $f_n$  that approximates $f$ in the     `integrated mean square' sense.

For example, if the integrand $F(t, \om)$ is     continuous in $t$  (almost surely),   we take  a sequence of partitions $\mathfrak{P}^n$ of the time interval $[T_0, T_f]$, of equal subinterval length $\delta^n = \frac{T_f -T_0}{n} $:
 $$
 T_0=t_0^{n} < t_1^{n}<\cdots <t_i^{n} <t_{i+1}^{n} < \cdots <t_n^{n}=T_f,
 $$
 for $n=1, 2, \cdots$. Note that $\delta^n$ converges to $0$ as $n\to \infty$.
 Then we choose a sequence of elementary functions as follows
\begin{eqnarray}
F_n(t, \om) = \sum_{i=0}^{n-1} F(t_i^n, \om) \; I_{[t_i^n, t_{i+1}^n)} (t),
\end{eqnarray}
where $f$ is evaluated at the left end point on each subinterval $[t_i^n, t_{i+1}^n ]$.
The value of the It\^{o} integral is thus obtained by the limit
\begin{eqnarray}\label{ito109898}
& & \int_{T_0}^{T_f} F(t, \om) dB_t     \nonumber  \\
&=& \lim_{n \to \infty} \mbox{ in m.s. }  \int_{T_0}^{T_f} F_n(t_i^n, \om) dB_t    \nonumber  \\
&=& \lim_{n \to \infty} \mbox{ in m.s. } \sum_{i=0}^{n-1} F_n( t_i^{n}, \om) (B(t_{i+1}^{n}) - B(t_i^{n}) ),
\end{eqnarray}
where $F$ is evaluated at the  left end   point on each subinterval $[t_i^n, t_{i+1}^n ]$.

\medskip
\subsection*{Definition of Stratonovich integral}

\index{Definition of Stratonovich integral}

Inspired by the evaluation formula \eqref{ito109898} for It\^{o} integral, we define Stratonovich integral    $\int_{T_0}^{T_f} F(t, \om) \circ dB_t(\om)$, when the integrand $f$ is continuous in $t$, by the following limit whenever it exists
 \begin{eqnarray}\label{strat100}
 &&\int_{T_0}^{T_f}  F(t, \om) \circ dB_t(\om)   \nonumber \\
 &=& \lim_{n\to \infty} \mbox{ in m.s. }
 \sum_{i=0}^{n-1} F(\frac12(t_i^{n}+t_{i+1}^{n}), \om) (B(t_{i+1}^{n}) - B(t_i^{n}) ),
\end{eqnarray}
where $f$ is evaluated at the   middle point on each subinterval $[t_i^n, t_{i+1}^n ]$.


\index{Stratonovich integral}

An interesting observation is useful here.
When the integrand $F(t, \om)$ is continuously differentiable   in time (almost surely),  we apply Taylor expansions at
$t_i^{n}$ and $t_{i+1}^{n}$, respectively, to get
\begin{eqnarray*}
F(\frac12(t_i^{n}+t_{i+1}^{n}), \om) = F(t_i^{n}, \om) + O(t_{i+1}^{n} - t_i^{n}), \\
F(\frac12(t_i^{n}+t_{i+1}^{n}), \om) = F(t_{i+1}^{n}, \om) + O(t_{i+1}^{n} - t_i^{n}).
\end{eqnarray*}
Adding half of each of both equations together, we conclude that,
\begin{eqnarray*}
F(\frac12(t_i^{n}+t_{i+1}^{n}), \om) =\frac12  F(t_i^{n}, \om) + \frac12 F(t_{i+1}^{n}, \om) + O(t_{i+1}^{n} - t_i^{n}).
\end{eqnarray*}
Thus, by \eqref{strat100}, the Stratonovich integral    is also   defined by
 \begin{eqnarray}\label{strat100aaa}
 &&\int_{T_0}^{T_f} F(t, \om) \circ dB_t(\om) \nonumber \\
 &=& \lim_{n\to \infty} \mbox{ in m.s. }  \sum_{i=0}^{n-1}[ \frac12  F(t_i^{n}, \om) + \frac12 F(t_{i+1}^{n}, \om)] (B(t_{i+1}^{n}) - B(t_i^{n}) ),
\end{eqnarray}
whenever the limit exists. In fact, this is also often taken as the  definition of Stratonovich integral even when the integrand is not differentiable in time, as long as the limit in \eqref{strat100aaa} exists.
This may offer an advantage, as we do not need to evaluate $F$ at the middle point of each subinterval; instead, we evaluate the average of $F$ values at the end points of each subinterval.

\medskip

\begin{remark}
If the integrand $F(t, \om)$ is sufficiently smooth
in time (e.g., H\"older continuous in time in mean-square norm,
with exponent larger than $1$, then both It\^{o} and Stratonovich
integrals are identical; See \cite[p. 39]{Oksendal}). But in general,
  It\^{o} and Stratonovich integrals differ. Note that $B_t$
is only H\"older continuous in time (\cite[Ch. 2]{Klebaner}, \cite{Kuo})  with exponent less than
$\frac12$.
\end{remark}

\subsection*{Properties of It\^{o} integrals}

These properties of It\^{o} integrals are useful for analyzing SDEs, and they hold when the involved It\^{o} integrals exist, i.e., when the integrands are in $ \mathbb{S}(S, T)$. These properties are proved    first for elementary functions in $ \mathbb{S}(S, T)$, then approximating other functions in $ \mathbb{S}(S, T)$ by elementary functions, and finally passing the limits (\cite[Ch. 3]{Oksendal}). In addition to linearity (as for deterministic integrals) and the zero-mean property $\EX \int_S^T F(t,\omega) dB_t   =0$, there is also It\^{o} isometry.



\textbf{It\^{o} isometry in scalar case}:
\begin{eqnarray}
\EX (\int_S^T F(t,\omega) dB_t)^2  =\EX \int_S^T F^2(t,\omega) dt.
\end{eqnarray}

\index{It\^{o} isometry}

More generally,
\begin{eqnarray}
 \EX (\int_S^a F(t,\omega) dB_t\; \int_S^b G(t, \omega)
dB_t)=\EX  \int_S^{a\wedge b} F(t,\omega)\; G(t,\omega) dt,
\end{eqnarray}
where $a\wedge b \triangleq \min\{a, b\}$.

\textbf{It\^{o} isometry in vector case}:

Let $F(t, \om)$ and $G(t, \om)$ be $n\times n$ matrices, and $B_t$
be $n-$dimensional Brownian motion. Then
\begin{eqnarray}
\EX  (\int_S^a F(t,\omega) dB_t  \cdot \int_S^b G(t,\omega) dB_t)
=\EX \int_S^{a \wedge b}  \Tr (GF^T)(t,\omega) dt,
\end{eqnarray}
where $\cdot$ denotes the usual scalar product in $\R^n$, $\Tr$
denotes the trace of a matrix (i.e. the sum of diagonal entries of
a matrix).

In particular,
\begin{eqnarray}
\EX  \| \int_S^a F(t,\omega) dB_t\|^2 =\EX \int_S^a \Tr (F F^T)(t,\omega) dt ,
\end{eqnarray}
and
\begin{eqnarray}
 \EX  (\int_S^a F(t,\omega) dB_t \cdot \int_S^b F(t,\omega) dB_t)
=\EX \int_S^{a \wedge b}  \Tr (F F^T)(t,\omega) dt.
\end{eqnarray}

\bigskip
\subsection*{Stochastic differential equations}

The time-homogeneous It\^{o} SDE \eqref{SDE1234}  has a  more general time-inhomogeneous version
\begin{eqnarray} \label{SDE1234a}
dX_t = f(t,X_t) dt + \s(t,X_t) dB_t, \; X_0=x.
\end{eqnarray}
The corresponding Stratonovich SDE is 
\begin{eqnarray} \label{SDEabcd}
dX_t = f(t,X_t) dt + \s(t,X_t) \circ dB_t, \; X_0=x,
\end{eqnarray}


Note that the It\^{o} stochastic differential $ \s(t, X_t) dB_t$ in SDE \eqref{SDE1234a} and  the Stratonovich stochastic differential $\s(t, X_t) \circ
dB_t$ in SDE  \eqref{SDEabcd} are
interpreted through their corresponding definitions of stochastic
integrals $\int_0^T \s(t,X_t)  dB_t$ and $\int_0^T \s(t, X_t) \circ dB_t$, respectively.


In  \eqref{SDE1234a} or \eqref{SDEabcd}, when $\s$ does not depend on the system state $X_t$, the SDE is said to have an additive noise, otherwise it is said to have a multiplicative noise. In these SDEs, $f$ is called the vector field or the drift term, and $\s$ is the diffusion coefficient or noise intensity.

\index{Additive noise}
\index{Multiplicative noise}
\index{Vector field}
\index{Drift term}
\index{Noise intensity}
\index{Diffusion coefficient}

\bigskip

\subsection*{Conversion between It\^{o} and Stratonovich stochastic differential equations}

 Stratonovich SDEs can be converted to It\^{o}  SDEs and
vice versa.  

\index{It\^{o} integral for two-sided time}
\index{Stochastic integrals, backward}
\index{Stochastic integrals, forward}
\index{SDEs with two-sided Brownian motions}


First consider a scalar Stratonovich SDE
\begin{equation}\label{sde400}
dX_t=f(t,X_t)dt+ \s(t,X_t) \circ dB_t,
\end{equation}
where $b$ is the drift term and $\s(t,X_t)$ is the diffusion term.
Using the Taylor expansion theorem and the mean value theorem in the sum for the definition of stochastic integrals, it is shown that (Kloeden \cite[Ch. 4]{Kloeden})
\begin{equation}\label{sde401}
 \int_0^T   \s(t,X_t) \circ dB_t = \int_0^T  \s(t,X_t)  dB_t + \frac12 \int_0^T \s(t, X_t) \frac{\p \s}{\p x}(t, X_t) dt,
\end{equation}
or, in differential form,
\begin{equation}\label{sde402}
    \s(t,X_t) \circ dB_t = \s(t,X_t)  dB_t + \frac12  \s(t, X_t) \frac{\p \s}{\p x}(t, X_t) dt.
\end{equation}
This also says that the Stratonovich integral may not have zero mean (unlike the It\^{o} integral):
\begin{equation}\label{sde401111}
\EX \int_0^T   \s(t,X_t) \circ dB_t =   \frac12 \; \EX \int_0^T \s(t, X_t) \frac{\p \s}{\p x}(t, X_t) dt.
\end{equation}

Thus we have the following conclusion.

\begin{theorem}
The Stratonovich SDE
\begin{equation}\label{sde400abc}
dX_t=f(t,X_t)dt+ \s(t,X_t) \circ dB_t
\end{equation}
 is converted to the following It\^{o} SDE
\begin{equation}\label{sde403}
dX_t=[f(t,X_t) + \frac12  \s(t, X_t) \frac{\p \s}{\p x}(t, X_t)] dt+ \s(t,X_t) dB_t,
\end{equation}
with a new drift term $f(t,X_t) + \frac12  \s(t, X_t) \frac{\p \s}{\p x}(t, X_t)$. \\
Conversely, an It\^{o} SDE
\begin{equation}\label{sde404}
dX_t=f(t,X_t)dt+ \s(t,X_t)  dB_t
\end{equation}
is equivalent to the following  Stratonovich SDE
\begin{equation}\label{sde405}
dX_t=[f(t,X_t) - \frac12  \s(t, X_t) \frac{\p \s}{\p x}(t, X_t)] dt+ \s(t,X_t) \circ dB_t,
\end{equation}
with a modified drift term $f(t,X_t) - \frac12  \s(t, X_t) \frac{\p \s}{\p x}(t, X_t)$.
\end{theorem}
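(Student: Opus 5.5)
The plan is to derive the theorem from the integral conversion formula \eqref{sde401}, so the main work is justifying that formula; the two SDE conversions then follow by algebra. Let $X_t$ solve the Stratonovich SDE \eqref{sde400abc} and take the equidistant partitions $\mathfrak{P}^n$ of $[0,T]$. By the endpoint-average form \eqref{strat100aaa}, the Stratonovich integral is the mean-square limit of
\begin{equation*}
\sum_{i}\tfrac12\bigl[\s(t_i,X_{t_i})+\s(t_{i+1},X_{t_{i+1}})\bigr]\Delta B_i
= \sum_i \s(t_i,X_{t_i})\Delta B_i + \tfrac12\sum_i \bigl[\s(t_{i+1},X_{t_{i+1}})-\s(t_i,X_{t_i})\bigr]\Delta B_i ,
\end{equation*}
where $\Delta B_i = B(t_{i+1})-B(t_i)$. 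By \eqref{ito109898}, the first sum converges in mean square to the It\^o integral $\int_0^T\s(t,X_t)\,dB_t$. The whole task is therefore to identify the limit of the correction sum.

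For the correction sum, I would Taylor expand $\s$ using the mean value theorem:
\begin{equation*}
\s(t_{i+1},X_{t_{i+1}})-\s(t_i,X_{t_i}) = \frac{\p\s}{\p x}(t_i,X_{t_i})\,\Delta X_i + O(\Delta t) + O(|\Delta X_i|^2).
\end{equation*}
On each step the SDE gives $\Delta X_i = \s(t_i,X_{t_i})\Delta B_i + O(\Delta t) + (\text{higher order})$. Since $\s\circ dB - \s\,dB$ contributes only an $O(\Delta t)$ drift per step, the same local expansion holds whether the equation is read in It\^o or Stratonovich form. Substituting, the leading term of the correction sum is
\begin{equation*}
\tfrac12\sum_i \s\frac{\p\s}{\p x}(t_i,X_{t_i})(\Delta B_i)^2 .
\end{equation*}
The remaining pieces are of the form $O(\Delta t)\Delta B_i$ or $O(|\Delta B_i|^3)$, whose sums are $O(\sqrt{\Delta t})$ in $L^2$ and vanish in the limit.

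The key step, and the main obstacle, is the quadratic-variation limit
\begin{equation*}
\sum_i g_i(\Delta B_i)^2 \to \int_0^T g(t)\,dt \quad \text{in mean square},
\end{equation*}
where $g_i = \s\frac{\p\s}{\p x}(t_i,X_{t_i})$ is $\mathcal{F}_{t_i}$-measurable. I would write $(\Delta B_i)^2 = \Delta t + \bigl((\Delta B_i)^2-\Delta t\bigr)$. The terms $g_i\bigl((\Delta B_i)^2-\Delta t\bigr)$ have zero mean and are orthogonal across $i$, by independence of increments (Theorem~\ref{BMdefn2}(iii)). Hence the second moment of their sum is $\sum_i \EX g_i^2 \cdot 2(\Delta t)^2 \to 0$, assuming a bound on $\EX g^2$, for example $\s$ and $\p_x\s$ bounded or of linear growth. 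Meanwhile $\sum_i g_i\Delta t$ converges to the Riemann integral by continuity of paths. This yields \eqref{sde401}, and equivalently the differential form \eqref{sde402}. Controlling the Taylor remainders uniformly requires some regularity, e.g.\ $\s\in C^{1,2}$ with bounded derivatives, which I would assume.

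Finally, substituting \eqref{sde402} into \eqref{sde400abc} gives the It\^o SDE \eqref{sde403}. For the converse, start from the It\^o SDE \eqref{sde404} and rewrite $\s\,dB_t = \s\circ dB_t - \frac12\s\frac{\p\s}{\p x}\,dt$, again by \eqref{sde402}. Its solution is a semimartingale with the same diffusion part, so the same argument applies, and this gives \eqref{sde405}.
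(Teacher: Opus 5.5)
Your proposal is correct and follows the same route as the paper. The paper obtains the integral identity \eqref{sde401} by Taylor and mean-value expansion inside the defining Riemann sums (deferring the details to Kloeden), then substitutes \eqref{sde402} into the SDE in each direction; you supply the omitted details, namely the split of the endpoint-average sum into an It\^o sum plus a correction sum, and the mean-square limit $\sum_i g_i(\Delta B_i)^2\to\int_0^T g(t)\,dt$. The one loosely argued step is the local expansion $\Delta X_i=\s(t_i,X_{t_i})\Delta B_i+O(\Delta t)+\cdots$ for the Stratonovich solution, which you justify by invoking the conversion itself; justify it instead, as Kloeden does, by taking $X$ to be an It\^o process whose martingale part is $\int\s\,dB_t$ (the Stratonovich and It\^o integrals of a semimartingale integrand differ only by a finite-variation covariation term), after which your argument is no longer circular.
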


\medskip


Similarly, it is also possible to convert    SDE systems  from Stratonovich to It\^{o} forms and vice versa.
 Consider a Stratonovich SDE system
  \begin{equation}\label{sde408}
dX_t=f(t,X_t)dt+ \s(t,X_t) \circ dB_t,
\end{equation}
where $b$ is the drift term in $\Rn$,   $\s(t,X_t)$ is an $n \times m$ matrix,   $X_t$ is in $\Rn$, and $B_t$ is in $\R^m$.
Again, it is known that (Kloeden \cite[Ch. 4]{Kloeden})
\begin{equation}\label{sde401b}
 \int_0^T   \s(t,X_t) \circ dB_t = \int_0^T  \s(t,X_t)  dB_t +   \int_0^T c(t, X_t) dt,
\end{equation}
where the vector $c$ has the components
\begin{equation}\label{cccc}
    c_i={\frac12} \sum^n_{j=1} \sum^m_{k=1} \s_{j,k} (t, X_t) \frac{\partial \s_{i,k}}{\partial x_j} ( t, X_t),
\end{equation}
for $i=1, \cdots, n$.
Or, in differential form,
\begin{equation}\label{ccccc}
    \s(t,X_t) \circ dB_t = \s(t,X_t)  dB_t + c(t, X_t) dt.
\end{equation}
Thus the above Stratonovich SDE \eqref{sde408} is converted to the following It\^{o} SDE
\begin{equation}\label{sde409}
dX_t=[f(t,X_t) + c(t, X_t)] dt+ \s(t,X_t) dB_t,
\end{equation}
with a new drift term $f(t,X_t) + c(t, X_t)$.

Conversely, an It\^{o} SDE system
\begin{equation}\label{sde410}
dX_t=f(t,X_t)dt+ \s(t,X_t)  dB_t,
\end{equation}
is equivalent to the following  Stratonovich SDE
\begin{equation}\label{sde405b}
dX_t=[f(t,X_t) -c(t, X_t) ] dt+ \s(t,X_t) \circ dB_t,
\end{equation}
with a modified drift term $f(t,X_t) - c(t, X_t)$.

\medskip

In order to analyze SDEs, we need
a stochastic chain rule, i.e.,  the It\^{o}'s formula. This is  introduced in the next section.

\subsection{Generators and It\^{o}'s Formula }
\index{It\^{o}'s formula}

\medskip

To analyze SDEs, we need to be able to manipulate stochastic differentials, which are interpreted via stochastic integrals.
However, it is tedious and in general difficult to evaluate  stochastic  integrals by definition as shown in the previous section.
As in deterministic calculus, we need theoretical tools to manipulate integrals. One of the theoretical tools is the It\^{o}'s formula, or the stochastic chain rule, which implies the stochastic product rule and integration by parts.

Before we review the It\^{o}'s formula \cite{Oksendal}, let us recall the concept `differentiation' in deterministic calculus.

Let $h$ be a scalar deterministic function in time, and $t_0$ be a given time instant. In order to approximate the difference $\Delta h(t_0) = h(t_0+\Delta t) - h(t_0)$ when $\Delta t$ sufficiently small, we calculate the differential
\begin{eqnarray}
dh(t_0) = h'(t_0) dt.
\end{eqnarray}
The error for this approximation $dh \thickapprox \Delta h$ is $o(|\Delta t|^2)$, if $h$ has bounded second order derivative.  By Taylor expansion at $t_0$,
\begin{eqnarray*}
h(t_0+\Delta t)- h(t_0) = h'(t_0) \Delta t + \frac12 h''(t_0) (\Delta t)^2 + \cdots,
\end{eqnarray*}
or
\begin{eqnarray*}
dh(t_0) = h'(t_0) d t + \frac12 h''(t_0) (d t)^2 + \cdots.
\end{eqnarray*}
In other words, the differentiation for $h$ at $t_0$ means we retain only the \emph{first order} (in $\Delta t$) terms in its Taylor expansion at $t_0$. 



\bigskip

\textbf{It\^{o}'s formula in  scalar case:}

Consider a scalar SDE
\begin{eqnarray}
 dX_t = f(X_t)dt + \s(X_t) dB_t,  \label{scalar12345}
\end{eqnarray}
where $f(\cdot), \s(\cdot)$ are scalar functions, and $B_t$ is a scalar Brownian motion.

Let $g(t, x)$ be a given (deterministic) scalar smooth function. Let us try to apply Talyor expansion of $g$ or deterministic chain rule to obtain
\begin{eqnarray}
 d g(t, X_t) &=&  \frac{\p g}{\p t} dt + \frac{\p g}{\p x} dX_t  \nonumber \\
 &+& \frac12 [ \frac{\p^2 g}{\p t^2} (dt)^2 +2 \frac{\p^2 g}{\p t \p x} dt dX_t + \frac{\p^2 g}{\p x^2} (dX_t)^2] +\mbox{h. o. t.} \nonumber \\
 &=&  \frac{\p g}{\p t} dt + \frac{\p g}{\p x}[f(X_t) dt +\s(X_t)dB_t] \nonumber \\
 &+& \frac12 [ \frac{\p^2 g}{\p t^2} (dt)^2 +2 \frac{\p^2 g}{\p t \p x} dt [f(X_t) dt +\s(X_t)dB_t] \nonumber \\
 &+&  \frac{\p^2 g}{\p x^2} [f(X_t) dt +\s(X_t)dB_t]^2] +\mbox{h. o. t.},  \label{chain}
\end{eqnarray}
where each partial derivative is evaluated at $(t, X_t)$, and  \mbox{h. o. t.} denotes higher order terms. Note that $(dt)^2$ is a second order term, and $dt dB_t$ is higher than first order, and so we discard them in the It\^{o} stochastic differential. But, how about the term with $(dB_t)^2$? Is it first order (retain) or higher than first order (discard)?
It turns out that, in the formula \eqref{chain}, the term with $(dB_t)^2$ is actually a first order term  in $dt$, and we thus need to retain it in the stochastic chain rule or It\^{o}'s formula.

Fortunately, the  preceding formal derivation   can be made rigorous (see \cite[\S 4.1]{Oksendal} or \cite[\S 3.3]{KS}), but we will omit it here.

\medskip

We thus have the
It\^{o}'s formula in differential form
\begin{eqnarray} \label{ito999}
 d g(t,X_t) = [\frac{\p g}{\p t}(t, X_t)+ f(X_t)\frac{\p g}{\p x}(t, X_t)+\frac12 \s^2(X_t)\frac{\p^2 g}{\p x^2} (t, X_t) ]dt \nonumber \\
 + \frac{\p g}{\p x}(t, X_t) \s(X_t) dB_t.
\end{eqnarray}
 The term $\frac12 \frac{\p^2 g}{\p x^2}(t, X_t)\s^2(X_t)$ is called the It\^{o} correction term.

\index{It\^{o} correction term}

 Equivalently,   It\^{o}'s formula is
 \begin{eqnarray} \label{ito8989000}
 d g(t, X_t)= \frac{\p g}{\p t}(t, X_t) dt + \frac{\p g}{\p x}(t, X_t) dX_t
 + \frac12  \frac{\p^2 g}{\p x^2} (t, X_t) (dX_t)^2,
 \end{eqnarray}
 where $(dX_t)^2$ is evaluated using the symbolic rules
\begin{eqnarray}
 dt dt =dt dB_t =0, \;\;\; dB_t dB_t =dt.
\end{eqnarray}

\index{It\^{o}'s formula}

\bigskip

 It\^{o}'s formula in integral form is
\begin{eqnarray}
  g(t,X_t) & = & g(0, X_0)
  + \int_0^t[\frac{\p g}{\p t}(s, X_s)+ f(X_s)g_x(s, X_s) +\frac12 \s^2(X_s)\frac{\p^2 g}{\p x^2}(s, X_s) ]ds    \nonumber \\
& + & \int_0^t \frac{\p g}{\p x}(s, X_s) \s(X_s) dB_s.
\end{eqnarray}

\medskip

Under quite general conditions on the coefficients in the SDE \eqref{scalar12345}, the solution process $X_t$ is a Markov process (\cite[\S 7.1]{Oksendal}). With an `observable' (i.e., a measurable function $h: \R^1 \to \R^1$), we could observe or measure the process $X_t$ to obtain $h(X_t)$. Then we take the mean of our observations to get a time-dependent deterministic function $\EX f(X_t)$ and it is called the semigroup for the process. The time derivative (at $t=0$) of this semigroup is a linear operator $A$
 \begin{eqnarray} \label{generator555}
A h(x) \triangleq  \frac{d}{dt}|_{t=0} \;\EX h(X_t) = \lim_{t \downarrow 0} \frac{\EX h(X_t)-h(x)}{t},\;  x\in \R^1,
\end{eqnarray}
 whenever the limit exists. The domain for $A$ is the set of $h$'s such that this limit exists.
This linear operator $A$ is called the (infinitesimal) generator for the  SDE \eqref{scalar12345}, or for its solution process $X_t$. It is the time derivative of the ``mean observation of the   solution process", and  its representation is known  as (Oksendal \cite[\S 7.3]{Oksendal}) 
 \begin{eqnarray}
A h \triangleq  f h_x  +\frac12 \s^2 h_{xx},   \label{generator777}
\end{eqnarray}
for $h $ in Sobolev space $H_0^2(\R^1)$.  

\index{Generator for an SDE}
\index{Infinitesimal generator}

For example, the scalar SDE, $dX_t = 0 dt + dB_t$, with initial condition $X_0=x$, has solution $X_t = x+B_t$ (a `Brownian motion starting at $x$'). Thus, by \eqref{generator777},  the generator for this Brownian motion is $\frac12 \frac{d^2}{d x^2}$ (Laplacian operator). This fact can also be proved directly by the definition \eqref{generator555}.

\index{Generator for Brownian motion}

\index{Brownian motion starting at $x$: $x+B_t$}

\medskip

With the generator $A$,  the It\^{o}'s formula \eqref{ito999} can be rewritten as
\begin{eqnarray} \label{ito99988888}
 d g(t,X_t) = [\frac{\p g}{\p t}(t, X_t)+ A g(t, X_t) ]dt   + \frac{\p g}{\p x}(t, X_t) \s(X_t) dB_t.
\end{eqnarray}

Being the derivative of observation on the solution process and also a significant part of the It\^o formula, the generator $A$ carries dynamical information for the SDE system   \eqref{scalar12345}. 




\medskip

\textbf{It\^{o}'s formula in vector case:}
\index{It\^{o}'s formula}

Consider an SDE system in $\Rn$
\begin{eqnarray}
 dX_t = f(X_t)dt + \s(X_t) dB_t,   \label{system123}
\end{eqnarray}
where  $f$ is an $n$-dimensional   vector function, $\s$ is an $n\times n$ matrix function, and   $B_t(\omega)$ is an $n$-dimensional Brownian motion.

 Let $g(t, x)$ be a given (deterministic) scalar smooth function in both $x\in \R^n$ and $t\in \R$. Then
 \begin{eqnarray}\label{itoRn}
 dg(t, X_t) & = & \frac{\p g}{\p t} dt + \grad g \cdot dX_t
  +   \frac12 (dX_t)^T H(g) dX_t            \nonumber  \\
  &=& \frac{\p g}{\p t} dt +  \sum_{i=1}^n \frac{\p g}{\p x_i}  dX_t^i
   +   \frac12  \sum_{i, j=1}^n \frac{\p^2 g}{\p x_i \p x_j}  dX_t^i dX_t^j,
\end{eqnarray}
 where $\frac{\p g}{\p t}$, $\frac{\p g}{\p x_i}$ and $ \frac{\p^2 g}{\p x_i \p x_j}$  are evaluated at $(t, X_t)$.

Symbolically, we may also use the following rules in manipulating the preceding It\^{o} differential:
\begin{eqnarray}
 dt dt =0, \;\; dt dB_t = 0, \;\;  dB_t \cdot dB_t =\Tr (Q)dt =ndt.
\end{eqnarray}
Note that the  covariance matrix $Q=I$, for $n-$dimensional Brownian motion $B_t$, as mentioned in Remark \ref{QQQ888}.

With these symbolic operations, the
It\^{o}'s formula in differential form becomes:
\begin{eqnarray}
 d g(t,X_t) & = & \{\frac{\p g}{\p t}(t, X_t)+ f \cdot \grad g(t, X_t)
 +\frac12 \Tr[\s \s^T H(g)](t, X_t)\} \; dt \nonumber \\
 & + & (\grad g(t, X_t))^T \; \s(X_t) \; dB_t,   \label{ito12345}
\end{eqnarray}
where $\cdot $ denotes scalar product in $\R^n$,  $ ^T$ denotes transpose of a   matrix,   $H(g)= (g_{x_i x_j})$ is
the $n \times n$ symmetric Hessian matrix which is also denoted as $D^2(g)$, and $\Tr$ denotes the trace of a matrix.

\begin{remark}
Note that $H(g) (x, y) = x^T H(g) y$, $H(g)$ is bilinear, and $B_t = \sum_{i=1}^n B_i(t) e_i$. \\
We have the following interpretation:
\begin{eqnarray*}
 H(g) (\s dB_t, \s dB_t)    
 &=& H(g) (\s \sum_{i=1}^n dB_i(t) e_i, \s \sum_{j=1}^n dB_j(t) e_j)  \\
 &=& \sum_{i, j=1}^n dB_i(t) dB_j(t) H(g) (\s e_i, \s e_j)   \\
 &=&  [\sum_{i=1}^n   e_i^T \s^T H(g) \s e_i] \; dt   
 = \Tr [\s^T H(g)\s ] dt    \\
 &=& \Tr[ H(g) \s \s^T ] dt 
 =   \Tr[\s \s^T H(g)  ] dt.
\end{eqnarray*}

\end{remark}

The generator $A$  for this SDE system \eqref{system123}, or for its solution process $X_t$, is
\begin{eqnarray} \label{gtor}
A h = f \cdot  \grad h   +\frac12 \Tr[\s \s^T H(h)],
\end{eqnarray}
for $h$  in Sobolev space $H_0^2(\Rn)$. 
For example, Brownian motion $B_t$ in $\Rn$ is the solution for $dX_t = 0dt + dB_t, \; X_0=x$. Thus, by \eqref{gtor},   the generator for  Brownian motion starting at $x$,  $X_t= x+B_t$,  is $\frac12 \Delta$ (Laplacian operator).

\index{Generator for Brownian motion in $\Rn$ }
\index{Brownian motion starting at $x$ }

With the generator $A$,  the It\^{o}'s formula \eqref{ito12345} can be rewritten as
\begin{eqnarray} \label{ito67890}
 d g(t,X_t)  =  \{ \frac{\p g}{\p t}(t, X_t)+ A g(t, X_t)\} dt
  +  (\grad g(t, X_t))^T \s(X_t) dB_t.
\end{eqnarray}

It\^{o}'s formula in integral form is:
\begin{eqnarray}
  g(t,X_t) & = & g(0, X_0)
  + \int_0^t \{\frac{\p g}{\p t}(s, X_s)+ b^T \grad g(s, X_s)+\frac12 \Tr[\s \s^T H(g)](s, X_s)\}  ds    \nonumber \\
& + & \int_0^t  (\grad g(s, X_s))^T \s(X_s)   dB_s.
\end{eqnarray}


\medskip

\textbf{Stochastic product rule and integration by parts}
\index{Stochastic product rule}

Let $X_t, Y_t$ be solutions of two scalar SDEs, respectively.
Then, by applying the two dimensional It\^{o}'s formula to $g(x, y) =xy$, we get the stochastic product rule
\begin{eqnarray}\label{product8989000}
 d (X_tY_t) &=& \frac{\p g}{\p x} dX_t + \frac{\p g}{\p y}dY_t + \frac12 (dX_t dY_t + dY_t dX_t) \nonumber\\
 &=& X_t dY_t + Y_t dX_t  + dX_t dY_t.
\end{eqnarray}
The corresponding integral form is the   stochastic integration by parts
\begin{eqnarray} \label{parts8989}
  \int_0^T X_t dY_t = X_T Y_T -X_0 Y_0 -\int_0^T Y_t dX_t - \int_0^T dX_t dY_t.
\end{eqnarray}

\index{Stochastic integration by parts}


\subsection{Kolmogorov and Fokker-Planck Equations }
\label{sec1.2.5}

We consider first  the Kolmogorov background equation  and then Kolmogorov forward  equation (or Fokker-Planck equation) for  the following  SDE system in $\R^n$
\begin{eqnarray} \label{sde906}
dX_{t}=f(X_{t})dt+\sigma(X_{t})dB_{t},  
\end{eqnarray}
where  $f$ is an $n$-dimensional   vector function, $\s$ is an $n\times n$ matrix function, and   $B_t(\omega)$ is an $n$-dimensional Brownian motion.

Recall that the   generator for this SDE system is
 \begin{eqnarray}  \label{A888}
Ah(x) &=& \sum_{i}f_{i} \frac{\partial h}{\partial x_{i}}
+ \frac12 \sum_{i,j}(\sigma\sigma^{T})_{ij} \frac{\partial^{2} h}{\partial
x_{i}\partial x_{j}}    \nonumber  \\
& =& f \cdot \nabla h + \frac12 \, \Tr (\sigma \sigma^T H(h)), \;\;  h \in H_0^2(\Rn),
\end{eqnarray}
where $H(h)$ is the Hessian matrix for $h$.

\subsection*{The Kolmogorov Backward equation }

The Kolmogorov backward equation for \eqref{sde906}  is \cite[Ch.8]{Oksendal} 
\begin{eqnarray} \label{backwardeqn}
\p_t u (t, x) = A u(t, x), \;\; u(0, x)= u_0(x),
\end{eqnarray}
where $u(t, x)=\E u_0(X_t)$, for each given   $u_0$ in the domain of  the generator $A$.

\subsection*{The Fokker-Planck Equation }

The Kolmogorov forward equation is also called the Fokker-Planck equation.

The adjoint operator for the  generator $A$ in $L^2(\Rn)$ is
\begin{eqnarray}
A^{*}p=-\sum_{i}{\partial\over
\partial x_{i}}(f_{i}p)+{1\over2}\sum_{i,j}{\partial^{2}\over\partial
x_{i}\partial x_{j}}((\sigma\sigma^{T})_{ij} \; p), \;\; p \in H_0^2(\Rn).
 \label{FPK-operator}
\end{eqnarray}
Note that $A^{*}$ is  often called the Fokker-Planck operator.
\index{Fokker-Planck operator}
It may be rewritten as
\begin{eqnarray} \label{A*}
 A^{*} p  &=& \frac12 \sum_{i,j} \p_{x_i x_j} ( (\sigma \sigma^T)_{ij} p)
 -\sum_i \p_{x_i} (f_i(x) p ) \nonumber  \\
 &=& \frac12 \Tr (\nabla \nabla^T (\sigma \sigma^T p)) -\nabla \cdot (f p) \nonumber \\
&=& \frac12 \Tr (H (\sigma \sigma^T p)) -\nabla \cdot  (f p),
\end{eqnarray}
where we have used the fact for  the (symmetric) Hessian matrix:
$$
H = \nabla \nabla^T = (\p_{x_i x_j}).
$$
Note that $p$ is a scalar function, and  we interpret $H (\sigma \sigma^T p)$ as matrix multiplications of $H, \sigma $ and $\sigma^T p$.
The adjoint operator (\ref{A*}) may be further rewritten as
\begin{eqnarray} \label{AA*}
 A^{*} p  &=&   \frac12  \nabla \cdot (\nabla \cdot (\sigma \sigma^T p) ) -\nabla \cdot (f p)   \nonumber \\
 &= & \frac12  \mbox{div}  ( \mbox{div} (\sigma \sigma^T p) ) -\mbox{div}  (f p),
\end{eqnarray}
where $\nabla \cdot (\nabla \cdot (\sigma \sigma^T p ) )$ is explained as follows as a two-step operation: First,
take  divergence for each of the $n$ row vectors of an $n \times n$ symmetric matrix $\sigma \sigma^T p$. This gives us     an $n$-vector, namely, $V= \nabla \cdot (\sigma \sigma^T p)$. Then, take the divergence of $V$.

 The   Fokker-Planck equation for the probability density function $p$ of the solution process $X_t$, or for the SDE system  \eqref{sde906}, is 
\begin{eqnarray}\label{multi604}
{\partial\over\partial t}p(x, t)=A^{*} p(x, t).
\end{eqnarray}
When the SDE system  \eqref{sde906} is given the initial condition $X_{0}=x_{0}$,  the  Fokker-Planck equation is supplemented with the  initial condition
$ p(x, 0)=\delta(x-x_0)$.  
Note that $p(x, 0)$ may also be  a given initial probability density function.






\section{L\'evy Motions and Stochastic Differential Equations  }
\label{sec1.3}

In this section, we consider   stochastic differential equations with non-Gaussian processes. In fact, as there are so many non-Gaussian processes, we will focus on   $\alpha$-stable
L\'evy motions and     dynamical systems driven by these L\'evy motions.

\index{Normal random variable}
\index{Non-normal random variable}

Brownian motion is defined in terms of normal random variables (i.e., Gaussian random variables), whereas a  $\alpha$-stable L\'evy motion is defined via stable random variables (which are non-Gaussian).
Brownian motion is a Gaussian process with independent   and stationary increments.
L\'evy motions, especially the $\alpha$-stable L\'evy motions, are non-Gaussian processes with independent and stationary increments that mimic many fluctuating processes in complex systems in physics, geophysics, biophysics, chemistry, engineering, and other disciplines.

\index{Gaussian process}
\index{Non-Gaussian process}



Although  Brownian motion has been widely
used in describing  fluctuations in  mathematical  modeling of complex systems under uncertainty,   many complex phenomena  involve   non-Gaussian L\'evy
motions, especially, $\alpha$-stable L\'evy motions.

 \subsection{L\'evy Motions}





  L\'evy motions \cite[p.43]{Applebaum} are  defined similarly as for Brownian motion $B_t$.

\begin{definition}
A L\'evy motion (L\'evy process)  $L_t$, in $\Rn$, is a stochastic process satisfying the following conditions:\\
(i) $L_0=0$, a.s.;\\
(ii) Independent increments: For $ t_1 < t_2 < \cdots <t_{n-1}<t_n$,   the
random variables $L_{t_2}-L_{t_1}, \cdots, L_{t_n}-L_{t_{n-1}}$ are independent;\\
(iii) Stationary increments: $L_t -L_s $ and $L_{t-s}$ have the same distribution;\\
(iv)  Stochastically continuous sample paths (i.e., sample paths are continuous in probability): For every $\delta >0$ and every $s\geq 0$,
   $$ \PX (\|L_t -L_s\| > \delta) \to 0$$
   as $t \to s$.
\end{definition}

\index{Stochastically continuous sample paths}
\index{Stochastic continuity}
\index{Continuity in probability}
\index{Non-Gaussian stochastic process}

\begin{remark}
   For a stochastically continuous process, there exists a modification (i.e., a version)    whose   paths are continuous
   from the right and have   left limits (``c\`adl\`ag") at every time \cite[Ch.2]{Applebaum}. We are going to take this modification for the L\'evy motion.
    Therefore, the paths of a L\'evy motion are c\`adl\`ag.
   Note that a c\`adl\`ag function can only have (at most) a countable number of jumps (see \cite{Duan2015}), and the jumps are their only possible
   discontinuities in time.  Brownian motion $B_t$, as a Gaussian stochastic process,  is a special L\'evy motion wtih no jumps. 
\end{remark}

\index{C\`adl\`ag}

\subsection{L\'evy-It\^{o} Decomposition}

Let $L_t$ be a L\'{e}vy motion  in $\R^n$.  Define the jump process $\Delta L(t)$ by
\begin{eqnarray} \label{jump1}
  \Delta L(t) \triangleq L_t-L_{t-}, \; t \geq 0,
\end{eqnarray}
where $L_{t-}$ is the left limit of $L_t$ at time $t$.

\index{Left limit ($L(t-)$)}



Let us count the jumps of specified size. For a Borel set $S \in \B(\R^n \setminus \{0\})$ and for $t > 0$,
define
\begin{eqnarray} \label{jump2}
  N(t,S)(\omega)  \triangleq \#\{0\leq s <t: \Delta L(s)(\omega) \in S \},
\end{eqnarray}
whenever $\omega \in \Omega_0$, and $N(t, S)(\omega)  \triangleq 0$, whenever
$\omega \in \Omega_0^c$. Recall that $L_t$ has a cadlag modification on $\Omega_0$ and $\PX (\Omega_0)=1$ (\cite[p.88]{Applebaum}).

Define a Borel measure $\nu$ on $\B(\R^n \setminus \{0\})$ by
\begin{eqnarray} \label{jump3}
 \nu(S)  \triangleq \EX N(1,S)(\omega).
\end{eqnarray}
Furthermore, define the compensated Poisson random measure by
\begin{eqnarray} \label{jump3.5}
\tilde{N} (t, S) \triangleq   N(t, S)-t\, \nu(S) .
\end{eqnarray}

We recall the L\'evy-It\^{o} decomposition theorem \cite[p.126]{Applebaum}.

\begin{theorem}(L\'evy-It\^{o} decomposition)\\
If $L_t$ is a L\'{e}vy motion  in $\R^n$, then there exist a vector $b \in \R^n$, a covariance matrix $Q$, and an independent Poisson random measure $N$ on $\R^+ \times (\R^n \setminus \{0\})$ such that for each $t\geq 0$,
\begin{align}\label{levyito8}
L_t=bt+B^Q_t +\int_{\|y\|< 1}y\tilde{N}(t, dy)+\int_{\|y\|\geq 1}y{N}(t, dy),
\end{align}
where $N(dt, dx)$ is the Poisson random measure (quantifying the number of jumps of $L_t$),
$\tilde{N}(dt, dx)\triangleq N(dt, dx)-\nu(dx)dt$
  is the compensated Poisson random
measure, $\nu(S)\triangleq \EX N(1,S)$ is the jump measure, and $B_t^Q$ is an independent $n$-dimensional Brownian motion with covariance matrix $Q$.
\end{theorem}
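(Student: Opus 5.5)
The plan is the classical route through the jump measure: first split off the large jumps, then build the small-jump martingale part by an $L^2$ limit, and finally identify what remains as a Brownian motion with drift.

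\emph{Step 1: the jump counts form a Poisson random measure.} For a Borel set $S$ bounded away from $0$, i.e.\ $\overline{S}\subset \R^n\setminus\{0\}$, the càdlàg property gives only finitely many jumps in $S$ on $[0,t]$. So $t\mapsto N(t,S)$ is a counting process with independent, stationary increments, since it is a functional of the increments of $L_t$. Hence it is a Poisson process with intensity $\nu(S)$. For disjoint $S_1,\dots,S_k$, the processes $N(\cdot,S_i)$ have no common jumps. A standard argument (e.g.\ computing the joint characteristic function through the exponential martingales $e^{iuN(t,S)-t\nu(S)(e^{iu}-1)}$) shows they are independent. This makes $N$ a Poisson random measure on $\R^+\times(\R^n\setminus\{0\})$ with intensity $dt\,\nu(dy)$.

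\emph{Step 2: remove the large jumps and the small-jump martingale.} Since only finitely many jumps with $\|y\|\ge 1$ occur on compact time intervals, $P_t=\int_{\|y\|\ge1}yN(t,dy)$ is a compound Poisson process. I will show that $L_t-P_t$ is a Lévy process independent of $P_t$, again by the no-common-jumps argument.

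Next, for $\eps\in(0,1)$ define the compensated sums
\[
M^\eps_t=\int_{\eps\le\|y\|<1}y\,\tilde N(t,dy).
\]
Each $M^\eps$ is a square-integrable martingale, and by the Itô-type isometry for Poisson integrals,
\[
\EX\|M^\eps_t-M^{\eps'}_t\|^2=t\int_{\eps'\le\|y\|<\eps}\|y\|^2\nu(dy).
\]

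\emph{Main obstacle.} The hardest step is proving $\int_{\|y\|<1}\|y\|^2\nu(dy)<\infty$. I would first show the independence of $L-P-M^\eps$ from $M^\eps$. Then, comparing characteristic functions, $|\EX e^{iu\cdot (L_t-P_t)}|\le|\EX e^{iu\cdot M^\eps_t}|$. Here
\[
|\EX e^{iu\cdot M^\eps_t}|=\exp\Bigl(-t\int(1-\cos u\cdot y)\,\nu(dy)\Bigr),
\]
with the integral over $\eps\le\|y\|<1$. The left side is nonzero near $u=0$ uniformly in $\eps$, so $\int_{\|y\|<1}(1-\cos u\cdot y)\,\nu(dy)$ stays bounded. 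Since $1-\cos u\cdot y\ge c\,|u\cdot y|^2$ for small $u$, this yields the second-moment bound.

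Once that bound holds, Doob's maximal inequality shows $M^\eps$ converges uniformly on compacts in $L^2$ to a martingale $M_t=\int_{\|y\|<1}y\tilde N(t,dy)$.

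\emph{Step 3: identify the continuous part.} The remainder $C_t=L_t-P_t-M_t$ is a Lévy process with continuous paths, independent of $N$. A continuous Lévy process has moments of all orders (its increments have exponential tails via a stopping-time argument). Set $b=\EX C_1$ and $Q=\mathrm{Cov}(C_1)$. Then $C_t-bt$ is a continuous martingale with quadratic variation $tQ$. By Lévy's characterization, or by a central limit theorem applied to its infinitely divisible increments, it is a Brownian motion $B^Q_t$ in the sense of Definition~\ref{QBM}.

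Assembling the pieces gives exactly \eqref{levyito8}.
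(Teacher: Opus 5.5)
Your proposal is correct and follows the standard textbook proof in \cite[p.~126]{Applebaum}, the source from which the paper recalls this theorem. The steps are the same: the jumps form a Poisson random measure, and the large jumps are split off by the no-common-jumps independence argument. The compensated small jumps converge in $L^2$ once $\int_{\|y\|<1}\|y\|^2\nu(dy)<\infty$ is obtained from your comparison $|\EX e^{iu\cdot(L_t-P_t)}|\le|\EX e^{iu\cdot M^\eps_t}|$, and the continuous remainder is identified as a Brownian motion with drift. The paper gives no proof of its own, only the citation, so there is no different route to compare.
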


The triplet $(b, Q, \nu)$ is   called the    generating triplet for the  L\'{e}vy motion $L_t$. Brownian motion with covariance matrix $Q$  means that $B_t^Q = \sigma B_t$,
 where $\sigma$ is an $n \times m$ real non-zero matrix and $B_t$ is an $m-$dimensional standard Brownian motion,  such that $Q=\sigma \sigma^T$. For this reason, we occasionally write $\sigma = Q^{\frac12}$. Therefore,
 \begin{equation}
 B^Q_t = Q^{\frac12} B_t.
 \end{equation}

\index{Brownian motion with covariance matrix $Q$}
\index{$B_t^Q$}


The number $1$ in  $\|y\|< 1$ and $\|y\| \geq 1$ allows us to specify relatively `small' and `large' jumps, respectively \cite[p.364]{Applebaum}. It may be replaced by an arbitrary positive number $c$. The standard     Brownian motion $B_t$ in $\R^n$ has the identity matrix $I$ as covariance matrix (i.e., $Q=I$), with   the generating triplet $(0, I, 0)$.

 \index{Jumps (relatively `small' or `large')}

\index{Generating triplet for L\'{e}vy motion }
\index{Triplet for L\'{e}vy motion }

\subsection{L\'evy-Khintchine Formula   }

The L\'evy-Khintchine formula  specifies the expression for the characteristic function of a L\'evy motion (\cite[p.45]{Applebaum}). Recall the indicator function, $I_s$, for a set $S$, is defined as
$$
I_S(y) =
    \begin{cases}
        1,   &\text{if $y \in S $,}\\
        0,    &\text{if $y \notin S$.}
    \end{cases}
$$
\index{Indicator function}

\begin{theorem}[L\'evy-Khintchine formula]
If $L_t$ is a L\'evy motion in $\Rn$, then its characteristic function is
\begin{equation*}
\Phi_{t}(u) \triangleq \EX e^{i <u, L_t>} =e^{t\eta(u)}\qquad for\ each\
t\geq0,u\in\mathbb{R}^{n},
\end{equation*}
where
\begin{equation}\label{eq4}
\eta(u)=ib\cdot u-\frac{1}{2}u\cdot
Qu+\int_{\mathbb{R}^{n} \setminus \{0\}}[e^{iu\cdot y}-1-i I_{\{\|y\|<1\}} \; u\cdot
y] \; \nu(dy)
\end{equation}
for a vector $b\in\mathbb{R}^{n}$, a non-negative definite symmetric
$ n \times n$ matrix $Q$, and a Borel measure $\nu$ on
$\mathbb{R}^{n} \setminus \{0\}$ for which
$\int_{\mathbb{R}^{n} \setminus \{0\}}(\|y\|^{2}\wedge1)\nu(dy)<\infty$, or equivalently
\begin{align}\label{s1_2}
\int_{\mathbb{R}^n\setminus \{0\}}\frac{\|y\|^2}{1+\|y\|^2}\nu(dy)<\infty.
\end{align}
Here $\|\cdot \|$ is the usual Euclidean norm  in $\mathbb{R}^n$.\\
Conversely, given a mapping of the form \eqref{eq4}, there exists a L\'evy motion with characteristic function  $\Phi_{t}(u)=e^{t\eta(u)}$.
It is called the L\'evy motion with   triplet $(b, Q,\nu)$.
\end{theorem}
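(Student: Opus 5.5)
The plan is to take the L\'evy--It\^{o} decomposition \eqref{levyito8} as the starting point and compute the characteristic function of each independent piece separately. The direct part uses that decomposition; the converse is done by explicit construction.

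\textbf{Direct part.} By \eqref{levyito8}, $L_t$ splits into four terms:
\begin{itemize}
\item the drift $bt$;
\item the Brownian part $B^Q_t$;
\item the compensated small-jump integral $\int_{\|y\|<1} y\,\tilde N(t,dy)$;
\item the large-jump integral $\int_{\|y\|\ge1} y\,N(t,dy)$.
\end{itemize}
These are independent, so $\EX e^{i\langle u,L_t\rangle}$ factors into a product.

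The drift gives $e^{it\,b\cdot u}$. Since $B^Q_t=Q^{1/2}B_t$ and $B_t\sim\mathcal N(0,tI)$, we have $u\cdot B^Q_t\sim\mathcal N(0,t\,u\cdot Qu)$. Hence this factor is $e^{-\frac t2 u\cdot Qu}$.

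For the large jumps, $\nu(\{\|y\|\ge1\})<\infty$, so $N(t,\cdot)$ restricted there is a compound Poisson process. For a simple function $\sum_k y_k I_{S_k}$ with disjoint $S_k$, the counts $N(t,S_k)$ are independent Poisson variables with mean $t\nu(S_k)$. This gives the exponential formula
\[
\EX e^{i\int u\cdot y\,N(t,dy)}=\exp\Bigl(t\int(e^{iu\cdot y}-1)\,\nu(dy)\Bigr).
\]
We extend it to $y\mapsto y$ on $\{\|y\|\ge1\}$ by bounded convergence.

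For the small jumps, first restrict to $\{\eps\le\|y\|<1\}$. There $\nu$ is finite, and the same exponential formula together with the compensator gives
\[
\exp\Bigl(t\int_{\eps\le\|y\|<1}(e^{iu\cdot y}-1-iu\cdot y)\,\nu(dy)\Bigr).
\]
Let $\eps\to0$. The integrand is bounded by $\tfrac12\|u\|^2\|y\|^2$, which is $\nu$-integrable on $\{\|y\|<1\}$ by \eqref{s1_2}, so dominated convergence handles the exponent. On the process side, the $L^2$-isometry for compensated Poisson integrals gives $\EX\|\int_{\eps'\le\|y\|<\eps}y\,\tilde N(t,dy)\|^2=t\int_{\eps'\le\|y\|<\eps}\|y\|^2\nu(dy)\to0$. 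So the truncated integrals converge in $L^2$, and hence their characteristic functions converge.

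Multiplying the four factors gives $e^{t\eta(u)}$ with $\eta$ as in \eqref{eq4}. The integrability condition on $\nu$ is part of the content of the decomposition theorem. It can also be read off from the finiteness of the $L^2$ norm above together with $\nu(\{\|y\|\ge1\})<\infty$, the latter holding because only finitely many large jumps occur on bounded time intervals for c\`adl\`ag paths.

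\textbf{Converse.} Given $(b,Q,\nu)$, we build the process as a sum of independent pieces:
\begin{itemize}
\item the drift $bt$;
\item $Q^{1/2}B_t$;
\item a Poisson random measure with intensity $dt\otimes\nu$, which exists by the standard construction on the finite-mass shells $\{2^{-k-1}\le\|y\|<2^{-k}\}$ and $\{\|y\|\ge1\}$.
\end{itemize}
We then define the jump parts as in \eqref{levyito8}, with the small-jump integral obtained as the $L^2$ limit above. The resulting process has independent and stationary increments because the Poisson measure and $B_t$ do. Stochastic continuity follows from continuity of $t\mapsto e^{t\eta(u)}$ at $0$, and the computation from the direct part gives the characteristic function $e^{t\eta(u)}$.

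\textbf{Main obstacle.} The delicate step is the small-jump limit: justifying that the compensated truncated integrals converge, both in $L^2$ and in the exponent, when $\nu$ may have infinite mass near $0$. This is exactly where the compensator term $-iI_{\{\|y\|<1\}}u\cdot y$ and the condition $\int(\|y\|^2\wedge1)\,\nu(dy)<\infty$ are needed.
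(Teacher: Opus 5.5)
Your argument is correct, but it runs in the opposite logical direction from the source the paper relies on. The paper gives no proof of its own and simply cites Applebaum, where the formula is obtained at the level of distributions. Independence and stationarity of increments give $\Phi_{t+s}(u)=\Phi_t(u)\Phi_s(u)$, and stochastic continuity makes $t\mapsto\Phi_t(u)$ continuous, hence $\Phi_t(u)=e^{t\eta(u)}$ with $e^{\eta}$ the characteristic function of $L_1$. Since $L_1$ is infinitely divisible, the L\'evy--Khintchine representation of infinitely divisible laws puts $\eta$ in the form \eqref{eq4}. The converse comes from the convolution semigroup of laws with characteristic functions $e^{t\eta}$ together with a Kolmogorov-type existence theorem. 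You instead take the L\'evy--It\^{o} decomposition \eqref{levyito8} as a black box and compute the characteristic function of its four independent pieces. You then prove the converse by building the process explicitly from a Brownian motion and a Poisson random measure.

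Your route buys transparency and concreteness. Each entry of the triplet is identified with a pathwise object: $b$ is the drift, $Q$ the Brownian covariance, and $\nu=\EX N(1,\cdot)$ the intensity of jumps. The role of the compensator and of $\int(\|y\|^2\wedge1)\,\nu(dy)<\infty$ is visible exactly where you say it is. The converse produces the process itself rather than an abstract extension, and shows as a by-product that $e^{t\eta(u)}$ is a characteristic function.

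The price is that all the real work has been moved into the decomposition, including the integrability of $\nu$ and the Gaussianity of the continuous part. So the argument is non-circular only when the decomposition is proved without the L\'evy--Khintchine formula; in some standard treatments (Sato's, for instance) it is derived from that formula. Since the chapter recalls the decomposition before stating this theorem, using it here is legitimate.

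One small caution concerns your secondary justification of $\int_{\|y\|<1}\|y\|^2\,\nu(dy)<\infty$ ``from the finiteness of the $L^2$ norm.'' It needs a bound on $\EX\|\int_{\eps\le\|y\|<1}y\,\tilde N(t,dy)\|^2$ that is uniform in $\eps$, which is exactly the point at issue. Your primary justification, that this integrability is part of the decomposition theorem, is the one to rely on.
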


\index{L\'evy-Khintchine formula for L\'evy motions}

Both   dot and $<\cdot, \cdot>$ in this theorem denote the scalar product in $\R^n$.  We use both notations in this book.


 In this theorem, $(b, Q, \nu)$ is the  triplet, or generating triplet, for the L\'evy motion $L_t$.
 \index{Triplet (for a L\'evy motion)}
 The vector $b$ is usually called the drift vector, $Q$ is called the covariance matrix or diffusion matrix, and the   Borel measure $\nu$  is called the \emph{jump measure}, for $L_t$.

\index{Drift vector}
\index{Covariance matrix}
\index{Diffusion matrix}
 \index{Jump measure (for a L\'evy motion)}

Each term appearing in L\'evy-Khintchine formula has
a probabilistic significance, as emphasized in Revuz and Yor \cite{RevuzYor}. Every
L\'evy motion is obtained as a sum of independent processes with
three types of triplets $(b, 0, 0)$, $(0, Q, 0)$ and
$(0, 0, \nu)$.




\subsubsection*{Generator of a L\'evy motion}

\index{Generator of a L\'evy motion}

The generator $A$ of a  L\'evy motion with triplet $(b, Q, \nu)$ is \cite{Applebaum}:
\begin{eqnarray} \label{big-generator}
 A  \phi &=& b \cdot \nabla \phi + \frac12 \Tr(Q H(\phi))   \nonumber \\
& +&  \int_{\R^n \setminus\{0\}} [\phi(x+  y)-\phi(x) -I_{\{\|y\|<1\}} \; y \cdot \nabla \phi(x) ] \; \nu(dy),
\end{eqnarray}
for $\phi$ in the domain of definition for  $A$.

\subsection{Stable Random Variables }
\label{examplelevymotion}

 We now consider a special but important class of L\'evy motions, the $\alpha$-stable L\'evy motions (\cite[p.30]{JW} and \cite[p.113]{taqqu}).  
The stable   random variables   are used to define   (non-Gaussian) $\alpha$-stable L\'evy motion $L_t^\alpha$, just like the normal random variables are used to define    (Gaussian) Brownian motion $B_t$.

\subsection*{Gaussian random variables as limits}

Let $X_1, X_2, \cdots$ be a sequence of independent, identically distributed random variables, with finite mean $\gamma$ and finite variance $\sigma^2$. Denote $S_n \triangleq X_1+ \cdots + X_n$.
By the Central Limit Theorem in Ash \cite{Ash}, $\frac{S_n- n\gamma}{\sigma \sqrt{n}}$ converges in distribution to a standard normal random variable
$X \sim \cN(0, 1)$. A normal random variable is also called a Gaussian   random variable.

\index{Normal random variable}
\index{Gaussian random variable}
\index{$\cN(0, 1)$}

Figure \ref{bell} shows the probability density function $f(x)$ for the standard Gaussian random variable $X \sim \cN(0, 1)$.

 \begin{figure}
 \begin{center}
\includegraphics[height=6cm]{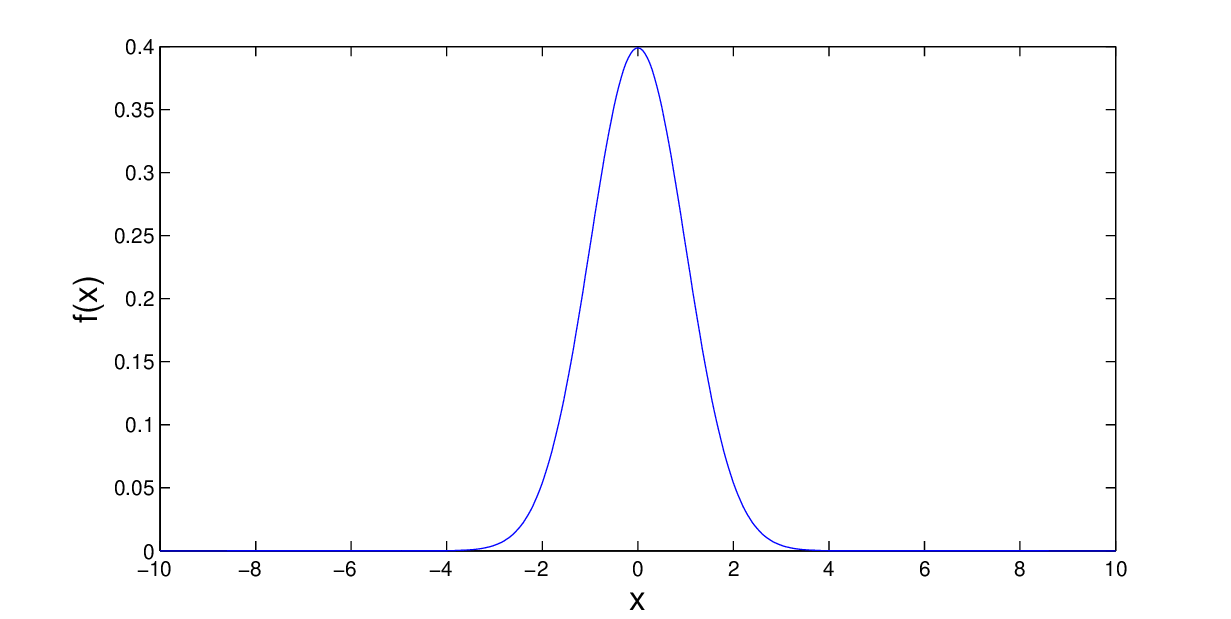}
\caption{Bell shape: The probability density function for
the standard Gaussian random variable $X \sim \cN(0, 1)$.} \label{bell}
\end{center}
  \end{figure}

All other random variables are called non-Gaussian random variables.
But a special class of non-Gaussian random variables, stable random variables, stands out.

\index{Non-Gaussian random variable}

\subsection*{Stable random variables as limits}

We first consider scalar and then   vector stable random variables, as special non-Gaussian random variables.

A random variable $X$ is called a \emph{stable} random variable if it is a limit in distribution of a scaled sequence $\frac{S_n-b_n}{a_n}$, where  $S_n \triangleq X_1+ \cdots + X_n$, $X_i$'s are some
independent, identically distributed random variables,  and $a_n>0$ and $b_n$ are some real sequences. But here we do not require that $X_i$'s have finite mean or variance. For more details, see \cite[Ch. 1]{taqqu}, \cite[Ch. 2]{JW}, \cite[Ch. 1]{Applebaum} and \cite{Kuske}.

\index{Stable random variable}

The probability density functions for stable random variables  are generally not representable via elementary functions. So we examine them via their characteristic functions.


Let $\Phi_X(u) \triangleq \EX e^{i u   X}$ be the characteristic function for a scalar random variable $X$. 

\index{Characteristic function (for a random variable)}

The following definition for a stable random variable is in terms of characteristic functions. This is a ``local" characterization for a stable random variable, as the  characteristic function is in terms of point-wisely defined elementary functions.

\begin{definition}   \label{stable1}
A scalar random variable $X$ is stable if there exist four real parameters, i.e., a stability parameter $\alpha \in (0, 2]$, a scaling parameter $\sigma >0$, a symmetry parameter $\beta \in [-1, 1]$ and a shift parameter $\gamma \in \R^1$, such that its characteristic function $\Phi_X(u)$ has the following representation\\
(i) $0<\alpha<1$: \\
$$\Phi_X(u)=\exp\{i\gamma u-\sigma |u|^\alpha [1-i\beta \sign(u) \tan\frac{\pi \alpha}{2}] \}; $$
(ii) $\alpha=1$: \\
$$\Phi_X(u)=\exp\{i\gamma u-\sigma |u| [1+i\beta\frac{2}{\pi} \sign(u) \ln |u|] \};$$
(iii) $1<\alpha<2$: \\
$$\Phi_X(u)=\exp\{i\gamma u-\sigma |u|^\alpha [1-i\beta \sign(u) \tan\frac{\pi \alpha}{2}] \}; $$
(iv)  $\alpha=2$: \\
 $$\Phi_X(u)=\exp\{i\gamma u-\frac12 \sigma^2 u^2\}, $$
\end{definition}
where
$$
\sign (u) = \begin{cases}
1, \;\; u >0, \\
0, \;\; u=0,\\
-1, \;\; u<0.
\end{cases}
$$

\index{Definition for a stable random variable}
\index{Characteristic function for a sable random variable}

Note that $\Phi_X(u)=\exp\{i\gamma u-\frac12 \sigma^2 u^2\}$ is the characteristic function for a Gaussian random variable. So when $\alpha=2$, the stable random variable is just the Gaussian random variable.




\bigskip

The distribution for a stable random variable is denoted as $S_{\alpha}(\sigma, \beta, \gamma )$. Usually, $\alpha$ is called the index of stability (or non-Gaussianity index), $\sigma$ the scale parameter,
$\beta$ the skewness parameter and $\gamma$ the shift parameter.
The symbol $S_{\alpha}(\sigma, \beta, \gamma )$ refers to either the distribution function or the probability density function for a stable random variable.
To indicate the importance of the index of stability, $\alpha$,
we often call such a   random variable the $\alpha$-stable random variable.

Note that $S_2(\sigma, 0, \gamma) = \cN(\gamma, 2\sigma^2)$, as seen in \cite[p.7-10]{taqqu}.

\index{$\alpha$-stable random variable}
\index{Stable distribution $S_{\alpha}(\sigma, \beta, \gamma )$}

\index{$S_\alpha(\sigma, \beta, \gamma)$}

Figure \ref{bell2} shows the probability density functions for
various $\alpha, \beta, \sigma, \gamma$ values. Probability density functions for stable random variables are generated by a Matlab code of Mark Veillette.

\begin{figure}
\center
\includegraphics[height=6cm]{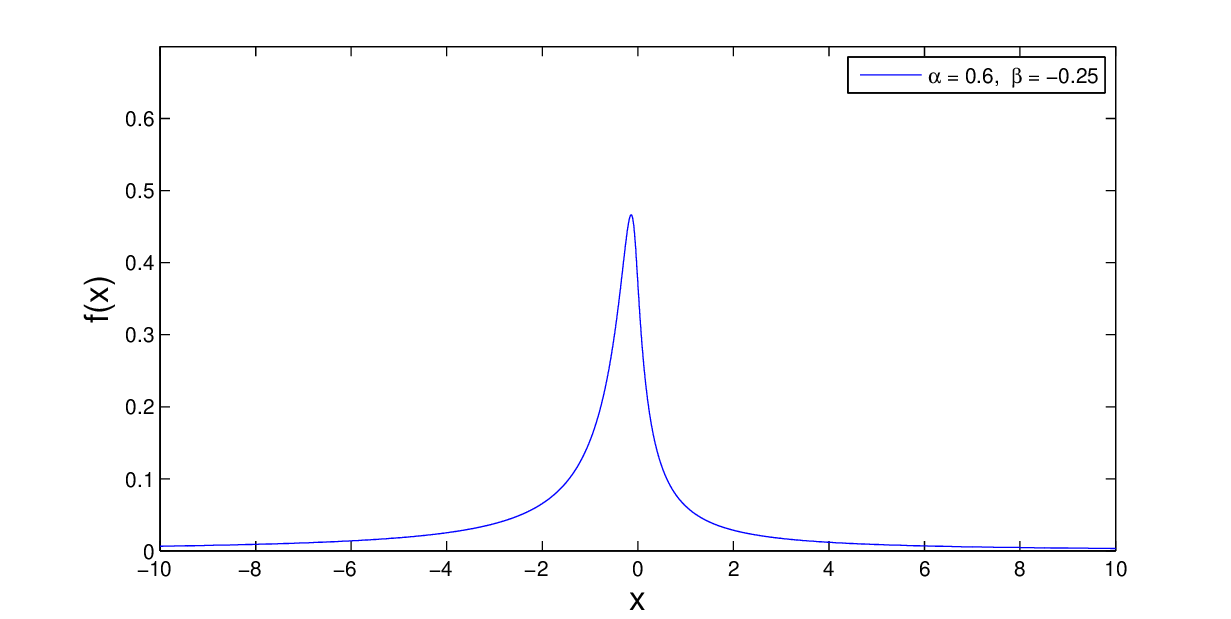}
\includegraphics[height=6cm]{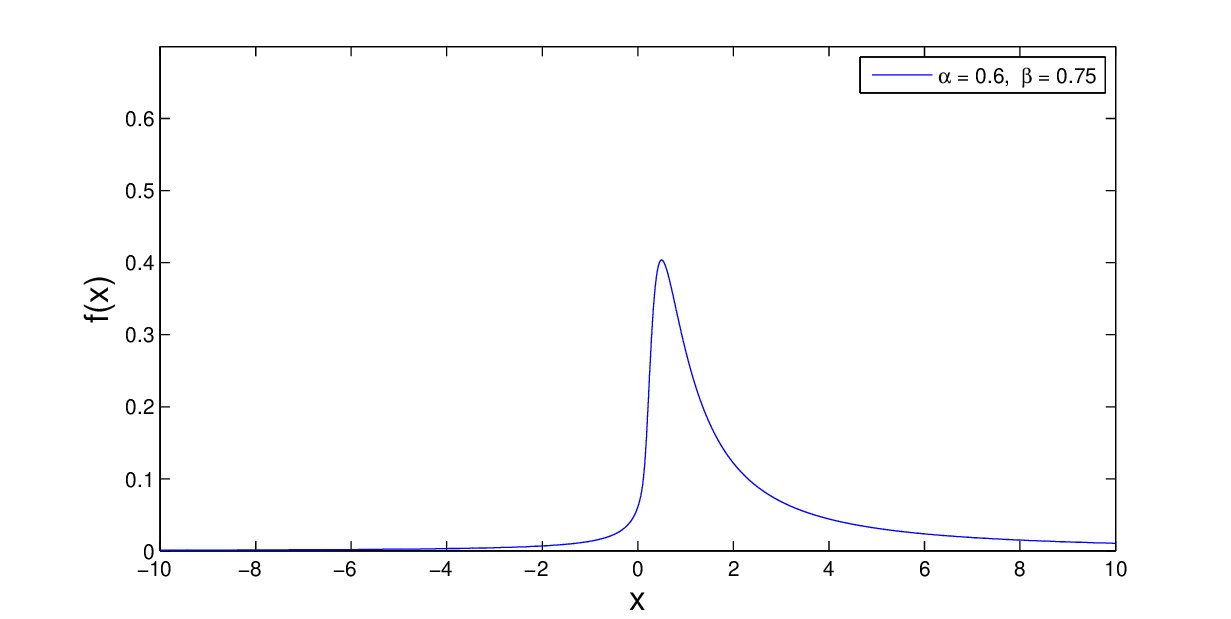}
\caption{The probability density functions for
the   $\alpha$-stable random variable $X \sim S_\alpha(\sigma, \beta, \gamma)$: $\sigma=1, \gamma=0$.} \label{bell2}
  \end{figure}

\begin{example}
There are three special stable random variables $X$ with the probability density functions for the following distributions.\\

Probability density function for the normal distribution:\\
$$ \alpha=2, \beta=0, X \sim \cN(\gamma, 2\sigma^2).$$

Probability density function for the Cauchy distribution:\\
$$\alpha=1, \beta=0, f(x)= \frac{\sigma}{\pi[(x-\gamma)^2+\sigma^2]}.$$

Probability density function for the L\'evy distribution:\\
$$\alpha=\frac12, \beta=1,
f(x)= \begin{cases} \sqrt{\frac{\sigma}{2\pi}} \frac1{(x-\gamma)^{\frac32}}
 \exp[-\frac{\sigma}{2(x-\gamma)}],\; \mbox{for} \; x >\gamma, \\
 0,  \; \mbox{for} \; x \leq \gamma.
 \end{cases}
 $$
\end{example}

\begin{remark}
As we know in \cite[p.37]{Applebaum} and \cite[p.16]{taqqu}, the following estimates hold: \\
(i) For $\alpha=2$, i.e., the normal random variable $X$ has the following tail estimate
\begin{eqnarray}
\PX(X>y) \sim \frac{e^{-\frac{y^2}2}}{\sqrt{2\pi} \, y} \; \mbox{ as }\; y \to \infty.
\end{eqnarray}
We say that the normal (or Gaussian) random variable $X$ has `light tail', as the tail estimate decays exponentially. \\
(ii) For $0<\alpha<2$,   the stable random variable $X$ has the following tail estimate
\begin{eqnarray}
\lim_{y\to \infty} y^\alpha \PX(X>y) =
 C_\alpha \frac{1+\beta}2 \sigma^\alpha,  \\
\lim_{y\to \infty} y^\alpha \PX(X<-y) =
 C_\alpha \frac{1-\beta}2 \sigma^\alpha,
\end{eqnarray}
where $C_\alpha$ is a positive constant.
A stable random variable $X$ (with $0<\alpha<2$) has `heavy tail', as the tail estimate decays polynomially.
\end{remark}

\index{Light tail}
\index{Heavy tail}




\subsection*{Basic properties of $\alpha$-stable random variables}

We recall some properties of stable random variables  \cite[Ch. 1]{taqqu}.

\begin{theorem} \label{stable-property}

(i) If $X \sim S_\alpha(\sigma, \beta, \gamma)$ and $a$ is a real constant, then $X+ a \sim S_\alpha(\sigma, \beta, \gamma+a)$.\\
(ii) If $X_1$ and $X_2$ are independent stable random variables with
$X_1 \sim S_\alpha(\sigma_1, \beta_1, \gamma_1)$ and $X_2 \sim S_\alpha(\sigma_2, \beta_2, \gamma_2)$, then
\begin{eqnarray}
X_1+X_2 \sim S_\alpha(\sigma, \beta, \gamma),
\end{eqnarray}
with $\sigma= (\sigma_1^\alpha+\sigma_2^\alpha)^{\frac1{\alpha}}$,
$\beta= \frac{ \beta_1\sigma_1^\alpha+\beta_2\sigma_2^\alpha}{\sigma_1^\alpha+\sigma_2^\alpha}$, and $\gamma=\gamma_1+\gamma_2$.\\
(iii) If $X \sim S_\alpha(\sigma, \beta, \gamma)$ and $k$ is a real constant, then
\begin{eqnarray}
k X \sim
\begin{cases}
S_\alpha(|k|\sigma, \mbox{sign}(k) \beta, k \gamma),\;\; \mbox{ for  }\; \alpha \neq 1; \\
  S_1(|k|\sigma, \mbox{sign}(k) \beta, k \gamma-\frac2{\pi} k(\log|k|)\sigma\beta ), \;\; \mbox{ for  }\; \alpha = 1.
  \end{cases}
\end{eqnarray}
In particular, if $X \sim S_\alpha(1, 0, 0)$ and $k$ is a real constant, then
\begin{eqnarray} \label{important}
k X \sim S_\alpha(|k|, 0, 0), \; \; \mbox{ for  } \alpha \in (0, 2).
\end{eqnarray}
(iv)
If $X \sim S_\alpha(\sigma, \beta, 0)$, then   $-X \sim S_\alpha(\sigma, -\beta, 0)$,  for $0<\alpha<2$.\\
(v) If $X_1, X_2$ are independent stable random variables with the same distribution $S_\alpha(\sigma, \beta, \gamma)$ for $\alpha \neq 1$ and $A, B$ are positive constants, then
\begin{eqnarray}
AX_1+BX_2 \sim  S_\alpha(\sigma (A^\alpha+B^\alpha)^{\frac1{\alpha}}, \beta, \gamma (A+B)).
\end{eqnarray}
In particular,   for $\alpha \neq 1$,
$AX_1  \sim  S_\alpha(\sigma  A, \beta, \gamma A)$.
\end{theorem}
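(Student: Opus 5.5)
The natural route is to work entirely with characteristic functions, using Definition \ref{stable1}. A distribution is determined by its characteristic function, so each item of Theorem \ref{stable-property} reduces to computing $\Phi$ of the transformed variable and reading off the parameters.

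Items (i)–(v) then go as follows.

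\emph{Item (i).} We have $\Phi_{X+a}(u)=e^{iau}\Phi_X(u)$. This only changes the linear term, giving $\gamma\mapsto\gamma+a$.

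\emph{Item (ii).} Independence gives $\Phi_{X_1+X_2}=\Phi_{X_1}\Phi_{X_2}$. The exponents add. For $\alpha\neq1$ we get
\[
-(\sigma_1^\alpha+\sigma_2^\alpha)|u|^\alpha+i(\beta_1\sigma_1^\alpha+\beta_2\sigma_2^\alpha)\,\sign(u)\tan\tfrac{\pi\alpha}{2}\,|u|^\alpha .
\]
Comparing with the canonical form identifies the new parameters as $\sigma^\alpha=\sigma_1^\alpha+\sigma_2^\alpha$ and $\beta\sigma^\alpha=\beta_1\sigma_1^\alpha+\beta_2\sigma_2^\alpha$. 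The case $\alpha=1$ is the same computation with the $\ln|u|$ factor in place of $\tan\frac{\pi\alpha}{2}$. For $\alpha=2$, $\beta$ plays no role and the scale combines quadratically, consistent with the formula at $\alpha=2$.

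One point of convention needs care. The statement writes the scale as $\sigma$ multiplying $|u|^\alpha$, whereas the standard convention (Samorodnitsky–Taqqu) uses $\sigma^\alpha$. I will adopt $\sigma^\alpha|u|^\alpha$ in the exponent, since that is the convention under which (ii) and (iii) hold as stated. I will remark on this mismatch with Definition \ref{stable1}.

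\emph{Item (iii).} We have $\Phi_{kX}(u)=\Phi_X(ku)$. Using $|ku|^\alpha=|k|^\alpha|u|^\alpha$ and $\sign(ku)=\sign(k)\sign(u)$ gives the claim for $\alpha\neq1$. For $\alpha=1$, expand $\ln|ku|=\ln|k|+\ln|u|$. The extra term
\[
-i\sigma|k|\beta\tfrac{2}{\pi}\sign(ku)\ln|k|\,u
\]
is linear in $u$, because $|k|\sign(ku)u=k|u|\sign(u)=ku$. It is absorbed into the shift, producing exactly $k\gamma-\frac{2}{\pi}k\sigma\beta\log|k|$. This $\alpha=1$ bookkeeping is the only genuinely delicate step. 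The special case \eqref{important} follows with $\beta=\gamma=0$.

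\emph{Item (iv).} This is (iii) with $k=-1$.

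\emph{Item (v).} Apply (iii) to $AX_1$ and $BX_2$ with $A,B>0$, so signs are preserved. Then apply (ii). This yields scale $\sigma(A^\alpha+B^\alpha)^{1/\alpha}$, skewness $\beta$ (a weighted average of equal values), and shift $\gamma(A+B)$.
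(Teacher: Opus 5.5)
Your characteristic-function argument is correct and is the standard one from Samorodnitsky--Taqqu, Ch.~1, which the paper cites in place of giving its own proof; you are also right that (ii), (iii) and (v) require the exponent $\sigma^\alpha|u|^\alpha$ rather than the $\sigma|u|^\alpha$ typed in Definition~\ref{stable1}. One slip to fix: in the $\alpha=1$ case of (iii) the extra term is $-i\sigma|k|\beta\frac{2}{\pi}\sign(ku)\ln|k|\,|u|$ (with $|u|$, not $u$), and the identity $|k|\sign(ku)|u|=k\sign(u)|u|=ku$ is what makes it linear (as written, your chain gives $|k|\sign(ku)u=k|u|$, which is not linear), and $k\neq0$ should be assumed throughout (iii).
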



Hence, if $f_\alpha(x, \sigma, \beta, \gamma)$ is the probability density function of the stable random variable $X\sim S_\alpha(\sigma, \beta, \gamma)$, then $f_\alpha(x, \sigma, \beta, \gamma+a)$ is the probability density function of $X+a$ (for every real constant $a$) and $f_\alpha(x, \sigma A, \beta, \gamma A)$ is the probability density function of $AX$ (for every positive constant $A$ and $\alpha \neq 1$).

\subsection*{Symmetric  $\alpha$-stable random variables}

\begin{definition}
$X \sim S_\alpha(\sigma, \beta, \gamma)$ is called a symmetric $\alpha$-stable random variable  if $\beta=0$ and $\gamma=0$, i.e., $X \sim S_\alpha(\sigma, 0, 0)$. This distribution is often denoted by $S \alpha S$. When $\sigma=1$, it is called a standard symmetric
$\alpha$-stable random variable, and we denote this by $X \sim S_\alpha(1, 0, 0)$.
\end{definition}

\index{Standard symmetric $\alpha$-stable random variable}
\index{$S_\alpha(1, 0, 0)$ }

Figure \ref{bell3} shows the probability density functions of the    standard symmetric $\alpha$-stable random variable $X\sim S_\alpha(1, 0, 0)$ for various $\alpha$ values.

\begin{figure}
\center
\includegraphics[height=6cm]{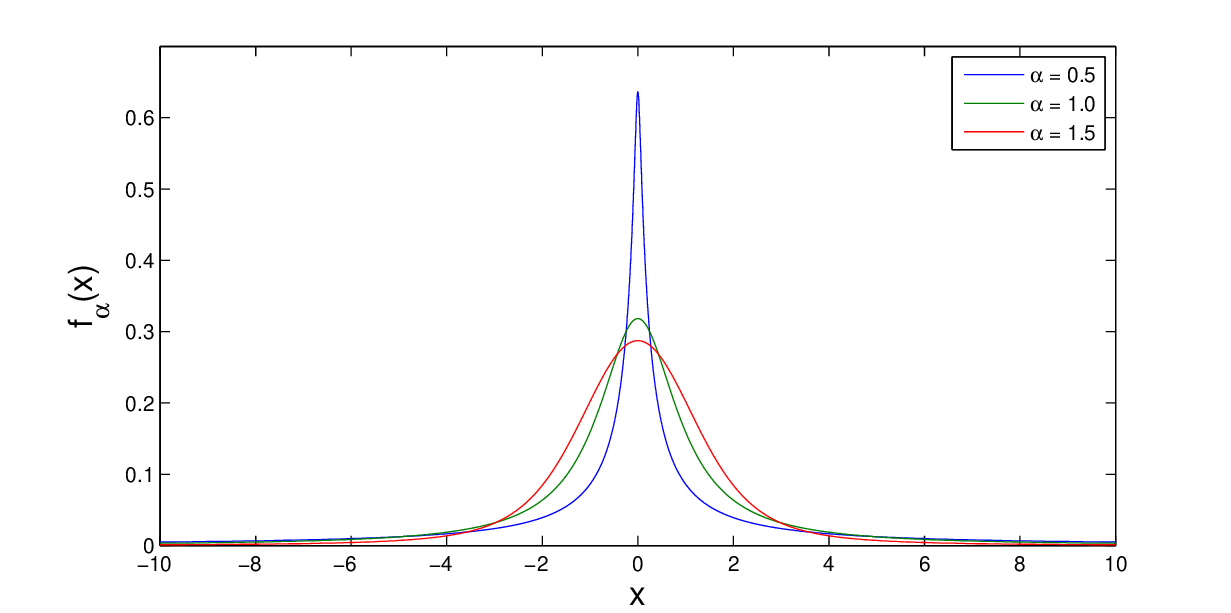}
\caption{The probability density function for
the standard symmetric $\alpha$-stable random variable $X \sim S_\alpha(1, 0, 0)$:
$\alpha=0.5$ (with highest peak or in blue color), $\alpha=1.0$ (with second highest peak or in green color), and $\alpha=1.5$ (with lowest peak or in red color)} \label{bell3}
  \end{figure}

\begin{remark}
If $X \sim S_\alpha(1, 0, 0)$ and $A$ is a positive constant, then $AX \sim  S_\alpha(A, 0, 0)$.
Also note that if the probability density function for the  standard
symmetric
$\alpha$-stable random variable  $X \sim S_\alpha(1, 0, 0)$ is $f_\alpha(x)$,
 then $AX$ has the  probability density function $\frac1{A} f_\alpha(\frac{x}{A})$. This comes from the fact that $\PX(AX \leq x) =\PX(X\leq \frac{x}{A})=\int_{-\infty}^{\frac{x}{A}} f_{\alpha}(\xi) d\xi$ and $\frac{d}{dx} \PX(AX \leq x) = \frac1{A}f_\alpha(\frac{x}{A})$.

 Namely, if $f_\alpha(x)$ is the probability density function corresponding to $S_\alpha(1, 0, 0)$, then $\frac1{A}f_\alpha(\frac{x}{A})$ is the probability density function corresponding to $S_\alpha(A, 0, 0)$.
\end{remark}

\begin{remark}
The probability density function $f_\alpha(x)$ for the  standard
symmetric $\alpha$-stable random variable  $X \sim S_\alpha(1, 0, 0)$ can be represented as infinite series
(\cite{Shao} or \cite[p.48]{JW}):
\begin{eqnarray} \label{pdf888}
f_\alpha(x)  = \begin{cases}
   \frac1{\pi x}\sum_{k=1}^\infty \frac{(-1)^{k-1}}{k!}\Gamma(\alpha k+1)
   |x|^{-\alpha k} \sin(\frac{k\alpha\pi}2 ), & x \neq 0, \;   0<\alpha<1, \\
   \frac1{\pi}\int_0^{\infty} e^{-u^\alpha} du,  & x=0, \;  0<\alpha<1, \\
              \frac1{\pi(1+x^2)},  &  \alpha=1,\\
\frac1{\pi \alpha}\sum_{k=0}^\infty \frac{(-1)^k}{2 \; k!}\Gamma(\frac{2k+1}{\alpha})
   x^{2k}, &  1<\alpha<2.
             \end{cases}
\end{eqnarray}
\end{remark}


\bigskip

A symmetric scalar $\alpha$-stable random variable $X$ has distribution $S_{\alpha}(\sigma, 0, 0)$, i.e., $\beta=\gamma=0$,  with the characteristic function
\begin{eqnarray}
\Phi_X(u) = \begin{cases}
     e^{-\sigma^\alpha |u|^\alpha}, \;\; 0<\alpha<2,\\
     e^{-\frac12 \sigma^2 |u|^2}, \;\; \alpha =2.
     \end{cases}
\end{eqnarray}




Note that a  symmetric   $\alpha$-stable random variable
$X \sim S_\alpha(\sigma, 0, 0)$
has the following moment properties \cite[p.24]{JW}: For $\alpha \in (0, 2)$,\\

\begin{eqnarray}
\EX X = \begin{cases}
        \mbox{does not exist}, \; \alpha \in (0, 1], \\
           0,\;          \alpha \in (1, 2].
           \end{cases}
\end{eqnarray}

\begin{eqnarray}
\EX |X|  & & \begin{cases}
            =\infty, \;  \alpha \in (0, 1], \\
            <\infty, \;  \alpha \in (1, 2], \\
            \end{cases}  \\
\EX |X|^p & & \begin{cases}
         < \infty, \; \mbox{ for } p \in (0, \alpha), \\
         = \infty, \; \mbox{ for } p \in [\alpha, 2),
        \end{cases} \\
\EX |X|^2 &=& \infty.
\end{eqnarray}
Therefore, $\EX |X|<\infty$ if and only if $\alpha \in (1, 2]$, and
$\EX |X|^2 <\infty$ if and only if $\alpha=2$ (i.e., $X$ is a Gaussian random variable).


\subsection*{Another definition for stable random variables}

There is another definition for a stable random variable. It is a  ``global" characterization of a stable random variable, as the characteristic function is in terms of an integral in the whole Euclidean space, where the stable random variable takes values.

\begin{definition}   \label{stable2}
A scalar random variable $X$ is stable if its characteristic function takes the following form \\
(i') $0<\alpha<1$: \\
$$\Phi_X(u)=\exp\{\int_{\R^1\setminus \{0\}}(e^{ix u}-1) \nu(dx)+i\gamma_0 u \}; $$
(ii') $\alpha=1$: \\
$$\Phi_X(u)=\exp\{\int_{\R^1\setminus \{0\}}(e^{ix u}-1-i x u I_{\{|x|< 1\}}(x)) \nu(dx)+i\gamma_* u \};$$
(iii') $1<\alpha<2$: \\
 $$\Phi_X(u)=\exp\{\int_{\R^1\setminus \{0\}}(e^{ix u}-1-i x u ) \nu(dx)+i\gamma_1 u \}; $$
(iv')  $\alpha=2$: \\
$$\Phi_X(u)=\exp\{i\gamma u-\frac12 \sigma^2 u^2\}, $$
where $\gamma_0, \gamma_*, \gamma_1$ are real constants, and $\nu(dx)=\frac{c_1}{|x|^{1+\alpha}} I_{(0,\infty)}(x)dx +
\frac{c_2}{|x|^{1+\alpha}} I_{(-\infty, 0)}(x)dx$, with non-negative constants
$c_1, c_2$ satisfying $c_1+c_2>0$.
\end{definition}

It can be shown that  the two definitions, Definitions \ref{stable1} and \ref{stable2}, are   equivalent ( \cite{Sato}).

\subsection*{Stable random  vectors}

We now consider   stable random  vectors in $\Rn$.
We have  a similar definition     inspired by   Definition  \ref{stable2}.

\begin{definition}  \label{stable2vector}
A   random vector $X$ is stable if   its characteristic function is
as follows\\
(i) $0<\alpha<1$: \\
$$\Phi_X(u)=\exp\{\int_{\Rn\setminus \{0\}}(e^{i<x, u>}-1) \nu(dx)+i<\gamma_0, u> \}; $$
(ii) $\alpha=1$: \\
$$\Phi_X(u)=\exp\{\int_{\Rn\setminus \{0\}}(e^{i<x,u>}-1-i <x, u> I_{\{\|x\| < 1\}}(x)) \nu(dx)+i<\gamma_*, u> \}; $$
(iii) $1<\alpha<2$: \\
$$\Phi_X(u)=\exp\{\int_{\Rn\setminus \{0\}}(e^{i<x, u>}-1-i <x, u>) \nu(dx)+i<\gamma_1, u> \};$$
(iv) $\alpha=2$: \\
$$\Phi_X(u)=\exp\{i<\gamma, u>-\frac12 \sigma^2 <u, u>\}, $$
where $\gamma_0, \gamma_*, \gamma_1$ are real vectors in $\Rn$, and $\nu(dx)$ is a Borel measure
on  $\Rn\setminus \{0\}$  (called   jump measure).

\end{definition}

A  stable random  vector in $\Rn$ is rotationally invariant or rotationally symmetric if its characteristic function has the following special form (Applebaum \cite[Ch. 1]{Applebaum})
\begin{eqnarray}
\Phi_X(u) &=& e^{-\sigma^{\alpha}\|u\|^{\alpha}}, \mbox{ if} \ \alpha \neq 2,\\
\Phi_X(u) &=& e^{-\frac{\sigma^2}{2}\|u\|^2}, \mbox{ if}  \ \alpha=2.
\end{eqnarray}
In this case, the jump measure is
\begin{equation}
 \nu_\alpha(dx)=\frac{c(n, \alpha)}{\|x \|^{n+\alpha}}dx.
\end{equation}
See also Sato  \cite[p.114-115]{Sato} for more details.
\index{Rotationally invariant stable random variable}
\index{Rotationally symetric stable random variable}


\subsection{The $\alpha$-stable L\'evy Motions }
\label{levymotion89}

 We only discuss scalar symmetric and then high dimensional rotationally symmetric  $\alpha$-stable L\'evy motions.

\index{$\alpha$-stable  L\'evy motion}


\subsubsection*{The $\alpha$-stable L\'evy motions in $\mathbb{R}^1$}

A symmetric $\alpha$-stable scalar  L\'evy motion $L_t^{\alpha}$, with $0<\alpha <2$, is a stochastic process with the following properties

(i) $L_0^{\alpha} =0$, a.s.;

(ii) $L_t^{\alpha}$ has independent increments;

(iii) $L_t^{\alpha} -  L_s^{\alpha} \sim S_{\alpha}((t-s)^{\frac1{\alpha}}, 0, 0)$; and

(iv) $L_t^{\alpha}$ has  stochastically continuous sample paths, i.e., for every $s>0$, $L_t^{\alpha} \to  L_s^{\alpha}$ in probability, as $t \to s$.

\medskip

From this definition, we see that $L_t^{\alpha}  \sim  S_{\alpha}(t^{\frac1{\alpha}}, 0, 0)$.
By Theorem  \ref{stable-property} (iii), we conclude that if $X \sim S_\alpha(1, 0, 0)$, then
$ t^{\frac1{\alpha}} X \sim S_{\alpha}(t^{\frac1{\alpha}}, 0, 0)$, for $t >0$.
Indeed, for every $c>0$,
$L^\alpha_{ct}$ and $c^{\frac1{\alpha}} L^\alpha_{t}$ have the same distribution. See \cite[p.113]{taqqu}.

Using the  facts that
$\PX(t^{\frac1{\alpha}} X \leq x) = \PX(X \leq t^{-\frac1{\alpha}} x)$ and
$\frac{d}{dx}\PX(X \leq t^{-\frac1{\alpha}} x)= t^{-\frac1{\alpha}} \frac{d}{d\tilde{x}}\PX(X \leq \tilde{x})|_{\tilde{x}=t^{-\frac1{\alpha}}x}$, we conclude that
  the probability density function for $L_t^{\alpha}$ is
\begin{eqnarray}
 t^{-\frac1{\alpha}} \, f_\alpha(t^{-\frac1{\alpha}} \, x),
\end{eqnarray}
where $f_\alpha$ is the probability density function for the standard symmetric $\alpha$-stable random variable $X \sim S_\alpha(1, 0, 0)$, as in (\ref{pdf888}) above. The generalized time derivative $\frac{dL_t^\alpha}{dt}$ as a model for non-Gaussian white noise is discussed in \cite{Nun, Lee, Shih}.


\index{Symmetric $\alpha$-stable  L\'evy motion}

\bigskip

A symmetric $\alpha$-stable  L\'evy motion $L_t^{\alpha}$, for $\alpha \in (0, 2)$,  has the generating triplet $(0, 0, \nu_{\alpha})$ where the jump measure $  \nu_{\alpha}(du)=  c_\alpha \frac{du}{|u|^{1+\alpha}}$,
with
\begin{eqnarray} \label{CCC}
\displaystyle{c_{\alpha} =
\frac{\alpha}{2^{1-\alpha}\sqrt{\pi}}
\frac{\Gamma(\frac{1+\alpha}{2})}{\Gamma(1-\frac{\alpha}{2})}}.
\end{eqnarray}
Here $\Gamma$ is the Gamma function.


When $\alpha=2$, this family reduces to the well-known Brownian motion $B_t$.


\medskip


The generator of a scalar symmetric $\alpha$-stable L\'evy motion  $L_t^{\alpha}$,  with triplet $(0, 0,  \nu_{\alpha})$,  is
\begin{eqnarray}
 A_\alpha  \phi &=&   \int_{\R^1 \setminus\{0\}} [\phi(x+  y)-\phi(x)  ] \; \nu_\alpha(dy),
\end{eqnarray}
where the right hand side is understood as  a Cauchy principal value. The domain of $A$ is the collection of  function  $\phi$ such that this Cauchy principal value integral exists. Here
$$
\nu_\a({\rm d}y)=c_\alpha \frac{dy}{|y|^{1+\alpha}},
$$
with $c_\alpha$ from (\ref{CCC}).
 Note that the integrand $  I_{\{|y|<1\}} \; y  $ is an odd function in $y$ and the corresponding integral in (\ref{big-generator}) is zero.

\subsection*{The $\alpha$-stable L\'evy motions in $\Rn$}
\label{rotationsymmetric}

In this subsection, we discuss rotationally symmetric
$\alpha$-stable L\'evy motions  $L_t^\alpha$ in $\Rn$.

\index{Rotationally symmetric $\alpha$-stable L\'evy motion}



\begin{definition}\label{rid}
For $\alpha\in(0,2)$, an $n$-dimensional  rotationally symmetric $\alpha$-stable L\'evy motion
  $L_t^\alpha$ is a L\'evy motion with characteristic function
\begin{equation}
\EX e^{i <u, L_t^\alpha>} = e^{-C t \|u\|^\alpha}, \quad u \in  \mathbb{R}^n, 
\end{equation}
where
$$
C=\pi^{-1/2}\frac{\Gamma((1+\a)/2)\Gamma(n/2)}{\Gamma((n+\a)/2)}.
$$
\end{definition}
The value of $C$ is $1$ when the dimension $n=1$.



\medskip

We recall the following result (\cite{Chen}, \cite[Ch. 3]{Sato}).

\begin{theorem} (Properties of $\alpha$-stable L\'evy motions \cite{Applebaum}) \label{levy789}
A rotationally symmetric $\alpha$-stable  L\'evy motion $L_t^{\alpha}$ in $\R^n$ has the generating triplet $(0, 0, \nu_{\alpha})$,  with the jump measure
\begin{equation}\label{levyalpha888}
  \nu_{\alpha}(du)= c(n, \alpha) \frac{du}{\|u\|^{n+\alpha}},
\end{equation}
and the intensity constant
\begin{equation}\label{levyc}
     c(n, \alpha)= \frac{ \a \Gamma((n+\a)/2)}{2^{1-\a} \pi^{n/2} \Gamma(1-\a/2)  },
\end{equation}
where $\Gamma$ is the Gamma function.
\end{theorem}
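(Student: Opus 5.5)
The plan is to show that the L\'evy exponent $\eta(u)=-C\|u\|^\alpha$ of Definition~\ref{rid} coincides with the exponent in the L\'evy--Khintchine formula for the triplet $(0,0,\nu_\alpha)$. Since a L\'evy motion is determined in law by its characteristic function, and the triplet in \eqref{eq4} is unique, this identifies the generating triplet.

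\textbf{Step 1: candidate measure and reduction.} Take $\nu(du)=c\,\|u\|^{-n-\alpha}du$ with $c$ to be determined. This measure satisfies $\int(\|y\|^2\wedge 1)\,\nu(dy)<\infty$ precisely because $0<\alpha<2$. By rotational symmetry, $\int_{\|y\|<1} u\cdot y\,\nu(dy)=0$, so the compensator term drops out. For the same reason the imaginary part $\int\sin(u\cdot y)\,\nu(dy)$ vanishes, being a principal value of an odd integrand. The exponent therefore reduces to
\[
\eta(u)=-c\int_{\R^n\setminus\{0\}}\frac{1-\cos(u\cdot y)}{\|y\|^{n+\alpha}}\,dy .
\]
This integral converges: near $0$ the numerator is $O(\|y\|^2)$ and $\alpha<2$, and at infinity the integrand is bounded by $2\|y\|^{-n-\alpha}$. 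The substitution $y=z/\|u\|$ together with a rotation taking $u/\|u\|$ to $e_1$ gives $\eta(u)=-c\,I_{n,\alpha}\|u\|^\alpha$, where
\[
I_{n,\alpha}=\int_{\R^n}\frac{1-\cos z_1}{\|z\|^{n+\alpha}}\,dz .
\]
So we need $b=0$, $Q=0$, and $c\,I_{n,\alpha}=C$. Matching the Gaussian part gives $Q=0$, since the exponent is homogeneous of degree $\alpha<2$. Uniqueness of the triplet then ensures there is no other choice.

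\textbf{Step 2: computing $I_{n,\alpha}$.} This is the main obstacle. The plan is to write $z=(z_1,z')$ and integrate out $z'\in\R^{n-1}$ first, using
\[
\int_{\R^{n-1}}(z_1^2+\|z'\|^2)^{-(n+\alpha)/2}\,dz'=|z_1|^{-1-\alpha}\,\pi^{(n-1)/2}\,\frac{\Gamma((1+\alpha)/2)}{\Gamma((n+\alpha)/2)} .
\]
This follows from polar coordinates and a Beta integral. It reduces the problem to the one-dimensional integral
\[
\int_\R\frac{1-\cos t}{|t|^{1+\alpha}}\,dt=\frac{2\Gamma(1-\alpha/2)\sqrt\pi}{2^{\alpha}\alpha\,\Gamma((1+\alpha)/2)} .
\]
To prove this, integrate by parts to $\frac{2}{\alpha}\int_0^\infty t^{-\alpha}\sin t\,dt=\frac{2}{\alpha}\Gamma(1-\alpha)\cos(\pi\alpha/2)$. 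Then convert to the stated form using the duplication and reflection formulas, treating $\alpha=1$ by continuity.

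\textbf{Step 3: solving for $c$.} Multiplying the two factors gives $I_{n,\alpha}$. Setting $c=C/I_{n,\alpha}$ with $C=\pi^{-1/2}\Gamma((1+\alpha)/2)\Gamma(n/2)/\Gamma((n+\alpha)/2)$ should then be checked against \eqref{levyc}. The place to be careful is the normalization: the $\Gamma(n/2)$ in $C$ must cancel against the spherical constants, and the Gamma identities have to be handled precisely. I expect this bookkeeping, rather than any conceptual step, to be where errors arise. As a sanity check, the case $n=1$ should reproduce $c_\alpha$ in \eqref{CCC}, since $C=1$ there.
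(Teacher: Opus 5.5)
Your Steps 1--2 are correct. The Beta integral, the value $\frac{2}{\alpha}\Gamma(1-\alpha)\cos(\pi\alpha/2)$, and the duplication/reflection conversion all check out. L\'evy--Khintchine matching plus uniqueness of the triplet is also the standard argument behind this result; the paper gives no proof of its own and only cites Applebaum and Sato. The gap is Step 3, and it is not a bookkeeping issue: the check you defer actually fails for $n\ge 2$. Multiplying your two factors gives
\[
I_{n,\alpha}=\frac{2^{1-\alpha}\pi^{n/2}\,\Gamma(1-\alpha/2)}{\alpha\,\Gamma((n+\alpha)/2)}=\frac{1}{c(n,\alpha)} .
\]
So the measure \eqref{levyalpha888}--\eqref{levyc} produces the exponent $\eta(u)=-\|u\|^\alpha$, and your prescription $c=C/I_{n,\alpha}$ equals $C\,c(n,\alpha)$. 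The $\Gamma(n/2)$ in $C$ has nothing to cancel against. The only spherical constant in your computation, $|S^{n-2}|$, is already absorbed in the Beta integral. In fact $C=|S^{n-1}|^{-1}\int_{S^{n-1}}|\xi_1|^\alpha\,\sigma(d\xi)$, which is strictly less than $1$ for $n\ge 2$. Concretely, take $n=2$, $\alpha=1$: then $c(2,1)=1/(2\pi)$, as the paper itself notes, and $I_{2,1}=2\pi$. This gives $\mathbb{E}e^{i\langle u,L_t\rangle}=e^{-t\|u\|}$, whereas Definition~\ref{rid} requires $e^{-(2/\pi)t\|u\|}$.

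So, with the normalization of Definition~\ref{rid}, the statement holds only for $n=1$. There $C=1$ and your sanity check against \eqref{CCC} goes through; for $n\ge 2$ the correct intensity would be $C\,c(n,\alpha)$. What your argument does prove in full is the theorem under the standard normalization $\mathbb{E}e^{i\langle u,L^\alpha_t\rangle}=e^{-t\|u\|^\alpha}$. In that case $c=1/I_{n,\alpha}=c(n,\alpha)$, and Steps 1--2 already finish the proof. You should state that normalization explicitly rather than expect the constants to reconcile.

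A smaller point in Step 1: for $\alpha\ge 1$ the compensator $\int_{\|y\|<1}u\cdot y\,\nu(dy)$ is not absolutely convergent, so do not discard it on its own. Instead, note that the imaginary part $\sin(u\cdot y)-u\cdot y\,\mathbf{1}_{\{\|y\|<1\}}$ of the full L\'evy--Khintchine integrand is odd and absolutely integrable against $\nu$, hence integrates to zero.
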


When the spatial dimension $n$ is clear in the context, we often denote   $c(n, \alpha)$ as $c_{\alpha}$ or just $c$.
For example, in the case of $\a=1$, $c=\frac1{\pi}$ (for $n=1$) and   $c=\frac1{2 \pi}$ (for $n=2$). See \cite{Chen}.

\begin{remark}
Thus, for an $n$-dimensional rotationally symmetric $\alpha$-stable L\'evy motion  $L_t^\alpha$, its diffusion matrix $Q=0$ and the drift vector $\gamma=0$.   It is characterized by the jump measure $\nu_\alpha$, for $\alpha \in (0, 2)$.
\end{remark}






The generator for this rotationally symmetric $\alpha$-stable L\'evy motion $L_t^\alpha$ in $\Rn$ is (\cite[Theorem 3.3.3]{Applebaum} or \cite{Chen})
\begin{eqnarray}
 A_\alpha  \phi (x) &=&   \int_{\Rn \setminus\{0\}} [\phi(x+  y)-\phi(x)  ] \; \nu_\alpha(dy),
\end{eqnarray}
where the right hand side is understood as  a Cauchy principal value. The domain of $A$ is the collection of  function  $\phi$ such that this Cauchy principal value integral exists.
This can also be shown directly,   from the definition of generator $A_\alpha  \phi (x) = \frac{d}{dt}|_{t=0} \EX \phi(X_t)$  for   $X_t= x+L_t^\alpha$ (a $\alpha$-stable L\'evy motion starting at $x$). 


This   generator  $A_{\alpha} $ has a unique extension to   a self-adjoint operator (\cite{wu}) in the domain of definition
$W^{\alpha, 2}(\Rn) \triangleq \{g \in L^2(\Rn): \;  \|k\|^{\alpha} \mathbb{F} (g) \in L^2(\Rn)  \}$. Here the Fourier transform for $g$ is defined by
\begin{eqnarray} \label{Fourier}
 \mathbb{F} (g)(k) = \frac1{(2\pi)^{\frac{n}2}} \int_{\Rn} e^{-i\; <k, x>} g(x) dx.
\end{eqnarray}
\index{Fourier transform}
Sometimes, $\mathbb{F} (g)(k)$ is also denoted by  $ \hat{g}(k)$.
 By \cite{Adams} (Theorem 7.39), $W^{\alpha, 2}(\Rn) = W^{\alpha, 2}_0(\Rn)$, i.e., all functions in $W^{\alpha, 2}(\Rn)$ have compact support in $\Rn$.
Further note that this integral operator is related to the fractional Laplacian operator \cite{wu}. Indeed, for $\alpha \in (0, 2)$, by Fourier inverse transform,
\begin{equation}\label{flaplace}
    A_{\alpha} u \triangleq   \int_{\R^n \setminus\{0\}} [u(x+  y)-u(x) ] \; \nu_{\a}(dy)=\theta_{\alpha, n} \mathbb{F}^{-1}(\|k\|^\alpha \mathbb{F}(u)(k)) = \theta_{\alpha, n} \; (-\Delta)^{\frac{\alpha}2} u(x),
\end{equation}
where
\begin{equation}
\theta_{\alpha, n} \triangleq  \int_{\R^n \setminus\{0\}} (\cos (e \cdot y) -1 ) \; \nu_{\a}(dy) <0,
\end{equation}
with $e$ being any unit vector in $\Rn$.
Here we have used the notation for the  fractional Laplacian operator:
\begin{eqnarray} \label{fractional-defn}
\mathbb{F}^{-1}(\|k\|^\alpha  \mathbb{F}(u) (k)) \triangleq (-\Delta)^{\frac{\alpha}2} u(x),
\end{eqnarray}
\begin{eqnarray} \label{fractional-defn9090}
\mathbb{F} ((-\Delta)^{\frac{\alpha}2} u(x) )  =  \|k\|^\alpha  \mathbb{F}(u) (k).
\end{eqnarray}
Clearly, this notation is inspired by the fact that
$$
\mathbb{F} (-\Delta u(x) ) =  \|k\|^2  \mathbb{F}(u) (k).
$$

\index{Fourier   transform of  fractional Laplacian operator }
\index{Fractional Laplacian operator}

Thus, the generator for the rotationally symmetric $\alpha$-stable L\'evy motion $L_t^\alpha$ in $\Rn$  is also written as
\begin{eqnarray}
 A_{\alpha}  \phi &=&   \theta_{\alpha, n} \; (-\Delta)^{\frac{\alpha}2} \phi,  \;\; \alpha \in (0, 2),
\end{eqnarray}
for $\phi$ in the domain of definition of $A_\alpha$, i.e., Sobolev space $W^{\alpha, 2}(\Rn)$. This is especially true for the scalar symmetric $\alpha$-stable L\'evy motion discussed earlier in this section.





\subsection{Stochastic Differential Equations with L\'evy Motions}
\label{SDELevynoise}




\index{Jump measure}


By the L\'evy-It\^{o} decomposition, a L\'{e}vy motion  with the generating
triplet $(b, Q, \nu)$ has the following representation
$$
L_t=b\; t+ Q^{\frac12} B_t+\int_{\|y\|< 1}y \tilde{N}(t, dy)+\int_{\|y\|\geq 1}y {N}(t, dy),
$$
where $N(dt, dx)$ is the Poisson random measure (quantifying the number of jumps of $L_t$), $\tilde{N}(dt,
dx)=N(dt, dx)-\nu(dx)dt$  is the compensated Poisson random
measure, $\nu(S) = \EX N(1, S)$ is the jump measure, $b$ and $y$ are in $\R^n$, $Q$ is a non-negative definite symmetric $n\times n$ covariance matrix, and $B_t$ is a standard $n$-dimensional Brownian motion (i.e., Wiener process). The small jumps ($\|y\|< 1$) are controlled by
  $\tilde{N}(t, dy)$, while   large jumps ($\|y\| \geq 1$) are governed by
   $N(t, dy)$.

\index{L\'evy-Ito decomposition}
\index{Small or large jumps}

As the covariance matrix $Q$ is non-negative definite and symmetric, it has non-negative eigenvalues $\lambda_1, \cdots, \lambda_n$, and an orthonormal basis $\{e_1, \cdots, e_n\}$  formed by the corresponding eigenvectors. The definition of  $Q^{\frac12}$ is via  $Q^{\frac12}x= \sum_{i=1}^n \lambda_i^{\frac12} <x, e_i> e_i$. Moreover,   $Q^{\frac12} B_t$ has covariance matrix $Q^{\frac12} (Q^{\frac12})^T = Q$.

In the L\'evy-It\^{o} decomposition, we  usually include (or absorb) the drift term $ bdt$ and Gaussian noise term $ Q^{\frac12} {\rm d} B_t$ in the corresponding terms in a stochastic differential equation. Thus the L\'evy motion $L_t$ appears to have its generating triplet $(0, 0, \nu)$ and becomes 
\begin{align}\label{s1_3aaaa}
L_t= \int_{\|y\|< 1}y \;\tilde{N}(t, dy)+\int_{\|y\|\geq 1}y\; {N}(t, dy).
\end{align}
In differential form, this is the `standard additive L\'evy noise'
\begin{align}\label{s1_4b}
{\rm d} L_t =   \int_{\|y\|< 1}y\; \tilde{N}({\rm d} t, {\rm d} y)+\int_{\|y\|\geq 1}y\;{N}({\rm d} t, {\rm d} y).
\end{align}
The `multiplicative L\'evy noise' is then
\begin{align}\label{s1_4bb}
G d L_t  \triangleq  \int_{\|y\|< 1} G(X_{t-}, y) \; \tilde{N}({\rm d} t, {\rm d} y)+\int_{\|y\|\geq 1} G(X_{t-}, y) \; {N}({\rm d} t, {\rm d} y),
\end{align} 
with an appropriate  ($n-$dimensional vector) L\'evy noise intensity $G$.
For stochastic integrals with respect to L\'evy motion, see \cite[\S 4.3]{Applebaum}.

An SDE  with  multiplicative  L\'evy noise, in $\R^n$, is
\begin{eqnarray}\label{sdelevy}
dX_t &=& f(X_{t})dt+\sigma(X_{t})  dB_t  \nonumber \\
&+& \int_{\|y\|< 1} G(X_{t-}, y) \; \tilde{N}({\rm d} t, {\rm d} y)+\int_{\|y\|\geq 1} G(X_{t-}, y) \; {N}({\rm d} t, {\rm d} y),
\end{eqnarray} 
where the vector field (or drift) $f$ is an $n-$ dimensional vector function, Gaussian noise intensity $\sigma$ is an $n\times n$   matrix, and $B_t$ is a standard $n$-dimensional Brownian motion, independent of $L_t$.  

When   L\'evy noise intensity $G=G(t, y)$,    not depending explicitly on $X_t$, this SDE contains the `additive L\'evy noise'. In particular, when $G\equiv y$, this SDE has the `standard additive L\'evy noise' $dL_t$ in \eqref{s1_4b} and it becomes 
\begin{eqnarray}\label{sdelevy2}
dX_t &=& f(X_{t})dt+\sigma(X_{t})  dB_t + d L_t.
\end{eqnarray}






\subsection{Generators and It\^o Formula}
\label{Generator777}


The generator   for  $L_{t}$  in  \eqref{s1_3aaaa} is
(see   \S \ref{examplelevymotion} or  \cite[Theorem 3.3.3]{Applebaum} )
\begin{equation} \label{A}
A_0 h(x) =    \int_{\Rn
\setminus\{0\}} [h(x+ y)-h(x) -  I_{\{\|y\|<1\}} \; y \cdot \nabla h(x)] \; \nu(dy),
\end{equation}
where $I_S$ is the indicator function of the set $S$, i.e., it
takes value $1$ on this set and takes zero value otherwise.




The generator $A $ for the SDE  \eqref{sdelevy} or for the solution process $X_t$, is (\cite[Theorem 6.7.4]{Applebaum})
\begin{eqnarray} \label{generatorbig}
&& A h(x)  \triangleq  \sum_{i=1}^n f_i(x) \p_i h(x)+ \frac12  \sum_{i, j=1}^n (\sigma\sigma^T)_{ij}\p_i\p_j h(x) \nonumber \\
&+& \int_{\R^n \setminus \{0\}} [h(x+ G(x,y))-h(x)-I_{\{\|y\|<1\}} \sum_{i=1}^n G_i(x, y) \p_i h(x)] \; \nu(dy),
\end{eqnarray}
for $h$ in the domain of the generator. 
In vector form, this generator $A$ becomes
\begin{eqnarray}  \label{generatorbig2}
&& A h = f \cdot \grad h +\frac12 \Tr[\s \s^T H(h)]  \nonumber \\
&+& \int_{\R^n \setminus \{0\}} [h(x+G(x,y))-h(x)-I_{\{\|y\|<1\}}\; G(x, y) \cdot \grad h(x)] \; \nu(dy).
\end{eqnarray}
  Here $\; \text{Tr}\; $ is the trace of a matrix,   $^T$ denotes the transpose of a matrix, and $H$ is the Hessian matrix of a scalar function. 




 For each $h \in C^2 (\Rn)$, the It\^o formula  for   SDE  \eqref{sdelevy} is  \cite[Theorem 4.47]{Applebaum} 
 \begin{eqnarray}  \label{generatorbig8882}
 && dh(X_t) = \{ f(X_t) \cdot \grad h(X_t) +\frac12 \Tr[\s \s^T H(h(X_t))] \} dt  
 + (\nabla h(X_t)^T \sigma(X_t) dB_t     \nonumber \\
&+& \int_{\|y\|< 1} [h(X_{t-}+ G(X_{t-},y))- h(X_{t-})]\tilde{N}({\rm d} t, {\rm d} y)   \nonumber \\
&+&\int_{\|y\|\geq 1} [h(X_{t-}+G(X_{t-},y))- h(X_{t-})] {N}({\rm d} t, dy)    \nonumber \\
&+& \int_{\|y\|< 1} [h(X_{t-}+G(X_{t-},y))- h(X_{t-})- G(X_{t-},y) \cdot \nabla h(X_{t-})] \nu(dy) dt.
\end{eqnarray}
 Note that the generator and  It\^o formula  for   SDE  \eqref{sdelevy2}, with standard additive L\'evy noise, are   simpler (i.e., replacing $G(X_{t-},y)$  in \eqref{generatorbig2} and \eqref{generatorbig8882}, with $y$).


\subsection{Kolmogorov Backward and Fokker-Planck Equations  }
\label{sec1.3.8}

We consider first the Kolmogorov backward equation and then the Kolmogorov forward equation (i.e., the Fokker-Planck equation)  for SDE  systems in $\R^n$.

\subsection*{Kolmogorov Backward     Equations  }

The Kolmogorov backward    equation for    SDE     \eqref{sdelevy}, with initial condition $X_0=x$,   is     \cite[\S 3.5.3]{Applebaum} 
\begin{eqnarray} \label{backwardlevy}
\p_t u (t, x) = A u(t, x), \;\; u(0, x)= u_0(x),
\end{eqnarray}
 where $u(t, x)= \EX_x [u_0(X_t)] \triangleq  \EX [u_0(X_t) | X_0=x]$, for each   observable  $u_0$ in the domain of the generator $A$.  Thus $ u(t,x)$ is the ensemble-averaged value of the observable $u_0$   at time $t$, conditioned on the initial state $X_0=x$.

\subsection*{Fokker-Planck Equations  }

The  Fokker-Planck equation    for   SDE with the generator $A$ is \cite[\S 3.5.3]{Applebaum} 
\begin{eqnarray}  \label{FPEgeneral}
\partial _t p = A^* p.
\end{eqnarray}
It is in terms of the adjoint operator $A^*$ for the generator $A$. This adjoint operator $A^*$ is easily available in the case of   additive, symmetric L\'evy noise. For example, we consider the following  SDE  
\begin{equation}\label{levysde8989}
dX_t=f(X_{t-})dt + \sigma(X_{t-}) dB_t +  dL_t^\alpha,  
\end{equation}
 where $f$ is a vector field, $\sigma$ is an $n \times n$ matrix, $B_t$ is a Brownian motion in $\Rn$,  and $L_t^\alpha$ is a symmetric $\alpha$-stable L\'{e}vy motion   in $\R^n$,  with the generating triplet     $(0, 0, \nu_\alpha)$.  The jump measure is 
$$
\nu_\a({\rm d}y)=c(n,\alpha) \|y\|^{-(n+\alpha)}\, {\rm d}y,
$$
with
$\displaystyle{c(n,\alpha)=\frac{\a\Gamma((n+\a)/2)}{2^{1-\a}\pi^{n/2}\Gamma(1-\a/2)}}$. The processes $B_t$ and $L_t^\alpha$ are independent.


The  Fokker-Planck equation    for   SDE \eqref{levysde8989}  is then
\begin{eqnarray}  \label{FPEvector}
 \partial_t p    
  = - \grad \cdot (f p) + \frac12 \Tr[H(\s \s^T p)]    \nonumber \\
+\int_{\R^n \setminus \{0\}} [p(t,x+y)-p(t, x) ] \nu_\alpha(dy),
\end{eqnarray}
where $H(\s \s^T p)$ is interpreted as matrix multiplication of the Hessian $H$ and $ \s \s^T p$ (note that $p$ is a scalar function).  Here the integral in the right hand side is understood as a Cauchy principal value.
If SDE \eqref{levysde8989}  is given initial condition $X_0=x_0$, then the initial condition for the Fokker-Planck equation above is $p(0,x)=\delta(x-x_0)$.

The Fokker-Planck equation \eqref{FPEvector} on a bounded domain $D$ in $\Rn$ may also be subject to the following absorbing boundary condition,  and an initial condition:
\begin{eqnarray}  \label{FPEvectorBC}
 p(t, x) =0 \;\; \mbox{ for } x \in D^c \;\; ; \;\; p(0, x) = p_0(x) \;\;\; \mbox{ for } x \in D.
\end{eqnarray}
This initial condition $p_0(x)$ needs to be non-negative and satisfies $\int_{\Rn} p_0(x)dx=1$.







To obtain the Fokker-Planck equation for SDE  \eqref{sdelevy}, with multiplicative L\'evy noise,  it is considerably more complicated,   as we need to find   the adjoint operator $A^*$ for the generator $A$. See \cite{XuSun2012, zlotchevski2024} for a way to achieve this goal.








\section*{Problems}
\addcontentsline{toc}{section}{Problems}
%


\begin{prob}
\label{BandL}
\textbf{Brownian motion vs. L\'evy motion }\\
Compare the basic properties of scalar (Gaussian) Brownian motion $B_t$ and (non-Gaussian)  $\alpha$-stable L\'evy motion $L_t^\alpha$.     
\end{prob}

\begin{prob}
\label{LMgenerator}
\textbf{Generator for a stochastic differential equation }\\
Find the generator $A_\alpha$ for a scalar symmetric $\alpha$-stable L\'evy motion $L_t^\alpha$,     using the definition   $A_\alpha  \phi (x) = \frac{d}{dt}|_{t=0} \EX \phi(X_t)$  for   $X_t= x+L_t^\alpha$ (a scalar $\alpha$-stable L\'evy motion starting at $x$).  \\
\textit{ Hint}:  See Theorem 3.3.3 in \cite{Applebaum}.
\end{prob}


\begin{prob}
\label{multi888999} 
\textbf{Kolmogorov backward equations and Fokker-Planck equations }\\
Consider a stochastic dynamical system in $\Rn$ with multiplicative   Gaussian and non-Gaussian noise
\begin{eqnarray*}\label{sdelevy888}
dX_t &=& f(X_{t})dt+\sigma(X_{t})  dB_t  \nonumber \\
&+& \int_{\|y\|< 1} G(X_{t-}, y) \; \tilde{N}({\rm d} t, {\rm d} y)+\int_{\|y\|\geq 1} G(X_{t-}, y) \; {N}({\rm d} t, {\rm d} y),
\end{eqnarray*} 
 where $f$ is a vector field, $B_t$ is a Brownian motion in $\Rn$, $\sigma$ is an  $n \times n$ matrix of functions,    L\'evy  noise intensity   $G$  is an $n-$dimensional vector of functions, and  $L_{t}$ is a   L\'evy motion in $\Rn$ with   generating triplet $(0, 0, \nu)$. The processes $B_t$ and $L_t$ are independent. What is the generator for the solution process $X_t$? What is the Kolmogorov backward equation and the Fokker-Planck equation? \\
\textit{ Hint}:  See \cite{zlotchevski2024} and \cite{XuSun2012}.
 
\end{prob}

\begin{prob}
\label{FPE2026}
\textbf{Geometry of a Fokker-Planck equation in the space of probability densities  }\\
Consider the       Fokker-Planck equation associated with  the scalar stochastic differential equation $dX_t = (X_t-X_t^3)dt + dB_t$. A solution of this equation is a curve  in  $\PP_2(\RR)$, the space of probability densities in $\RR$.  Try to devise a way to visualize such a curve.
How to calculate or analyze the slope and curvature of such a curve?
\end{prob}

\begin{prob}
\label{Bridge}
\textbf{Brownian bridges, L\'evy  bridges and Markovian bridges  }\\
 Discuss the definition, existence, uniqueness and properties of    Brownian bridges, L\'evy  bridges and Markovian bridges.   With the help of AI, generate these bridges in the Euclidean plane.\\
 \textit{ Hint}:  See \cite[Ch. 5]{Oksendal} and \cite{privault2004, Chaumont2011, orland2024}.
\end{prob}


%
%
%
\chapter{The Most Probable Transition Path via the Onsager-Machlup Action  }\label{ch:2}

\abstract*{Each chapter should be preceded by an abstract (no more than 200 words) that summarizes the content. The abstract will appear \textit{online} at \url{www.SpringerLink.com} and be available with unrestricted access. This allows unregistered users to read the abstract as a teaser for the complete chapter.
Please use the 'starred' version of the new \texttt{abstract} command for typesetting the text of the online abstracts (cf. source file of this chapter template \texttt{abstract}) and include them with the source files of your manuscript. Use the plain \texttt{abstract} command if the abstract is also to appear in the printed version of the book.}

\abstract{ This chapter establishes a comprehensive geometric and analytical framework to quantify the most probable dynamics of stochastic systems using the Onsager-Machlup action functional. Section \ref{Mathematical introduction} analyzes transition paths for continuous diffusion processes via Lagrangian and Hamiltonian formulations. Section \ref{chapter2.sec2.2} extends this framework to systems driven by multiplicative noise, highlighting crucial geometric corrections. Finally, Section \ref{chapter2.sec2.3} generalizes the theory beyond traditional Gaussian assumptions to non-Gaussian jump-diffusion processes.}
 
\section{The Most Probable Transition Path for Diffusion Processes}\label{Mathematical introduction}

Stochastic differential equations (SDEs) serve as a fundamental tool for modeling intricate dynamics across physical, biological, and engineering disciplines. In nonlinear frameworks, the presence of random noise frequently triggers transitions between distinct dynamical states. Consequently, a crucial theoretical and practical challenge is to identify the mechanisms governing these shifts and to compute the \emph{most probable transition paths (MPTPs)}. Historically, the investigation of this problem originated from the formulation of the \emph{Onsager--Machlup action functional} (OM functional). Initially developed by Onsager and Machlup, this functional was intended to evaluate the likelihood of thermodynamic trajectories in irreversible processes, marking a milestone in the study of thermodynamic fluctuations \cite{OM53,MO53}. Viewed through a modern mathematical lens, their foundational research primarily addressed SDEs driven by linear drift terms and constant diffusion matrices, utilizing Feynman's path integral as a core analytical mechanism. Building upon these concepts, Tisza and Manning subsequently extended the analysis to SDEs featuring nonlinear coefficients \cite{tisza1957fluctuations}. 

Directly applying the variational principle to find the minimizers of the OM functional, as attempted by some early followers of \cite{OM53,MO53,tisza1957fluctuations}, presents inherent theoretical paradoxes. Specifically, the Euler--Lagrange equations derived from such variational approaches invariably yield twice-differentiable solution curves. In stark contrast, genuine trajectories of a diffusion process are almost surely non-differentiable everywhere, implying that the exact realization probability of any single continuous path is identically zero. To overcome this discrepancy, researchers such as those in \cite{Durr1978,Ikeda1980} shifted their focus to computing the probability that a stochastic trajectory remains confined within a narrow ``neighborhood''---essentially a tube---surrounding a smooth reference curve. By evaluating and comparing the transition probabilities associated with different tubes of equivalent ``thinness'', they essentially reinterpreted the OM functional as an action that characterizes the most probable infinitesimal tubular regions. Even though these minimizers are not actual physical trajectories of the system, it remains a standard convention to refer to the solutions of this variational problem as ``most probable transition paths''. This conceptually rigorous perspective has profoundly influenced a wide array of scientific fields, inspiring subsequent research in path integral techniques \cite{Kath1981path,Schulman1981,Wiegel1986,Wio2013,Khandekar2000}, trajectory sampling \cite{KA20,Lu2017}, transition-path theory \cite{EVE10,GLLL23,huang2026transition}, transition rate analysis \cite{Zuckerman2000,Gobbo2012}, noise-induced transitions in non-Gaussian environments \cite{Chao2019,Huang2019}, biomolecular folding dynamics \cite{Faccioli2006,Wang2006}, information projection \cite{selk2021information,bierkens2014explicit}, and Bayesian inverse problems \cite{dashti2013map,AKLS21}. For further comprehensive discussions regarding OM functionals from diverse theoretical standpoints, readers are referred to \cite{gasteratos2026onsager,selk2024smallnoise,selk2021feynman,selk2025reweighting}.

In the present framework, we examine the following SDE formulated in $\mathbb{R}^n$:
\begin{equation}\label{firstequation}\left\{
  \begin{aligned}
    dX_t &= -\nabla U(X_t)dt+\sigma dB_t, \quad t>0, \\
    X_0 &=x_0,
  \end{aligned}\right.
\end{equation}
where $U:\mathbb{R}^n\rightarrow\mathbb{R}$ represents a given scalar potential landscape, $B = \{B_t\}_{t\ge0}$ denotes an $n$-dimensional standard Brownian motion, and $\sigma$ is a strictly positive diffusion intensity parameter. Detailed assumptions regarding the system's coefficients will be introduced progressively in the ensuing sections.
\begin{definition}
    A specific state $x_0\in\mathbb R^n$ is characterized as a \emph{metastable state} for the system (\ref{firstequation}) provided it is a stable equilibrium for the underlying deterministic vector field $\dot x_t= -\nabla U(x_t)$ \cite{Huang2019}. Or equivalently, $x_0$ is a metastable state if it is a local minimizer of the potential $U$.
\end{definition}
For example, in the one-dimensional scenario governed by the classic Ginzburg--Landau double-well potential $U = \frac{1}{4}x^4-\frac{1}{2}x^2$, the corresponding metastable states are  $\pm 1$. We proceed under the assumption that $x_0\in\mathbb R^n$ acts as a \emph{metastable state} for \eqref{firstequation}. 

\begin{remark}
    Before investigating the general drift framework, we purposefully introduce the system via the gradient formulation $-\nabla U(x)$ based on both pedagogical and physical considerations. Following the principle of moving from the simple to the complex, a gradient system offers a far more intuitive physical paradigm than a generic drift $f(x)$ (slated for Section \ref{chapter2.sec2.2}). Physically, equation \eqref{firstequation} governs the classic overdamped Langevin dynamics, where $U(x)$ serves as the potential energy landscape. This setup provides an elegant geometric interpretation for the transition between metastable states: since these states are exactly the local minimizers of $U(x)$, the``most probable transition pat'' corresponds to a particle thermally activating out of one potential well, overcoming an energy barrier (typically a saddle point), and dropping into another well. Such a vivid physical picture would be obscured if a general non-gradient drift $f(x)$ were introduced at the outset, where equilibria cannot be geometrically characterized by simple potential topographies.
\end{remark}

The classical inquiry posed by Onsager and Machlup can thus be reformulated using modern dynamical terminology (refer to \cite[Chapter 6, Section 9]{Ikeda1980}): Over a predefined time horizon $T$, among all possible smooth trajectories linking two metastable states $x_0$ and $x_T$, which specific path represents the ``most probable'' evolutionary route for the dynamics of \eqref{firstequation}? From a rigorous probabilistic viewpoint, studies such as \cite{Durr1978,Ikeda1980} translated this conceptual inquiry into the mathematical problem of analyzing the transition likelihood from the initial state $x_0$ into a predefined neighborhood of the target state $x_T$ at the terminal time $T$. This methodology spurred numerous subsequent advancements \cite{Zeitouni1989,Zeitouni1987,Zeitouni1988}, all of which generally rely on estimating the probabilistic measure of tubular neighborhoods enveloping a given differentiable curve. The foundational insight here is that, provided certain regularity conditions hold, the asymptotic probability of the solution trajectory of \eqref{firstequation} being strictly bounded within a $\delta$-tube centered around a reference curve $\psi$ can be expressed as:
\begin{equation}\label{OMapproximate}
\begin{split}
\mathbb{P}(\|X -\psi \|_T<\delta) \sim \exp(-S_X^{OM}(\psi))\mathbb{P}(\|\sigma B \|_T<\delta), \quad \delta\downarrow0.
\end{split}
\end{equation}
In this asymptotic relation, $S_{X}^{OM}$ denotes the \emph{Onsager--Machlup action functional} (OM functional) corresponding to the system \eqref{firstequation}, which is explicitly defined as:
\begin{equation}\label{OM}
\begin{split}
S_{X}^{OM}(\psi)=\frac{1}{2}\int_0^T \left[ \frac{|\dot{\psi}(s) + \nabla U(\psi(s))|^2}{\sigma^2} - \triangle U(\psi(s)) \right]ds.
\end{split}
\end{equation}
Here, $\|\cdot\|_T$ stands for the standard supremum norm evaluated over the functional space $C([0,T],\mathbb R^n)$, consisting of all continuous mappings from $[0,T]$ into $\mathbb R^n$. 

Consequently, identifying the most probable transition paths of \eqref{firstequation} is mathematically equivalent to locating the minimizer of the functional $S_{X}^{OM}$ within an appropriate functional space. Conceptually, such an optimal path functions as the infinite-dimensional analogue of the statistical mode, serving as a primary numerical descriptor of the underlying probability distribution \cite{AKLS21}. Extensive investigations into the OM functional have illuminated a variety of intrinsic properties associated with these optimal transition routes \cite{DLL21,Li2021}. By applying standard techniques from the calculus of variations, one can readily extract an Euler--Lagrange (EL) equation from the OM functional. This resulting EL equation not only provides the mathematical foundation for developing numerous numerical algorithms aimed at computing most probable transition paths \cite{E2002,E2004}, but it has also proven highly effective in dissecting the complex geometrical structures inherent to these trajectories \cite{Soskin2006}.

\subsection{Preliminaries}

Let $C[0,T]=C([0,T],\mathbb R^n)$ be equipped with the supremum norm
\begin{equation*}
\begin{split}
\|\phi\|_T=\sup_{t\in[0,T]}|\phi(t)|,\quad \phi\in C[0,T].
\end{split}
\end{equation*}
Let $\mathcal B$ be its Borel $\sigma$-algebra and $\{\mathcal B_t\}_{t\in[0,T]}$ the canonical filtration $\mathcal B_t=\sigma\{\omega(s)\mid \omega\in C[0,T],\,0\le s\le t\}$.
Recall the infinitesimal generator
$A=-\nabla U\cdot\nabla+\frac12\sigma^2\triangle$. We impose the following assumptions.

\begin{assumption}\label{wellposedfirst}
(1) $U\in C^3(\mathbb R^n,\mathbb R)$.\\
(2) The local martingale problem associated with $A$ is well posed in $C[0,T]$ (see \cite[Chapter 5, Definition 4.5]{Karatzas1991}). For any $(s,x)\in[0,T)\times\mathbb R^n$, there exists a probability measure $P^{s,x}$ on $(C[0,T],\mathcal B)$ such that $P^{s,x}(\omega(r)=x,r\le s)=1$. Moreover, for every $f\in C^2(\mathbb R^n)$,
\[
M_t^f(\omega):=f(\omega(t))-f(\omega(s))-\int_s^tAf(\omega(r))dr
\]
is a local martingale on $[s,T]$ with respect to the right-continuous augmentation of $\{\mathcal B_t\}_{t\in[s,T]}$ under $P^{s,x}$.
\end{assumption}

\begin{lemma}[Well-posedness of SDEs \cite{huang2024most}]\label{strong-X}
Under Assumption \ref{wellposedfirst}, the SDE \eqref{firstequation} admits a pathwisely unique strong solution $X=\{X_t\}_{t\in[0,T]}$ on $(\Omega,\mathcal F,\{\mathcal F_t\}_{t\ge0},\mathbb P^{x_0})$. The process is strongly Markovian and satisfies $\mathbb P^{x_0}(X_0=x_0)=1$.
\end{lemma}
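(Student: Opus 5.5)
The plan is to pass from the well-posed martingale problem in Assumption \ref{wellposedfirst} to a weak solution, establish pathwise uniqueness by a local Lipschitz argument, and then apply the Yamada--Watanabe theorem to obtain a strong solution that is pathwise unique.

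\emph{Weak existence.} By Assumption \ref{wellposedfirst}(2), for the initial point $(0,x_0)$ there is a measure $P^{0,x_0}$ on $(C[0,T],\mathcal B)$ solving the local martingale problem for $A=-\nabla U\cdot\nabla+\frac12\sigma^2\triangle$. Taking $f(x)=x_i$ and $f(x)=x_ix_j$ shows that
\[
M_t=\omega(t)-x_0+\int_0^t\nabla U(\omega(r))\,dr
\]
is a continuous local martingale with quadratic covariation $\langle M^i,M^j\rangle_t=\sigma^2\delta_{ij}t$. By L\'evy's characterization, $B_t:=\sigma^{-1}M_t$ is an $n$-dimensional standard Brownian motion. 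Hence the coordinate process solves \eqref{firstequation} weakly on the filtered space formed by $C[0,T]$, the augmented canonical filtration and $P^{0,x_0}$ (see \cite[Ch.~5, Prop.~4.6]{Karatzas1991}).

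\emph{Pathwise uniqueness.} By Assumption \ref{wellposedfirst}(1), $U\in C^3$, so $\nabla U$ is $C^2$ and in particular locally Lipschitz. Let $X$ and $Y$ be two solutions driven by the same Brownian motion with $X_0=Y_0=x_0$, and let $\tau_R$ be the first time either leaves the ball of radius $R$. Because the noise is additive, the stochastic integrals cancel in $X-Y$, so on $[0,\tau_R]$ we get the pathwise bound
\[
|X_{t\wedge\tau_R}-Y_{t\wedge\tau_R}|\le L_R\int_0^{t\wedge\tau_R}|X_s-Y_s|\,ds .
\]
Gronwall's inequality then gives $X=Y$ on $[0,\tau_R]$. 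Since both processes have continuous paths on $[0,T]$, almost surely $\tau_R\ge T$ for $R$ large, so $X=Y$ on $[0,T]$. This step needs no growth condition, because non-explosion is already implied by the existence of solutions on $C[0,T]$.

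\emph{Strong existence and the Markov property.} Weak existence together with pathwise uniqueness gives, by the Yamada--Watanabe theorem \cite[Ch.~5, Cor.~3.23]{Karatzas1991}, a strong solution on any filtered space $(\Omega,\mathcal F,\{\mathcal F_t\},\mathbb P^{x_0})$ carrying a Brownian motion, with $\mathbb P^{x_0}(X_0=x_0)=1$. For the strong Markov property I would use the fact that a well-posed martingale problem, with uniqueness for every starting point $(s,x)$ as in Assumption \ref{wellposedfirst}(2), yields a strong Markov family \cite[Ch.~5, Thm.~4.20]{Karatzas1991}. The measurability of $x\mapsto P^{s,x}$ that this theorem requires follows from uniqueness. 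Since the law of $X$ is $P^{0,x_0}$, the strong Markov property carries over to $X$.

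I expect the main obstacle to be this last step: the theorem cited above is stated for the martingale problem, whereas Assumption \ref{wellposedfirst}(2) is phrased for local martingales with $f\in C^2$. Closing the gap requires a localization argument with the stopping times $\tau_R$, together with the observation that well-posedness of the local martingale problem implies well-posedness of the martingale problem for $f\in C_c^\infty$.
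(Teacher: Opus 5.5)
Your proposal is correct and follows the route the paper has in mind. The paper gives no proof of its own: it defers to \cite{huang2024most} and notes in Remark~\ref{remark-1} that the argument rests on the Yamada--Watanabe framework of \cite[Section 5.3]{Karatzas1991}, which is exactly your chain (weak existence from the martingale problem, pathwise uniqueness from the local Lipschitz continuity of $\nabla U$, Yamada--Watanabe, and the strong Markov property from well-posedness). The local-versus-true martingale point you flag is routine: for $f\in C_c^2$ the local martingale $M^f$ is bounded on $[0,T]$ and hence a martingale, and your localization with $\tau_R$ gives the converse, so the two formulations have the same solution set.
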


The transition semigroup $(T_{s,t})_{0\le s<t}$ of \eqref{firstequation} is defined by
\begin{equation*}
\begin{split}
(T_{s,t}f)(x)=\mathbb E(f(X_t)\mid X_s=x),
\end{split}
\end{equation*}
for $x\in\mathbb R^n$ and $f\in\mathfrak B_b(\mathbb R^n)$, where $\mathfrak B_b(\mathbb R^n)$ denotes the bounded Borel measurable functions on $\mathbb R^n$. Assume that $T_{s,t}$ admits a transition density $p(\cdot,t|x,s)$ with respect to a $\sigma$-finite measure $\nu$:
\begin{equation*}
T_{s,t}f(x)=\int_{\mathbb R^n}f(y)p(y,t|x,s)\nu(dy).
\end{equation*}
Choosing $\nu$ as the Lebesgue measure and using the time-independence of $-\nabla U$ and $\sigma$, the density is time homogeneous \cite{Ikeda1980}:
\begin{equation*}
p(y,t+s|z,t)=p(y,s|z,0).
\end{equation*}

Under Assumption \ref{wellposedfirst}, the density satisfies \cite[Chapter 6]{Bogachev2015}:\\
(\lowercase \expandafter {\romannumeral 1}) $(s,y,x)\mapsto p(y,s|x,0)$ is jointly continuous;\\
(\lowercase \expandafter {\romannumeral 2}) $p$ satisfies the Kolmogorov forward equation
\begin{equation}\label{forward}
\begin{split}
\frac{\partial p(x,t|x_0,0)}{\partial t}
=\nabla(\nabla U(x)p(x,t|x_0,0))
+\frac12\sigma^2\triangle p(x,t|x_0,0),
\end{split}
\end{equation}
and the Kolmogorov backward equation
\begin{equation}\label{backward}
\begin{split}
\frac{\partial p(x_T,T|x,t)}{\partial t}
=\nabla U(x)\cdot\nabla p(x_T,T|x,t)
-\frac12\sigma^2\triangle p(x_T,T|x,t).
\end{split}
\end{equation}

Since $X$ is strongly Markovian, the Chapman--Kolmogorov equation holds:
\begin{equation*}
\begin{split}
p(y,T|x_0,0)=\int_{\mathbb R^n}p(z,T-s|x_0,0)p(y,s|z,0)dz,
\end{split}
\end{equation*}
for all $0<s<T$ and $y\in\{x\mid p(x,T|x_0,0)>0\}$.

Let $x_0$ be the initial state of \eqref{firstequation} and $x_T$ a target state. Define
\begin{equation*}
\begin{split}
C_{x_0}[0,T]
=\{\phi:[0,T]\to\mathbb R^n~\mbox{continuous},~\phi(0)=x_0\}.
\end{split}
\end{equation*}
Except when $x_0=0$, this is not a vector space. It inherits the subspace topology from $C[0,T]$, with Borel $\sigma$-algebra $\mathcal B^{x_0}_{[0,T]}$. Equivalently, $\mathcal B^{x_0}_{[0,T]}$ is generated by cylinder sets
\begin{equation*}
\begin{split}
I=\{\phi\in C_{x_0}[0,T]\mid \phi(t_1)\in E_1,\cdots,\phi(t_n)\in E_n\},
\end{split}
\end{equation*}
where $0\le t_1<\cdots<t_n\le T$ and each $E_i$ is Borel in $\mathbb R^n$ \cite{Karatzas1991}.

For $\psi\in C_{x_0}[0,T]$ and $\delta>0$, define the open and closed tubes
\begin{equation*}
\begin{split}
K_T(\psi,\delta)
=\{\phi\in C_{x_0}[0,T]\mid \|\psi-\phi\|_T<\delta\},
\end{split}
\end{equation*}
and
\begin{equation*}
\begin{split}
\bar K_T(\psi,\delta)
=\{\phi\in C_{x_0}[0,T]\mid \|\psi-\phi\|_T\le\delta\}.
\end{split}
\end{equation*}
Denote by $B_\rho(x)$ and $\bar B_\rho(x)$ the open and closed balls in $\mathbb R^n$.

The law of $X$ on $(C_{x_0}[0,T],\mathcal B^{x_0}_{[0,T]})$ is
\begin{equation*}
\begin{split}
\mu_X^{x_0}(B)
=\mathbb P^{x_0}(\{\omega\in\Omega\mid X(\omega)\in B\}),
\end{split}
\end{equation*}
for $B\in\mathcal B^{x_0}_{[0,T]}$. The measure associated with $\sigma W$ is the Wiener measure $\mu_{\sigma W}^{x_0}$. Comparing tube probabilities $\mu_X^{x_0}(\bar K_T(\psi,\delta))$ for different $\psi$ leads naturally to the notion of most probable transition paths.

\begin{definition}[Most probable transition path]
For \eqref{firstequation}, the most probable transition path connecting $x_0$ and $x_T$ is a curve $\psi^*$ minimizing the OM functional \eqref{OM} over
\begin{equation*}
\begin{split}
C^2_{x_0,x_T}[0,T]
:=\{\psi:[0,T]\to\mathbb R^n\mid
\dot\psi,\ddot\psi~\mbox{continuous},~
\psi(0)=x_0,~
\psi(T)=x_T\},
\end{split}
\end{equation*}
that is, $S_X^{OM}(\psi^*)
=\inf_{\psi\in C^2_{x_0,x_T}[0,T]}S_X^{OM}(\psi)$.
\end{definition}

By \eqref{OMapproximate}, this is equivalent to
\begin{equation}\label{MPTP-2}
\begin{split}
\lim_{\delta\downarrow0}
\frac{\mu_X^{x_0}(K_T(\psi^*,\delta))}
{\mu_X^{x_0}(K_T(\psi,\delta))}
\ge1,
\end{split}
\end{equation}
for all $\psi\in C^2_{x_0,x_T}[0,T]$. Open tubes in \eqref{MPTP-2} may also be replaced by closed tubes; see \cite[Theorem 9.1]{Ikeda1980}, \cite[Section 4]{Durr1978}, and \cite[Theorem 1]{Zeitouni1988}.

\begin{lemma}[Approximation for probabilities of closed tubes \cite{huang2021estimating,huang2024most}]\label{cylinderset}
Let $\mu$ be a probability measure on $(C_{x_0}[0,T],\mathcal B^{x_0}_{[0,T]})$. For any $\psi\in C_{x_0}[0,T]$ and $\delta>0$, there exists a sequence of closed cylinder sets $\{\bar I_n\}_{n=1}^\infty$ such that
\begin{equation*}
\begin{split}
\mu(\bar K_T(\psi,\delta))
=\lim_{n\to\infty}\mu(\bar I_n).
\end{split}
\end{equation*}
\end{lemma}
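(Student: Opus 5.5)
The plan is to approximate the closed tube by finite-dimensional closed cylinder sets obtained from a dense countable set of times, and then use continuity of the measure from above. Fix an enumeration of the dyadic rationals in $[0,T]$ (together with $T$) and let $D_n=\{t_1^n<\cdots<t_{k_n}^n\}$ be the first $n$ points of this enumeration, arranged so that $D_n\subset D_{n+1}$ and $\bigcup_n D_n$ is dense in $[0,T]$. Define
\[
\bar I_n=\{\phi\in C_{x_0}[0,T]\mid \phi(t_i^n)\in \bar B_\delta(\psi(t_i^n)),\ i=1,\dots,k_n\}.
\]
Each $\bar I_n$ is a cylinder set whose base sets are closed balls. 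Since evaluation maps are continuous, each $\bar I_n$ is closed in $C_{x_0}[0,T]$ and lies in $\mathcal B^{x_0}_{[0,T]}$.

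The key step is to show that the sets are decreasing with
\[
\bigcap_{n}\bar I_n=\bar K_T(\psi,\delta).
\]
Monotonicity $\bar I_{n+1}\subset \bar I_n$ follows from $D_n\subset D_{n+1}$. The inclusion $\bar K_T(\psi,\delta)\subset\bar I_n$ for every $n$ is immediate, because $\|\phi-\psi\|_T\le\delta$ bounds the deviation at every time. For the reverse inclusion, take $\phi\in\bigcap_n\bar I_n$. Then $|\phi(t)-\psi(t)|\le\delta$ for all $t$ in the dense set $\bigcup_n D_n$. Since $t\mapsto|\phi(t)-\psi(t)|$ is continuous, this bound extends to all $t\in[0,T]$, so $\|\phi-\psi\|_T\le\delta$.

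Continuity of the probability measure $\mu$ from above, applicable because $\mu(\bar I_1)\le1<\infty$, then gives $\mu(\bar K_T(\psi,\delta))=\lim_{n\to\infty}\mu(\bar I_n)$.

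The only delicate point is the use of closed balls rather than open ones. If open balls were used, the density argument would fail: a pointwise bound $|\phi(t)-\psi(t)|<\delta$ on a dense set passes to the limit only as $\le\delta$, so the intersection would not be the closed tube. With closed balls the limit argument goes through as described. One should also note that $\psi(0)=x_0=\phi(0)$ for every $\phi\in C_{x_0}[0,T]$, so including $t=0$ in the time sets is harmless. No property of $\mu$ beyond countable additivity is needed.
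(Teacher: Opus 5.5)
Your proof is correct and follows the standard argument behind this lemma (the paper gives no proof of its own and defers to \cite{huang2021estimating,huang2024most}, which proceed the same way): closed cylinder sets over a nested dense dyadic time grid decrease to $\bar K_T(\psi,\delta)$ by continuity of $\phi-\psi$, and continuity of the finite measure $\mu$ from above then gives the limit. Your remark on why closed rather than open balls are needed is also accurate.
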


Consider the SDE in $\mathbb R^n$
\begin{equation}\label{SDEt}
dZ_t=f(t,Z_t)dt+\sigma dB_t,\quad Z_0=x_0,
\end{equation}
and its deterministic counterpart
\begin{equation}\label{MPPequ}
d\psi(t)=f(t,\psi(t))dt,\quad \psi(0)=x_0,
\end{equation}
where $f:[0,T]\times\mathbb R^n\to\mathbb R^n$. Motivated by \eqref{MPTP-2}, we define most probable paths (MPPs). Unlike MPTPs, MPPs are conditioned only on the initial state.

\begin{definition}[Most probable paths]
A curve $\psi^*\in C^2_{x_0}[0,T]$ is called a most probable path of \eqref{SDEt} if
\begin{equation}\label{eqn-1}
\begin{split}
\lim_{\delta\downarrow0}
\frac{\mu_Z^{x_0}(\bar K_T(\psi^*,\delta))}
{\mu_Z^{x_0}(\bar K_T(\psi,\delta))}
\ge1,
\quad \forall\,\psi\in C^2_{x_0}[0,T].
\end{split}
\end{equation}
\end{definition}

\begin{lemma}[Most probable paths \& 1st order ODEs \cite{huang2024most}]\label{MPP}
Assume weak existence and uniqueness hold for \eqref{SDEt} on $[0,T]$. Then $\psi^*\in C^2_{x_0}[0,T]$ is a most probable path of \eqref{SDEt} if and only if it solves \eqref{MPPequ}, either when the drift is linear or in the small-noise limit $\sigma\to0$.
\end{lemma}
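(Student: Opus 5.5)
The strategy is to reduce the tube-probability ratio in \eqref{eqn-1} to an Onsager--Machlup type functional for the time-dependent drift $f$. Then we identify the minimizers of that functional in the two regimes: linear drift, and small noise. The starting point is the generalization of \eqref{OMapproximate} to \eqref{SDEt}, obtained by the same Girsanov argument. For $\psi\in C^2_{x_0}[0,T]$ we apply Girsanov's theorem (justified by weak existence and uniqueness) to shift the measure so that $Z-\psi$ becomes $\sigma B$ under the new measure. We then expand the Radon--Nikodym density on the tube $\bar K_T(\psi,\delta)$. Three ingredients handle the expansion. First, we use It\^o's formula to rewrite the stochastic integral $\int_0^T \sigma^{-2}(f(t,\psi+\sigma B)-\dot\psi)\cdot \sigma dB_t$, removing the parts that are not small on the tube. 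Second, we apply Taylor expansion of $f(t,\cdot)$ around $\psi(t)$. Third, we use the standard fact (Ikeda--Watanabe, D\"urr--Bach) that conditionally on $\|\sigma B\|_T\le\delta$, exponential moments of terms $\int_0^T c\,B_t\,dB_t$ and $\int_0^T\langle A(t)B_t,dB_t\rangle$ with $A$ symmetric tend to $1$, up to the explicit trace correction. Lemma \ref{cylinderset} lets us pass between closed tubes and cylinder approximations wherever measurability or limits need care. The output is
\[
\mu_Z^{x_0}(\bar K_T(\psi,\delta))\sim \exp\big(-S^{OM}_Z(\psi)\big)\,\mu^{0}_{\sigma W}(\bar K_T(0,\delta)),\qquad
S^{OM}_Z(\psi)=\frac12\int_0^T\Big[\frac{|\dot\psi-f(t,\psi)|^2}{\sigma^2}+\divg_x f(t,\psi)\Big]dt .
\]
Since the Wiener factor does not depend on $\psi$, condition \eqref{eqn-1} is equivalent to $\psi^*$ minimizing $S^{OM}_Z$ over $C^2_{x_0}[0,T]$, with a free right endpoint.

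Next we analyze the minimizers. \emph{Linear drift}, $f(t,x)=A(t)x+b(t)$: here $\divg_x f=\operatorname{tr}A(t)$ does not depend on $\psi$. The functional therefore reduces, up to a constant, to $\frac1{2\sigma^2}\int_0^T|\dot\psi-f(t,\psi)|^2dt\ge0$. This quantity vanishes exactly when $\psi$ solves \eqref{MPPequ}, and by uniqueness of the linear ODE that solution is the unique minimizer. For the converse direction, any minimizer must attain the value $0$, because the ODE solution is admissible and belongs to $C^2$ when $A$ and $b$ are $C^1$, which we assume. \emph{Small noise}: multiply by $\sigma^2$, giving $\sigma^2 S^{OM}_Z(\psi)=\frac12\int|\dot\psi-f|^2+\frac{\sigma^2}{2}\int\divg f$. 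As $\sigma\to0$ the second term is $O(\sigma^2)$, uniformly on bounded sets of paths. We interpret ``in the small-noise limit'' as $\Gamma$-convergence, or equivalently as comparing ratios in \eqref{eqn-1} at leading exponential order $\exp(-\sigma^{-2}\cdot)$. In that sense the limiting functional is the Freidlin--Wentzell action, whose unique zero is the solution of \eqref{MPPequ}. This gives both directions. We can also check the claim through the Euler--Lagrange equation with free-endpoint transversality $\dot\psi(T)=f(T,\psi(T))$. In the linear case the combination $\dot\psi-f\equiv0$ solves it. In the nonlinear case the residual forcing $\frac{\sigma^2}{2}\nabla\divg f$ vanishes in the limit.

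The main obstacle will be the first step in the nonlinear case: controlling the conditional exponential moments of the cross terms on shrinking tubes, uniformly in $\delta$, together with the It\^o correction that generates $\divg f$. I would handle it as D\"urr--Bach do. Split the density into a finite product of factors, show each factor's conditional expectation tends to $1$, and bound each one using the independence of the radial and angular parts of $B$, or via a Gaussian correlation inequality. A secondary subtlety is making precise what ``if and only if in the limit $\sigma\to0$'' means. I would state it as convergence of minimizers of $S^{OM}_Z$ to the solution of \eqref{MPPequ}.
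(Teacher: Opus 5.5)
Your argument is correct and is essentially the one behind this lemma; the book itself gives no proof and only cites \cite{huang2024most}. Like that argument, you reduce \eqref{eqn-1} to minimizing the time-dependent Onsager--Machlup functional $\frac12\int_0^T\bigl[\sigma^{-2}|\dot\psi-f(t,\psi)|^2+\divg_x f(t,\psi)\bigr]dt$ with free right endpoint, then observe that the divergence term is path-independent for linear drift and of relative order $\sigma^2$ in the small-noise limit. Your nonnegativity argument in the linear case gives both directions directly, without an Euler--Lagrange/transversality analysis; you should, however, state explicitly the $C^{1,2}$-type regularity of $f$ that the tube asymptotics require, since weak existence and uniqueness alone do not even make $\divg_x f$ meaningful.
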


\subsection{Markovian Bridges and Most Probable Transition Paths}\label{Markovianbridge}

The theoretical framework established in Lemma \ref{cylinderset} permits the approximation of tube probabilities via cylinder sets. Consequently, a rigorous analysis of the finite-dimensional distributions characterizing Markovian bridges becomes an indispensable preliminary step. Let us recall the fixation of the initial state $x_0\in\mathbb R^n$. Within the current mathematical configuration, the conditional probability law $\mathbb{P}^{x_0}(X\in\cdot \mid X_T)$ is known to admit a regular version; mathematically, it defines a regular conditional distribution for the process $X$ given the terminal state $x_T$ under the probability measure $\mathbb{P}^{x_0}$ \cite{Chaumont2011,Fitzsimmons1993}. We designate $\mu_{X}^{x_0,\cdot}$ as the associated probability kernel mapping from $\mathbb R^n$ into $C_{x_0}[0,T]$, which will be referred to as the \emph{bridge measure}. Explicitly, for $(\mathbb P^{x_0}\circ X_T^{-1})$-almost every $x_T\in\mathbb R^n$ and any Borel subset $B\in \mathcal{B}^{x_0}_{[0,T]}$, this relation implies:
\begin{equation*}
  \mu_{X}^{x_0, x_T}(B) = \mathbb{P}^{x_0}(X\in B \mid X_T = x_T). 
\end{equation*}

Evaluated under the conditional measure $\mathbb{P}^{x_0}(\cdot\mid X_T=x_T)$, the stochastic trajectory $\{X_t\}_{0\leq t<T}$ functions as the specific $(x_0,T,x_T)$-bridge deduced from the original process $X$. Notably, this derived bridge strictly preserves the strong Markov property \cite{Chaumont2011,Fitzsimmons1993}, possessing the transition densities:
\begin{equation}\label{conditionalp}
\begin{split}
p^{x_0,x_T}(y,t|x,s)& =\frac{p(y,t|x,s)p(x_T,T|y,t)}{p(x_T,T|x,s)},~~0\leq s<t<T.
\end{split}
\end{equation}
Furthermore, it holds trivially that $\mu_{X}^{x_0,x_T}(\{\psi\in C_{x_0}[0,T]\mid \psi(T)=x_T\})=1$.

Considering an arbitrary cylinder set $I=\{\psi\in C_{x_0}[0,T]\mid\psi(t_1)\in E_1,\cdots,\psi(t_n)\in E_n\}$ structured with temporal nodes $0< t_1<t_2<\cdots<t_n<T$ and spatial Borel constraints $E_i\subset\mathbb{R}^n$, its probabilistic mass is evaluated as:
\begin{equation}\label{finitedistribution}
\begin{split}
\mu_X^{x_0,x_T}(I)
=&\int_{\{x_i\in E_i\}}\frac{p(x_1,t_1|x_0,0)p(x_2,t_2|x_1,t_1)\cdots p(x_T,T|x_{n},t_{n})}{p(x_T,T|x_0,0)}dx_1\cdots dx_{n}.
\end{split}
\end{equation}

By synthesizing the fundamental relations provided in $(\ref{forward})$, $(\ref{backward})$, and $(\ref{conditionalp})$, one can deduce that the bridge transition density function $p^{x_0,x_T}(x,t|x_0,0)$ intrinsically governs the following partial differential equation \cite{Cetin2016}:
\begin{equation*}
\begin{split}
\frac{\partial p^{x_0,x_T}(x,t|x_0,0)}{\partial t}= \nabla\left[ (\nabla U(x) - \sigma^2\nabla \log  p(x_T,T|x,t))p^{x_0,x_T}(x,t|x_0,0) \right]+\frac{1}{2}\sigma^2\triangle p^{x_0,x_T}(x,t|x_0,0).
\end{split}
\end{equation*}
Throughout the remainder of this exposition, any spatial derivative operator acting on $\log  p$ (or $p$) is explicitly executed with respect to the third argument of the function, e.g., $\nabla_x \log  p(\cdot, \cdot| x,\cdot)$. Structurally, this PDE mirrors the classical Fokker-Planck equation. This analytical parallel allows us to systematically pair the transition density $p^{x_0,x_T}(x,t|x_0,0)$ with a novel $n$-dimensional SDE. This derived SDE is constructed on a sufficiently expansive probability space $(\tilde{\Omega},\mathcal{\tilde F}, \{\tilde{\mathcal{F}}_t\}_{t\ge0}, \mathbb{\tilde P})$ capable of supporting an independent $n$-dimensional Brownian motion $\tilde{B}=(\tilde{B}^1,\cdots,\tilde{B}^n)$:
\begin{equation}\label{newsde}
   dY_t=\left[ - \nabla U(Y_t)+\sigma^2\nabla\log  p(x_T,T|Y_t,t) \right]dt+\sigma d\tilde{B}_t,\quad t\in[0,T).
\end{equation}
Equation \eqref{newsde} is hereafter designated as the \emph{bridge SDE} appended to the core dynamics \eqref{firstequation}. From a historical and probabilistic standpoint, this elegant formulation was pioneered by Doob \cite{Doob1957} and is predominantly recognized as the \emph{Doob h-transform} of the SDE \eqref{firstequation}. To maintain notational elegance, the specific drift vector driving this bridge SDE \eqref{newsde} is encapsulated by:
\begin{equation}\label{m-drift}
f(t,x) := - \nabla U(x)+\sigma^2\nabla\log  p(x_T,T|x,t),
\end{equation}
which we shall conventionally label the modified drift.

Rigorous proofs establishing both the weak and strong well-posedness of \eqref{newsde} were presented in \cite{Cetin2016}, contingent upon relatively modest analytical constraints. In particular, provided that Assumption \ref{wellposedfirst} is satisfied (alongside the supplementary Assumption \ref{assumptiononEU} when the spatial dimension $k\geq2$), the strict existence and uniqueness of a strong solution to \eqref{newsde} are mathematically assured (cf. \cite[Theorem 4.1]{Cetin2016}). Following our previous notational conventions, let $\mathbb{\tilde P}^{x_0}$ denote the conditionally restricted probability $\mathbb{\tilde P}(\cdot\mid Y_0=x_0)$. Consequently, one arrives at the definitive identity for any arbitrarily chosen Borel subset $E\subset\mathbb{R}^n$:
\begin{equation}\label{densityY}
\begin{split}
\mathbb{\tilde P}^{x_0}(Y_t\in E)=\int_E\frac{p(y,t|x_0,0)p(x_T,T|y,t)}{p(x_T,T|x_0,0)}dy, \quad 0<t<T,
\end{split}
\end{equation}
which inherently guarantees the terminal constraint $\mathbb{\tilde P}^{x_0}(Y_T=x_T)=1$.

\begin{assumption}\label{assumptiononEU}
  For multi-dimensional cases where $n\geq2$, we impose the additional criteria that the strict positivity condition $p(x_T,T|x_0,0)>0$ holds, and that the potential scalar curvature is bounded from below as $-\triangle U \ge \xi$ for some constant $\xi\in\mathbb R$. Furthermore, the mapped function $h(t,x)=p(x_T,T|x,t)$ must belong to the continuity class $C^{1,2}([0,T)\times\mathbb R^n)$.
\end{assumption}

\begin{remark}\label{remark-1}
(i) It is crucial to note that the foundational stochastic architecture $(\tilde{\Omega},\mathcal{\tilde F}, \{\tilde{\mathcal{F}}_t\}_{t\ge0}, \mathbb{\tilde P})$ and the driving noise $\tilde W$ utilized for the bridge SDE \eqref{newsde} are \emph{not} rigidly required to be identical to the underlying configuration $(\Omega,\mathcal{F}, \{\mathcal{F}_t\}_{t\ge0}, \mathbb{P})$, $W$ supporting the unconditioned SDE \eqref{firstequation}. This flexibility arises because the strong well-posedness arguments articulated in Lemmas \ref{strong-X} critically rely on the classical Yamada--Watanabe framework. Such a framework dictates that as long as the supporting probability space is sufficiently rich to accommodate a Brownian motion, the corresponding SDE can reliably admit a uniquely defined strong solution (compare with \cite[Section 5.3]{Karatzas1991}).

(ii) When contrasting the bridge SDE \eqref{newsde} with the fundamental unconditioned system \eqref{firstequation}, it becomes glaringly apparent that the appended virtual potential term $\sigma^2\log  p(x_T,T|x,t)$ acts as the definitive ``driving force'' that relentlessly pulls the modified process $Y$ toward the singular target $x_T$ at precisely $t=T$. In the context of physics, this dynamic modification translates to injecting a synthetic control potential into an otherwise non-equilibrium thermodynamic ensemble, designed specifically to corral (almost) every stochastic path from its originating metastable location directly into a designated terminal metastable configuration.
\end{remark}

Within the scope of this specific subsection, we establish a pivotal theoretical link that serves as the architectural foundation for our primary theorem presented subsequently. This critical relation is encapsulated in the ensuing lemma:
\begin{lemma}[OM functional \& bridge measures]\label{OMderived}
There exists an absolute strictly positive constant $C>0$ guaranteeing that, for any smooth reference path $\psi\in C^2_{x_0,x_T}[0,T]$, the following asymptotic proportionality holds valid:
\begin{equation*}
\mu_X^{x_0,x_T}(\bar{K}_T(\psi,\delta))\sim C\exp\left( -S^{OM}_X(\psi) \right)\ \mu_{\sigma W}^{0,0}(\bar{K}_T(0,\delta)) \quad \text{as } \delta\downarrow0.
\end{equation*}
\end{lemma}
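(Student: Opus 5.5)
The plan is to reduce the bridge tube probability to the unconditioned tube probability near the endpoint. The unconditioned asymptotic is the classical Onsager--Machlup estimate \eqref{OMapproximate}, which we take from \cite{Durr1978,Ikeda1980}.

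First, by Lemma \ref{cylinderset}, the bridge tube $\bar K_T(\psi,\delta)$ is approximated by closed cylinder sets. On these sets the finite-dimensional distributions \eqref{finitedistribution} show that the bridge measure is the unconditioned law $\mu_X^{x_0}$ disintegrated along the terminal value and divided by $p(x_T,T|x_0,0)$. Passing to the limit, this gives
\begin{equation*}
\mu_X^{x_0,x_T}(\bar K_T(\psi,\delta))=\lim_{\epsilon\downarrow0}\frac{\mu_X^{x_0}\big(\bar K_T(\psi,\delta)\cap\{|\phi(T)-x_T|<\epsilon\}\big)}{\mathbb P^{x_0}(|X_T-x_T|<\epsilon)}.
\end{equation*}

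Second, we remove the drift by Girsanov's theorem, using $U\in C^3$ from Assumption \ref{wellposedfirst}. The density of $\mu_X^{x_0}$ relative to the law of $\psi+\sigma W$ is an exponential. Its stochastic integral $\sigma^{-2}\int\langle -\nabla U(X)-\dot\psi,dX\rangle$ is rewritten with Itô's formula applied to $U(X_t)$. This is the step that produces the $-\tfrac12\triangle U$ correction in \eqref{OM}.

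Third, on the tube we have $X=\psi+\sigma W$ with $\|\sigma W\|_T\le\delta$. The deterministic part of the exponent then equals $-S_X^{OM}(\psi)+o(1)$ uniformly in $\delta$. What remains are multiple stochastic integrals such as $\int\langle \ddot\psi,dW\rangle$ and $\int \partial_{ij}U\,W^i dW^j$. We must show that, conditioned on the tube intersected with the endpoint constraint, their exponential moments tend to $1$. Thanks to the endpoint constraint, this is the Brownian-bridge version of the classical Ikeda--Watanabe/Dürr--Bach estimates, since conditioning $W$ to stay in a small ball and to end near $0$ is a symmetric event.

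Fourth, after the change of measure the numerator becomes
\begin{equation*}
\exp(-S_X^{OM}(\psi))\,\mu_{\sigma W}^{0}\big(\bar K_T(0,\delta)\cap\{|\phi(T)|<\epsilon\}\big)\,(1+o(1)).
\end{equation*}
Dividing by the Gaussian endpoint density and letting $\epsilon\to0$ yields $\mu_{\sigma W}^{0,0}(\bar K_T(0,\delta))$, up to a factor. That factor is the ratio of the Brownian endpoint density at $0$ to $p(x_T,T|x_0,0)$, namely
\begin{equation*}
C=\frac{(2\pi\sigma^2T)^{-n/2}}{p(x_T,T|x_0,0)}.
\end{equation*}
Assumption \ref{assumptiononEU} ensures the denominator is positive, and $C$ depends only on $x_0$, $x_T$, $T$, and not on $\psi$, as the lemma requires.

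The main obstacle is the third step: showing that the stochastic-integral terms have conditional exponential moments tending to one. This must hold uniformly under the doubly conditioned event (small tube and pinned endpoint), and the $\epsilon\to0$ and $\delta\to0$ limits must be interchangeable. I would first treat the linear terms using the symmetry $W\mapsto -W$, which preserves the bridge tube. For the quadratic term I would use the polarization and time-change argument of \cite[Ch.~6 \S9]{Ikeda1980}, adapted to the pinned Brownian bridge, perhaps by writing the bridge as $W_t-\tfrac tT W_T$.
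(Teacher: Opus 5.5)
Your strategy works and produces the right constant. However, your third step misidentifies what is left after your second step, so the obstacle you single out does not exist. Apply It\^o's formula to $U(\omega_t)$ and integrate $\int_0^T\langle\dot\psi_t,d\omega_t\rangle$ by parts (allowed since $\psi\in C^2$); after that no stochastic integral remains. The density of $\mu_X^{x_0}$ with respect to the law $\mu_{\psi+\sigma W}$ of $\psi+\sigma W$ is $e^{\Phi(\omega)}$ with
\begin{equation*}
\begin{aligned}
\Phi(\omega)={}&-\frac{U(\omega_T)-U(x_0)}{\sigma^2}+\frac12\int_0^T\triangle U(\omega_t)\,dt-\frac{1}{2\sigma^2}\int_0^T\bigl(|\nabla U(\omega_t)|^2-|\dot\psi_t|^2\bigr)\,dt\\
&-\frac{1}{\sigma^2}\Bigl(\langle\dot\psi_T,\omega_T\rangle-\langle\dot\psi_0,x_0\rangle-\int_0^T\langle\ddot\psi_t,\omega_t\rangle\,dt\Bigr).
\end{aligned}
\end{equation*}
This functional is continuous for $\|\cdot\|_T$ and satisfies $\Phi(\psi)=-S_X^{OM}(\psi)$. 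Hence $\eta(\delta):=\sup_{\omega\in\bar K_T(\psi,\delta)}|\Phi(\omega)+S_X^{OM}(\psi)|\to0$. For every Borel $B$, with $A_B:=\bar K_T(\psi,\delta)\cap\{\omega_T\in B\}$, you get
\begin{equation*}
e^{-S_X^{OM}(\psi)-\eta(\delta)}\,\mu_{\psi+\sigma W}(A_B)\le\mu_X^{x_0}(A_B)\le e^{-S_X^{OM}(\psi)+\eta(\delta)}\,\mu_{\psi+\sigma W}(A_B).
\end{equation*}
Taking $B=B_\epsilon(x_T)$ and using $\psi(T)=x_T$, your fourth step goes through verbatim and gives $C=(2\pi\sigma^2T)^{-n/2}/p(x_T,T|x_0,0)$.

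Because this bound holds pathwise on the whole tube, you need no conditional exponential moments, no symmetry or polarization argument on the pinned bridge, and no interchange of the $\epsilon$- and $\delta$-limits. Terms like $\int\partial_{ij}U\,W^i\,dW^j$ appear only if you Taylor-expand $\nabla U$ inside the stochastic integral, as \cite{Ikeda1980} must for non-gradient drifts. Integration by parts produces the Lebesgue integral $\int\langle\ddot\psi_t,W_t\rangle\,dt$, not $\int\langle\ddot\psi,dW\rangle$.

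What does need an argument is your first step. The $\epsilon$-limit identifies the bridge only for almost every endpoint. You must check that the version fixed by \eqref{finitedistribution} is continuous in the endpoint on tube sets, for instance via weak continuity of $y\mapsto\mu_X^{x_0,y}$ together with $\mu_X^{x_0,x_T}(\partial\bar K_T(\psi,\delta))=0$.

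As for the route: the paper does not prove the lemma; it quotes it from \cite{huang2024most} and sets it as Problem~\ref{prob1}. It does signal an approach through the cylinder-set approximation of Lemma~\ref{cylinderset} and the bridge finite-dimensional distributions \eqref{finitedistribution}. Its remark after the lemma stresses that bridge measures lose quasi-translation invariance, so the double-Girsanov method of \cite{Durr1978,Ikeda1980} cannot be run on them directly. You sidestep this by changing measure at the unconditioned level, where the shifted Wiener reference is available, and conditioning on $X_T$ only afterwards.

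That works precisely because the gradient structure gives pathwise, uniform control of the density on the tube. The aggregate asymptotic \eqref{OMapproximate} alone is not enough to pass to the endpoint-conditioned measure. Your route buys an elementary proof, two-sided bounds for each fixed $\delta$, and an explicit $C$, which the paper leaves implicit. Its price is reliance on the gradient form of the drift: for a non-gradient drift the stochastic integral survives, and you would really face Ikeda--Watanabe-type estimates under pinned conditioning. The finite-dimensional route works directly with the bridge transition densities \eqref{conditionalp}, the objects behind the bridge SDE \eqref{newsde}. Its cost is controlling the singular factor $p(x_T,T|x_n,t_n)$ as $t_n\uparrow T$.
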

\begin{remark}
In the landmark treatments found in \cite{Durr1978,Ikeda1980}, the elegant formulation of the OM action functional was extracted directly from the unconditioned measure $\mu_X^{x_0}$ via a dual application of the classical Girsanov theorem. Such a pathway is logically sound primarily because $\mu_X^{x_0}$ maintains absolute continuity with respect to the scaled Wiener measure $\mu_{\sigma W}^{x_0}$, and both underlying measures possess the property of quasi-translation invariance (an extensive elaboration is provided in \cite{Durr1978}). Conversely, the strictly conditioned bridge measures $\mu_X^{x_0,x_T}$ and $\mu_{\sigma W}^{x_0,x_T}$ completely lose this quasi-translation invariance within the path space $C_{x_0}[0,T]$. This profound structural discrepancy marks the primary divergence between the analytical approach presented here and the historical methodologies utilized in \cite{Durr1978,Ikeda1980} for deriving the OM functional.
\end{remark}

Imposing the strict conditional event that the stochastic evolution of \eqref{firstequation} successfully intercepts the exact point $x_T$ at the precise moment $T$, the resulting regular conditional probability measure (i.e., the bridge measure) $\mu_X^{x_0,x_T}$ is governed heuristically by an informal stochastic boundary value formulation, routinely categorized as a \emph{conditioned SDE} \cite{Pinski2012}:
\begin{equation}\label{sdetwobridge}\left\{
\begin{aligned}
dX_t&=  - \nabla U(X_t)dt+\sigma dB_t,\\
X_0&=x_0,\quad X_T=x_T.
\end{aligned}\right.
\end{equation}
Fundamentally, this should be interpreted as nothing more than the native unconditioned dynamics \eqref{firstequation} rigorously evaluated under the lens of the posterior probability $\mathbb{P}^{x_0}(\cdot| x_T =x_T)$. A defining boundary constraint physically differentiates the bridge SDE \eqref{newsde} from the conditioned SDE \eqref{sdetwobridge}: while the former relies solely upon an initial value formulation driven by an artificial drift, the latter is strictly handcuffed by rigid two-point boundary data specifying both initial and terminal states.

Attention must be drawn to the fact that the altered drift vector in \eqref{newsde} develops a severe singularity as the time horizon approaches $t=T$. Analytically speaking, this aggressively singular attractive well is precisely the mechanism dictating that all emergent trajectories of $Y$ must seamlessly collapse onto $x_T$ at the terminal instance $T$ \cite{Cetin2016}. Thus, the process $Y$ is intrinsically forced to ``transit'' to $x_T$ at time $T$. Consequently, from a formal optimization standpoint, the specific constraint of demanding a terminal transition becomes computationally redundant for the dynamics of $Y$. The optimization challenge is thus elegantly simplified to answering the following: evaluating all conceivable smooth paths launching from $x_0$, which single deterministic curve manifests as the supreme probabilistic mode for the trajectory of system $(\ref{newsde})$?

The continuous spatial progression of $Y$, governed by $(\ref{newsde})$, seamlessly induces an empirical measure $\mu_Y^{x_0}$ mapping over the Borel $\sigma$-algebra $\mathcal{B}^{x_0}_{[0,T]}$, formalized via its distributional law $\mu_Y^{x_0}(B)=\mathbb{\tilde P}^{x_0}\left( \{w\in\tilde{\Omega}\mid Y (\omega)\in B\} \right),\quad B\in\mathcal{B}^{x_0}_{[0,T]}$.

\begin{remark}
Armed with the a priori mathematical certainty that $\mathbb{\tilde P}^{x_0}(Y_T=x_T)=1$, it becomes unequivocally clear that any derived most probable paths characterizing the engineered system $(\ref{newsde})$ must perfectly intersect the target state $x_T$ precisely at the temporal deadline $T$.
\end{remark}

To meticulously unravel the mathematical equivalency connecting the most probable transition paths of the base system $X$ to the unconstrained most probable paths of the auxiliary system $Y$, one must leverage the analytical properties of the bridge measure $\mu_{X}^{x_0,x_T}$. Even though the dual representations $(\ref{newsde})$ and \eqref{sdetwobridge} might technically inhabit disparate background probability spaces, their respective path-space induced measures, $\mu_Y^{x_0}$ and $\mu_X^{x_0,x_T}$, share identical residence within the common measurable domain $(C_{x_0}[0,T],\mathcal{B}^{x_0}_{[0,T]})$. The ensuing lemma establishes the absolute functional equivalence bridging the measures $\mu_Y^{x_0}$ and $\mu_{X}^{x_0,x_T}$.

\begin{lemma}[Bridge measures $=$ laws of bridge SDEs]\label{equivalence}
The probabilistically derived bridge measure $\mu_{X}^{x_0,x_T}$ mathematically aligns perfectly with the law $\mu_Y^{x_0}$ governing the modified system.
\end{lemma}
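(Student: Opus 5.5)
The plan is to show that the two probability measures $\mu_Y^{x_0}$ and $\mu_X^{x_0,x_T}$ on $(C_{x_0}[0,T],\mathcal B^{x_0}_{[0,T]})$ agree on every cylinder set. Since cylinder sets form a $\pi$-system generating $\mathcal B^{x_0}_{[0,T]}$, the $\pi$–$\lambda$ (Dynkin) theorem then gives equality of the measures. Only times $0<t_1<\cdots<t_n<T$ need to be handled. The time $t=0$ is trivial because both measures charge only paths with $\psi(0)=x_0$. The terminal time $t=T$ follows afterwards: both measures are concentrated on $\{\psi(T)=x_T\}$, by $\mathbb{\tilde P}^{x_0}(Y_T=x_T)=1$ and the trivial property of the bridge measure. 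Alternatively, agreement on cylinders with times in $[0,T)$ already generates the $\sigma$-algebra restricted to continuous paths, since $\psi(T)=\lim_{t\uparrow T}\psi(t)$.

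The key step is to identify the finite-dimensional distributions of $Y$. By the strong well-posedness of \eqref{newsde} (Cetin's result, under Assumptions \ref{wellposedfirst} and \ref{assumptiononEU}), $Y$ is a Markov process on $[0,T)$. I would then show that its transition density is the function
\[
q(y,t|x,s)=\frac{p(y,t|x,s)\,p(x_T,T|y,t)}{p(x_T,T|x,s)}
\]
from \eqref{conditionalp}. Concretely, for fixed $(x,s)$, $q$ as a function of $(y,t)$ satisfies the Fokker–Planck equation with drift $f(t,x)$ from \eqref{m-drift}. This follows from the forward equation \eqref{forward} for $p(y,t|x,s)$ and the backward equation \eqref{backward} for $h(t,y)=p(x_T,T|y,t)$, by the same product-rule computation cited after \eqref{finitedistribution}, in which the cross terms $\sigma^2\nabla h\cdot\nabla p$ are exactly produced by the extra drift $\sigma^2\nabla\log h$. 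The initial condition is $q\to\delta_x$ as $t\downarrow s$.

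An alternative route, which I would actually prefer for rigor, is to verify the martingale problem. Apply Itô's formula to $h(t,X_t)$ for the original process and use \eqref{backward}. This shows that $M_t=h(t,X_t)/h(0,x_0)$ is a positive martingale on $[0,T)$. Define $d\mathbb Q=M_t\,d\mathbb P^{x_0}$ on $\mathcal F_t$. By Girsanov's theorem, under $\mathbb Q$ the process $X$ solves \eqref{newsde} with drift $-\nabla U+\sigma^2\nabla\log h$. Weak uniqueness for \eqref{newsde} then identifies the law of $X$ under $\mathbb Q$ with $\mu_Y^{x_0}$ on $\mathcal F_t$ for each $t<T$. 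Meanwhile, for a cylinder event $I$ with times up to $t_n<T$, the Markov property gives
\[
\mathbb Q(I)=\mathbb E^{x_0}\!\left[\mathbf 1_I\,\frac{p(x_T,T|X_{t_n},t_n)}{p(x_T,T|x_0,0)}\right],
\]
which is exactly \eqref{finitedistribution}. Hence $\mu_Y^{x_0}(I)=\mu_X^{x_0,x_T}(I)$.

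I expect the main obstacle to be the justification of the change of measure and the uniqueness argument near $T$, where $\nabla\log h$ blows up. There are two issues. First, one needs $M$ to be a true martingale rather than merely a local martingale. Second, one needs uniqueness of \eqref{newsde}, which only holds on $[0,T)$. I would handle both by working on $[0,T-\varepsilon]$. There $h\in C^{1,2}$ and $h>0$ by Assumption \ref{assumptiononEU}, and the martingale property follows from Chapman–Kolmogorov: $\mathbb E^{x_0}[h(t,X_t)]=\int p(z,t|x_0,0)p(x_T,T|z,t)\,dz=h(0,x_0)$, so the nonnegative local martingale (hence supermartingale) $M$ has constant expectation and is therefore a martingale. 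Uniqueness then comes from Cetin's strong well-posedness. Finally, I would let $\varepsilon\downarrow0$ and use continuity of paths to pass to $[0,T]$.
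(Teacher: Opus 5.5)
Your proof is correct. The paper does not prove Lemma~\ref{equivalence}; it attributes it to \cite{huang2024most} and leaves it as Problem~\ref{prob1}. Still, the surrounding material points to your first route: the forward equation for $p^{x_0,x_T}$ quoted from \cite{Cetin2016}, together with \eqref{densityY}. In that route you take from \cite{Cetin2016} that $Y$ is Markov with the bridge transition densities \eqref{conditionalp}. You multiply them along $t_1<\cdots<t_n$, so the factors $p(x_T,T|x_i,t_i)$ telescope into \eqref{finitedistribution}, and you conclude on the cylinder $\pi$-system. Be aware that your Fokker--Planck computation alone does not identify $q$ as the transition density of $Y$. That step needs a uniqueness theorem for the forward equation with a drift that is unbounded in $x$ and singular as $t\uparrow T$, which is exactly what the citation to \cite{Cetin2016} supplies.

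Your preferred second route, the Doob $h$-transform via Girsanov, is genuinely different and more self-contained. It never needs the transition kernel of $Y$, because it produces \eqref{finitedistribution} directly as the finite-dimensional law of $X$ under $\mathbb Q$. It uses only three ingredients:
\begin{itemize}
\item the backward equation \eqref{backward};
\item Chapman--Kolmogorov, which you use neatly to upgrade the positive local martingale $h(t,X_t)/h(0,x_0)$ to a true martingale without any Novikov-type condition;
\item uniqueness in law for \eqref{newsde} on $[0,T-\varepsilon]$.
\end{itemize}
The last ingredient does not even require Cetin's theorem. On $[0,T-\varepsilon]\times\R^n$ the drift $-\nabla U+\sigma^2\nabla h/h$ is locally Lipschitz, so pathwise uniqueness and Yamada--Watanabe apply. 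Restricting to $t_n<T$ also correctly sidesteps the singularity of the bridge law with respect to $\mathbb P^{x_0}$ on $\mathcal F_T$. Your observation $\psi(T)=\lim_{t\uparrow T}\psi(t)$ then closes the argument at the terminal time.

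Two small bookkeeping points remain.
\begin{itemize}
\item Assumption~\ref{assumptiononEU} only gives $p(x_T,T|x_0,0)>0$, not $h>0$ on $[0,T)\times\R^n$. The positivity you invoke must come from nondegeneracy of the diffusion, or be read as implicit in the definition of the modified drift \eqref{m-drift}.
\item Since $\mu_X^{x_0,\cdot}$ is a regular conditional distribution, it is determined only for almost every $x_T$. What is actually proved is equality with the canonical version defined by \eqref{finitedistribution}, for every $x_T$ with $p(x_T,T|x_0,0)>0$, which is how the paper uses it.
\end{itemize}
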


\begin{remark}\label{remark-2}

The empirically focused bridge measure $\mu_{X}^{x_0,x_T}$ implicitly encodes the complete thermodynamic (and broader statistical) profile capturing the subset of trajectories generated by the primitive process $X$ that organically happen to terminate at the designated metastable basin $x_T$. In stark contrast, the generated law $\mu_Y^{x_0}$ characterizing the bridge SDE \eqref{newsde} encapsulates identical trajectory data for the synthetic process $Y$, which is artificially steered toward the same destination state $x_T$ via an engineered auxiliary potential gradient (see Remark \ref{remark-1} for extended context). Ultimately, Lemma \ref{equivalence} profoundly validates that the underlying thermodynamic entropy frameworks governing these theoretically distinct dynamic formulations are entirely structurally isometric.
\end{remark}

Operating under the unified theoretical canopy provided by Lemmas \ref{OMderived} and \ref{equivalence}, for any competitive pairing of smooth curves $\psi_1,\psi_2\in C^2_{x_0,x_T}[0,T]$, the following precise asymptotic equality sequence is mandated:
\begin{equation*}
\begin{split}
\lim_{\delta\downarrow0} \frac{\mu_Y^{x_0}(\bar K_T(\psi_1,\delta))}{\mu_Y^{x_0}(\bar K_T(\psi_2,\delta))} =&\ \lim_{\delta\downarrow0} \frac{\mu_X^{x_0,x_T}(\bar K_T(\psi_1,\delta))}{\mu_X^{x_0,x_T}(\bar K_T(\psi_2,\delta))}
 = \lim_{\delta\downarrow0} \frac{\mu_X^{x_0}(\bar{K}_T(\psi_1,\delta))}{\mu_X^{x_0}(\bar{K}_T(\psi_2,\delta))}.
\end{split}
\end{equation*}
This chain of equalities confirms an incredibly powerful analytical shortcut: resolving the complex issue of isolating the most probable transition path belonging to the native dynamics \eqref{firstequation} can be seamlessly achieved by identifying the more accessible most probable path corresponding to the bridged formulation \eqref{newsde}. Adding to this mathematical convenience, courtesy of the mechanisms outlined in Lemma \ref{MPP}, the elusive most probable path attached to \eqref{newsde} can be analytically extracted via its corresponding deterministic ODE equivalent. These cascading theoretical deductions culminate to form the central thesis of the current investigation, officially formulated in the subsequent principal theorem.

\begin{theorem}[Sufficient \& necessary characterization for MPTPs \cite{huang2024most}]\label{maintheory}
Provided that the foundational hypotheses outlined in Assumptions \ref{wellposedfirst} and \ref{assumptiononEU} strictly hold, the following conclusions are valid: \\
(i) The targeted most probable transition path(s) corresponding to the original unconditioned stochastic environment \eqref{firstequation} map identically onto the most probable transition path(s) intrinsic to the algorithmically coupled bridge SDE (\ref{newsde}). \\
(ii) A deterministic optimal curve $\psi^*\in C^2_{x_0,x_T}[0,T]$ qualifies structurally as a most probable transition path for system \eqref{firstequation} if and only if it actively satisfies the governing first-order ordinary differential equation framework:
\begin{equation}\label{MPTP}
   d\psi^*(t)=\left[- \nabla U(\psi^*(t))+\sigma^2\nabla\log  p(x_T,T|\psi^*(t),t) \right]dt, \quad t\in(0,T),\quad \psi^*(0)=x_0,
\end{equation}
in which the kernel $p(\cdot,\cdot|\cdot,\cdot)$ strictly embodies the evolving transition density connected to the dynamic solution of \eqref{firstequation}, evaluated under the condition of possessing either a  linear drift  or undergoing an asymptotically vanishing noise intensity limit.
\end{theorem}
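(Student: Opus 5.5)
The proof chains the three lemmas that precede the theorem: Lemma \ref{OMderived}, Lemma \ref{equivalence} and Lemma \ref{MPP}. The idea is to move the tube-probability comparison from the unconditioned law $\mu_X^{x_0}$ to the bridge measure $\mu_X^{x_0,x_T}$, then to the law $\mu_Y^{x_0}$ of the bridge SDE \eqref{newsde}, and finally to the first-order ODE.

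For part (i), fix two curves $\psi_1,\psi_2\in C^2_{x_0,x_T}[0,T]$. By Lemma \ref{OMderived}, both closed-tube probabilities under $\mu_X^{x_0,x_T}$ are asymptotic to the same reference factor $C\,\mu_{\sigma W}^{0,0}(\bar K_T(0,\delta))$ times $\exp(-S_X^{OM}(\psi_i))$. Dividing, the reference factor cancels, and the ratio tends to $\exp(S_X^{OM}(\psi_2)-S_X^{OM}(\psi_1))$. By \eqref{OMapproximate}, the ratio of unconditioned tube probabilities under $\mu_X^{x_0}$ has the same limit, since the Wiener factor $\mathbb P(\|\sigma B\|_T<\delta)$ is independent of $\psi$. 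Here I use the fact quoted after \eqref{MPTP-2} that open and closed tubes give the same asymptotics. Lemma \ref{equivalence} identifies $\mu_X^{x_0,x_T}$ with $\mu_Y^{x_0}$. Together these give the displayed chain of equalities of limits preceding the theorem.

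It follows that $\psi^*$ maximizes the limiting ratio \eqref{MPTP-2} for $X$ exactly when it maximizes \eqref{eqn-1} for $Y$ among curves ending at $x_T$. It remains to remove the endpoint restriction on the $Y$ side. Since $\tilde{\mathbb P}^{x_0}(Y_T=x_T)=1$, any $\psi\in C^2_{x_0}[0,T]$ with $\psi(T)\neq x_T$ has $\mu_Y^{x_0}(\bar K_T(\psi,\delta))=0$ once $\delta<|\psi(T)-x_T|/2$. Such curves therefore never beat a competitor ending at $x_T$. Hence the MPP problem for $Y$ over $C^2_{x_0}[0,T]$ coincides with the MPTP problem over $C^2_{x_0,x_T}[0,T]$, which proves (i).

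For part (ii), apply Lemma \ref{MPP} to \eqref{SDEt} with the modified drift $f$ of \eqref{m-drift}. Weak existence and uniqueness on $[0,T)$ hold by \cite[Theorem 4.1]{Cetin2016} under Assumptions \ref{wellposedfirst} and \ref{assumptiononEU}. Lemma \ref{MPP} then says that $\psi^*$ is a most probable path of \eqref{newsde} if and only if it solves \eqref{MPTP}, in the linear-drift or small-noise regime. Combining this with (i) gives the characterization.

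The main obstacle is the singularity of $f$ at $t=T$. Lemma \ref{MPP} is stated on $[0,T]$, while $f(t,\cdot)=-\nabla U+\sigma^2\nabla\log p(x_T,T|\cdot,t)$ blows up as $t\uparrow T$; this is why \eqref{MPTP} is posed on $(0,T)$. My first approach is to apply Lemma \ref{MPP} on $[0,T-\varepsilon]$, where the drift is $C^{1,2}$ by Assumption \ref{assumptiononEU}. I would then pass $\varepsilon\downarrow0$, using the Markov property of the bridge and continuity of the tube probabilities in the horizon, and use $\psi^*(T)=x_T$ together with $C^2$ regularity to close the endpoint. A secondary concern is that the linear-drift hypothesis must be read as linearity of $f$, which holds for Ornstein--Uhlenbeck-type $U$: for quadratic $U$, $\log p$ is quadratic in $x$, so $f$ is affine in $x$ with time-dependent coefficients. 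In the small-noise case, the $\sigma^2\nabla\log p$ term should be interpreted as a fixed drift while the diffusion coefficient in Lemma \ref{MPP} is sent to zero.
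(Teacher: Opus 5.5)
Your proposal is correct and follows the paper's own route. The paper likewise combines Lemma~\ref{OMderived} and Lemma~\ref{equivalence} into the chain of equal limiting tube-probability ratios for $\mu_Y^{x_0}$, $\mu_X^{x_0,x_T}$ and $\mu_X^{x_0}$, then invokes Lemma~\ref{MPP} for the bridge SDE \eqref{newsde} to reach \eqref{MPTP}. Your extra steps fill in points the paper leaves implicit rather than departing from it: discarding curves with $\psi(T)\neq x_T$ via $\tilde{\mathbb P}^{x_0}(Y_T=x_T)=1$, truncating to $[0,T-\varepsilon]$ for the singular drift, and reading ``linear drift'' as affineness of \eqref{m-drift}.
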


\subsection{Most Probable Transition Paths and Time Reversal}\label{timereversal}
For the OM functional associated with \eqref{firstequation}, the most probable transition path, denoted by $\psi^*$, intrinsically coincides with the most probable path generated by the Markovian bridge SDE \eqref{newsde}. Consequently, this optimal path constitutes the solution to the deterministic ODE presented below (see \cite[Lemma 2.6]{huang2024most}):
\begin{equation}\label{2dMPTP}
\left\{
  \begin{aligned}
   \frac{d\psi^*(t)}{dt}=&\ - \nabla U(\psi^*(t))+\sigma^2\nabla\log  p(x_T,T|x,t)|_{x=\psi^*(t)} , \quad t\in(0,T),\\
   \psi^*(0)=&\ x_0.
    \end{aligned}\right.
\end{equation}

Given a specific time horizon $T$, the solution process $X$ governed by the system \eqref{firstequation} originates at the state $x_0$ and diffuses throughout the Euclidean space. Let $\mathcal{L}_T$ signify the probability distribution of $X$ evaluated at time $T$. By reversing the temporal flow---allowing $t$ to run backwards from $T$ to $0$---the original system \eqref{firstequation} admits the following reverse-time representation:
\begin{equation}\label{reversedSDE}
 \left\{
  \begin{aligned}
    d_*X_t &= \left[-\nabla U(X_t) - \sigma^2\nabla\log p(x,t|x_0,0)|_{x=X_t}\right]d_*t+\sigma d_*\widehat{B}_t, \quad t\in(0,T], \\
    X_T &\sim \mathcal{L}_T,
  \end{aligned}\right.
\end{equation}
in which $\widehat{B}$ acts as a standard Brownian motion under the reversed temporal flow from $T$ to $0$, and the notation $d_*$ denotes the corresponding backward-time stochastic differential. Furthermore, we define $\mu_{X}$ as the probability measure induced by the process $X$ over the path space $C_{x_0}[0,T]$.

Assuming the drift modification term $\sigma^2\nabla\log p(x,t|x_0,0)|_{x=X_t}$ is explicitly available, one can generate sample trajectories for \eqref{reversedSDE} by initializing the backward process with a random variable drawn from the distribution $\mathcal{L}_T$. Upon obtaining a specific realization for this variable, the ``initial'' configuration for \eqref{reversedSDE} becomes a deterministic state, denoted as $X_T=x_T$. Conceptually, this boundary assignment effectively conditions the forward solution process $X$ of \eqref{firstequation} to arrive exactly at $x_T$ at the terminal time $T$. As a result, the ensemble of paths generated by \eqref{reversedSDE} ``launching'' backward from $x_T$ structurally mirrors the path-space distribution of the forward ($x_0$-$T$-$x_T$) Markovian bridge derived from $X$. Hence, the dynamically conditioned process \eqref{reversedSDE} constrained by $X_T=x_T$ is distributionally identical to the forward Markovian system \eqref{newsde}, differing solely in the orientation of their time arrows. Therefore, the subsequent SDE operates as the reverse-time Markovian bridge associated with \eqref{firstequation}, effectively serving as the bridged counterpart to \eqref{reversedSDE}:
\begin{equation}\label{reversednews}
 \left\{
  \begin{aligned}
    d_*X_t &= \left[-\nabla U(X_t) - \sigma^2\nabla\log p(x,t|x_0,0)|_{x=X_t}\right]d_*t+\sigma d_*\widehat{B}_t, \quad t\in(0,T], \\
    X_T &=\ x_T.
  \end{aligned}\right.
\end{equation}

Consequently, rather than computing the MPTP directly, one can equivalently seek the MPP corresponding to the system \eqref{reversedSDE} under the initial backward condition $X_T=x_T$ (which is precisely \eqref{reversednews}). And we have the following statements.
\begin{theorem}[Sufficient \& necessary characterization for MPTPs]\label{forwardreversetheory}
Suppose that the following underlying assumptions hold valid. \\
(i) The most probable transition path(s) belonging to the primary system \eqref{firstequation} is (are) entirely consistent with the most probable transition path(s) of both the forward bridge SDE \eqref{newsde} and the reverse-time bridge SDE \eqref{reversednews}. \\
(ii) A smooth reference path $\psi^*\in C^2_{x_0,x_T}[0,T]$ functionally qualifies as a most probable transition path for \eqref{firstequation} if and only if it actively solves the forward first-order ODE:
\begin{equation*} 
 \left\{
  \begin{aligned}
   d\psi^*(t)=&\ \left[- \nabla U(\psi^*(t))+\sigma^2\nabla\log p(x_T,T|x,t)|_{x=\psi^*(t)} \right]dt, \quad t\in(0,T),\\
   \psi^*(0)=&\ x_0,
   \end{aligned}\right.
\end{equation*}
alongside the paired reverse-time ODE:
\begin{equation}\label{reverseMPTP}
 \left\{
  \begin{aligned}
   d_*\psi^*(t)=&\ \left[- \nabla U(\psi^*(t)) - \sigma^2\nabla\log p(x,t|x_0,0)|_{x=\psi^*(t)} \right]d_*t, \quad t\in(0,T),\\
   \psi^*(T)=&\ x_T,
   \end{aligned}\right.
\end{equation}
where $p(\cdot,\cdot|\cdot,\cdot)$ delineates the transition density of the solution process for \eqref{firstequation}, evaluated strictly under either a linear drift or a vanishing noise intensity limit.
\end{theorem}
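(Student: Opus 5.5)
The forward part of (i) and (ii) is already Theorem \ref{maintheory}, so the new content is the reverse-time statement. I would split the argument into two stages.

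\textbf{Stage 1: the path measures coincide.} First identify the law of the reverse-time bridge \eqref{reversednews} with the bridge measure $\mu_X^{x_0,x_T}$, and hence, by Lemma \ref{equivalence}, with $\mu_Y^{x_0}$.

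\emph{Reverse process.} By Haussmann--Pardoux time reversal, the process $\widehat X_s:=X_{T-s}$ is a diffusion with drift $\nabla U-\sigma^2\nabla\log p(\cdot,T-s|x_0,0)$ in forward time $s$. This is exactly \eqref{reversedSDE} rewritten with $d_*$.

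\emph{Finite-dimensional distributions.} Disintegrate this law with respect to $\widehat X_0=X_T$. Since the reverse process is Markov, conditioning on $X_T=x_T$ gives \eqref{reversednews}. Its finite-dimensional distributions are products of reverse transition densities
\[
\frac{p(y,t|x_0,0)\,p(x,t'|y,t)}{p(x,t'|x_0,0)},\qquad t<t' .
\]
Chaining these from $x_T$ backward, the denominators telescope and leave exactly the numerator of \eqref{finitedistribution} divided by $p(x_T,T|x_0,0)$. So the cylinder probabilities agree.

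\emph{Conclusion.} Lemma \ref{cylinderset} extends this agreement from cylinder sets to closed tubes. This gives $\mu^{x_T}_{\widehat X}(\bar K_T(\psi,\delta))=\mu_X^{x_0,x_T}(\bar K_T(\psi,\delta))$ for every $\psi$, where tubes are viewed in $C_{x_0,x_T}$ after reversing time. The ratio limits \eqref{MPTP-2} are therefore the same for all three systems, and (i) follows.

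\textbf{Stage 2: the ODE characterizations.} For (ii), the forward ODE is Theorem \ref{maintheory}(ii). For the reverse ODE, apply Lemma \ref{MPP} to the time-reversed curve $\tilde\psi(s)=\psi^*(T-s)$ and the SDE for $\widehat X$ started at $x_T$. This SDE has additive noise and drift
\[
f(s,x)=\nabla U(x)+\sigma^2\nabla\log p(x,T-s|x_0,0).
\]
Lemma \ref{MPP}, in the linear-drift or small-noise regime, says $\tilde\psi$ is an MPP iff $\dot{\tilde\psi}=f(s,\tilde\psi)$. Translating back to $t=T-s$, with $d_*t=-dt$, gives \eqref{reverseMPTP}.

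Since both ODE descriptions characterize the same set of MPTPs, a curve satisfies one iff it satisfies the other. One can also check this consistency directly. The difference of the two vector fields is
\[
\sigma^2\nabla\log\bigl[p(x_T,T|x,t)\,p(x,t|x_0,0)\bigr],
\]
which is $\sigma^2\nabla\log$ of the bridge marginal density \eqref{densityY}. In the linear or small-noise case the MPTP tracks the mode of this marginal, so the gradient vanishes along $\psi^*$.

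\textbf{Main obstacle.} The hard step is making the time reversal rigorous at the endpoints. The reverse drift is singular near $t=0$ (the $\delta_{x_0}$ initial law) and the forward bridge drift is singular near $t=T$. Conditioning the reversed process on its initial value is also only defined for a.e.\ $x_T$. I would handle this by working on $[\epsilon,T-\epsilon]$ and using the $C^{1,2}$ regularity from Assumption \ref{assumptiononEU} together with joint continuity of $p$. I would then pass $\epsilon\to0$ using only tube probabilities and cylinder-set approximation, so the singular drifts never enter a Girsanov argument. Continuity in $x_T$ of the bridge kernel, inherited from continuity of $p$, removes the a.e.\ restriction.
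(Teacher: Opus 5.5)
This is essentially the paper's argument. The text preceding the theorem conditions the time-reversed representation \eqref{reversedSDE} on $X_T=x_T$, asserts that the resulting process \eqref{reversednews} is distributionally the forward bridge, and reads off the reverse ODE from Lemma \ref{MPP}; your telescoping of reverse transition densities against \eqref{finitedistribution} simply makes that identification explicit. Fix the sign slip in Stage 1: the drift of $\widehat X_s$ is $\nabla U+\sigma^2\nabla\log p(\cdot,T-s|x_0,0)$, as you correctly write in Stage 2, and only this sign reproduces \eqref{reversedSDE}. Also note that Assumption \ref{assumptiononEU} concerns $p(x_T,T|x,t)$ in its backward variables, so the regularity of $(t,x)\mapsto p(x,t|x_0,0)$ used in your endpoint argument must be assumed or proved separately.
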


It is critical to emphasize that both $\psi^*$ and $\widehat{\psi}^*$ geometrically map to the identical physical curve---the most probable transition path. The hat notation is utilized purely as a descriptive mathematical marker to indicate its derivation via the reverse-time ODE framework.

\subsection{Lagrangian and Hamiltonian Mechanics Viewpoints}\label{sec:SM-Lag-Ham}

{\bf Lagrangian mechanics viewpoint:} Let us examine the functional expansion:
\begin{equation*} 
\begin{split}
S_{X}^{OM}(\psi)=&\  \frac{1}{2}\int_0^T \left[ \frac{|\dot{\psi}(s) + \nabla U(\psi(s))|^2}{\sigma^2} - \triangle U(\psi(s)) \right]ds\\
=&\ \frac{S (\psi)}{\sigma^2} + \frac{U(x_T)-U(x_0)}{2\sigma^2},
\end{split}
\end{equation*}
where the classical action structural components are defined by:
\begin{equation}\label{hamiltonSL}
\begin{split}
S(\psi):=&\ \int_0^T L(\psi,\dot{\psi})ds,\ 
L(x,y):=\ \frac{1}{2} |y|^2 + V(x),\\
V(x):=&\  \frac{1}{2}|\nabla U(x)|^2 - \frac{\sigma^2}{2}\triangle U(x).
\end{split}
\end{equation}
For any stipulated constant noise intensity $\sigma$, the optimal trajectory $\psi^*$ simultaneously acts as the global minimizer for the following variational optimization problem:
\begin{equation*}
\begin{split}
S(\psi^*)=\inf_{\psi\in C^2_{x_0,x_T}[0,T]}S(\psi).
\end{split}
\end{equation*}
By invoking the standard calculus of variations, $\psi^*$ is required to satisfy the governing Euler-Lagrange equation tied to \eqref{firstequation}, expressed as:
\begin{equation}\label{2dEL} 
\left\{ \begin{aligned}
       & \ddot{\psi}^* +\nabla \left(\frac{1}{2}\sigma^2\Delta U(\psi^*)-\frac{1}{2}|\nabla U(\psi^*)|^2\right)=0,\\
       & \psi^*(0)=x_0,\ \psi^*(T)=x_T.
\end{aligned}\right.
\end{equation}
This formulation can be elegantly mapped onto the following Hamiltonian dynamical system subject to boundary constraints:
\begin{equation*} 
\begin{cases}
    \dot{Q}(t)=&\ P(t),\quad \dot{P}(t)=\ \nabla V(Q(t)),\\
    Q(0)=&\ x_0,\ Q(T)=\ x_T,
\end{cases}
\end{equation*}
by setting $Q=\psi^*$ and identifying the conjugate momentum as $P=\dot{\psi}^*$.

Classical mechanical principles dictate that the total energy (representing the Hamiltonian) of the system remains strictly conserved along the optimal path $\psi^*$. Mathematically, this translates to:
\begin{equation*} 
\begin{split}
\frac{1}{2}|\dot{\psi}^*|^2 - V(\psi^*)\equiv const,\ \forall t\in[0,T].
\end{split}
\end{equation*}
Assuming the uniqueness of $\psi^*$, the invariant constant $E$ is completely predetermined by the chosen spatial boundaries $x_0$, $x_T$, combined with the transition duration $T$. Holding $x_0$ and $x_T$ fixed implies that $E$ effectively behaves as a function uniquely dependent on $T$. In such scenarios, the intrinsic relationship bridging $E$ and $T$ is governed by the integral equation $T=\int_{\gamma(\psi^*)}\frac{|d\psi^*|}{\sqrt{2K_{E(T)}(\psi^*)}}$, in which $K_{E(T)}(\psi^*)=E(T)+V(\psi^*)$ formulates the kinetic energy profile, and $\gamma(\psi^*)$ encapsulates the geometric trace of $\psi^*$.

{\bf Hamiltonian mechanics viewpoint:} Despite the concise characterization of the most probable transition path $\psi^*$ offered by the first-order ODEs \eqref{2dMPTP} and \eqref{reverseMPTP}, a significant practical hurdle remains: the exact transition density function $p(x_T,T|\cdot,\cdot)$ (or its forward counterpart $p(\cdot,\cdot|x_0,0)$) is generally inaccessible across $\mathbb{R}^n\times [0,T]$. It fundamentally obeys a Kolmogorov backward (or Fokker-Planck forward) equation---a class of partial differential equations notoriously devoid of general analytical solutions. Because of this inherent opacity, equations \eqref{2dMPTP} and \eqref{reverseMPTP} pose severe numerical challenges. Within contemporary literature, the density function $p(x_T,T|\cdot,\cdot)$ is frequently approximated by leveraging short-time $T$ asymptotics or small-noise expansion techniques. Nevertheless, extending these approximations into regimes characterized by long transition times or significant noise intensities typically yields unsatisfactory numerical artifacts.

The subsequent theorem introduces a robust alternative analytical methodology for determining the most probable transition path.

\begin{theorem}\label{MPTPinHamilton}
Under the assumptions of Theorem \ref{maintheory}, let $\psi^*$ denote the corresponding solutions to \eqref{2dMPTP} and \eqref{reverseMPTP}. By establishing the phase-space coordinates $(Q(t),P(t))=(\psi^*(t),\dot{\psi}^*(t))$ over the interval $t\in[0,T]$, it follows that the pair $(Q,P)$ strictly adheres to the ensuing Hamiltonian framework: 
\begin{equation}\label{HamiltonQP}
\left\{
\begin{aligned}
    \dot{Q}=&\ P,\quad
    \dot{P}=\ \nabla V(Q),\\
    Q(0)=&\ x_0,\ P(0)=\ -\nabla U(x_0)+\sigma^2\nabla\log  p(x_T,T|x,0)|_{x=x_0}
\end{aligned}\right.
\end{equation}
as well as its temporally reversed counterpart:
\begin{equation}\label{HamiltonQPreverse}
\left\{
\begin{aligned}
    \dot{Q}=&\ P,\quad 
    \dot{P}=\  \nabla V(Q),\\
    Q(T)=&\ x_T,\ P(T)=\ -\nabla U(x_T)-\sigma^2\nabla\log  p(x,T|x_0,0)|_{x=x_T}.
\end{aligned}\right.
\end{equation}
\end{theorem}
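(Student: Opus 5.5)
The plan has two halves. First, the most probable transition path $\psi^*$ obeys the second-order Euler--Lagrange equation \eqref{2dEL}. Second, the two first-order ODEs \eqref{2dMPTP} and \eqref{reverseMPTP} supply the initial and terminal momenta. Theorem \ref{maintheory} and Theorem \ref{forwardreversetheory} show that $\psi^*$ is the minimizer of the OM functional $S_X^{OM}$ over $C^2_{x_0,x_T}[0,T]$. By the decomposition $S_X^{OM}(\psi)=S(\psi)/\sigma^2+(U(x_T)-U(x_0))/(2\sigma^2)$, it therefore also minimizes the classical action $S(\psi)=\int_0^T L(\psi,\dot\psi)\,ds$ with $L(x,y)=\frac12|y|^2+V(x)$. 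The boundary term is constant on $C^2_{x_0,x_T}[0,T]$; it comes from $\int_0^T \dot\psi\cdot\nabla U(\psi)\,ds=U(x_T)-U(x_0)$.

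For the dynamics, I take a variation $\psi^*+\epsilon\eta$ with $\eta\in C^2$ and $\eta(0)=\eta(T)=0$. Differentiating $S$ at $\epsilon=0$ and integrating by parts gives $\int_0^T(-\ddot\psi^*+\nabla V(\psi^*))\cdot\eta\,ds=0$ for all such $\eta$. This uses $U\in C^3$ (Assumption \ref{wellposedfirst}), so that $V\in C^1$ and $\nabla V$ is continuous. The fundamental lemma of the calculus of variations then yields $\ddot\psi^*=\nabla V(\psi^*)$ on $(0,T)$, which is \eqref{2dEL}. Setting $Q=\psi^*$ and $P=\dot\psi^*$, this is exactly $\dot Q=P$, $\dot P=\nabla V(Q)$. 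By continuity of $\ddot\psi^*$ up to the endpoints, the system holds on $[0,T]$.

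For the boundary data, $Q(0)=x_0$ and $Q(T)=x_T$ hold trivially. For $P(0)$, I evaluate the forward first-order ODE \eqref{2dMPTP} as $t\downarrow0$: $\dot\psi^*(t)=-\nabla U(\psi^*(t))+\sigma^2\nabla_x\log p(x_T,T|x,t)|_{x=\psi^*(t)}$. Since $\dot\psi^*$ is continuous on $[0,T]$, it suffices that the right-hand side extends continuously to $t=0$. This follows from the regularity $h(t,x)=p(x_T,T|x,t)\in C^{1,2}([0,T)\times\mathbb R^n)$ in Assumption \ref{assumptiononEU}, together with positivity of $h$ near $(0,x_0)$, which makes $\log h$ differentiable. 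That gives $P(0)=-\nabla U(x_0)+\sigma^2\nabla\log p(x_T,T|x,0)|_{x=x_0}$. Symmetrically, I let $t\uparrow T$ in the reverse-time ODE \eqref{reverseMPTP}, where $d_*\psi^*/d_*t$ is the derivative $\dot\psi^*$ approached from the right end. Using continuity of $\nabla\log p(x,t|x_0,0)$ near $(T,x_T)$, which comes from the joint continuity and smoothness of the forward density, this gives $P(T)=-\nabla U(x_T)-\sigma^2\nabla\log p(x,T|x_0,0)|_{x=x_T}$.

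The main obstacle is the endpoint limits. The forward ODE's drift is singular at $t=T$ and the backward one is singular at $t=0$, so each ODE can only be used at its regular endpoint: the forward equation at $t=0$ and the reverse equation at $t=T$. I must also check that the backward differential convention gives $\dot\psi^*$ with the correct sign rather than $-\dot\psi^*$. For this I will use the identification in Theorem \ref{forwardreversetheory} that both ODEs describe the same physical curve. I will then confirm consistency through the time-reversal relation $\nabla\log p(x,t|x_0,0)+\nabla\log p(x_T,T|x,t)=\nabla\log p^{x_0,x_T}(x,t|x_0,0)$, which appears in the bridge Fokker--Planck equation.
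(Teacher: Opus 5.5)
Your proposal is correct and follows the paper's argument. You show that $\psi^*$ minimizes $S$, so it satisfies the Euler--Lagrange equation $\ddot\psi^*=\nabla V(\psi^*)$, which is the stated Hamiltonian system; you then read $P(0)$ off the forward ODE \eqref{2dMPTP} at $t=0$ and $P(T)$ off the reverse ODE \eqref{reverseMPTP} at $t=T$, using $d_*\psi^*/d_*t=\dot\psi^*$ for smooth curves, and your treatment of the endpoint limits is more careful than the paper's terse proof. The only slip is harmless because the term is constant on $C^2_{x_0,x_T}[0,T]$: the boundary term in the decomposition of $S_X^{OM}$ is $(U(x_T)-U(x_0))/\sigma^2$, not $(U(x_T)-U(x_0))/(2\sigma^2)$.
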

\begin{proof}
As previously established, the trajectory $\psi^*$ acts as the global minimizer for the action functional $S$ defined mathematically in \eqref{hamiltonSL}. In accordance with variational principles, it must satisfy the connected Euler-Lagrange constraints. Consequently, the optimal trajectory phase coordinates also seamlessly integrate into the corresponding Hamiltonian system $\dot{Q}=\ \frac{\partial H}{\partial P}=P,\ \dot{P}=\ -\frac{\partial H}{\partial Q}= \nabla V(Q)$, where the transformation $(Q(t),P(t))=(\psi^*(t),\dot{\psi}^*(t))$ is applied.

Therefore, the boundary conditions governing the forward evaluation from $t=0$ are rigorously defined as:
$$Q(0)=x_0,~P(0)=-\nabla U(x_0)+\sigma^2\nabla\log  p(x_T,T|x,0)|_{x=x_0},$$
whereas the conditions anchoring the backward calculation from $t=T$ are given by:
$$Q(T)=x_T,~P(T)=-\nabla U(x_T)-\sigma^2\nabla\log  p(x,T|x_0,0)|_{x=x_T}.$$
\end{proof}

\begin{remark}
Theorem \ref{MPTPinHamilton} structurally implies that the most probable transition path(s) governing a stochastic system may potentially lack geometric uniqueness; nevertheless, in scenarios where multiple optimal trajectories coexist, they are strictly mathematically constrained to share identical initial and terminal velocity vectors.
\end{remark}

Having previously demonstrated the analytical efficacy of our theoretical framework on simpler setups, such as free Brownian motion and the Ornstein-Uhlenbeck process, we now pivot to evaluate a considerably more complex, nonlinear system that concurrently manages to retain an explicit transition density formulation.
\begin{example}[Hongler's model]
The fundamental dynamics of Hongler's model are captured by the following 1-dimensional nonlinear SDE:
\begin{equation}\label{hongler}
    \begin{aligned}
        dX_t=-\frac{dU}{dx}(X_t)dt+dB_t,\quad X_0=x_0,
    \end{aligned}
\end{equation}
equipped with the structured potential:
\begin{equation*}
    \begin{aligned}
        U(x)= \frac{\sqrt{A}}{2} x^2-\log  \left[ {}_1F_1\left(\frac{B}{4\sqrt{A}}+\frac{1}{4};\frac{1}{2};\sqrt{A}x^2 \right) \right],
    \end{aligned}
\end{equation*}
where ${}_1F_1$ symbolically represents the standard confluent hypergeometric function, and the defining constants $A$ and $B$ satisfy the parameter constraints $A\geq0,\quad B\geq-\sqrt{A}$. Figure \ref{Honglerpotential} visualizes the landscape of the potential $U$ configured with the exact parameters $A=1$ and $B=-0.6$. The rigorous transition density function corresponding to the solution process of \eqref{hongler} can be analytically expressed as: 
\begin{equation*}
\begin{split}
p(x,t|y,s)=&\ \exp\left \{ \frac{\sqrt{A}}{e^{-2\sqrt{A}(t-s)}-1} \left(x-ye^{-\sqrt{A}(t-s)}\right)^2 -\frac{B}{2}(t-s) \right\} {}_1F_1\left(\frac{B}{4\sqrt{A}}+\frac{1}{4};\frac{1}{2};\sqrt{A}x^2 \right)\\
&\ \times \left[ \sqrt{ \frac{2\pi}{\sqrt{A}} \sinh\left(\sqrt{A}(t-s)\right)        }    {}_1F_1\left(\frac{B}{4\sqrt{A}}+\frac{1}{4};\frac{1}{2};\sqrt{A}y^2 \right)\right]^{-1}.
\end{split}
\end{equation*}

Consequently, the core system \eqref{hongler} can be explicitly expanded into:
\begin{equation*} 
    \begin{aligned}
        dX_t=\left(-\sqrt{A}X_t + \left(B+\sqrt{A}\right)\frac{ {}_1F_1\left(\frac{B}{4\sqrt{A}}+\frac{5}{4};\frac{3}{2};\sqrt{A}x^2 \right)}{{}_1F_1\left(\frac{B}{4\sqrt{A}}+\frac{1}{4};\frac{1}{2};\sqrt{A}x^2 \right)}X_t \right)dt+dB_t,\quad X_0=x_0.
    \end{aligned}
\end{equation*}
Upon executing algebraic simplification, the Hamiltonian system governing the target MPTP structurally reduces to $\frac{dQ}{dt}=\ P,\ 
    \frac{d P}{dt}=\ AQ$,
driven by the precise initial boundaries $Q(0)=x_0,~P(0)= -x_0 + 2e^{-\sqrt{A}T}[\sqrt{A}/(1-e^{-2\sqrt{A}T})  ](x_T-x_0e^{-\sqrt{A}T})$. Figure \ref{fig:hongler} encapsulates the empirical numerical outcomes tested under time constraints $T=5, 10,$ and $20$. These computational results conclusively demonstrate that enhancing the algorithmic precision of the initial conditions yields a correspondingly more accurate rendering of the final MPTP. Furthermore, the extreme inherent sensitivity of the resulting MPTP to the specific initial momentum configuration becomes glaringly apparent. Thus, securing a highly exact initial momentum parameter proves to be an indispensable prerequisite within our computational framework.
\begin{figure}
    \centering {\includegraphics[width=0.4\textwidth]{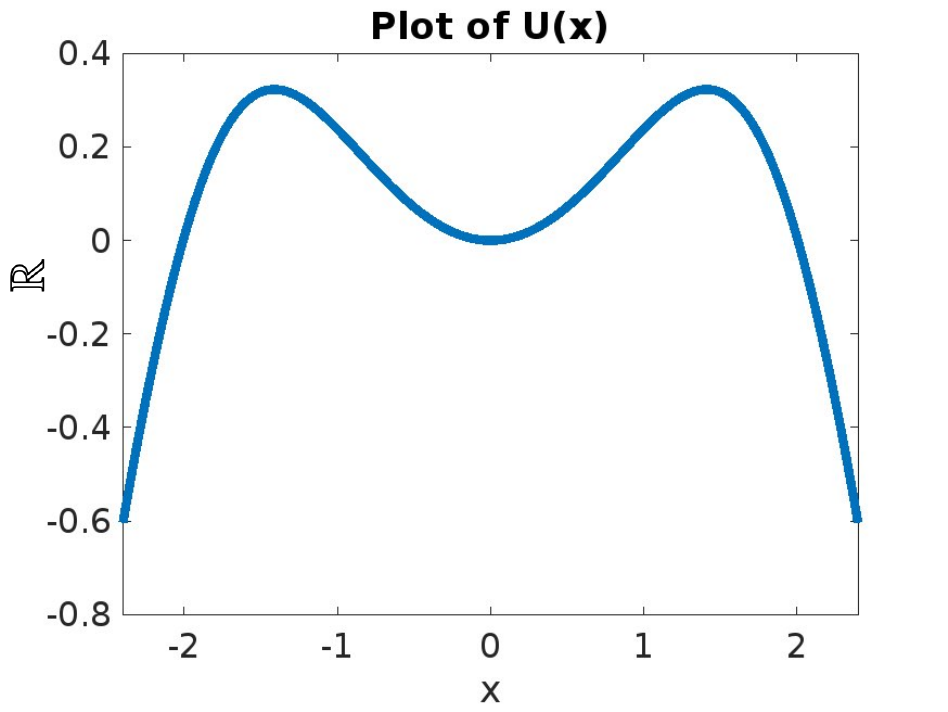} }
    \caption{Hongler's potential evaluated with the foundational parameters $A= 1,
B=-0.6$}
    \label{Honglerpotential}%
\end{figure}

\begin{figure}
    \centering {{\includegraphics[width=0.4\textwidth]{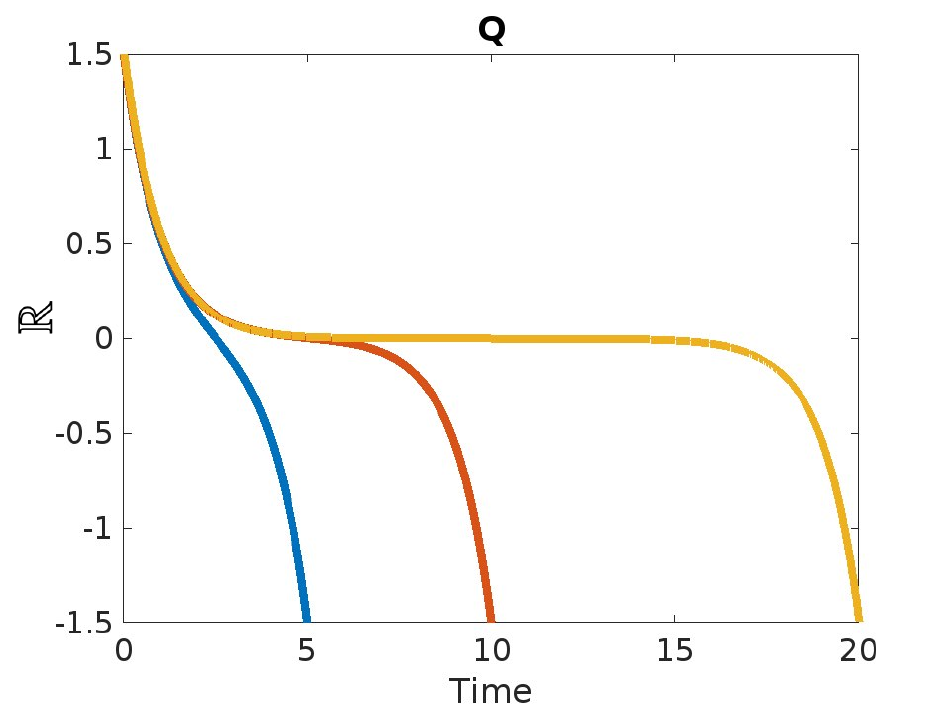} }}
    \qquad {{\includegraphics[width=0.4\textwidth]{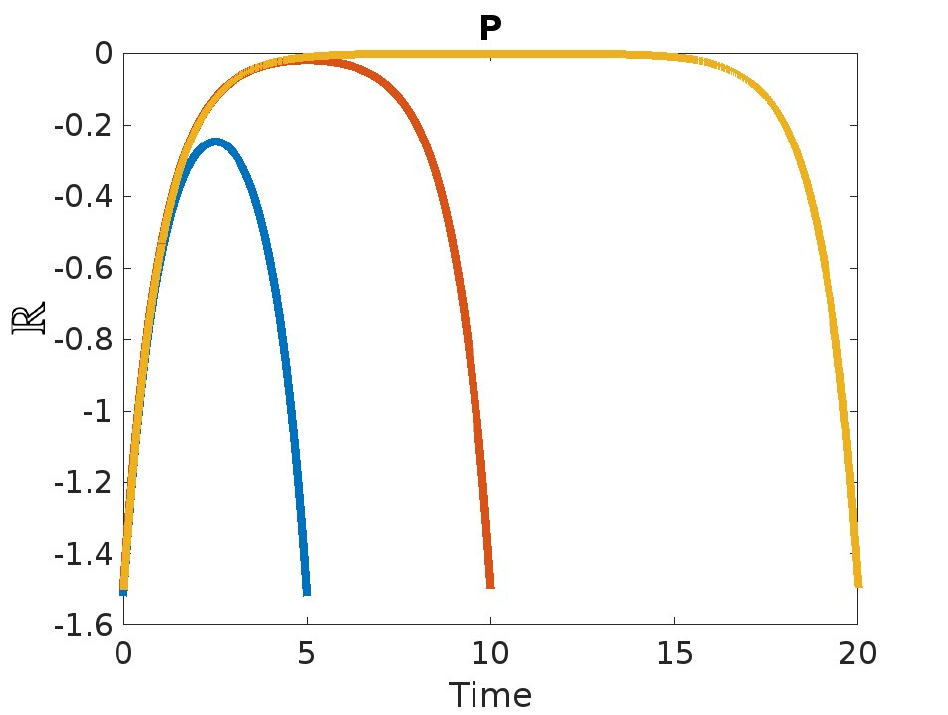} }}
    \caption{Comparative analysis of the most probable transition paths simulated for Hongler's model across varying time horizons.}
    \label{fig:hongler}
\end{figure}

\end{example}

\subsection{Most Probable Transition Paths as Characteristics Lines of PDEs}

The method of characteristics serves as a fundamental and pervasive technique within the realm of partial differential equations (PDEs) for constructing analytical solutions. Although classically deployed for first-order equations, its mathematical validity inherently encompasses broader hyperbolic PDE frameworks. Fundamentally, this approach operates by reducing a PDE into an equivalent system of ordinary differential equations (ODEs) evaluated along designated trajectories, universally recognized as characteristic curves. Subsequent integration along these paths allows one to recover the target solution, provided appropriate initial data is specified on a compatible hypersurface.

We begin by introducing an alternative representation of the Hamiltonian system governing the MPTP.
\begin{theorem}\label{MPTPinnewHamilton}
Subject to the hypotheses established in Theorem \ref{maintheory}, the spatial curves $\psi^*$ and $\widehat{\psi}^*$ qualify as the most probable transition paths for the system \eqref{firstequation} if and only if they strictly obey the ensuing systems of ordinary differential equations:
\begin{equation}\label{MPTPnew}
\left\{
  \begin{aligned}
    \frac{d\psi^*(t)}{dt}=&\ - \nabla U(\psi^*(t)) +  f_p(t),\\
    \frac{d f_p(t)}{dt}=&\ -\nabla \left( \frac{1}{2}\sigma^2 \Delta U(\psi^*(t)) \right) + \nabla^2 U(\psi^*(t))\cdot f_p(t),\\
    \psi^*(0)=&\ x_0,\quad f_p(0)=\ \sigma^2\nabla\log  p(x_T,T|x,0)|_{x=x_0},
\end{aligned}\right.
\end{equation}
and
\begin{equation*} 
\left\{
  \begin{aligned}
    \frac{d_*\psi^*(t)}{d_*t}=&\  -\nabla U(\psi^*(t))  -  \widehat{f}_p(t),\\
    \frac{d_* \widehat{f}_p(t)}{d_*t}=&\  \nabla \left( \frac{1}{2}\sigma^2 \Delta U(\psi^*(t)) \right) + \nabla^2 U(\psi^*(t))\cdot \widehat{f}_p(t),\\
    \psi^*(T)=&\ x_T,\quad \widehat{f}_p(T)=\ \sigma^2\nabla\log  p(x,T|x_0,0)|_{x=x_T},
\end{aligned}\right.
\end{equation*}
respectively, wherein $p(\cdot,\cdot|\cdot,\cdot)$ designates the transition density corresponding to the solution process of \eqref{firstequation}, and the operator $\nabla^2$ represents the Hessian matrix. 
\end{theorem}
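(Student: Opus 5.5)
The plan is to obtain \eqref{MPTPnew} from the Hamiltonian system of Theorem \ref{MPTPinHamilton} by a change of variables. We set
\[
f_p(t):=\dot\psi^*(t)+\nabla U(\psi^*(t)),
\]
so the first equation of \eqref{MPTPnew} holds by definition. By Theorem \ref{maintheory}(ii) and \eqref{2dMPTP}, $f_p(t)=\sigma^2\nabla\log p(x_T,T|x,t)|_{x=\psi^*(t)}$. At $t=0$ this gives the stated initial value $f_p(0)=\sigma^2\nabla\log p(x_T,T|x,0)|_{x=x_0}$, equivalently $P(0)=-\nabla U(x_0)+f_p(0)$ from \eqref{HamiltonQP}.

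For the evolution of $f_p$, we differentiate the definition:
\[
\dot f_p=\ddot\psi^*+\nabla^2U(\psi^*)\,\dot\psi^*.
\]
We then substitute $\ddot\psi^*=\dot P=\nabla V(\psi^*)$ from \eqref{HamiltonQP}. With $V$ as in \eqref{hamiltonSL},
\[
\nabla V=\nabla^2U\,\nabla U-\nabla\bigl(\tfrac12\sigma^2\Delta U\bigr).
\]
Inserting $\dot\psi^*=-\nabla U+f_p$, the two terms $\pm\nabla^2U\,\nabla U$ cancel. This leaves
\[
\dot f_p=-\nabla\bigl(\tfrac12\sigma^2\Delta U(\psi^*)\bigr)+\nabla^2U(\psi^*)\,f_p,
\]
which is the second line of \eqref{MPTPnew}. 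The step requires $U\in C^3$, which Assumption \ref{wellposedfirst} supplies, and the symmetry of the Hessian.

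The converse runs the same computation backwards. Given a solution $(\psi^*,f_p)$ of \eqref{MPTPnew}, we set $Q=\psi^*$ and $P=\dot\psi^*=-\nabla U(\psi^*)+f_p$. The algebra above shows $\dot P=\nabla V(Q)$, and the initial data match \eqref{HamiltonQP}. The Hamiltonian initial value problem has a unique solution because $\nabla V$ is locally Lipschitz. Hence $\psi^*$ coincides with the curve of Theorem \ref{MPTPinHamilton}, which Theorem \ref{maintheory} identifies as the MPTP.

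The reverse-time system is handled the same way using \eqref{HamiltonQPreverse}. We set $\widehat f_p:=-\dot\psi^*-\nabla U(\psi^*)$, so that $\widehat f_p=\sigma^2\nabla\log p(x,t|x_0,0)|_{x=\psi^*}$ by \eqref{reverseMPTP}. Differentiating gives $\dot{\widehat f}_p=-\nabla V(\psi^*)-\nabla^2U(\psi^*)\dot\psi^*$. Substituting $\dot\psi^*=-\nabla U-\widehat f_p$ produces the same cancellation and yields $\nabla(\frac12\sigma^2\Delta U)+\nabla^2U\,\widehat f_p$.

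The main obstacle is bookkeeping rather than analysis. In the reverse system one must fix the sign convention for $d_*/d_*t$: the displayed equations are consistent only if $d_*/d_*t$ is read as the ordinary derivative along $\psi^*$, with the backward orientation entering solely through the terminal data at $t=T$. I would check this against \eqref{reverseMPTP} before asserting the signs. The other point to verify is that the existence and uniqueness supplied by Theorem \ref{maintheory} makes the ``if'' direction legitimate, that is, that a solution of the ODE system really is the minimizer.
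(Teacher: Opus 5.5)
Your proposal is correct in substance and follows the route the paper intends. The paper gives no separate proof (the statement is left as an exercise at the end of the chapter) and presents \eqref{MPTPnew} as an alternative representation of the Hamiltonian system \eqref{HamiltonQP}; that is exactly your substitution $P=-\nabla U(Q)+f_p$. Your computations of $\dot f_p$ and $\dot{\widehat f}_p$ are right, including the cancellation of $\pm\nabla^2U\,\nabla U$. The initial and terminal data also match. Your reading of $d_*/d_*t$ as the ordinary derivative along the curve is right too. It is the only reading consistent with \eqref{HamiltonQPreverse} and with the free Brownian example, where the paper obtains $\psi^*(t)=x_T+\widehat f_p(T)(T-t)$.

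The step that does not hold as stated is the uniqueness claim in the converse. Since $\nabla V=\nabla^2U\,\nabla U-\tfrac{\sigma^2}{2}\nabla\Delta U$ involves third derivatives of $U$, Assumption~\ref{wellposedfirst} ($U\in C^3$) makes $\nabla V$ merely continuous, not locally Lipschitz. Peano then gives local existence but not uniqueness for the initial value problem \eqref{HamiltonQP}. Theorem~\ref{maintheory} cannot fill this in, because it is only a characterization and asserts neither existence nor uniqueness.

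The gap is not cosmetic. The remark following Theorem~\ref{MPTPinHamilton} explicitly allows several MPTPs sharing the same initial velocity, which is precisely non-uniqueness of that initial value problem. In that situation your argument cannot identify a solution of \eqref{MPTPnew} with an MPTP. There are two ways to repair it:
\begin{itemize}
\item assume $U\in C^4$ (or just $\nabla\Delta U$ locally Lipschitz); or
\item note that in the linear-drift case covered by Theorem~\ref{maintheory}(ii), $U$ is quadratic, so $\nabla\Delta U=0$ and $\nabla V$ is affine.
\end{itemize}
The reverse-time terminal value problem is handled in the same way. You should also state explicitly that the converse uses the existence of an MPTP, i.e.\ of a solution of \eqref{MPTP} in $C^2_{x_0,x_T}[0,T]$; this is built into the hypotheses of Theorem~\ref{MPTPinHamilton}.
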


We now proceed to establish that these most probable transition paths functionally serve as the characteristic curves for a specific class of second-order partial differential equations.
\begin{theorem}\label{relationHamiltonNS}
Presuming the validity of the conditions in Theorem \ref{maintheory}, let us examine the forward-time Hamiltonian system formulated as:
\begin{subequations}\label{Hamilton}
\begin{align}
    \frac{d\psi^*(t)}{dt}=&\ - \nabla U(\psi^*(t)) +  f_p(t), \label{Hamilton-spatial} \\
    \frac{d f_p(t)}{dt}=&\ -\nabla \left( \frac{1}{2}\sigma^2 \Delta U(\psi^*(t)) \right) + \nabla^2 U(\psi^*(t))\cdot f_p(t), \label{Hamilton-momentum}
\end{align}
\end{subequations}
alongside the adjoint reverse-time Hamiltonian system:
\begin{subequations}\label{backwardHamilton}
\begin{align}
    \frac{d_*\psi^*(t)}{d_*t}=&\ - \nabla U(\psi^*(t)) -  \widehat{f}_p(t), \label{reverseHamilton-spatial} \\
    \frac{d_* \widehat{f}_p(t)}{d_*t}=&\ \nabla \left( \frac{1}{2}\sigma^2 \Delta U(\psi^*(t)) \right) + \nabla^2 U(\psi^*(t))\cdot \widehat{f}_p(t).\label{reverseHamilton-momentum}
\end{align}
\end{subequations}

Assume that the scalar fields $F_p, \widehat{F}_p\in C^{1,2}(\R \times \R^k)$. The subsequent statements are mathematically equivalent: \\
(i) The function $F_p$ satisfies the following partial differential equation:
\begin{equation*}
\begin{split}
\frac{\partial F_p}{\partial t}-\nabla^2 U\cdot F_p - (\nabla U\cdot\nabla)F_p +  (F_p\cdot\nabla ) F_p +\frac{1}{2}\sigma^2\triangle F_p=0,\quad t\in[0,T).
\end{split}
\end{equation*}
Concurrently, $\widehat{F}_p$ complies with the following partial differential equation:
\begin{equation}\label{NS}
\begin{split}
\frac{\partial \widehat{F}_p}{\partial t}- \nabla(\triangle U) - \nabla^2 U\cdot \widehat{F}_p - (\nabla U\cdot\nabla)\widehat{F}_p -  (\widehat{F}_p\cdot\nabla ) \widehat{F}_p -\frac{1}{2}\sigma^2\triangle \widehat{F}_p=0,\quad t\in(0,T].
\end{split}
\end{equation}
(ii) For any arbitrary MPTP $\psi^*$ governing the SDE \eqref{firstequation} that conforms to \eqref{Hamilton-spatial} and \eqref{reverseHamilton-spatial} under the respective functional substitutions $f_p(t) = F_p(\psi^*(t),t)$ and $\widehat{f}_p(t) = \widehat{F}_p(\psi^*(t),t)$, the associated momentum curves $f_p$ and $\widehat{f}_p$ rigorously fulfill the differential constraints \eqref{Hamilton-momentum} and \eqref{reverseHamilton-momentum}, respectively.
\end{theorem}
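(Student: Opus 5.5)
The plan is a direct chain-rule computation along the characteristic curve, done separately for the forward and the reverse systems. For the forward case, suppose $\psi^*$ satisfies \eqref{Hamilton-spatial} with $f_p(t)=F_p(\psi^*(t),t)$. Differentiating in $t$ gives
\[
\frac{d f_p}{dt}=\partial_t F_p+(\dot\psi^*\cdot\nabla)F_p=\partial_t F_p-(\nabla U\cdot\nabla)F_p+(F_p\cdot\nabla)F_p,
\]
all evaluated at $(\psi^*(t),t)$. If $F_p$ solves the PDE in (i), I substitute $\partial_t F_p$ from it. The first-order transport terms then cancel, and what remains is
\[
\frac{df_p}{dt}=\nabla^2U\cdot F_p-\tfrac12\sigma^2\triangle F_p .
\]
This has to be matched with \eqref{Hamilton-momentum}, so I need the identity $\triangle F_p=\nabla(\triangle U)$ along the MPTP.

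To get that identity, I will use the structure of $F_p$ supplied by Theorem \ref{maintheory} and Theorem \ref{MPTPinnewHamilton}. There $F_p(x,t)=\sigma^2\nabla\log p(x_T,T|x,t)$, and $h=p(x_T,T|\cdot,\cdot)$ solves the backward equation \eqref{backward}. Setting $\phi=\sigma^2\log h$, I get
\[
\partial_t\phi=\nabla U\cdot\nabla\phi-\tfrac12\sigma^2\triangle\phi-\tfrac12|\nabla\phi|^2 .
\]
Taking the gradient of this equation yields a PDE for $F_p=\nabla\phi$. Comparing it with the PDE in (i) should isolate exactly how the diffusion term pairs with $\nabla(\triangle U)$, and I will keep track of the resulting sign conventions. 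In the linear-drift or small-noise regime stipulated in Theorem \ref{maintheory}, the term $\nabla(\triangle U)$ either vanishes or is of lower order. The cleanest route is therefore: (a) verify that the gradient of the log-transformed backward equation coincides with the PDE in (i) modulo the $\nabla(\triangle U)$ correction; (b) check that this correction is consistent with the $-\nabla(\tfrac12\sigma^2\triangle U)$ forcing in \eqref{Hamilton-momentum}.

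For the converse direction (ii)$\Rightarrow$(i), I run the same computation backwards. Requiring \eqref{Hamilton-momentum} to hold along every MPTP through every point $(x,t)$ — possible because the initial data $x_0$, $x_T$ can be varied so that these curves sweep out the domain — forces the PDE identity to hold pointwise. This is the standard method-of-characteristics equivalence, using continuity of $F_p\in C^{1,2}$.

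The reverse system is handled identically with $d_*$. I write $\widehat f_p(t)=\widehat F_p(\psi^*(t),t)$ and use \eqref{reverseHamilton-spatial}, which gives
\[
\frac{d_*\widehat f_p}{d_*t}=\partial_t\widehat F_p-(\nabla U\cdot\nabla)\widehat F_p-(\widehat F_p\cdot\nabla)\widehat F_p .
\]
I then substitute \eqref{NS}, and use the forward Kolmogorov equation \eqref{forward} for $\widehat F_p=\sigma^2\nabla\log p(\cdot,t|x_0,0)$ to get the matching second-order identity; here the extra $\nabla(\triangle U)$ term arises naturally from the divergence form of \eqref{forward}.

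The main obstacle is the second-order term. The characteristic calculation only handles first-order transport, so the Laplacian term must be rewritten using the specific structure of $F_p$ as a log-gradient of a Kolmogorov solution, together with the regime assumptions of Theorem \ref{maintheory}. I would first try the Cole–Hopf-type computation above and track the signs carefully.
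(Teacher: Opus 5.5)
Your chain-rule computations are correct, and you have located the crux yourself: along $\dot\psi^*=-\nabla U(\psi^*)+F_p(\psi^*,t)$, the PDE in (i) gives $\frac{df_p}{dt}=\nabla^2U\cdot f_p-\tfrac12\sigma^2\triangle F_p$. So \eqref{Hamilton-momentum} follows only if $\triangle F_p=\nabla(\triangle U)$ along the curve.

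\textbf{The gap.} Your plan for producing that identity cannot work. Differentiating the log-transformed backward equation for $\phi=\sigma^2\log p(x_T,T|\cdot,\cdot)$ reproduces the PDE in (i) \emph{exactly}. There is no ``$\nabla(\triangle U)$ correction'' to isolate in step (a), so the Kolmogorov structure of $F_p$ carries no information relating $\triangle F_p$ to $\nabla(\triangle U)$. Nor is the relation an identity. If it held on an open set, $\triangle(\sigma^2\log p-U)$ would be independent of $x$ there, and in one dimension the HJB equation would then force the potential $V$ of \eqref{hamiltonSL} to be a quadratic polynomial.
\begin{itemize}
\item \emph{Linear drift.} The identity holds only because both sides vanish: $U$ is quadratic, and the Gaussian density makes $F_p$ affine in $x$. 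The second fact uses the explicit $F_p$. A general $C^{1,2}$ solution of (i), e.g.\ $\sigma^2\nabla\log h$ with $h$ a sum of two transition densities to different endpoints, has $\triangle F_p\neq0$.
\item \emph{Small noise.} The residual $\tfrac12\sigma^2(\triangle F_p-\nabla\triangle U)$ is of the same order as the $\sigma^2$-term that \eqref{Hamilton-momentum} keeps, so ``lower order'' gives no exact statement.
\item \emph{Reverse case.} After substituting \eqref{NS} you still need $\triangle\widehat F_p=-\nabla(\triangle U)$. Note that the $\nabla(\triangle U)$ term, computed from \eqref{forward}, carries a factor $\sigma^2$.
\item \emph{Converse.} Imposing \eqref{Hamilton-momentum} along a sweeping family of curves (with $x_T$ fixed, since $F_p$ depends on it) yields only the first-order equation $\partial_tF_p-(\nabla U\cdot\nabla)F_p+(F_p\cdot\nabla)F_p-\nabla^2U\cdot F_p+\tfrac12\sigma^2\nabla(\triangle U)=0$. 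This differs from (i) by exactly $\tfrac12\sigma^2(\triangle F_p-\nabla\triangle U)$.
\end{itemize}

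\textbf{The missing idea.} Your forward argument never uses that $\psi^*$ is an MPTP. The paper does not extract the momentum equation from the PDE at all.
\begin{itemize}
\item It takes (ii) from Theorem \ref{MPTPinnewHamilton}, i.e.\ from the Euler--Lagrange equation \eqref{2dEL}, $\ddot\psi^*=\nabla V(\psi^*)$. In the variable $f_p=\dot\psi^*+\nabla U(\psi^*)$ (resp.\ $\widehat f_p=-\dot\psi^*-\nabla U(\psi^*)$) this is literally \eqref{Hamilton-momentum} (resp.\ \eqref{reverseHamilton-momentum}).
\item The first-order ODE \eqref{MPTP} of Theorem \ref{maintheory} identifies $f_p(t)=F_p(\psi^*(t),t)$ with $F_p=\sigma^2\nabla\log p(x_T,T|\cdot,\cdot)$.
\item Statement (i), resp.\ \eqref{NS}, is verified separately for that same field by differentiating the log-transformed backward, resp.\ forward, Kolmogorov equation.
\end{itemize}
Both statements are thus established independently for the specific fields, and the Laplacian mismatch never has to be resolved.

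If you want a genuine characteristics argument, restrict to linear drift and use the explicit affine form of $F_p$ and $\widehat F_p$, so that $\triangle F_p=\triangle\widehat F_p=0=\nabla(\triangle U)$. Otherwise your own computation shows that the viscous PDE (i) and the characteristic statement (ii) cannot be linked.
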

\begin{proof}
Recalling the transition density function $p(\cdot,\cdot|\cdot,\cdot)$ tied to the solution process $X$ of \eqref{firstequation}, the associated Kolmogorov backward equation is formulated as:
\begin{equation*} 
\begin{split}
\frac{\partial p(x_T,T|x,t)}{\partial t}= \nabla U(x)\cdot\nabla p(x_T,T|x,t)-\frac{1}{2}\sigma^2\triangle p(x_T,T|x,t),
\end{split}
\end{equation*}
subject to the singular terminal condition $p(x_T,T|x,T)=\delta_{x_T}(x)$. By introducing the logarithmic transformation $S_p(x,t)=\log p(x_T,T|x,t)$, one deduces that $S_p$ governs the following Hamilton-Jacobi-Bellman (HJB) equation:
\begin{equation*} 
\begin{split}
\frac{\partial S_p}{\partial t}-\nabla U\cdot \nabla S_p +  \frac{1}{2}\sigma^2| \nabla S_p|^2 +\frac{1}{2}\sigma^2\triangle S_p=0,\quad t\in[0,T),
\end{split}
\end{equation*}
which can be explicitly expanded as:
\begin{equation*} 
\begin{split}
\frac{\partial S_p}{\partial t}-\sum_{i=1}^k\frac{\partial U}{\partial x_i}\frac{\partial S_p}{\partial x_i} +  \frac{1}{2}\sigma^2 \sum_{i=1}^k\left(\frac{\partial S_p}{\partial x_i}\right)^2 +\frac{1}{2}\sigma^2\sum_{i=1}^k\frac{\partial^2 S_p}{\partial x^2_i}=0,\quad t\in[0,T).
\end{split}
\end{equation*}
Taking the partial derivative of both sides with respect to the spatial variables yields:
\begin{equation*} 
\begin{split}
\frac{\partial }{\partial t}\frac{\partial S_p}{\partial x_j}-\sum_{i=1}^k\frac{\partial^2 U}{\partial x_j\partial x_i}\frac{\partial S_p}{\partial x_i} -\sum_{i=1}^k\frac{\partial U}{\partial x_i}\frac{\partial^2 S_p}{\partial x_j\partial x_i} +  \sigma^2\sum_{i=1}^k \frac{\partial^2 S_p}{\partial x_j\partial x_i}\frac{\partial S_p}{\partial x_i} +\frac{1}{2}\sigma^2\sum_{i=1}^k\frac{\partial^2}{\partial x^2_i}\frac{\partial S_p}{\partial x_j}=0,\quad t\in[0,T).
\end{split}
\end{equation*}

By defining the vector field $F_p(x,t)=\sigma^2\nabla\log p(x_T,T|x,t)$, it naturally follows that $F_p$ is a solution to the ensuing PDE:
\begin{equation*}
\begin{split}
\frac{\partial F_p}{\partial t}-\nabla^2 U\cdot F_p - (\nabla U\cdot\nabla)F_p +  (F_p\cdot\nabla ) F_p +\frac{1}{2}\sigma^2\triangle F_p=0,\quad t\in[0,T).
\end{split}
\end{equation*}

Turning our attention to the Kolmogorov forward equation (commonly referred to as the Fokker-Planck equation):
\begin{equation*} 
\begin{split}
\frac{\partial p(x,t|x_0,0)}{\partial t}= \nabla(\nabla U(x)p(x,t|x_0,0))+\frac{1}{2}\sigma^2\triangle p(x,t|x_0,0).
\end{split}
\end{equation*}
Applying a similar logarithmic mapping $\widehat{S}_p(x,t)=\log p(x,t|x_0,0)$ reveals that it complies with the adjoint Hamilton-Jacobi-Bellman equation:
\begin{equation*} 
\begin{split}
\frac{\partial \widehat{S}_p}{\partial t} 
-\triangle U  -\nabla U\cdot \nabla \widehat{S}_p -  \frac{1}{2}\sigma^2| \nabla \widehat{S}_p|^2 -\frac{1}{2}\sigma^2\triangle \widehat{S}_p=0,\quad t\in(0,T],
\end{split}
\end{equation*}
or equivalently written in expanded form:
\begin{equation*} 
\begin{split}
\frac{\partial \widehat{S}_p}{\partial t}-\sum_{i=1}^k\frac{\partial^2U}{\partial x_i^2} -\sum_{i=1}^k\frac{\partial U}{\partial x_i}\frac{\partial \widehat{S}_p}{\partial x_i} -  \frac{1}{2}\sigma^2 \sum_{i=1}^k\left(\frac{\partial \widehat{S}_p}{\partial x_i}\right)^2 -\frac{1}{2}\sigma^2\sum_{i=1}^k\frac{\partial^2 \widehat{S}_p}{\partial x^2_i}=0,\quad t\in(0,T].
\end{split}
\end{equation*}
Applying spatial differentiation across this equation produces:
\begin{equation*} 
\begin{split}
&\frac{\partial }{\partial t}\frac{\partial \widehat{S}_p}{\partial x_j} - \sum_{i=1}^k\frac{\partial^3U}{\partial x_j\partial x_i^2} - \sum_{i=1}^k\frac{\partial^2 U}{\partial x_j\partial x_i}\frac{\partial S_p}{\partial x_i} -\sum_{i=1}^k\frac{\partial U}{\partial x_i}\frac{\partial^2  \widehat{S}_p}{\partial x_j\partial x_i} \\
&\quad \quad -  \sigma^2\sum_{i=1}^k \frac{\partial^2  \widehat{S}_p}{\partial x_j\partial x_i}\frac{\partial  \widehat{S}_p}{\partial x_i} -\frac{1}{2}\sigma^2\sum_{i=1}^k\frac{\partial^2}{\partial x^2_i}\frac{\partial  \widehat{S}_p}{\partial x_j}=0,\quad t\in(0,T].
\end{split}
\end{equation*}

Substituting the vector formulation $\widehat{F}_p(x,t)=\sigma^2\nabla\log p(x,t|x_0,0)$ demonstrates that $\widehat{F}_p$ acts as a valid solution to:
\begin{equation*}
\begin{split}
\frac{\partial \widehat{F}_p}{\partial t}- \nabla(\triangle U) - \nabla^2 U\cdot \widehat{F}_p - (\nabla U\cdot\nabla)\widehat{F}_p -  (\widehat{F}_p\cdot\nabla ) \widehat{F}_p -\frac{1}{2}\sigma^2\triangle \widehat{F}_p=0,\quad t\in(0,T].
\end{split}
\end{equation*}
Ultimately, the propositions asserted within this Theorem emerge as a direct logical consequence of synthesizing the preceding differential arguments with the structural properties established in Theorem \ref{MPTPinnewHamilton}.
\end{proof}

To elucidate the practical implications of this Theorem, let us analyze specific 1-dimensional scenarios governed by $U=0$ and $U=\theta\left(\frac{x^2}{2}-\mu x\right)$, wherein $\theta$ and $\mu$ denote arbitrary constants. 

\begin{example}[The free Brownian motion]
Let us examine the purely 1-dimensional free diffusion scenario characterized by a vanishing potential $U=0$.
  
The tailored variants of systems \eqref{Hamilton} and \eqref{backwardHamilton} simplify to:
\begin{equation*}
\left\{\begin{aligned}
    \frac{d\psi^*(t)}{dt}=&\  f_p(t),\\
    \frac{d  f_p(t)}{dt}=&\ 0,
\end{aligned}\right.
\ \mbox{and}\
\left\{\begin{aligned}
    \frac{d_*\psi^*(t)}{d_*t}=&\ -\widehat f_p(t),\\
    \frac{d_* \widehat f_p(t)}{d_*t}=&\ 0.
\end{aligned}\right.
\end{equation*}
Given the boundary conditions $f_p(0)$ and $\widehat{f}_p(T)$, their respective analytical solutions take the form:
\begin{equation*} 
\begin{aligned}
f_p(t)=&\ f_p(0),\ \psi^*(t)=\ x_0 + f_p(0)t,\\
\widehat f_p(t)=&\ \widehat f_p(T),\ \psi^*(t)=\ x_T+\widehat{f}_p(T)(T-t).
\end{aligned}
\end{equation*}
Consequently, invoking the equivalence mapping delineated in Theorem \ref{relationHamiltonNS}, we deduce:
\begin{equation*} 
\begin{aligned}
     F_p(\psi^*(t),t)=&\ f_p(t)=\ -\widehat f_p(t)=\ \frac{x_T-\psi^*(t)}{T-t},\quad t\in[0,T),\\
     \widehat F_p(\psi^*,t)=&\ \widehat f_p(t)=\ - f_p(t)=\ -\frac{\psi^*-x_0}{t},\quad t\in (0,T].    
\end{aligned}
\end{equation*}
It subsequently follows that $F_p$ and $\widehat F_p$ perfectly align with the classical Burgers' equations:
\begin{equation*} 
\begin{split}
\frac{\partial F_p}{\partial t}  +   F_p\frac{\partial F_p}{\partial x} +\frac{1}{2}\sigma^2\frac{\partial^2 F_p}{\partial x^2}=&\ 0,\quad t\in[0,T),\\
\frac{\partial \widehat F_p}{\partial t}  -   \widehat F_p\frac{\partial \widehat F_p}{\partial x} -\frac{1}{2}\sigma^2\frac{\partial^2 \widehat F_p}{\partial x^2}=&\ 0,\quad t\in(0,T],
\end{split}
\end{equation*}
precisely as theoretically anticipated. Approaching the problem from the inverse perspective, the explicit transition density for a standard Brownian motion characterized by noise intensity $\sigma$ under a null potential landscape is universally known as:
\begin{equation*} 
\begin{split}
p(x,t|y,s)=\frac{1}{\sqrt{2\pi \sigma^2(t-s)}}\exp\left(-\frac{(x-y)^2}{2\sigma^2(t-s)}\right),
\end{split}
\end{equation*}
Therefore, computing the logarithmic gradients yields:
\begin{equation*} 
\begin{aligned}
F_p(x,t)=&\ \sigma^2\frac{p'(x_T,T|x,0)}{p(x_T,T|x_0,0)}=\frac{x_T-x}{T-t},\\ 
\widehat F_p(x,t)=&\ \sigma^2\frac{p'(x,T|x_0,0)}{p(x_T,T|x_0,0)}=-\frac{x-x_0}{t},
\end{aligned}
\end{equation*}
Appealing once more to the theoretical linkage established in Theorem \ref{relationHamiltonNS}, the optimal trajectory $\psi^*$ is constrained by the dual representations:
\begin{equation*} 
\begin{aligned}
\psi^*(t)=\ x_0+f_p(t)t,\ 
     \psi^*(t)=\ x_T-\widehat{f}_p(t)t,
\end{aligned}
\end{equation*}
Differentiating these expressions with respect to their respective temporal flows results in:
\begin{equation*} 
\begin{aligned}
\frac{d\psi^*(t)}{dt}=\ \frac{d f_p(t)}{dt}t+f_p(t),\ 
     \frac{d_*\psi^*(t)}{d_*t}=\ -\frac{d\widehat{f}_p(t)}{dt}t-\widehat{f}_p(t).
\end{aligned}
\end{equation*}
By directly benchmarking these dynamic equations against \eqref{Hamilton-spatial} and \eqref{reverseHamilton-spatial}, one immediately extracts the identities:
\begin{equation*} 
\begin{aligned}
\frac{d f_p(t)}{dt}t=\ 0,\ 
\frac{d_*\widehat{f}_p(t)}{d_*t}t=\ 0,
\end{aligned}
\end{equation*}
valid on the open interval $t\in(0,T)$, which verify the momentum constraints \eqref{Hamilton-momentum} and \eqref{reverseHamilton-momentum}.
\end{example}

\section{The Onsager--Machlup Functional in Multiplicative Noise Cases}\label{chapter2.sec2.2}

In the preceding section, our discussion was confined to gradient systems with additive noise, where the physical landscape provided a straightforward intuition. However, to accommodate more complex non-equilibrium systems, it is imperative to extend this framework to general drift fields and state-dependent fluctuations. A rigorous foundation for this generalization can be traced back to the seminal work of Dembo and Zeitouni \cite{Zeitouni1988}, which addressed the problem of finding the maximum a posteriori (MAP) estimator for the trajectory of a diffusion process with multiplicative noise. In this more general setting, the state-dependent nature of the noise fundamentally alters the geometry of the system: unlike the flat Euclidean metric in the additive noise case, the multiplicative noise induces a non-trivial Riemannian structure on the state space, where the local ``distance'' is intrinsically weighted by the inverse of the diffusion matrix.

\subsection{Model and Settings}

Consider a $n$-dimensional diffusion process $X_t$ governed by the following Stochastic Differential Equation (SDE) in the It\^{o} sense:
\begin{equation}
dX_t = f(X_t) dt + \sigma(X_t) dB_t, \quad X_0 = x_0
\end{equation}
where $B_t$ is an $m$-dimensional standard Brownian motion ($m \ge n$). We assume the following fundamental conditions hold (see \cite{Zeitouni1987,Zeitouni1988} for more detailed assumptions):
\begin{enumerate}
    \item \textbf{Ellipticity:} The diffusion matrix $a(x) = \sigma(x)\sigma^T(x)$ is uniformly positive definite for all $x \in \mathbb{R}^n$.
    \item \textbf{Smoothness:} The coefficients $f^i(x)$ and $\sigma^{ij}(x)$ are of class $C_b^3(\mathbb{R}^n)$.
\end{enumerate}

The multiplicative noise defines a Riemannian metric $g$ on $\mathbb{R}^n$, whose components are given by the inverse of the diffusion matrix:
\begin{equation}\label{Riem-metric}
g_{ij}(x) = [a^{-1}(x)]_{ij}
\end{equation}
This metric transforms the state space into a Riemannian manifold $(\mathbb{R}^n, g)$. All subsequent differential operators (divergence, gradient) are defined with respect to this metric.

\subsection{The Lagrangian and Geometric Corrections}

The Onsager-Machlup functional is derived by considering the limit of the probability that the process $X_t$ remains within an $\epsilon$-tube around a deterministic path $\phi \in C^2([0,T]; \mathbb{R}^n)$. Zeitouni and Dembo proved that:
\begin{equation}
P\left( \sup_{0 \le t \le T} \|X_t - \phi_t\| < \epsilon \right) \sim C(\epsilon) \exp \left( -\int_0^T L(\phi_t, \dot{\phi}_t) dt \right)
\end{equation}
The Lagrangian $L(x, \dot{x})$ for the multiplicative noise case is given by:
\begin{equation}
L(x, \dot{x}) = \frac{1}{2} g_{ij}(x) (\dot{x}^i - \tilde{f}^i(x))(\dot{x}^j - \tilde{f}^j(x)) + \frac{1}{2} \text{div}_g \tilde{f}(x) - \frac{1}{12} R(x)
\end{equation}
where:
\begin{itemize}
    \item $\tilde{f}^i(x) = f^i(x) - \frac{1}{2} \sum_{l,j}((\sigma(x)\sigma^\top)^{-1})^{lj}(x)\Gamma^i_{lj}$ is the drift adjusted for the manifold structure. In terms of the Stratonovich drift $\tilde{f}(x)$, it relates to the Ito drift $f(x)$ through the connection coefficients.
    \item $\text{div}_g \tilde{f} = \frac{1}{\sqrt{|g|}} \partial_i (\sqrt{|g|} \tilde{f}^i)$ is the Beltrami-Laplace divergence of the vector field $\tilde{f}$, where $|g| = \det(g_{ij})$.
    \item $R(x)$ is the \textbf{scalar curvature} of the Riemannian manifold $(\mathbb{R}^n, g)$.
\end{itemize}

\subsection{Impact of Multiplicative Noise}

The appearance of the scalar curvature $R(x)$ is a direct consequence of the asymptotic expansion of the heat kernel on a Riemannian manifold. For a metric $g_{ij}$, the curvature is defined via the Riemann curvature tensor:
\begin{equation}
R = g^{ij} R_{ij} = g^{ij} (\partial_k \Gamma^k_{ij} - \partial_j \Gamma^k_{ik} + \Gamma^k_{kl} \Gamma^l_{ij} - \Gamma^k_{jl} \Gamma^l_{ik})
\end{equation}
where $\Gamma^i_{jk}$ are the Christoffel symbols of the second kind:
\begin{equation}
\Gamma^i_{jk} = \frac{1}{2} g^{il} (\partial_j g_{lk} + \partial_k g_{lj} - \partial_l g_{jk})
\end{equation}
The term $-\frac{1}{12}R(x)$ indicates that the local geometry of the noise affects the path probability. In regions of positive curvature, the process is ``concentrated,'' effectively lowering the cost of paths staying in that region.

Note that, the state space is equipped with a Riemannian metric defined by the inverse of the diffusion coefficient squared:
\begin{equation}
    V_\sigma(x) = (\sigma(x)\sigma^*(x))^{-1} = \frac{1}{\sigma^2(x)}
\end{equation}
In one dimension, the corresponding Christoffel symbol is:
\begin{equation}
    \Gamma = \frac{1}{2} V^{-1}_\sigma(x) \frac{d V_\sigma(x)}{dx} = \frac{1}{2} \sigma^2(x) \left( \frac{-2\sigma'(x)}{\sigma^3(x)} \right) = -\frac{\sigma'(x)}{\sigma(x)}
\end{equation}
where $\sigma'(x) = \frac{d\sigma(x)}{dx}$. The scalar curvature $R(x)$ in 1D is trivially zero.

We transform  the It\^o drift into a modified drift $f(x)$ mapping to the Stratonovich representation:
\begin{equation}
    \tilde{f}(x) = f(x) - \frac{1}{2} V^{-1}_\sigma(x) \Gamma = f(x) + \frac{1}{2} \sigma(x)\sigma'(x)
\end{equation}
The divergence operator acting on $f(x)$ under this metric is given by:
\begin{equation}
    \text{div} \tilde{f}(x) = \frac{1}{\sqrt{V_\sigma(x)}} \frac{d}{dx} \left( \tilde{f}(x) \sqrt{V_\sigma(x)} \right) = \sigma(x) \frac{d}{dx} \left( \frac{\tilde{f}(x)}{\sigma(x)} \right) = \tilde{f}'(x) - \tilde{f}(x)\frac{\sigma'(x)}{\sigma(x)}
\end{equation}

The Lagrangian associated with the OM functional is
\begin{equation}
    L(x,\dot{x})
    =\frac12\left[(\dot{x}-\tilde f(x))^2V_\sigma(x)+\mathrm{div}\,\tilde f(x)\right].
\end{equation}
For the one-dimensional case,
\begin{equation}
    L(x,\dot{x})
    =\frac{(\dot{x}-\tilde f(x))^2}{2\sigma^2(x)}
    +\frac12\tilde f'(x)
    -\frac{\tilde f(x)\sigma'(x)}{2\sigma(x)}.
\end{equation}
Using $\frac{\partial L}{\partial\dot{x}}
=\frac{\dot{x}-\tilde f(x)}{\sigma^2(x)}$, and simplifying the resulting derivatives, we obtain
\begin{equation}
    \frac{\ddot{x}}{\sigma^2}
    -\frac{\sigma'}{\sigma^3}\dot{x}^2
    -\frac{\tilde f\tilde f'}{\sigma^2}
    +\frac{\tilde f^2\sigma'}{\sigma^3}
    -\frac12\tilde f''
    +\frac12\frac{\tilde f'\sigma'}{\sigma}
    +\frac12\frac{\tilde f\sigma''}{\sigma}
    -\frac12\frac{\tilde f(\sigma')^2}{\sigma^2}
    =0.
\end{equation}
Multiplying by $\sigma^2(x)$ gives the Euler--Lagrange equation for the one-dimensional multiplicative noise system:
\begin{equation}
    \boxed{
    \begin{aligned}
    &\ddot{x}
    -\frac{\sigma'(x)}{\sigma(x)}\dot{x}^2
    -\tilde f(x)\tilde f'(x)
    +\tilde f^2(x)\frac{\sigma'(x)}{\sigma(x)}
    -\frac12\tilde f''(x)\sigma^2(x) \\
    &\quad
    +\frac12\tilde f'(x)\sigma'(x)\sigma(x)
    +\frac12\tilde f(x)\sigma''(x)\sigma(x)
    -\frac12\tilde f(x)(\sigma'(x))^2
    =0,
    \end{aligned}
    }
\end{equation}
where
\[
\tilde f(x)=f(x)+\frac12\sigma(x)\sigma'(x).
\]

In particular, when $f=0$,
\begin{equation}
    \boxed{
    \ddot{x}
    -\frac{\sigma'(x)}{\sigma(x)}\dot{x}^2
    -\frac12\sigma^2(x)\sigma'(x)\sigma''(x)
    -\frac14\sigma^3(x)\sigma'''(x)
    =0.
    }
\end{equation}

\section{The Onsager-Machlup functional for non-Gaussian Jump-Diffusions}\label{chapter2.sec2.3}

Although diffusion processes are widely utilized, many complex biological and physical systems are inherently driven by non-Gaussian noise. However, the formulation of the corresponding Onsager--Machlup (OM) functional for such systems remains largely an open problem. Literature concerning jump diffusion processes is sparse, particularly regarding the asymptotic behavior of solutions and the characterization of their most probable transition paths. For instance, \cite{bardina2002asymptotic} focused solely on the asymptotic evaluation of the tube probability around a jump path for linear stochastic differential equations (SDEs) driven by a Poisson process. In the context of symmetric $\alpha$-stable L\'{e}vy motion, \cite{Huang2019} successfully characterized the most probable transition path of an SDE in terms of a deterministic dynamical system, yet the explicit formulation of the OM functional was left unresolved. The primary inspiration for this section is drawn from the seminal work of D\"urr and Bach \cite{Durr1978}. We extend and refine their foundational results to encompass a broader class of dynamical systems subjected to random fluctuations comprising both Brownian and L\'evy noise. The introduction of non-Gaussian pure jump L\'evy noise presents significant new analytic and probabilistic challenges. Furthermore, we adopt the notational framework established in \cite{Durr1978} to facilitate a seamless comparison between our results and those of the classical pure diffusion case.

\subsection{Preliminaries}
Consider the following scalar stochastic differential equation (SDE) defined on the interval $\tau=[s,u]$ over the filtered probability space $(\Omega,\mathcal{F},(\mathcal{F}_t)_{t\geq0},\mathbb{P})$:
\begin{equation} \label{Equation-1}
  \begin{split}
  &dX_t=f(X_{t-})dt+g(X_{t-})dB_t+dL_t,\quad X_s=x_0\in \mathbb{R}.
  \end{split}
\end{equation}

\noindent Here, ${L_t}$ denotes a L\'{e}vy process with characteristics $(0,0,\nu)$ satisfying
$
\int_{|\xi|<1}\xi\nu(d\xi)<\infty,
$
$B_t$ represents a standard Brownian motion, and $X_{t-}$ stands for the left-hand limit of $X_t$, namely $
X_{t-}=\lim_{s\uparrow t}X_s.
$ Using the L\'{e}vy-It\^o decomposition, equation (\ref{Equation-1}) may equivalently be expressed as (see Chapter \ref{chapter1intro}, Section \ref{sec1.3})
\begin{equation} \label{Equation-2}
  \begin{split}
  &dX_t=f(X_{t-})dt+g(X_{t-})dB_t+\int_{|y|<1}y\tilde{N}(dt,dy)+\int_{|y|\geq1}xN(dt,dy),    \\
  &X_s=x_0\in \mathbb{R}.\notag
  \end{split}
\end{equation}

\noindent The final term in (\ref{Equation-2}) corresponds to the contribution of large jumps weighted by $y$. Such a term can be treated through the interlacing technique \cite[Page 365]{Applebaum}. Therefore, it is natural to first neglect this component and focus on stochastic systems driven by continuous fluctuations together with small jumps only. Accordingly, we study the following SDE:
\begin{equation} \label{Equation-main}
  \begin{split}
  &dX_t=f(X_{t-})dt+g(X_{t-})dB_t+\int_{|y|<1}y\tilde{N}(dt,dy),\quad X_s=x_0\in \mathbb{R}.
  \end{split}
\end{equation}

\noindent Assume additionally that the coefficients $f(x)$ and $g(x)$ are locally Lipschitz continuous and satisfy the following one-sided linear growth condition.

\noindent{\bf C1 }(Locally Lipschitz condition) For every $R>0$, there exists a constant $K_1>0$ such that for all $|y_1|,|y_2|\leq R$,
$$
|f(y_1)-f(y_2)|^2+|g(y_1)-g(y_2)|^2\leq K_1|y_1-y_2|^2.
$$

\noindent{\bf C2 }(One-sided linear growth condition) There exists a positive constant $K_2$ satisfying, for all $y\in\mathbb{R}$,
$$
|g(y)|^2+2y\cdot f(y)\leq K_2(1+|y|^2).
$$

\noindent Under assumptions {\bf C1} and {\bf C2}, equation (\ref{Equation-main}) admits a unique global solution whose sample paths are adapted and c\`adl\`ag; see Theorem 3.1 in \cite{albeverio2010existence} or \cite{xi2019jump}. The corresponding solution process will be referred to as a jump-diffusion process. Observe that, in the absence of L\'{e}vy noise, the process reduces to a classical diffusion process. We also remark that the terminology ``jump-diffusion process'' is not used uniformly in the literature. Throughout this section, however, we adopt this terminology specifically for solutions of (\ref{Equation-main}); see \cite[Page 383]{Applebaum}.\par

Next, we introduce several notations and concepts that will be used subsequently.
Let $C^1(\mathbb{R})$ denote the collection of all continuously differentiable real-valued functions, and let $C^2(\mathbb{R})$ denote the set of all twice continuously differentiable real-valued functions. Furthermore, let $H^1([s,u];\mathbb{R})$ be the Sobolev space consisting of square-integrable real-valued functions on $\tau=[s,u]$ whose weak derivatives are also square integrable.

Denote by $D_\tau^{x_0}$ the path space associated with solutions of (\ref{Equation-main}), namely the collection of c\`adl\`ag functions
$$
D_\tau^{x_0}=\{x(t)\mid x:\tau\rightarrow \mathbb{R},\; x(t)\ \hbox{is right-continuous with left limits},\; x(s)=x_0\}.
$$

\noindent For any finite subset $S\subset[s,u]$, let $\pi_S$ denote the projection map from $D_\tau^{x_0}$ to $\mathbb{R}^S$, assigning to each path $x$ the vector $\{x(t):t\in S\}$. These projection mappings generate the projection $\sigma$-field $\mathcal{G}$. Unless otherwise stated, all measurability considered in this section is understood with respect to $\mathcal{G}$.\par

Since every c\`adl\`ag function on $\tau$ is bounded, the space $D_\tau^{x_0}$ becomes a Banach space when equipped with the uniform norm
$$
\|x\|=\sup_{t\in[s,u]}|x(t)|,\qquad x(t)\in D_\tau^{x_0}.
$$
No alternative norm on $D_\tau^{x_0}$ will be considered in this section. One may verify that the $\sigma$-field $\mathcal{B}_0$ generated by the closed balls induced from the uniform norm coincides with the projection $\sigma$-field $\mathcal{G}$, although it does not agree with the larger Borel $\sigma$-field $\mathcal{B}_\tau^{x_0}$. Further details may be found in \cite[Page 87--90]{pollard2012convergence}. \par

\begin{lemma}\label{lemma 2.1}
$\mathcal{B}_0=\mathcal{G}$ and $\mathcal{G}\subsetneqq\mathcal{B}_\tau^{x_0}$.
\end{lemma}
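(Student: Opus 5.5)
The lemma has two parts: $\mathcal{B}_0=\mathcal{G}$, and $\mathcal{G}$ is strictly smaller than $\mathcal{B}_\tau^{x_0}$. I will prove the two inclusions for $\mathcal{B}_0=\mathcal{G}$ separately and then exhibit a uniform-open set that is not $\mathcal{G}$-measurable, following the classical argument for $D[0,1]$ in \cite[Page 87--90]{pollard2012convergence}.

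\emph{Step 1: $\mathcal{B}_0\subset\mathcal{G}$.} Fix a closed ball $\bar B(y,r)=\{x\in D_\tau^{x_0}:\sup_{t\in\tau}|x(t)-y(t)|\le r\}$ with $y\in D_\tau^{x_0}$. Let $Q$ be a countable dense subset of $\tau$ containing $u$, for instance the rationals of $[s,u]$ together with $u$. Since $x-y$ is càdlàg, right-continuity gives
$$\sup_{t\in\tau}|x(t)-y(t)|=\sup_{t\in Q}|x(t)-y(t)|.$$
Hence
$$\bar B(y,r)=\bigcap_{q\in Q}\pi_{\{q\}}^{-1}\bigl([y(q)-r,\,y(q)+r]\bigr),$$
which is a countable intersection of sets in $\mathcal{G}$. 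Therefore $\mathcal{B}_0\subset\mathcal{G}$.

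\emph{Step 2: $\mathcal{G}\subset\mathcal{B}_0$.} It suffices to show that each coordinate map $x\mapsto x(t)$ is $\mathcal{B}_0$-measurable, i.e. that $\{x:x(t)\le c\}\in\mathcal{B}_0$ for every real $c$. For $t=s$ the map is constant ($x(s)=x_0$), so there is nothing to prove. For $t>s$, I will write $\{x:x(t)<c\}$ as a countable union of closed balls. For $n\in\mathbb{N}$ and rationals $c'<c$, consider the càdlàg centre
$$y_{n,c'}=x_0\mathbf 1_{[s,t)}+(c'-n)\mathbf 1_{[t,u]}$$
with radius $n$, adjusted so that $y_{n,c'}(s)=x_0$. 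The ball $\bar B(y_{n,c'},n)$ forces $x(t)\le c'$ and, as $n\to\infty$, imposes only the growing constraint $|x(r)-x_0|\le n$ for $r<t$ and $|x(r)-(c'-n)|\le n$ for $r\ge t$. Because every càdlàg path is bounded, every $x$ with $x(t)<c$ lies in one of these balls. One must check that the constraint on $[t,u]$ is satisfiable, i.e. that $c'-2n\le x(r)\le c'$ there. The upper bound $x(r)\le c'$ on all of $[t,u]$ is too strong, since it constrains values beyond time $t$. To fix this I will instead use right-continuity and localize near $t$: take centres equal to $c'-n$ only on $[t,t+1/k)$ and $x_0$ elsewhere, with radius $n$. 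Then $x(t)<c$ implies $x<c'$ on some $[t,t+1/k)$, and the union over $n,k,c'$ of these balls equals $\{x:x(t)<c\}$. This union is countable, so coordinate maps are $\mathcal{B}_0$-measurable. I expect this construction to be the fiddliest step.

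\emph{Step 3: strict inclusion.} I will use the standard nonseparability argument. For each $a\in(s,u)$ the path $x_a=x_0+\mathbf 1_{[a,u]}$ lies in $D_\tau^{x_0}$, and $\|x_a-x_b\|=1$ for $a\neq b$. For any $A\subset(s,u)$, the set $U_A=\bigcup_{a\in A}B(x_a,1/2)$ is open in the uniform topology, hence lies in $\mathcal{B}_\tau^{x_0}$. On the other hand, every $G\in\mathcal{G}$ depends on only countably many coordinates: the sets determined by the coordinates in some countable $S$ form a $\sigma$-field containing all the generators. If $U_A\in\mathcal{G}$ with associated countable $S$, pick $a\neq b$ in $(s,u)$ lying in the same gap of $S$, meaning no point of $S$ lies between them, with $a\in A$ and $b\notin A$. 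Such a choice exists by taking $A$ to be a non-Borel set, or more simply by noting that $x_a$ and $x_b$ then agree on $S$, so membership cannot distinguish them. Then $x_a\in U_A$ but $x_b\notin U_A$, while $x_a$ and $x_b$ agree on $S$, which is a contradiction. Choosing $A$ with both $A$ and its complement dense ensures such a pair exists in every gap (the gaps of $S$ in $(s,u)$ are uncountable in total measure). Hence $\mathcal{G}\subsetneqq\mathcal{B}_\tau^{x_0}$.
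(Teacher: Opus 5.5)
Your Steps 1 and 2 are sound. In Step 2, the localized centres (equal to $c'-n$ on $[t,t+1/k)$ and to $x_0$ elsewhere, with radius $n$) lie in $D_\tau^{x_0}$ and do exhibit $\{x:x(t)<c\}$ as a countable union of closed balls. Step 3, however, has a genuine gap.

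The countable set $S$ attached to a set in $\mathcal{G}$ is not yours to choose, and it may be dense. Indeed, you may always enlarge it to contain $\mathbb{Q}\cap[s,u]$. For $a<b$ the paths $x_a$ and $x_b$ differ exactly on $[a,b)$, so for such an $S$ no two of the paths $x_a$ agree on $S$. The pair ``in the same gap of $S$'' that you need therefore does not exist. Your remark that the gaps of $S$ are ``uncountable in total measure'' conflates $[s,u]\setminus S$, which has full measure, with intervals free of $S$, which need not exist.

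Your final sufficient condition is in fact false. Take $A=\mathbb{Q}\cap(s,u)$, which is dense with dense complement. Then $U_A$ is a countable union of open balls, and each open ball $B(y,r)=\bigcup_n\bar B(y,r-1/n)$ lies in $\mathcal{B}_0$. Hence $U_A\in\mathcal{B}_0=\mathcal{G}$.

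The obstruction is measurability, not separation by coordinates. Your passing mention of a non-Borel $A$ is the right idea, used in the wrong way. Take $A\subset(s,u)$ non-Borel and suppose $U_A=\pi_S^{-1}(B)$ with $S$ countable and $B$ in the product $\sigma$-field of $\mathbb{R}^S$. Since $\|x_a-x_{a'}\|=1$ for $a\neq a'$, one has $x_a\in U_A$ if and only if $a\in A$. Hence $A=\phi^{-1}(B)$ with $\phi(a)=(x_0+\mathbf 1_{\{a\le q\}})_{q\in S}$. Each coordinate of $\phi$ is a Borel function of $a$, so $\phi$ is measurable into the product $\sigma$-field, and $A$ would be Borel, a contradiction.

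Alternatively, use cardinality. $\mathcal{G}$ has at most $2^{\aleph_0}$ elements, since each is determined by a countable $S$ and a Borel subset of $\mathbb{R}^S$. But $A\mapsto U_A$ is injective on all subsets of $(s,u)$, because $A=\{a: x_a\in U_A\}$.

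This is the classical argument behind the reference to Pollard that the paper gives in place of a proof. With Step 3 replaced in this way, your proof is complete.
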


The primary objective of this section is to investigate the most probable tube associated with the process $X_t$. For this reason, we consider the probability that the trajectories of the process remain inside the closed tube
\begin{equation}\label{tube}
K(z,\epsilon)=\{x\in D_\tau^{x_0}\mid z\in D_\tau^{x_0},\|x-z\|\leq\epsilon,\epsilon>0\}.
\end{equation}

\noindent Such probabilities can be evaluated or approximated through the induced measure on the underlying function space.\par

Define the measure $\mu_X$ on $\mathcal{G}$ induced by the process (\ref{Equation-main}) through
$$
\mu_X(B)=\mathbb{P}(\{\omega\in\Omega\mid X_t(\omega)\in B\}),\qquad B\in\mathcal{G}.
$$

\noindent Consequently, for any prescribed $\epsilon>0$, the probabilities of tubes having identical radius can be compared for different reference paths $z\in D_\tau^{x_0}$ via
\begin{equation}\label{tubeP}
\mu_X(K(z,\epsilon))
=
\mathbb{P}(\{\omega\in\Omega\mid X_t(\omega)\in K(z,\epsilon)\}),
\end{equation}
where $K(z,\epsilon)\in\mathcal{G}$. The evaluation of the probability $\mu_X(K(z,\epsilon))$ will be investigated in the next section.\par

\begin{remark}\label{Remark 2.1}
The uniform norm is adopted here because the closed balls generated by this norm induce precisely the same $\sigma$-field $\mathcal{G}$. In fact, to guarantee the measurability of a closed tube, it suffices to establish the inclusion $\mathcal{B}_0\subset\mathcal{G}$. Observe that if $D_\tau^{x_0}$ is equipped with the Skorohod norm, then the space becomes separable and the projection $\sigma$-field $\mathcal{G}$ coincides with the Borel $\sigma$-field $\mathcal{B}_\tau^{x_0}$. Nevertheless, such a framework is less convenient for the subsequent analysis because the Skorohod norm is difficult to handle in later derivations.
\end{remark}

As discussed previously, our interest lies in identifying the most probable tube $K(z,\epsilon)$ defined by (\ref{tube}). Since the tube depends explicitly on the reference function $z(t)$, one needs to determine the path $z(t)$ that maximizes the probability (\ref{tubeP}). Restricting attention to differentiable reference functions leads naturally to the following definition \cite{OM53,tisza1957fluctuations}.\par

\begin{definition}\label{Definition 2.1}
Let $\epsilon>0$ be fixed and consider a tube centered around a reference trajectory $z(t)$. Suppose that for sufficiently small $\epsilon$, the probability that the solution process $X_t$ remains inside this tube admits the asymptotic representation
$$
\mathbb{P}(\{\|X-z\|\leq\epsilon\})
\propto
C(\epsilon)\exp\left\{-\frac{1}{2}\int_s^uOM(\dot{z},z)dt\right\}.
$$
Then the integrand $OM(\dot{z},z)$ is called the Onsager--Machlup function. Here, the notation $\propto$ indicates asymptotic equivalence as $\epsilon\to0$. Correspondingly, the quantity
$
\int_s^uOM(\dot{z},z)dt
$ 
is referred to as the Onsager--Machlup functional. In analogy with classical mechanics, the OM function is also interpreted as the Lagrangian, while the associated functional is regarded as the action functional.
\end{definition}

If one further restricts the reference paths $z(t)$ to be twice continuously differentiable, then the Onsager--Machlup function acquires a more direct physical interpretation as the Lagrangian governing the most probable tube associated with a jump-diffusion process. More precisely, let $z_m(t)$ denote a path maximizing $\mu_X(K(z,\epsilon))$. When $\epsilon$ is sufficiently small, this maximizing trajectory can be obtained by applying variational methods to the Onsager--Machlup action functional
$
\int_s^uOM(\dot{z},z)dt.
$
This variational viewpoint originates from the work in \cite{Durr1978} for diffusion processes and extends naturally to the present jump-diffusion setting.

Recall that the notation $\mu_X\ll\mu_Y$ means that the measure $\mu_X$ is absolutely continuous with respect to $\mu_Y$, namely, whenever $\mu_Y(A)=0$, one also has $\mu_X(A)=0$ for every $A\in\mathcal{G}$. Furthermore, the measures $\mu_X$ and $\mu_Y$ are said to be equivalent, denoted by $\mu_X\sim\mu_Y$, if both $\mu_X\ll\mu_Y$ and $\mu_Y\ll\mu_X$ hold simultaneously; see (\cite[Page 161]{Oksendal}, \cite{Sato}).\par

\begin{lemma}[Transform between measures \cite{Chao2019}]\label{lemma 3.1}
Let $X_t$ and $Y_t$ be two jump-diffusion processes governed by the following stochastic differential equations on the probability space $(\Omega,\mathcal{F},(\mathcal{F}_t)_{t\geq0},\mathbb{P})$:
\begin{eqnarray}
dX_t=&& f(X_{t-})dt+g(X_{t-})dB_t+\int_{|y|<1}y\tilde{N}(dt,dy),\label{Equation-x}\\
dY_t=&& k(Y_{t-})dt+g(Y_{t-})dB_t+\int_{|y|<1}y\tilde{N}(dt,dy),\label{Equation-y}
\end{eqnarray}
where the driving L\'{e}vy process possesses characteristic triplet $(0,0,\nu)$ and satisfies
$$
\int_{|\xi|<1}\xi\nu(d\xi)<\infty.
$$
Assume that $X_s=Y_s=x_0\in\mathbb{R}$ for $t\in\tau$, and that the coefficients $f,k,g\in C^1(\mathbb{R})$ satisfy condition ${\bf C2}$.

Then the induced measures satisfy $\mu_X\sim\mu_Y$, and the corresponding Radon--Nikodym derivative of $\mu_X$ with respect to $\mu_Y$ is given by
\begin{equation}\label{RND1}
\frac{d\mu_X}{d\mu_Y}[Y_t(\omega)]
=
\exp\left\{
\int_s^u a(Y_{t-})dB_t
-\frac{1}{2}\int_s^u(a(Y_{t-}))^2dt
\right\},
\end{equation}
where
\begin{equation}\label{a}
a(x)=\frac{f(x)-k(x)}{g(x)}.
\end{equation}
\end{lemma}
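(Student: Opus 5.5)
The plan is a Girsanov argument for jump-diffusions. Since $X$ and $Y$ share the same diffusion coefficient $g$ and the same compensated small-jump integral, the only discrepancy is the drift $f-k$. I would absorb that drift into a change of measure that acts only on the Brownian component and leaves the Poisson random measure $N$, and hence $\tilde N$ and its compensator $\nu(dy)dt$, untouched.

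\textbf{Step 1 (the density process).} Set $a(x)=(f(x)-k(x))/g(x)$. This requires $g\neq 0$; I take that as implicit, or else assume $g$ is bounded away from zero. Define
\[
M_t=\exp\Big\{\int_s^t a(Y_{r-})\,dB_r-\tfrac12\int_s^t a(Y_{r-})^2\,dr\Big\}.
\]
This is a positive continuous local martingale, being the Dol\'eans-Dade exponential of a continuous local martingale. The main obstacle is showing that $M$ is a true martingale on $[s,u]$, i.e. $\mathbb E M_u=1$. Novikov's criterion is not directly available, because $a$ is only $C^1$ and need not be bounded.

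\textbf{Handling the obstacle.} I would localize with $\tau_R=\inf\{t: |Y_t|\ge R\}\wedge u$. On $[s,\tau_R]$, $a$ is bounded, so the stopped exponential $M_{t\wedge\tau_R}$ is a martingale, and $\mathbb Q_R=M_{\tau_R}\mathbb P$ is a probability measure. Under $\mathbb Q_R$:
\begin{itemize}
\item By Girsanov, $\tilde B_t=B_t-\int_s^{t\wedge\tau_R} a(Y_{r-})\,dr$ is a Brownian motion.
\item $N$ keeps its compensator, since the density has no jump part. This follows from the general Girsanov theorem for semimartingales, as in Applebaum, Theorem 5.2.
\end{itemize}
Consequently, up to $\tau_R$, $Y$ solves the $X$-equation driven by $(\tilde B,\tilde N)$. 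Under \textbf{C2}, the $X$-equation is well posed and non-explosive, so $\mathbb Q_R(\tau_R<u)=\mathbb P(\sup_{t\le u}|X_t|\ge R)\to 0$. This gives
\[
\mathbb E M_u\ \ge\ \mathbb E\big[M_{\tau_R}\mathbf 1_{\{\tau_R=u\}}\big]=1-\mathbb Q_R(\tau_R<u)\to 1,
\]
and together with Fatou ($\mathbb E M_u\le 1$) we get $\mathbb E M_u=1$.

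\textbf{Step 2 (identifying the law).} With $\mathbb Q=M_u\mathbb P$, the process $Y$ under $\mathbb Q$ is a weak solution of the $X$-equation. Pathwise uniqueness under \textbf{C1} (Yamada--Watanabe) gives uniqueness in law, so the law of $Y$ under $\mathbb Q$ is $\mu_X$.

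\textbf{Step 3 (the Radon--Nikodym derivative).} Write $M_u=\Phi(Y)$ as a measurable functional of the path $Y$. One has $dB_t=\big(dY_t-k(Y_{t-})dt-dJ_t\big)/g(Y_{t-})$, where $J$ is the jump part. This part is recoverable from the path because $\int_{|y|<1}|y|\,\nu(dy)<\infty$, so the jumps are summable. Hence, for $A\in\mathcal G$,
\[
\mu_X(A)=\mathbb E\big[\Phi(Y)\mathbf 1_{\{Y\in A\}}\big]=\int_A \Phi\,d\mu_Y ,
\]
so $\mu_X\ll\mu_Y$ with $d\mu_X/d\mu_Y[Y]$ equal to the expression in \eqref{RND1}.

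\textbf{Step 4 (equivalence).} Since $\Phi>0$ almost surely, $\mu_Y\ll\mu_X$ as well. Equivalently, one can run the symmetric argument with $-a$ and the roles of $f$ and $k$ swapped, and the two directions together give $\mu_X\sim\mu_Y$.
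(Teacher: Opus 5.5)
Your argument is correct and follows the same route as the source the paper relies on. The paper states this lemma without proof, citing \cite{Chao2019}, where it comes from Girsanov's theorem for jump-diffusions: only the Brownian drift is shifted by $a(Y_{t-})$, and the compensator $\nu(dy)\,dt$ of $N$ is left untouched, exactly as in your Steps 1--2. Your additions are details the cited statement takes for granted: the localization argument giving $\mathbb{E}M_u=1$ without Novikov, and the recovery of $B$ from the path of $Y$, which genuinely needs $g\neq0$ and the reading $\int_{|y|<1}|y|\,\nu(dy)<\infty$ of the jump hypothesis, so that $d\mu_X/d\mu_Y$ is an honest functional on $D_\tau^{x_0}$. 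In the localization step, the identity $\mathbb{Q}_R(\tau_R<u)=\mathbb{P}(\sup_{t\le u}|X_t|\ge R)$ is really an inequality $\le$, justified by uniqueness in law for the stopped equation.
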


We next rewrite the Radon--Nikodym derivative in (\ref{RND1}) in the form of a path integral. The essential step consists in converting the stochastic integral appearing in (\ref{RND1}) through the It\^o formula \cite[Page 251--255]{Applebaum}. To simplify the presentation, we restrict ourselves to the case where $g(x)=c$.\par

Introduce the potential function
\begin{equation}\label{Va}
V(x)=\frac{1}{c}\int^x a(y)dy.
\end{equation}

Applying the It\^o formula to the stochastic integral yields the following representation $F[y(t)]$ for (\ref{RND1}) (see Appendix), which reveals the functional structure of the Radon--Nikodym derivative on the path space $D_\tau^{x_0}$ with $y(t)\in D_\tau^{x_0}$:
\begin{align}\label{F}
F[y(t)]
=&\frac{d\mu_X}{d\mu_Y}[y(t)]\notag\\
=&\exp\Bigg\{
V(y(u))-V(x_0)
-\frac{1}{2}\int_s^u f(y(t-))dt
+\int_s^u\int_{|\xi|<1}\frac{\xi}{c}a(y(t-))\nu(d\xi)dt
\notag\\
&-\Sigma_{s\leq t\leq u}[V(y(t))-V(y(t-))]\chi_{|\xi|<1}(\Delta y(t))
\Bigg\},
\end{align}
where
\begin{eqnarray}\label{b}
f(y(t-))
=
\{a(y(t-))\}^2
+\frac{2}{c}a(y(t-))k(y(t-))
+c\frac{da(x)}{dx}(y(t-)).
\end{eqnarray}

Recall that a measure $L$ on $\mathbb{R}^n$ is called translation invariant if, for every $E\in\mathcal{B}^n$, one has $\mathcal{T}E\in\mathcal{B}^n$ together with
$
L(\mathcal{T}E)=L(E),
$
where $\mathcal{T}$ denotes a translation on $\mathbb{R}^n$ and $\mathcal{B}^n$ is the Borel $\sigma$-field on $\mathbb{R}^n$. As pointed out in \cite{kuo2006gaussian,Durr1978}, no genuinely translation invariant measure exists on the function space $D_\tau^{x_0}$. Nevertheless, for the present framework, a weaker notion known as quasi-translation invariance is sufficient. This concept is formulated as follows \cite{Durr1978}.\par

\begin{definition}\label{definition 3.1}
Let $\mathcal{T}:D_\tau^{x_0}\rightarrow D_\tau^{x_0}$ be a transformation of the form
\begin{eqnarray}\label{t1}
\mathcal{T}x\rightarrow x+z_0,
\end{eqnarray}
where $z_0\in D_\tau^0$ is differentiable. Assume that $\mathcal{T}$ is measurable on
$
\mathcal{M}=\{A\in\mathcal{G}:\mathcal{T}^{-1}A\in\mathcal{G}\}.
$ Consider the jump-diffusion process $X_t$ together with the transformed process
\begin{eqnarray}\label{t2}
\mathcal{T}X_t=X_t+z_0(t).
\end{eqnarray}

If the induced measures satisfy $\mu_X\sim\mu_{\mathcal{T}X}$ on $\mathcal{M}$, then $\mu_X$ and $\mu_{\mathcal{T}X}$ are said to be quasi-translation invariant.
\end{definition}

\begin{remark}\label{remark 3.2}
The translation operator $\mathcal{T}$ acting on $D_\tau^{x_0}$ is not necessarily measurable because the projection $\sigma$-field $\mathcal{G}$ does not coincide with the larger Borel $\sigma$-field. To ensure measurability, the translation is therefore restricted to the subset $\mathcal{M}\subset\mathcal{G}$. Under this restriction, the notion of quasi-translation invariance becomes well-defined. Moreover, the set $\mathcal{M}$ is nonempty. Indeed, if the translation $\mathcal{T}$ defined in (\ref{t1}) is generated by $z_0(t)$, then
$$
\mathcal{T}^{-1}(K(z,\epsilon))=K(x_0,\epsilon),
$$
which clearly belongs to $\mathcal{G}$.
\end{remark}

\begin{lemma}[Girsanov theorem between quasi-translation invariant measures \cite{Chao2019}]\label{lemma 3.2}
The stochastic differential equation
\begin{equation}\label{Equation-xm1}
dX_t=f(X_{t-})dt+c\;dB_t+\int_{|y|<1}y\tilde{N}(dt,dy)
\end{equation}
induces a quasi-translation invariant measure $\mu_X$, and
\begin{equation}\label{RND2}
\frac{d\mu_{\mathcal{T}X}}{d\mu_X}[X_t(\omega)]
=
\exp\left\{
\int_s^u a_X(X_{t-},z_0(t))dB_t
-\frac{1}{2}\int_s^u(a_X(X_{t-},z_0(t)))^2dt
\right\},
\end{equation}
where
\begin{equation}\label{aX}
a_X(x,z_0)=\frac{f(x-z_0)+\dot{z_0}-f(x)}{c}.
\end{equation}
\end{lemma}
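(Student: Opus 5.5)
The strategy is to realize the translated process $\mathcal{T}X_t=X_t+z_0(t)$ as the solution of an SDE of exactly the form \eqref{Equation-y}, and then read off the Radon--Nikodym derivative from Lemma~\ref{lemma 3.1}. Set $Y_t:=X_t+z_0(t)$, where $z_0\in D_\tau^0$ is differentiable with $z_0(s)=0$, so that $Y_s=x_0$. Since $z_0$ is deterministic and of bounded variation, the It\^o formula for jump-diffusions reduces to ordinary addition of differentials. Thus
\begin{equation*}
dY_t=\bigl[f(Y_{t-}-z_0(t))+\dot z_0(t)\bigr]dt+c\,dB_t+\int_{|y|<1}y\tilde N(dt,dy),\qquad Y_s=x_0 .
\end{equation*}
The jumps of $Y$ coincide with those of $X$, because $z_0$ is continuous. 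Consequently $Y$ is driven by the same Brownian motion and the same compensated Poisson measure, with the same constant diffusion coefficient $g\equiv c$. Only the drift changes, to $k(t,x):=f(x-z_0(t))+\dot z_0(t)$.

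The next step is to apply Lemma~\ref{lemma 3.1} with the roles arranged so that $\mu_{\mathcal{T}X}=\mu_Y$ appears in the numerator and $\mu_X$ in the denominator. The underlying tool is Girsanov's theorem for the Brownian component. The compensated Poisson part is left untouched, since the two equations share it. By \eqref{a}, the density kernel is
\begin{equation*}
a=\frac{k-f}{c}=\frac{f(x-z_0)+\dot z_0-f(x)}{c}=a_X(x,z_0),
\end{equation*}
evaluated along the reference process $X_{t-}$. This yields exactly \eqref{RND2}. Equivalence $\mu_X\sim\mu_{\mathcal{T}X}$ follows because the exponential density is strictly positive, so each measure is absolutely continuous with respect to the other. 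Remark~\ref{remark 3.2} ensures that the statement makes sense on $\mathcal{M}$: translation by $z_0$ maps the relevant sets, in particular the tubes $K(z,\epsilon)$, back into $\mathcal{G}$, so both measures are evaluated on the same $\sigma$-field.

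The main obstacle is that Lemma~\ref{lemma 3.1} is stated for time-homogeneous drifts $k\in C^1$ satisfying \textbf{C2}, whereas here $k$ depends on $t$ through $z_0$. I would handle this by rerunning the Girsanov argument directly with a time-dependent drift. First I would check that $a_X(X_{t-},z_0(t))$ is locally bounded. This follows from $f\in C^1$, continuity of $\dot z_0$ on the compact interval $\tau$, and the non-explosion of $X$. Second, I would verify that the stochastic exponential is a true martingale. To do this I would localize with stopping times $\tau_R=\inf\{t:|X_t|>R\}$, apply Novikov's criterion on each $[s,\tau_R\wedge u]$, and pass $R\to\infty$ using the one-sided growth bound \textbf{C2}. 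That bound controls $\sup_t|X_t|$ and $\sup_t|Y_t|$ uniformly in probability. Finally, I would invoke weak uniqueness for the translated SDE, which holds since its coefficients remain locally Lipschitz, to identify the law of the Girsanov-transformed process with $\mu_{\mathcal{T}X}$.
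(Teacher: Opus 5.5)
Your proposal is correct and is essentially the intended proof. The paper gives no argument of its own and defers to \cite{Chao2019}, but the way $a_X$ is set up right after Lemma~\ref{lemma 3.1} shows the intended route: $a_X=(k-f)/c$ with $k(t,x)=f(x-z_0(t))+\dot z_0(t)$, the drift of $\mathcal{T}X=X+z_0$. That is exactly your route (as in D\"urr--Bach \cite{Durr1978}): translate, identify the new drift, and read off the Girsanov density along $X$.

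Your extra step of rerunning Girsanov for the time-dependent drift (localization, non-explosion of $Y=X+z_0$, weak uniqueness) is a worthwhile repair, since Lemma~\ref{lemma 3.1} is stated only for time-homogeneous drifts. Your tacit assumption that $\dot z_0$ is continuous is also genuinely needed: $\int_s^u a_X^2\,dt<\infty$ forces $\dot z_0\in L^2([s,u])$, and mere differentiability of $z_0$ does not guarantee this.
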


\begin{remark}\label{remark 3.3}
The subscript $X$ indicates that the corresponding quantity arises from the Radon--Nikodym derivative of the measure $\mu_{\mathcal{T}X}$ relative to $\mu_X$. Replacing $z_0$ with $-z_0$ in these expressions yields instead the Radon--Nikodym derivative of $\mu_{\mathcal{T}^{-1}X}$ with respect to $\mu_X$. It should also be emphasized that Lemma \ref{lemma 3.2} is no longer valid when $g(x)$ in (\ref{Equation-main}) is nonconstant. Equivalently, if $g(x)\neq c$, then the induced measure associated with (\ref{Equation-main}) fails to possess the quasi-translation invariance property. Consequently, in Section 4 we restrict attention to the case $g(x)=c$ when deriving the Onsager--Machlup function.
\end{remark}

The following result highlights the role played by the quasi-translation invariant measures introduced above.

\begin{lemma}[\cite{Chao2019}]\label{lemma 3.3}
Let $\mathcal{T}$ be the translation defined in (\ref{t1}), and let
$$
\mathcal{M}=\{A\in\mathcal{G}:\mathcal{T}^{-1}A\in\mathcal{G}\}.
$$
Then $\mathcal{T}$ is measurable on $\mathcal{M}$. Moreover, if $\Phi(x)$ is a measurable functional on $D_\tau^{x_0}$ and $\mu_X$ is quasi-translation invariant, then the identity
\begin{eqnarray}\label{y}
\int_{A}\Phi(y)d\mu_X(y)
=
\int_{\mathcal{T}^{-1}A}\Phi(x+z_0)J_X[x,-z_0]d\mu_X(x)
\end{eqnarray}
holds for every $A\in\mathcal{M}$.
\end{lemma}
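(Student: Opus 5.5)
The measurability claim is immediate from the definition of $\mathcal{M}$: for every $A\in\mathcal{M}$ we have $\mathcal{T}^{-1}A\in\mathcal{G}$ by construction. This is exactly measurability of $\mathcal{T}$ regarded as a map into $(D_\tau^{x_0},\mathcal{M})$. I would also record that $\mathcal{M}$ is a $\sigma$-field. This holds because $\mathcal{T}^{-1}$ commutes with complements and countable unions, and $\mathcal{G}$ is a $\sigma$-field. Remark \ref{remark 3.2} then shows that $\mathcal{M}$ is nonempty and contains the tubes.

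For the integral identity, I read $J_X[x,-z_0]$ as the Radon--Nikodym derivative $\frac{d\mu_{\mathcal{T}^{-1}X}}{d\mu_X}[x]$. This is the density of Lemma \ref{lemma 3.2} with $z_0$ replaced by $-z_0$, as Remark \ref{remark 3.3} indicates. The key step is to identify the image of $\mu_X$ under the translation. For $B\in\mathcal{G}$ with $\mathcal{T}B\in\mathcal{G}$ (equivalently, $B=\mathcal{T}^{-1}A$ with $A\in\mathcal{M}$) we have
\[
\mu_X(\mathcal{T}B)=\mathbb{P}(X\in \mathcal{T}B)=\mathbb{P}(X-z_0\in B)=\mu_{\mathcal{T}^{-1}X}(B).
\]
So the pushforward of $\mu_{\mathcal{T}^{-1}X}$ under $\mathcal{T}$ is $\mu_X$ on $\mathcal{M}$. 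The abstract change-of-variables formula for image measures then gives
\[
\int_A\Phi(y)\,d\mu_X(y)=\int_{\mathcal{T}^{-1}A}\Phi(\mathcal{T}x)\,d\mu_{\mathcal{T}^{-1}X}(x).
\]
I would prove this first for indicators $\Phi=\mathbf 1_C$ with $C\in\mathcal{M}$, then extend to simple functions, and finally to nonnegative and integrable $\Phi$ by monotone convergence.

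The last step uses quasi-translation invariance. Since $\mu_X$ is quasi-translation invariant, $\mu_{\mathcal{T}^{-1}X}\sim\mu_X$ on $\mathcal{M}$. Lemma \ref{lemma 3.2}, applied with $-z_0$, supplies the explicit density $J_X[\cdot,-z_0]$, and substituting it gives (\ref{y}).

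The main obstacle is the measure-theoretic bookkeeping. The translation is not Borel measurable on all of $D_\tau^{x_0}$, so the change of variables must be run entirely inside $\mathcal{M}$. The class of $\Phi$ also has to be understood as measurable with respect to $\mathcal{M}$, or alternatively $\Phi\circ\mathcal{T}$ must be $\mathcal{G}$-measurable. I would handle this with a monotone class argument restricted to $\mathcal{M}$. I would also check that the Radon--Nikodym derivative from Lemma \ref{lemma 3.2}, which is defined on $\mathcal{G}$, restricts correctly to $\mathcal{M}$.
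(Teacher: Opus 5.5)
Your argument is correct. The paper gives no proof of this lemma; it is imported from \cite{Chao2019}, which adapts \cite{Durr1978}. Your route is the standard one: recognise $\mu_X$ as the image of $\mu_{\mathcal{T}^{-1}X}$ under $\mathcal{T}$, change variables, and then insert $J_X[\cdot,-z_0]=d\mu_{\mathcal{T}^{-1}X}/d\mu_X$ as Remark~\ref{remark 3.3} prescribes.

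One correction to your last paragraph: the obstacle you anticipate is not there.

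\emph{Measurability.} Since $\mathcal{T}^{-1}\{x: x(t)\in E\}=\{x: x(t)\in E-z_0(t)\}$, the translation is $\mathcal{G}/\mathcal{G}$-measurable, so in fact $\mathcal{M}=\mathcal{G}$. Moreover $\mathcal{T}$ is an isometry of the uniform norm, so your assertion that it is not Borel measurable is false. The paper's Remark~\ref{remark 3.2} is misleading on exactly this point.

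\emph{Consequences.} $\Phi\circ\mathcal{T}$ is automatically $\mathcal{G}$-measurable, so no monotone-class argument confined to $\mathcal{M}$ is needed. Nothing has to be ``restricted'' either: you integrate the $\mathcal{G}$-measurable function $\mathbf 1_{\mathcal{T}^{-1}A}\,\Phi\circ\mathcal{T}$ against the $\mathcal{G}$-density $J_X[\cdot,-z_0]$. This matters because restricting a density to a genuinely smaller $\sigma$-field would replace $J_X$ by a conditional expectation of it, not $J_X$ itself. So it is fortunate that this step is vacuous.
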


\subsection{The Onsager--Machlup Functional for Jump Diffusion Processes}
In the previous section, we established several preliminary results that will be required in the present analysis. We now proceed to derive the Onsager--Machlup (OM) function associated with the stochastic differential equation
\begin{equation}\label{Equation-xmm}
dX_t=f(X_{t-})dt+c\;dB_t+\int_{|y|<1}y\tilde{N}(dt,dy),\qquad X_s=x_0\in\mathbb{R},
\end{equation}
which corresponds to the special case of (\ref{Equation-main}) with constant diffusion coefficient $g(x)=c$. Throughout this section, we assume that $f\in C^1(\mathbb{R})$ and that condition {\bf C2} is satisfied. The principal result concerning the explicit form of the OM function for jump-diffusion processes is stated below.\par

\begin{theorem}[Approximation of the OM functional for jump-diffusion processes \cite{Chao2019}]
\label{theorem 4.1}
Consider the stochastic system (\ref{Equation-xmm}) with jump measure satisfying
$
\int_{|\xi|<1}\xi \nu(d\xi)<\infty.
$ Then, up to an additive constant, the Onsager--Machlup function admits the approximation
\begin{eqnarray}\label{LOM}
OM(\dot{z},z)
=
\left(\frac{\dot{z}-f(z)}{c}\right)^2
+
f'(z)
+
2\frac{\dot{z}-f(z)}{c^2}\int_{|\xi|<1}\xi \nu(d\xi),
\end{eqnarray}
where $z(t)\in D_\tau^{x_0}$ is differentiable. The third term in (\ref{LOM}) represents the contribution arising from the pure jump L\'evy component. In the absence of jump noise, the above expression reduces to the classical OM function for diffusion processes.\par

Moreover, in terms of the OM function, the probability of the tube
$$
K(z,\epsilon)
=
\{x\in D_\tau^{x_0}\mid z\in D_\tau^{x_0},\|x-z\|\leq\epsilon,\epsilon>0\}
$$
can be approximated by
\begin{eqnarray}\label{OM3}
\mu_X(K(z,\epsilon))
\propto
\mu_{B^c}(K(x_0,\epsilon))
\exp\left\{
-\frac{1}{2}\int_s^uOM(\dot{z},z)dt
\right\},
\end{eqnarray}
where the notation $\propto$ denotes asymptotic equivalence for sufficiently small $\epsilon$. Here, the process $B_t^c$ is defined through
\begin{eqnarray}\label{W}
dB_t^c=cdB_t+\int_{|y|<1}y\tilde{N}(dt,dy).
\end{eqnarray}
\end{theorem}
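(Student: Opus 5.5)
Following Dürr--Bach \cite{Durr1978}, we will move the tube around $z$ onto a tube around the constant path $x_0$, where the reference process $B^c$ has symmetric, measure-preserving behaviour. Then we evaluate the leftover Radon--Nikodym factor on a shrinking tube.

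\textbf{Step 1: translate the tube.} Take the translation $\mathcal{T}$ of \eqref{t1} generated by $z_0=z-x_0$. By Remark \ref{remark 3.2}, $\mathcal{T}^{-1}K(z,\epsilon)=K(x_0,\epsilon)\in\mathcal{M}$. Lemma \ref{lemma 3.3} with $\Phi\equiv1$ then gives
\[
\mu_X(K(z,\epsilon))=\int_{K(x_0,\epsilon)}J_X[x,-z_0]\,d\mu_X(x).
\]
Here $J_X$ is the Radon--Nikodym derivative of Lemma \ref{lemma 3.2}, with $a_X(x,z_0)=(f(x-z_0)+\dot z_0-f(x))/c$.

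\textbf{Step 2: change reference measure.} Next we pass from $\mu_X$ to $\mu_{x_0+B^c}$, the law of \eqref{Equation-y} with $k\equiv0$. Lemma \ref{lemma 3.1} with $g=c$ applies, and its Radon--Nikodym derivative is written pathwise as $F[y]$ in \eqref{F}, with $a=f/c$ and $V(x)=c^{-2}\int^x f$.

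Combining the two densities, $\mu_X(K(z,\epsilon))$ becomes $\mathbb{E}[\,G(Y)\,;\,Y\in K(x_0,\epsilon)]$ with $Y=x_0+B^c$. On this tube every path satisfies $y(t)=x_0+O(\epsilon)$, so all deterministic (Riemann) integrals in $F$ and in the translated density converge uniformly to their values along $z$. Collecting these, we expect
\begin{itemize}
\item the square term $\frac{1}{c^2}(\dot z-f(z))^2$;
\item the Itô correction $f'(z)$, coming from $c\,a'$ in \eqref{b};
\item the boundary terms $V(z(u))-V(x_0)$, which are absorbed into the additive constant;
\item the jump compensator $\int\!\int \frac{\xi}{c}a\,\nu(d\xi)dt$, which combines with the $\dot z$-part to produce $2\frac{\dot z-f(z)}{c^2}\int_{|\xi|<1}\xi\nu(d\xi)$.
\end{itemize}
This yields \eqref{LOM} up to a constant, with $\exp\{-\frac12\int OM\}$ factored out.

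\textbf{Step 3: the remaining random terms (main obstacle).} What is left inside the expectation are random terms: the stochastic integral $\int \frac{\dot z-\ldots}{c}\,dB_t$, and the jump sum $\sum[V(y(t))-V(y(t-))]$ over small jumps. We must show that, conditionally on $\{Y\in K(x_0,\epsilon)\}$,
\[
\mathbb{E}\big[\exp(\lambda\cdot\text{these terms})\mid \|B^c\|\le\epsilon\big]\to1 \quad\text{for every }\lambda .
\]
Hölder's inequality then lets the product of exponentials factor, exactly as in Dürr--Bach.

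For the Brownian integral with a deterministic $C^1$ integrand, we integrate by parts. This converts it into $\frac{1}{c}\big[\dot z(u)B_u-\int \ddot z\, B\,dt\big]$ minus the jump contribution. Each of these pieces is $O(\epsilon)$ on the tube, since $|cB_t+J_t|\le\epsilon$ and the small-jump part $J_t$ is controlled in the same way.

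For the jump sum, jumps on the tube have size at most $2\epsilon$, so each summand is $V'\cdot\Delta y+O(\Delta y^2)$. We compare this with the compensator using $\int_{|\xi|<1}|\xi|\nu(d\xi)<\infty$. Then we bound the conditional exponential moments by a Chebyshev/Doob argument: the compensated small-jump martingale has bracket $O(\int_{|\xi|<2\epsilon}\xi^2\nu)\to0$.

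This conditional-moment control under tube conditioning, now for a Lévy-perturbed Brownian reference rather than pure Brownian motion, is the delicate point. We would first try to reduce it to symmetry and independence of the $B$ and $\tilde N$ components, so that Anderson-type inequalities apply.
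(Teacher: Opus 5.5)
\textbf{The gap is in Step 3.} On the tube only the sum $cB_t+J_t$ is controlled, where $J_t=\int_s^t\!\int_{|\xi|<1}\xi\,\tilde N(dr,d\xi)$. The two pieces $cB_t$ and $J_t$ are not separately $O(\epsilon)$. Every jump of a path in $K(x_0,\epsilon)$ is at most $2\epsilon$, so asymptotically the jump sum is negligible and $J_t$ stays close to its compensator drift $-(t-s)m$, with $m:=\int_{|\xi|<1}\xi\,\nu(d\xi)$. The Brownian part must therefore follow $cB_t\approx(t-s)m$. For $\nu=\lambda\delta_{1/2}$ and $\epsilon<1/4$ this is exact: no jump can occur on the tube, $J_t=-\lambda(t-s)/2$, and $cB_t=y_t-x_0+\lambda(t-s)/2$. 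Consequently $\int_s^u\frac{f(z)-\dot z}{c}\,dB_t$ is not $O(\epsilon)$ on the tube. In that example it equals $\frac{m}{c^2}\int_s^u(f(z)-\dot z)\,dt+O(\epsilon)$ uniformly, and in general it concentrates there. That is exactly the contribution of the third term of \eqref{LOM}. So the conditional exponential moments you want to send to $1$ do not tend to $1$. Moreover, a symmetry/Anderson argument is available only for symmetric $\nu$, i.e.\ $m=0$, which is precisely when the jump term vanishes. The form \eqref{OMR} shows the mechanism: \eqref{LOM} is, up to the constant $m^2/c^2$, the classical OM function of the diffusion $dX=(f-m)\,dt+c\,dB$. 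On shrinking tubes the small jumps survive only through their compensator.

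\textbf{Step 2 bookkeeping.} You translate first (the paper's \eqref{hjj}) and only then change measure, so $F$ is evaluated on paths near the constant path $x_0$. There $c\,a'$ gives $f'(x_0)$, the compensator integral gives $\frac{m}{c^2}f(x_0)(u-s)$, and $V(y(u))-V(x_0)\to0$; all of these are independent of $z$. Every $z$-dependent term must come from the product $J_X[\cdot,-z_0]\,F=d\mu_{\mathcal T^{-1}X}/d\mu_{x_0+B^c}$, whose Girsanov integrand $(f(y_{t-}+z_0(t))-\dot z_0(t))/c$ depends on the path. The square comes from its $-\frac12\int(\cdot)^2dt$ term. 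The term $f'(z)$ comes from the It\^o correction of the $y$-dependent part of that integrand, which you discard by treating the integrand as deterministic. The jump term comes from the drift of $B$ described above.

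Your list of sources is taken from the paper's opposite ordering, with $F[y+z]$ evaluated near $z$ and $J_Y[y+x_0,-z_0]$ near $x_0$. Even there, $V(z(u))-V(x_0)$ is not an additive constant: the terminal point is free, and by \eqref{Vf} it supplies the cross term $-2\dot z f(z)/c^2$ of the square.

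\textbf{The paper's route and possible repairs.} The paper writes both densities pathwise via It\^o's formula and keeps the compensator integrals at zeroth order; that is the source of its third term. It then drops the remainder, jump sums included, and states afterwards that the result is only an approximation. To repair your route, one option is to convert $J_X$ to the same pathwise form. Then the third term comes out of its compensator integral, and what is left is an integration-by-parts functional of the path plus the sum over jumps of size at most $2\epsilon$. The other option is to reduce first, on the tube, to the drift $f-m$ and apply the diffusion Onsager--Machlup theorem.
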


\begin{proof}
According to Definition \ref{Definition 2.1}, our starting point is the quantity
$$
\mu_X(K(z,\epsilon))
=
\int_{K(z,\epsilon)}d\mu_X(x).
$$

For every differentiable path $z(t)\in D_\tau^{x_0}$, one can introduce a function $z_0(t)\in D_\tau^{0}$ satisfying (\ref{t1}) such that
\begin{equation}\label{Equation-xm}
z(t)=x_0+z_0(t),
\qquad
\dot{z}(t)=\dot{z_0}(t).
\end{equation}

Consequently, the translation $\mathcal{T}$ defined by (\ref{t1}) is determined by this choice of $z_0(t)$, and we have
\begin{equation}
\mathcal{T}^{-1}(K(z,\epsilon))
=
K(x_0,\epsilon).
\end{equation}

Observe that every jump-diffusion process $Y_t$ with diffusion coefficient $c$ and initial condition $Y_s=x_0$ induces a quasi-translation invariant measure $\mu_Y$ satisfying $\mu_X\sim\mu_Y$, in view of Lemma \ref{lemma 3.1} and Lemma \ref{lemma 3.2}. Combining (\ref{F}) and (\ref{y}) therefore yields
\begin{align}\label{hj}
\mu_X(K(z,\epsilon))
=
\int_{K(z,\epsilon)}F[y]d\mu_Y(y)
=
\int_{K(x_0,\epsilon)}F[x+z_0]J_Y[x,-z_0]d\mu_Y(x),
\end{align}
since $K(z,\epsilon)\in\mathcal{M}\subset\mathcal{G}$. In particular, because $\mu_X$ itself is quasi-translation invariant by Lemma \ref{lemma 3.2}, we may choose $\mu_Y=\mu_X$. In this case $F=1$, and (\ref{hj}) reduces to
\begin{eqnarray}\label{hjj}
\mu_X(K(z,\epsilon))
=
\int_{K(x_0,\epsilon)}J_X[x,-z_0]d\mu_X(x).
\end{eqnarray}
To further shift the integration domain to $K(0,\epsilon)$, define
\begin{eqnarray}
Y_t^0=Y_t-x_0
\end{eqnarray}
and denote by $\mu_{Y^0}$ the induced measure on $D_\tau^{0}$. Applying relation (\ref{y}) once again with translation parameter $x_0$, we obtain
\begin{eqnarray}\label{hjf}
\mu_X(K(z,\epsilon))
=
\int_{K(0,\epsilon)}F[y+z]J_Y[y+x_0,-z_0]d\mu_{Y^0}(y).
\end{eqnarray}

It can be shown that \cite{Chao2019} 
\begin{align}\label{FJ}
&F[y+z]J_Y[y+x_0,-z_0]\notag\\
=&\exp\Bigg\{
V(y(u)+z(u))-V(x_0)
-\frac{1}{2}\int_s^uf(y(t-)+z(t-))dt
\notag\\
&-\Sigma_{s\leq t \leq u}[V(y(t)+z(t))-V(y(t-)+z(t-))]
\chi_{|\xi|<1}(\Delta \{y(t)+z(t)\})
\notag\\
&+\int_s^u\int_{|\xi|<1}\frac{\xi}{c}a(y(t-)+z(t-))\nu(d\xi)dt
\Bigg\}
\notag\\
&\cdot
\exp\Bigg\{
V_Y(y(u)+x_0,-z_0(u))-V_Y(x_0,-z_0(s))
\notag\\
&-\frac{1}{2}\int_s^ud_Y(y(t-)+x_0,-z_0(t))dt
\notag\\
&-\Sigma_{s\leq t \leq u}[V_Y(y(t)+x_0,-z_0(t))-V_Y(y(t-)+x_0,-z_0(t))]
\chi_{|\xi|<1}(\Delta y(t))
\notag\\
&+\int_s^u\int_{|\xi|<1}\frac{\xi}{c}a_Y(y(t-)+x_0,-z_0(t))\nu(d\xi)dt
\Bigg\}.
\end{align}

It should be emphasized that all integrals with respect to the time variable $t$ in (\ref{FJ}) are ordinary Riemann integrals. Consequently, these terms can be approximated using Taylor expansions. Expanding the exponent in (\ref{FJ}) around $y(t)=0$ and $y(t-)=0$, respectively, and isolating the zero-order contributions, the remaining terms become arbitrarily small whenever $\epsilon$ is sufficiently small, since for $y(t)\in K(0,\epsilon)$ and $y(t-)\in K(0,\epsilon)$,
$$
\|y(t)\|\leq\epsilon,
\qquad
\|y(t-)\|\leq\epsilon.
$$

Since $z(t)$ is continuously differentiable, denote the higher-order remainder terms collectively by $\Delta[y,z]$. Then
\begin{align}\label{FJ1}
&F[y+z]J_Y[y+x_0,-z_0]\notag\\
=&\exp(\Delta[y,z])
\cdot
\exp\Bigg\{
V(z(u))-V(x_0)
-\frac{1}{2}\int_s^uf(z(t))dt
\notag\\
&+\int_s^u\int_{|\xi|<1}\frac{\xi}{c}a(z(t))\nu(d\xi)dt
\Bigg\} \cdot
\exp\Bigg\{
V_Y(x_0,-z_0(u))-V_Y(x_0,-z_0(s))
\notag\\
&-\frac{1}{2}\int_s^ud_Y(x_0,-z_0(t))dt
 +\int_s^u\int_{|\xi|<1}\frac{\xi}{c}a_Y(x_0,-z_0(t))\nu(d\xi)dt
\Bigg\}.
\end{align}

Substituting (\ref{FJ1}) into (\ref{hjf}) leads to
\begin{eqnarray}\label{approx}
\mu_X(K(z,\epsilon))
=
F[z]J_Y[x_0,-z_0]
\int_{K(0,\epsilon)}\exp(\Delta[y,z])d\mu_{Y^0}(y).
\end{eqnarray}

We now estimate the remainder term $\Delta[y,z]$ appearing in (\ref{approx}). Recall that for a functional $\Psi[y]$ defined on $D_\tau^{x_0}$ satisfying $\|\Psi[y]\|\leq\gamma$, one has
$$
\int_B\Psi[y]d\mu(y)\leq\gamma\mu(B),
\qquad
B\in\mathcal{G}.
$$

Hence, if $\epsilon>0$ is chosen sufficiently small so that $\Delta[y,z]<\gamma$ with $\gamma\to0$, then expanding the exponential in (\ref{approx}) and neglecting terms of order smaller than $\gamma$ (with $\gamma\ll1$) yields the approximation
\begin{eqnarray}\label{appro}
\mu_X(K(z,\epsilon))
\propto
F[z]J_Y[x_0,-z_0]\mu_{Y^0}(K(0,\epsilon)).
\end{eqnarray}

Using the identity
$$
\mu_{Y^0}(K(0,\epsilon))
=
\mu_Y(K(x_0,\epsilon)),
$$
we finally arrive at
\begin{eqnarray}\label{tubePf}
\mu_X(K(z,\epsilon))
\propto
F[z]J_Y[x_0,-z_0]\mu_Y(K(x_0,\epsilon)).
\end{eqnarray}
Here, the notation $\propto$ is understood in the asymptotic sense as $\epsilon$ becomes sufficiently small. Based on (\ref{tubePf}), maximizing $\mu_X(K(z,\epsilon))$ reduces to maximizing the functional
\begin{eqnarray}\label{Mz}
M[z]
&=&
F[z]J_Y[x_0,-z_0]
\notag\\
&=&
\exp\Bigg\{
V(z(u))-V(x_0)
-\frac{1}{2}\int_s^uf(z(t))dt
\notag\\
&&
+\int_s^u\int_{|\xi|<1}\frac{\xi}{c}a(z(t))\nu(d\xi)dt
\Bigg\}\cdot
\exp\Bigg\{
V_Y(x_0,-z_0(u))-V_Y(x_0,-z_0(s))
\notag\\
&&
-\frac{1}{2}\int_s^ud_Y(x_0,-z_0(t))dt +\int_s^u\int_{|\xi|<1}\frac{\xi}{c}a_Y(x_0,-z_0(t))\nu(d\xi)dt
\Bigg\}.
\end{eqnarray}

We next simplify the functional (\ref{Mz}) following the same philosophy used for diffusion processes in \cite{Durr1978}. Let $X_t$ satisfy (\ref{Equation-xmm}), and let $Y_t$ be defined by (\ref{Equation-y}) with $g(x)=c$ and $h(y,x)=x$. It can be shown that \cite{Chao2019}
\begin{eqnarray}
V(z(u))-V(x_0)
=&&
\frac{1}{c^2}
\int_s^u
[\dot{z}(t)\{f(z(t))-k(z(t))\}]dt,\label{Vf}\\
f(z(t))
=&&
\left\{
\frac{f(z(t))-k(z(t))}{c}
\right\}^2
+
\frac{df(x)}{dx}\mid_{x=z(t)}
\label{bf}\\
&&
-
\frac{dk(x)}{dx}\mid_{x=z(t)}
+
\frac{2}{c^2}
\{f(z(t))-k(z(t))\}k(z(t)),\notag\\
V_Y(x_0,-z_0(u))-V_Y(x_0,-z_0(s))
=&&
\frac{1}{c^2}
\int_s^u
[\dot{z}(t)k(z(t))-\ddot{z}(t)x_0]dt,\label{VYf}
\\
d_Y(x_0,-z_0(t))
=&&
-\frac{k^2(x_0)}{c^2}
-
\frac{d k(x)}{dx}\mid_{x=x_0}
+
\frac{{\dot{z}(t)}^2}{c^2}
\notag\\
&&
+
\frac{k^2(z(t))}{c^2}
-
\frac{2\ddot{z}(t)x_0}{c^2}
+
\frac{d k(x)}{dx}\mid_{x=z(t)}.\label{dYf}
\end{eqnarray}

Combining the previous four identities yields
\begin{align}\label{lnM}
\log (M[z])
=&
-\frac{1}{2}
\int_s^u
\left\{
\left(
\frac{f(z)-\dot{z}}{c}
\right)^2
+
f'(z)
-
\left(
\frac{k^2(x_0)}{c^2}
+
k'(x_0)
\right)
\right\}dt
\notag\\
&
+
\int_s^u
\int_{|\xi|<1}
\frac{\xi}{c^2}f(z)\nu(d\xi)dt
-
\int_s^u
\int_{|\xi|<1}
\frac{\xi}{c^2}(k(x_0)+\dot{z})\nu(d\xi)dt.
\end{align}

In accordance with Definition \ref{Definition 2.1}, we therefore define the OM function by
\begin{equation}\label{OMOO}
OM(\dot{z},z)
=
\left(
\frac{\dot{z}-f(z)+\int_{|\xi|<1}\xi\nu(d\xi)}{c}
\right)^2
+
f'(z)
-
\left\{
\left(
\frac{-k(x_0)+\int_{|\xi|<1}\xi\nu(d\xi)}{c}
\right)^2
+
k'(x_0)
\right\}.
\end{equation}

Notice that the maximizer $z_m(t)$ of (\ref{tubePf}) should not depend on the particular quasi-translation invariant measure $\mu_Y$, namely, it should be independent of the choice of $k(z)$. This requirement is indeed satisfied here. Furthermore, the term
$$
\left(
\frac{\int_{|\xi|<1}\xi\nu(d\xi)}{c}
\right)^2
-
\left(
\frac{-k(x_0)+\int_{|\xi|<1}\xi\nu(d\xi)}{c}
\right)^2
+
k'(x_0)
$$
is merely a constant depending on the chosen measure.

Hence, up to an additive constant, the OM function can be taken as
\begin{align}\label{OMP}
OM(\dot{z},z)
=
\left(
\frac{\dot{z}-f(z)}{c}
\right)^2
+
f'(z)
+
2\frac{\dot{z}-f(z)}{c^2}
\int_{|\xi|<1}\xi \nu(d\xi).
\end{align}

Equivalently, the OM function in (\ref{OMP}) may also be rewritten in the form
\begin{align}\label{OMR}
OM(\dot{z},z)
=
\frac{1}{c^2}
\left[
\dot{z}
-
f(z)
+
\int_{|\xi|<1}\xi\nu(d\xi)
\right]^2
+
f'(z)
-
\left(
\frac{\int_{|\xi|<1}\xi\nu(d\xi)}{c}
\right)^2.
\end{align}
This completes the proof.
\end{proof}

Recall that, in handling the remainder term $\Delta[y,z]$ appearing in (\ref{approx}), the Poisson integral contributions associated with trajectories inside the tube neighborhood of the reference path were neglected. This simplification is nontrivial because, within a given tube set, there is generally no uniform upper bound on the number of jumps among all sample paths of the L\'evy process. Consequently, the Onsager--Machlup functional given in (\ref{LOM}) should be interpreted only as an approximation, and it does not provide a fully complete description of paths for jump-diffusion systems with infinite jump activity.

Alternative approaches for deriving Onsager--Machlup functionals for jump-diffusion processes under suitable assumptions have also been proposed. Examples include the \emph{probability flow equivalence method} for finite jump-diffusion systems with finite jump activity \cite{huang2025probability}, as well as the framework based on the Kramers--Moyal equation \cite{faria2025nonequilibrium}.

The main result of \cite{huang2025probability} concerns the jump-diffusion equation
\begin{equation*}
    \left\{
    \begin{aligned}
         d X_t=&\ b(X_{t-}) d t +  \sigma d B_t +  J_t d N_t,\quad t>0,\\
         X_0=&\ x_0\in\mathbb{R}^n,
    \end{aligned}
    \right.
\end{equation*}
where $b$ is a measurable vector-valued function, $\sigma$ is a constant, and $X_t\in\mathbb{R}^n$ denotes the position of a random particle. Here, $B$ is a standard Brownian motion in $\mathbb{R}^n$, while $N$ is an independent Poisson process with state-dependent stochastic intensity $\lambda(X_{t-})$ and unit jump size. In addition, $J$ denotes a random jump amplitude with intensity $\nu_J$, independent of both $B$ and $N$. The authors proved that
\begin{equation*}
    \begin{split}
    \mathbb{P}\left(
    \sup_{t\in[0,T]}|X_t-\psi(t)|\leq\delta
    \right)
    \approx
    C(\delta)\exp(-S_X^{\mathrm{OM}}(\psi)),
    \qquad
    \delta\downarrow0,
      \end{split}
\end{equation*}
for some positive constant $C(\delta)$ depending on $\delta$, where $\psi\in C^1_{x_0}([0,T],\mathbb R^d)$, and
\begin{equation*}
    \begin{aligned}
        S_\mathrm{finite}^{\mathrm{OM}}(\psi,\dot{\psi})
        =&\
        \int_{0}^{T}
        \Bigg(
        \frac{1}{2\sigma^2}
        \left|
        \dot{\psi}_s
        -
        b(\psi_s)
        -
        \int_{\mathbb{R}^n}\int_0^1
        z
        \frac{\lambda^2(\psi_s-\theta z)\nu_J(-\theta z)}
        {\lambda(\psi_s)\nu_J(0)}
        d\theta\
        \nu_J(d z)
        \right|^2
        \\
        &
        +
        \frac{1}{2}
        \nabla\cdot
        \Bigg[
        b(\psi_s)
        +
        \int_{\mathbb{R}^n}\int_0^1
        z
        \frac{\lambda^2(\psi_s-\theta z)\nu_J(-\theta z)}
        {\lambda(\psi_s)\nu_J(0)}
        d\theta\
        \nu_J(d z)
        \Bigg]
        \Bigg)
        ds .
    \end{aligned}
\end{equation*}
Consider the Lagrangian $L(\psi,\dot{\psi})$ corresponding to the OM functional
$$
S_\mathrm{finite}^{\mathrm{OM}}(\psi)
=
\int_{0}^{T}L(\psi_s,\dot{\psi}_s)ds.
$$

Define the effective drift term by
\begin{equation}
    F(\psi_s)
    =
    b(\psi_s)
    +
    \int_{\mathbb{R}^n}\int_0^1
    z
    \frac{\lambda^2(\psi_s-\theta z)\nu_J(-\theta z)}
    {\lambda(\psi_s)\nu_J(0)}
    d\theta
    \nu_J(dz).
\end{equation}

The associated Lagrangian then takes the form
\begin{equation}
    L(\psi,\dot{\psi})
    =
    \frac{1}{2\sigma^2}
    |\dot{\psi}-F(\psi)|^2
    +
    \frac{1}{2}\nabla\cdot F(\psi).
\end{equation}

After computing the derivatives explicitly, one obtains the second-order differential equation
\begin{equation}
    \boxed{
    \ddot{\psi}
    =
    (\nabla F)^\top F
    +
    \frac{\sigma^2}{2}\nabla(\nabla\cdot F)
    +
    \left(
    \nabla F-(\nabla F)^\top
    \right)\dot{\psi}.
    }
\end{equation}

When the explicit drift term vanishes, i.e. $b=0$, the dynamics are governed entirely by the jump-induced effective drift $F_J(\psi_s)$:
\begin{equation}
    F_J(\psi_s)
    =
    \int_{\mathbb{R}^n}\int_0^1
    z
    \frac{\lambda^2(\psi_s-\theta z)\nu_J(-\theta z)}
    {\lambda(\psi_s)\nu_J(0)}
    d\theta
    \nu_J(dz).
\end{equation}

In this case, the Euler--Lagrange equation for the most probable path reduces to
\begin{equation}
    \boxed{\ddot{\psi}
    =
    (\nabla F_J)^\top F_J
    +
    \frac{\sigma^2}{2}\nabla(\nabla\cdot F_J)
    +
    \left(
    \nabla F_J-(\nabla F_J)^\top
    \right)\dot{\psi}.}
\end{equation}
This equation demonstrates that the state-dependent jump intensity $\lambda(x)$ generates an effective ``virtual'' force field. Consequently, the path dynamics are affected both by gradients of the jump intensity and by the divergence correction term proportional to $\sigma^2$. Moreover, the probability flow equivalence method can also be employed to study probability currents \cite{huang2024levy} and entropy production \cite{huang2026entropy} in non-Gaussian  systems.

Ref.~ \cite{faria2025nonequilibrium} approaches the problem from the perspective of the Kramers--Moyal equation and shows that the OM functional can be generalized into an infinite-series representation. In particular, for the SDE (\ref{Equation-xmm}) with drift $f(x)=-U'(x)$ and small noise intensity $\epsilon$, the corresponding Kramers--Moyal OM functional is
\begin{equation}\label{KMOM}
    \begin{aligned}
        S_\mathrm{KM}^\mathrm{OM}(x,\dot{x})
        =
        \sum_{n=2}^\infty
        \frac{a_n}{n!}
        \left(
        \frac{\dot{x}-U'(x)}{\epsilon a_2}
        \right)^n,
    \end{aligned}
\end{equation}
where the constants $a_n$ are determined by the jump noise. According to \cite{faria2025nonequilibrium}, the term corresponding to $n=2$ recovers the classical OM functional for Gaussian diffusion processes, whereas all higher-order terms with $n\geq3$ reflect non-Gaussian effects.

The Euler--Lagrange equation associated with the Kramers--Moyal OM functional can be expressed compactly as
\begin{equation}\label{KM:EL}
    \boxed{
    \frac{\phi''(y)}{(\epsilon a_2)^2}
    \bigl(
    \ddot{x}-U''(x)\dot{x}
    \bigr)
    +
    \frac{U''(x)}{\epsilon a_2}\phi'(y)
    =
    0,
    }
\end{equation}
where
\begin{equation}
    y
    =
    \frac{\dot{x}-U'(x)}{\epsilon a_2}.
\end{equation}

When $U(x)=0$, one has $U'(x)=0$ and $U''(x)=0$. Under this condition, (\ref{KM:EL}) simplifies to
\begin{equation}
    \frac{d}{dt}\phi'(y)=0.
\end{equation}

Equivalently,
\begin{equation}
    \phi'(y)=\mathrm{const}.
\end{equation}

Expressed in terms of $\dot{x}$, this becomes
\begin{equation}
    \phi'
    \left(
    \frac{\dot{x}}{\epsilon a_2}
    \right)
    =
    \mathrm{const}.
\end{equation}

If $\phi''(y)\neq0$, it follows further that
\begin{equation}
    \dot{x}=\mathrm{const},
\end{equation}
which implies that the optimal trajectory is linear in time.

\section*{Problems}
\addcontentsline{toc}{section}{Problems}

\begin{prob}
    Prove Theorem \ref{MPTPinnewHamilton} to derive the Hamiltonian system that the most probable transition path satisfy.
\end{prob}

\begin{prob}
\label{prob1}
Prove Lemma \ref{OMderived} to show that the most probable transition path of original system coincides with the most probable transition path of the associated Markovian bridge system. And prove Lemma \ref{equivalence} to show that bridge measures equal to the laws of the bridge SDEs.
\end{prob}

\begin{prob}
Consider the following Stochastic Differential Equation:
\begin{equation*}
dX_t = \theta(\mu - X_t)dt + \sigma dB_t, \quad X_0 = x_0 \in \mathbb{R},
\end{equation*}
where the drift term is associated with the potential $U(x) = \theta \left( \frac{x^2}{2} - \mu x \right)$. 
Derive the Partial Differential Equation such that the most probable transition path of the above SDE are its characteristic line.
\end{prob}

\begin{prob}
   Prove Lemma \ref{lemma 2.1} to show that $\mathcal{B}_0=\mathcal{G}$ and $\mathcal{G}\subsetneqq\mathcal{B}_\tau^{x_0}.$
\end{prob}

\begin{prob}
\label{prob2} 
Derive the Euler--Lagrange equation \eqref{KM:EL} for the Krammer-Moyal Onsager--Machlup functional.
\end{prob}


%
%
%
\chapter{Stochastic Variational Principles and Stochastic Geometric Mechanics }
\label{ch:4} 

\abstract*{Each chapter should be preceded by an abstract (no more than 200 words) that summarizes the content. The abstract will appear \textit{online} at \url{www.SpringerLink.com} and be available with unrestricted access. This allows unregistered users to read the abstract as a teaser for the complete chapter.
Please use the 'starred' version of the new \texttt{abstract} command for typesetting the text of the online abstracts (cf. source file of this chapter template \texttt{abstract}) and include them with the source files of your manuscript. Use the plain \texttt{abstract} command if the abstract is also to appear in the printed version of the book.}

\abstract{
This chapter briefly introduces stochastic Lagrangian and Hamiltonian mechanics by establishing stochastic variational principles on configuration space and phase space. We established stochastic Euler–-Lagrange equations, stochastic Hamilton’s equations via Legendre transform, and second-order Hamilton--Jacobi equations via canonical transformations. The framework is extended to multiplicative noise. Connections are made to the Onsager–Machlup functional in the previous chapter, and Schr\"odinger’s problem in the next, offering a unified variational perspective on stochastic dynamics.
}

\section{Introduction}

In classical mechanics, the Lagrangian—typically defined as the difference between kinetic and potential energy—governs the deterministic motion of a system. Applying the principle of least action (or the Hamilton's principle) to this functional yields the classical trajectory of the particle \cite{VIArnold1989, Holm2011}. Geometric mechanics formalizes this framework by applying differential geometry to physical systems. Rather than treating motion merely as the time evolution of isolated coordinates, it conceptualizes the space of all possible states as a smooth manifold. Through this geometric lens, fundamental physical laws, such as the conservation of energy and momentum, emerge naturally from the symmetries of the underlying space.

The modern stochastic extension enables a rigorous variational approach to stochastic processes with non-differentiable trajectories, via Hamilton--Jacobi--Bellman equations (HJB).
The history of HJB equations originates from the field of stochastic control, where foundational contributions by Bismut \cite{bismut1973conjugate}, Peng and
Pardoux \cite{PardouxPeng1990}, and P.-L. Lions \cite{crandall1984some} laid the groundwork for their systematic development. In stochastic control problems, HJB equations naturally emerge as tools for characterizing value functions, particularly through the dynamic programming principle and backward stochastic differential equations \cite{bismut1973conjugate,peng1992stochastic}. In Euclidean quantum mechanics, a probabilistic analogy with quantum mechanics inspired by Schr\"odinger \cite{Schrodinger1932}, these equations serve as an analytical bridge between stochastic processes and quantum dynamics 
\cite{CZ03}. 
Moreover, second-order Hamilton--Jacobi (HJ) equations play a central role in stochastic optimal transport problems like Schr\"odinger's problem, where they govern the evolution of cost functions and probability measures in systems driven by stochastic flows \cite{Mik21,leonard2014}. 

More recently, the second-order Hamilton--Jacobi theory has been developed for stochastic geometric mechanics, connecting to stochastic Lagrangian and Hamiltonian systems via canonical transformations of second-order symplectic structures. It derives stochastic Hamilton's equations and variational principles \cite{HZ23,huang2022hamilton,huang2025stochastic}, capturing the interplay between noise and geometry and offering geometric insights into stochastic optimal transport. Second-order HJ equations also act as a bridge connecting stochastic geometric mechanics and statistical mechanics \cite{HZ23a}. The main results of this framework is summarized as follows:
\begin{center}
\begin{picture}(280,260)(0,-240)
  \put(35,-35){\shortstack{Stochastic \\ Hamiltonian \\ mechanics}}
  \put(30,-40){\framebox(65,35){}}
  \put(60,-40){\vector(0,-1){15}}
  \put(35,-85){\shortstack{Second-order \\ symplectic \\ structure}}
  \put(55,-90){\vector(-1,-1){15}}
  \put(70,-90){\vector(1,-1){15}}
  \put(0,-135){\shortstack{Mixed-order \\ contact \\ structure}}
  \put(40,-140){\vector(1,-1){20}}
  \put(65,-135){\shortstack{Stochastic \\ Hamilton's \\ equations}}
  \put(85,-140){\vector(-1,-1){20}}
  \put(115,-122){\vector(1,0){20}}
  \put(135,-122){\vector(-1,0){20}}
  \put(45,-180){\shortstack{HJB \\ equations}}
  \put(153,-138){\vector(-2,-1){65}}
  \qbezier(70,-185)(120,-197)(135,-200)

  \put(150,-35){\shortstack{Stochastic \\ Lagrangian \\ mechanics}}
  \put(145,-40){\framebox(60,35){}}
  \put(170,-40){\vector(-1,-1){15}}
  \put(185,-40){\vector(1,-1){15}}
  \put(240,-85){\shortstack{Stochastic \\ variational \\ principles}}
  \put(115,-90){\dashbox(120,35){}}
  \put(120,-85){\shortstack{Stochastic \\ Maupertuis's \\ principle}}
  \put(155,-90){\vector(1,-1){15}}
  \put(180,-85){\shortstack{Stochastic \\ Hamilton's \\ principle}}
  \put(200,-90){\vector(-1,-1){15}}
  \put(225,-90){\vector(1,-1){20}}
  \put(230,-130){\shortstack{Schr\"odinger's \\ problem}}
  \put(140,-135){\shortstack{Stochastic \\ Euler--Lagrange \\ equation}}
  \qbezier(170,-138)(152,-170)(135,-200)
  \put(135,-200){\vector(0,-1){10}}
  \put(115,-240){\shortstack{Stochastic \\ Noether's \\ theorem}}
\end{picture}
\end{center}
\vspace{5mm}

In this chapter, we provide a brief overview of this framework. To avoid delving too much into differential geometry, we focus solely on Euclidean spaces. For a more detailed discussion, we refer to \cite{HZ23}.
Throughout, the probability space $(\Omega, \F, \P)$ is equipped with a usual nondecreasing filtration $\{\F_t\}_{t \in \R}$.

\section{Stochastic Lagrangian Mechanics}
\label{sec-7-2}

In this section, we will establish two types of stochastic variational principles: stochastic Hamilton's principle, and stochastic Maupertuis's principle. 
The former describes how a stochastic system moves over time between fixed endpoint distributions, whereas the latter describes the stochastic trajectory with mean energy conserved.

\subsection{Stochastic Hamilton's Principle}

We first recall the definitions of Nelson's mean derivatives \cite{Nelson}.
\begin{definition}[Mean derivatives]
For an $\R^n$-valued process $\{X(t)\}_{t \in I}$, its (forward) mean derivative $DX$ and (forward) quadratic mean derivative $Q X$ are defined by conditional expectations as follows:
  \begin{gather}
    DX(t) = \lim_{\e\to0^+} \E\left[ \frac{X(t+\e)-X(t)}{\e} \bigg| \F_t \right], \label{mean-der} \\
    Q X(t) = \lim_{\e\to0^+} \E\left[ \frac{(X(t+\e)-X(t))\otimes (X(t+\e)-X(t))}{\e} \bigg| \F_t \right], \label{quad-mean-der}
  \end{gather}
\end{definition}

Let $T>0$. Our stochastic variational problem consists of finding the extrema (maxima or minima) of the stochastic action functional of optimal-transport type
\begin{equation}\label{action}
  \mathcal S [X] := \E \int_0^T L_0\left(t, X(t), D X(t) \right) dt
\end{equation}
over a suitable set of diffusions $X$ on $\R^n$, where $L_0: [0,T] \times \R^{2n}$ is a Lagrangian function.
In order to formulate a well-posed stochastic variational problem in an economical way, we assume that $X$ is in the set of diffusion processes with additive noise, that is, its quadratic mean derivative is a constant identity matrix (almost surely). More precisely, we consider the following admissible class of diffusions, for two fixed absolutely continuous probability measures $\mu_0, \mu_T \in \Pred(\R^n)$,
\begin{equation}\label{diff-space}
  \begin{aligned}
    \A([0,T];\mu_0, \mu_T) = \{ & X \text{ is a diffusion process on } [0,T]: \\
    &QX(t) = \hbar I_{d\times d}, \forall t\in [0,T], \text{ a.s.}, \\
    &\operatorname{Law}(X(0)) = \mu_0, \operatorname{Law}(X(T)) = \mu_T \},
  \end{aligned}
\end{equation}
where $\hbar$ is a positive constant.
The action functional $\mathcal S$ is now defined on the set $\A([0,T];\mu_0, \mu_T)$, that is, $\mathcal S: \A([0,T];\mu_0, \mu_T) \to \R$.

Note that the admissible class $\A$ is similar to the Wiener space, so that a candidate for its ``tangent space'' is Cameron--Martin space. Denote by $H^1([0,T]; \R^n)$ the Hilbert space of absolutely continuous curves $v:[0,T]\to \R^n$ such that $\int_0^T |\dot v(t)|^2 dt < \infty$. Let $H^1_0([0,T]; \R^n)$ be the subspace consisting of all $v\in H^1([0,T]; \R^n)$ satisfying $v(0) = v(T) = 0$.

\begin{definition}
  Let $X\in \A([0,T];\mu_0, \mu_T)$. A curve $v\in H^1_0([0,T]; \R^n)$ is called a tangent vector to $\A([0,T];\mu_0, \mu_T)$ at $X$. The tangent space to $\A([0,T];\mu_0, \mu_T)$ at $X$ is the set of all such tangent vectors, that is,
  \begin{equation*}
    T_X \A([0,T];\mu_0, \mu_T) := H^1_0([0,T]; \R^n).
  \end{equation*}
\end{definition}

\begin{definition}\label{variation}
  By a variation (or deformation) of a diffusion $X\in \A([0,T];\mu_0, \mu_T)$ along $v\in H^1_0([0,T]; \R^n)$, we mean a one-parameter family of diffusions $\{X^v_\e\}_{\e\in(-\varepsilon,\varepsilon)}$ defined by 
  \begin{equation}\label{variation-diff}
    X^v_\e(t) = X(t) + \e v(t) + o(\e).
  \end{equation}
  The diffusion $X\in \A([0,T];\mu_0, \mu_T)$ is called a critical point of $\mathcal S$, if the first variation $\delta \mathcal S$ vanishes at $X$, i.e.,
  \begin{equation}\label{stationary}
    \frac{d}{d\e}\bigg|_{\e=0} \mathcal S[X^v_\e] = 0, \quad \text{for all } v\in H^1_0([0,T]; \R^n).
  \end{equation}
\end{definition}

The stochastic variational problem \eqref{action}--\eqref{stationary} in the Euclidean context has also been familiar in stochastic optimal transport. See Section \ref{sec-7-3} for connections to those areas.

The following integration-by-parts formula will be used. Its proof is straightforward from definitions of stochastic integrals and mean derivatives, cf. \cite[Lemma 4.4]{CZ91}.
\begin{lemma}\label{integration-by-parts}
  Let $\xi = \{\xi(t)\}_{t\in[0,T]}$ be a real-valued continuous semimartingale such that $D\xi$ exists, let $f$ be a real-valued continuous process on $[0,T]$, of finite variation. Then
  \begin{equation*}
    \E \int_0^T \xi(t) \dot f(t) dt = E\left[ f(T)\xi(T)-f(0)\xi(0) \right] - \E \int_0^T f(t) D\xi(t) dt.
  \end{equation*}
\end{lemma}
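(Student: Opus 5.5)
The plan is a discrete-time approximation. We prove the formula first on partitions, where it becomes a summation by parts. Then we pass to the limit using the definition \eqref{mean-der} of the mean derivative.

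Fix partitions $0=t_0<t_1<\dots<t_N=T$ with mesh $h\to0$. Summation by parts (Abel) gives the exact identity
\begin{equation*}
\sum_{k=0}^{N-1}\xi(t_{k})\bigl(f(t_{k+1})-f(t_k)\bigr)=f(T)\xi(T)-f(0)\xi(0)-\sum_{k=0}^{N-1} f(t_{k+1})\bigl(\xi(t_{k+1})-\xi(t_k)\bigr).
\end{equation*}
Take expectations on both sides. The left side is a Riemann--Stieltjes sum, so it converges to $\E\int_0^T\xi\,df$. When $f$ is absolutely continuous this equals $\E\int_0^T\xi\dot f\,dt$; that is how $\dot f$ is to be read, and if $f$ is only of finite variation the statement should be read with $df$ in place of $\dot f\,dt$. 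Passing to the limit requires uniform integrability. We get it from continuity of $\xi$ and $f$ together with the integrability implicit in the hypotheses, namely $\E\sup_t|\xi(t)|<\infty$ and bounded total variation of $f$. If these bounds are not available outright, we localize by stopping times and then remove the localization.

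On the right side, split $f(t_{k+1})=f(t_k)+(f(t_{k+1})-f(t_k))$. The correction term $\sum_k (f(t_{k+1})-f(t_k))(\xi(t_{k+1})-\xi(t_k))$ is the quadratic covariation of a finite-variation process with a continuous semimartingale. It tends to $0$ in $L^1$, since it is bounded by $\mathrm{Var}(f)\cdot\max_k|\Delta\xi_k|$ and we can use dominated convergence. The main term is handled in two steps. First, if $f$ is adapted (for instance, deterministic as in the intended application $f=v\in H^1_0$), we condition on $\F_{t_k}$ and write
\begin{equation*}
\E\bigl[f(t_k)(\xi(t_{k+1})-\xi(t_k))\bigr]=\E\Bigl[f(t_k)\,h\,\E\Bigl[\tfrac{\xi(t_{k+1})-\xi(t_k)}{h}\Big|\F_{t_k}\Bigr]\Bigr].
\end{equation*}
Second, we replace the conditional difference quotient by $D\xi(t_k)$. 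This gives a Riemann sum for $\E\int_0^T fD\xi\,dt$.

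The main obstacle is the last replacement. It needs the convergence in \eqref{mean-der} to hold uniformly in $t$ in $L^1$, or at least well enough to sum the errors over $k$. We will handle it with the semimartingale decomposition $\xi=\xi(0)+M+A$. The martingale part contributes zero after conditioning. Existence of $D\xi$ then forces $A$ to be absolutely continuous with $D\xi(t)=\dot A(t)$ for a.e.\ $t$ (as in \cite[Lemma 4.4]{CZ91}). Hence $\E[\xi(t_{k+1})-\xi(t_k)\mid\F_{t_k}]=\E[\int_{t_k}^{t_{k+1}}D\xi\,ds\mid\F_{t_k}]$. The main term therefore equals
\begin{equation*}
\E\int_0^T f(\lfloor s\rfloor_h)\,D\xi(s)\,ds,
\end{equation*}
where $\lfloor s\rfloor_h$ denotes the left endpoint of the partition cell containing $s$. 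This converges to $\E\int_0^TfD\xi\,ds$ by continuity of $f$ and dominated convergence, assuming $\E\int_0^T|D\xi|\,dt<\infty$. Combining the limits of all three pieces yields the stated formula.
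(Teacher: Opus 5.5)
Your argument works, but $\E\sup_t|\xi(t)|<\infty$ and $\int_0^T\E|D\xi(t)|\,dt<\infty$ are extra hypotheses, not ones implicit in the statement. Localising does not help, since removing the localisation needs the same bounds.

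\textbf{Comparison with the paper.} The paper gives no real proof: it calls the lemma straightforward from the definitions and cites \cite[Lemma 4.4]{CZ91}. The intended argument is It\^o's product rule $d(f\xi)=f\,d\xi+\xi\,df$, with no bracket term because $f$ is continuous of finite variation. One then takes expectations, using that $\xi$ has drift $D\xi$ and a mean-zero martingale part. Your Abel summation re-derives that product rule by hand, with $\sum_k\Delta f_k\,\Delta\xi_k\to0$ playing the role of $[f,\xi]=0$. This shows exactly where adaptedness of $f$, absolute continuity of $f$, and integrability enter, and it covers adapted random $f$. It does not remove the one substantive input, though: identifying the finite-variation part of $\xi$ with $\int_0^\cdot D\xi\,ds$. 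You cite that from the same source the paper does.

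\textbf{The drift-identification step.} As you state it, ``existence of $D\xi$'' is not enough if the limit in \eqref{mean-der} is taken almost surely. Take $\xi(t)=|x_0+B_t|$ with $x_0\neq0$. For each fixed $t$ the conditional difference quotient tends to $0$ a.s. Yet the drift of $\xi$ is a local time, singular in $t$. The lemma then fails for $f(t)=t$, because $t\mapsto\E|x_0+B_t|$ is strictly increasing.

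What you need is:
\begin{enumerate}
\item convergence in $L^1$ in \eqref{mean-der};
\item a progressively measurable version of $D\xi$;
\item $\int_0^T\E|D\xi(t)|\,dt<\infty$.
\end{enumerate}
Under these, the step has a short proof that bypasses the semimartingale decomposition. For bounded $\F_s$-measurable $g$, the map $t\mapsto\E[g\,\xi(t)]$ is continuous on $[s,T]$. Its right derivative exists and equals $\E[g\,D\xi(t)]$ at every $t\in[s,T)$. A continuous function whose right derivative exists, is finite everywhere, and is integrable is the integral of that derivative. Hence $\E[g(\xi(t)-\xi(s))]=\E[g\int_s^tD\xi(r)\,dr]$, i.e.\ $\xi-\int_0^\cdot D\xi\,dr$ is a martingale. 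That is exactly what your main term uses.

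\textbf{A shortcut for the cases actually used.} In every application in Chapter~3, $f=v$ is deterministic. Then $g\equiv1$ already shows that $m(t)=\E\xi(t)$ is absolutely continuous with $m'=\E D\xi$ a.e. The lemma is then ordinary integration by parts for $m$ and $f$ after Fubini, with no discretisation needed.
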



Now we are in position to present the stochastic version of Hamilton's principle.

\begin{theorem}[Stochastic Hamilton's principle]\label{stoch-Hamilton-prin}
  Let $L_0$ be a regular Lagrangian on $\R\times TM$. A diffusion $X\in \A([0,T];\mu_0, \mu_T)$ is a stationary point of $\mathcal S$, if and only if $X$ satisfies the following stochastic Euler--Lagrange equation
  \begin{equation}\label{stoch-EL}
    \D_t \big[ \pt_{\dot x} L_0\left(t, X(t), D X(t) \right) \big] = \pt_x L_0\left(t, X(t), D X(t) \right),
  \end{equation}
  where $\D_t$ is the total mean derivative with respect to $X$.
\end{theorem}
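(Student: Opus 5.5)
The plan is to compute the first variation of $\mathcal S$ directly along a variation $X^v_\e$ from Definition \ref{variation}. Then I would use Lemma \ref{integration-by-parts} to move the time derivative off the test curve $v$. Finally, a fundamental-lemma argument over $v\in H^1_0([0,T];\R^n)$ gives the equivalence with \eqref{stoch-EL}.

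\textbf{Step 1: mean derivative of the variation.} Since $v$ is deterministic and absolutely continuous, the definition \eqref{mean-der} gives
\[
DX^v_\e(t)=DX(t)+\e\dot v(t)+o(\e).
\]
The $\F_t$-conditional expectation simply passes through $\e(v(t+h)-v(t))/h$. Also $QX^v_\e=QX=\hbar I$, because adding a finite-variation deterministic curve does not change the quadratic mean derivative. The endpoint laws are preserved since $v(0)=v(T)=0$, so $X^v_\e$ stays in $\A([0,T];\mu_0,\mu_T)$.

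\textbf{Step 2: differentiate under the expectation.} I assume enough integrability or growth on $\pt_x L_0$ and $\pt_{\dot x}L_0$ for dominated convergence; this is part of what ``regular Lagrangian'' should supply. Then
\[
\frac{d}{d\e}\Big|_{\e=0}\mathcal S[X^v_\e]=\E\int_0^T\Big[\pt_xL_0(t,X,DX)\cdot v(t)+\pt_{\dot x}L_0(t,X,DX)\cdot\dot v(t)\Big]dt .
\]

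\textbf{Step 3: integrate by parts.} Set $\xi(t)=\pt_{\dot x}L_0(t,X(t),DX(t))$, taken componentwise. When $DX(t)=b(t,X(t))$ is Markovian with a $C^{1,2}$ drift, $\xi$ is a continuous semimartingale of the form $F(t,X(t))$. Its mean derivative is then given by Itô's formula:
\[
D\xi=\D_t\xi=\big(\pt_t+DX\cdot\nabla+\tfrac{\hbar}{2}\Delta\big)F(t,X(t)),
\]
which is exactly the total mean derivative. Applying Lemma \ref{integration-by-parts} with $f=v_i$ for each component, the boundary terms vanish because $v(0)=v(T)=0$. The first variation therefore becomes
\[
\E\int_0^T\big(\pt_xL_0-\D_t\pt_{\dot x}L_0\big)\cdot v(t)\,dt=\int_0^T\E\big[\pt_xL_0-\D_t\pt_{\dot x}L_0\big]\cdot v(t)\,dt .
\]

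\textbf{Step 4: conclude both directions.} If \eqref{stoch-EL} holds, the integrand vanishes identically and $X$ is critical. Conversely, criticality forces the deterministic function $t\mapsto\E[\pt_xL_0-\D_t\pt_{\dot x}L_0](t)$ to vanish for a.e.\ $t$, by the du Bois-Reymond lemma.

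The main obstacle is this converse direction. Deterministic variations only test the expectation of the Euler--Lagrange residual, not the residual itself. I would try to close the gap in one of two ways. The first is to interpret \eqref{stoch-EL} in the mean sense that the deterministic tangent space $H^1_0$ naturally yields. The second, which is stronger, is to enlarge the variations to adapted processes $v(t)=\tilde v(t,X(t))$ with $\tilde v(0,\cdot)=\tilde v(T,\cdot)=0$. Rerunning Steps 1–3 with $\tilde v$ makes the residual orthogonal to all such test functions in $L^2(dt\otimes d\P)$. Because the residual is itself a function of $(t,X(t))$, it then vanishes almost surely. The technical care in this route lies in justifying the product rule for $\E[\xi\cdot\dot{\tilde v}]$: the Itô cross term $dX\,d\tilde v$ must be checked to be absorbed into the total mean derivative $\D_t$.
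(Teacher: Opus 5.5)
Your Steps 1--3 are the paper's proof. It computes the same first variation and applies Lemma~\ref{integration-by-parts} with $D\xi$ replaced by $\mathbf{D}_t\xi$; as you note, that replacement tacitly requires $\partial_{\dot x}L_0(t,X,DX)$ to be a function of $(t,X(t))$. It then ends with ``the arbitrariness of $v$ yields the desired result.''

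Your objection to that last step is correct, and it is more than a technicality: with the deterministic tangent space $H^1_0([0,T];\mathbb{R}^n)$ the ``only if'' direction is false. Take $L_0=\frac12|\dot x|^2$ and the stationary Ornstein--Uhlenbeck process $dX=-X\,dt+\sqrt{\hbar}\,dB$ with $X(0)\sim\mathcal{N}(0,\frac{\hbar}{2}I)$. Then $X\in\mathcal{A}([0,T];\mu_0,\mu_T)$ with $\mu_0=\mu_T=\mathcal{N}(0,\frac{\hbar}{2}I)$. For every $v$ we get $\frac{d}{d\varepsilon}\big|_{\varepsilon=0}\mathcal{S}[X+\varepsilon v]=-\int_0^T\mathbb{E}[X(t)]\cdot\dot v(t)\,dt=0$, yet $\mathbf{D}_t(DX)=\mathbf{D}_t(-X)=X(t)\neq0$. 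So what the argument (yours and the paper's) actually proves is the averaged equation $\mathbb{E}\bigl[\partial_xL_0-\mathbf{D}_t\partial_{\dot x}L_0\bigr](t)=0$ for a.e.\ $t$. Your first repair, reading \eqref{stoch-EL} in mean, is the correct statement of the theorem as posed.

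Your second repair fails. The process $X+\varepsilon\tilde v(t,X(t))$ has martingale part $(I+\varepsilon\nabla\tilde v)\sqrt{\hbar}\,dB$, so its quadratic mean derivative is $\hbar I+\varepsilon\hbar(\nabla\tilde v+\nabla\tilde v^{\top})+O(\varepsilon^2)$. It therefore leaves $\mathcal{A}([0,T];\mu_0,\mu_T)$ unless each $\tilde v(t,\cdot)$ is an infinitesimal rigid motion, and stationarity on $\mathcal{A}$ says nothing about such directions. The It\^o cross term you flagged is the same defect seen from the other side. Here $d(\xi\cdot\tilde v)$ contains $\hbar\sum_{i,k}\partial_kF^i\,\partial_k\tilde v^i\,dt$, which is not part of $\mathbf{D}_t$; it would survive and give an equation different from \eqref{stoch-EL}.

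To reach the pathwise equation, randomize the variation while keeping it of finite variation. Take $V(t)=\eta\,w(t)$ with $\eta$ bounded and $\mathcal{F}_s$-measurable, and $w\in H^1_0$ vanishing on $[0,s]$. Then $X+\varepsilon V\in\mathcal{A}$ and $D(X+\varepsilon V)=DX+\varepsilon\eta\dot w$. Lemma~\ref{integration-by-parts} still applies, since it allows random finite-variation $f$ and adaptedness kills the martingale term. With residual $R=\partial_xL_0-\mathbf{D}_t\partial_{\dot x}L_0$, this gives $\mathbb{E}\bigl[\eta\int_s^T R(t)\cdot w(t)\,dt\bigr]=0$, hence $\mathbb{E}[R(t)\mid\mathcal{F}_s]=0$ for a.e.\ $t\ge s$. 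Let $s\uparrow t$ along rationals and note that $R(t)=r(t,X(t))$ is $\mathcal{F}_{t-}$-measurable because $X$ has continuous paths. L\'evy's upward theorem then gives $R=0$, $dt\otimes d\mathbb{P}$-a.e. This yields \eqref{stoch-EL} pathwise, but only with respect to this enlarged class of variations, not the tangent space $H^1_0$ the paper defines.
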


We remark that since $QX(t) = \hbar I_{d\times d}$, the operator $\D_t$ in \eqref{stoch-EL} is, when acting on functions of $X(t)$,
\begin{equation}\label{total-mean-der}
  \D_t = \pt_t + DX(t) \cdot \nabla + \frac{\hbar}{2} \Delta,
\end{equation}
due to It\^o's formula.
The unknown in \eqref{stoch-EL} is the process $X$, so the conditions $X(0) = q$ and $\P\circ (X(T))^{-1} = \mu$, indicated in the assumption $X\in \A([0,T];\mu_0, \mu_T)$, can be regarded as boundary conditions of \eqref{stoch-EL}.

\begin{proof}
  It follows from \eqref{variation-diff} that
  \begin{equation}\label{variation-deriv}
    D X^v_\e(t) = \dot v(t).
  \end{equation}
  Then
  \begin{equation}\label{eqn-18}
    \begin{split}
      \frac{d}{d\e}\bigg|_{\e=0} \mathcal S[X^v_\e] &= \E \int_0^T \frac{d}{d\e}\bigg|_{\e=0} L_0\left(t, X^v_\e(t), D X^v_\e(t) \right) dt \\
      &= \E \int_0^T \left[ \pt_x L_0 \left( \frac{\pt}{\pt\e}\bigg|_{\e=0}X^v_\e(t) \right) + \pt_{\dot x} L_0 \left( \frac{\pt}{\pt\e}\bigg|_{\e=0} D X^v_\e(t) \right) \right] dt \\
      &= \E \int_0^T \left[ \pt_x L_0 \left( v(t)\right) + \pt_{\dot x} L_0 \left( \dot v(t) \right) \right] dt.
    \end{split}
  \end{equation}
  By Lemma \ref{integration-by-parts} and the fact that $v(0) = v(T) = 0$, we have
  \begin{equation}\label{eqn-19}
    \E \int_0^T \pt_{\dot x} L_0 \left( \dot v(t) \right) dt = -\E \int_0^T \D_t (\pt_{\dot x} L_0) \left( v(t) \right) dt.
  \end{equation}
  Thus,
  \begin{equation*}
    \frac{d}{d\e}\bigg|_{\e=0} \mathcal S[X^v_\e] = \E \int_0^T \left( \pt_x L_0 - \D_t (\pt_{\dot x} L_0)\right) \left( v(t) \right) dt.
  \end{equation*}
  The arbitrariness of $v$ yields the desired result.
\end{proof}

\begin{remark}
  (i). For a special class of Lagrangians in the Euclidean context, the stochastic Euler--Lagrange equation \eqref{stoch-EL} has been established in \cite[Subsection 5.1]{CZ91} where they called it stochastic Newton equation, see also \cite{Zam15}. See Section \ref{sec:multip-SGM} for the case of multiplicative noise. For stochastic Lagrangian mechanics on general Riemannian manifolds, we refer to \cite{HZ23} for an elaboration.

  (ii). The second author and his collaborator formulated a weak stochastic Euler--Lagrange equation in \cite{LZ16}. They mean by ``weak'' that their stochastic Euler--Lagrange equation holds in the sense of stochastic integrals. The main differences between their formulation and ours is that we get rid of the stochastic integral (martingale) part in our equation since we use mean derivatives instead of stochastic differentials.
\end{remark}

\subsection{Stochastic Maupertuis's Principle}\label{app-3}

Based on Definition \ref{variation}, if we further consider the variation caused by time-change, as in classical mechanics (cf. \cite[Definition 3.8.4]{AM78} or the so-called $\Delta$-variation in \cite[Section 8.6]{GPS02}), then we need to impose the constraint of constant
energy. 

Following the procedure in classical mechanics \cite[Definition 3.5.11]{AM78}, for a given classical Lagrangian $L_0:\R^{2n}\to \R$, we define a function $A_0:\R^{2n}\to \R$ by $A_0(v_x) = \mathbf FL_0(v_x)\cdot v_x$, and the \emph{classical energy} $E_0:\R^{2n}\to \R$ by $E_0 = A_0-L_0$. Notice that in local coordinates, $A_0 = \dot x^i \frac{\pt L_0}{\pt \dot x^i}$ and $E_0 = \dot x^i \frac{\pt L_0}{\pt \dot x^i} - L_0$.

Now, the path space $\A([0,T];\mu_0, \mu_T)$ in \eqref{diff-space} is modified to
\begin{equation*}
  \begin{split}
    \A([0,T];\mu_0, \mu_T;e) := \Big\{ (X, \tau) &: \tau\in C^2([0,T],\R), \tau' > 0, \\
    & X \text{ is a diffusion process on } [\tau(0),\tau(T)], \\
    & QX(t) = \hbar I_{d\times d}, \forall t\in [\tau(0),\tau(T)], \text{a.s.}, \\
    &\operatorname{Law}(X(\tau(0))) = \mu_0, \operatorname{Law}(X(\tau(T))) = \mu_T, \\
    & \E E_0(t, X(t), DX(t)) = e, \forall t\in [\tau(0),\tau(T)] \Big\},
  \end{split}
\end{equation*}
where $e\in\R$ is a regular value of $E_0$.

\begin{definition}\label{variation-2}
  Given $v\in H^1([0,T]; \R^n)$ and $\varsigma\in C^1([0,T],\R)$, by a variation of the pair $(X,\tau)\in \A([0,T];\mu_0, \mu_T;e)$ along $(v,\varsigma)$, we mean a family of pairs $\{(X_\e^{v,\varsigma},\tau^{\varsigma}_\e)\}_{\e\in(-\varepsilon,\varepsilon)}$ such that 
  \begin{itemize}
    \item[(1)] $\tau^{\varsigma}_0 = \tau$, $\frac{\pt}{\pt t}\tau^{\varsigma}_\e >0$, and for each $\e$, $\frac{\pt}{\pt\e}\tau^{\varsigma}_\e|_{\e=0} =\varsigma$, $X_\e^{v,\varsigma}$ is a diffusion process on $[\tau^{\varsigma}_\e(0), \tau^{\varsigma}_\e(T)]$,
    \item[(2)] for each $t\in[\tau^{\varsigma}_\e(0),\tau^{\varsigma}_\e(T)]$ and $\e\in(-\varepsilon,\varepsilon)$, $\E E_0(t, X_\e^{v,\varsigma}(t), DX_\e^{v,\varsigma}(t)) = e$, 
    \item[(3)] for each $t\in[\tau^{\varsigma}_\e(0),\tau^{\varsigma}_\e(T)]$, $X^{v,\varsigma}_\e(t)$ is given by
  \begin{equation}\label{variation-diff-2}
    X^{v,\varsigma}_\e(t) = X(t) + \e v(t) + o(\e).
  \end{equation} 
  \end{itemize}
  Define a functional $\mathcal I: \A([0,T];\mu_0, \mu_T;e) \to \R$ by
  \begin{equation*}
    \mathcal I[X,\tau] := 
    \E \int_{\tau(0)}^{\tau(T)} A_0\left(t, X(t), DX(t) \right) dt.
  \end{equation*}
  The pair $(X,\tau)\in \A([0,T];\mu_0, \mu_T;e)$ is called a stationary point of $\mathcal I$, if
  \begin{equation*}
    \frac{d}{d\e}\bigg|_{\e=0} \mathcal I[X_\e^{v,\varsigma},\tau^{\varsigma}_\e] = 0, \quad \text{for all } v\in H^1([0,T]; \R^n) \text{ and } \varsigma\in C^1([0,T],\R).
  \end{equation*}
\end{definition}

It is easy to deduce from \eqref{variation-diff-2} that $QX^{v,\varsigma}_\e(t) = \hbar I_{d\times d}$ for each $t\in[\tau^{\varsigma}_\e(0),\tau^{\varsigma}_\e(T)]$ so that $X^{v,\varsigma}_\e\in \A([0,T];\mu_0, \mu_T;e)$. Moreover, formula \eqref{variation-deriv} still holds for all $t\in[\tau(0),\tau(T)]$, with $X^{v,\varsigma}_\e$ in place of $X^v_\e$.

\begin{lemma}\label{variation-formula}
  Keep the notations in Definition \ref{variation-2}. Then we have
  \begin{equation*}
    \frac{\pt}{\pt\e}\bigg|_{\e=0} \E\left[ X^{v,\varsigma}_\e(\tau^{\varsigma}_\e(s)) \big| \F_{\tau(s)} \right] = v(\tau(s)) + \varsigma(s) DX(\tau(s)).
  \end{equation*}
\end{lemma}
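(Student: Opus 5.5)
We split the variation of $X^{v,\varsigma}_\e(\tau^{\varsigma}_\e(s))$ into a spatial part and a time-change part:
\begin{equation*}
X^{v,\varsigma}_\e(\tau^{\varsigma}_\e(s)) - X(\tau(s)) = \big[X^{v,\varsigma}_\e(\tau^{\varsigma}_\e(s)) - X(\tau^{\varsigma}_\e(s))\big] + \big[X(\tau^{\varsigma}_\e(s)) - X(\tau(s))\big].
\end{equation*}
By \eqref{variation-diff-2}, the first bracket equals $\e v(\tau^{\varsigma}_\e(s)) + o(\e)$. Since $v$ is continuous and $\tau^{\varsigma}_\e(s)\to\tau(s)$, this is $\e v(\tau(s)) + o(\e)$. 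The quantity is deterministic up to $o(\e)$, so its conditional expectation given $\F_{\tau(s)}$ is unchanged. After dividing by $\e$ and letting $\e\to0$, it contributes $v(\tau(s))$.

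For the second bracket we use the definition \eqref{mean-der} of the forward mean derivative. Write $h_\e := \tau^{\varsigma}_\e(s)-\tau(s) = \e\varsigma(s) + o(\e)$. Then
\begin{equation*}
\frac{1}{\e}\E\big[X(\tau(s)+h_\e) - X(\tau(s)) \,\big|\, \F_{\tau(s)}\big] = \frac{h_\e}{\e}\cdot \E\Big[\frac{X(\tau(s)+h_\e)-X(\tau(s))}{h_\e}\Big|\F_{\tau(s)}\Big].
\end{equation*}
When $\varsigma(s)>0$, we have $h_\e\downarrow0$ for $\e\downarrow0$, and the limit is $\varsigma(s)DX(\tau(s))$ directly by \eqref{mean-der}. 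When $\varsigma(s)=0$, we need the increment to be $o(\e)$ in conditional mean. This follows from boundedness of the conditional difference quotient for a diffusion with regular drift, since $\E[X(t+h)-X(t)\mid\F_t]=\int_t^{t+h}\E[DX(r)\mid\F_t]\,dr$ for a diffusion $dX = DX\,dt + \sqrt\hbar\, dW$.

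The main obstacle is the case $h_\e<0$, that is, a two-sided derivative in $\e$. Here $X(\tau(s)+h_\e)$ lies in the past and is $\F_{\tau(s)}$-measurable, so the conditional expectation does not annihilate the martingale part, and the formal forward-derivative argument fails. We handle this through the semimartingale representation $X(t)=X(0)+\int_0^t DX(r)\,dr+\sqrt{\hbar}\,W(t)$, which follows from $QX=\hbar I$. The drift contribution is $h_\e DX(\tau(s))+o(\e)$ by continuity of $DX$. The Brownian contribution is of order $\sqrt{|h_\e|}$ pathwise, so it must be interpreted appropriately. We take the derivative in $L^1$ or in the mean, that is, after applying $\E$, which is how the lemma is used in the variation of $\mathcal I$; the martingale increment then has zero expectation. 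Alternatively, we read $\frac{\pt}{\pt\e}|_{\e=0}$ as the right derivative, matching the convention of \eqref{mean-der}. Finally, we add the two contributions and use linearity of conditional expectation to obtain $v(\tau(s))+\varsigma(s)DX(\tau(s))$.
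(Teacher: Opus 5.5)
Your decomposition is exactly the paper's proof: the spatial shift gives $v(\tau(s))$, and the time shift gives $\varsigma(s)DX(\tau(s))$ through the forward mean derivative \eqref{mean-der}. The paper simply declares $\tau^{\varsigma}_\e(s)\ge\tau(s)$ ``without loss of generality''. You are right that this is not harmless. On the other side the increment is $\F_{\tau(s)}$-measurable, so the conditional difference quotient does not converge, and the lemma really holds only as a one-sided derivative.

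Two details of your repair need fixing.

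First, the side on which the argument works is the one where $\tau^{\varsigma}_\e(s)\ge\tau(s)$. This is $\e\downarrow0$ only when $\varsigma(s)>0$; for $\varsigma(s)<0$ you need $\e\uparrow0$. For $\varsigma(s)=0$, your bounded-quotient formula $\E[X(t+h)-X(t)\mid\F_t]=\int_t^{t+h}\E[DX(r)\mid\F_t]\,dr$ is valid only for $h\ge0$. A backward shift such as $\tau^{\varsigma}_\e=\tau+\e\varsigma-\e^2$, allowed by condition (1) of Definition~\ref{variation-2}, leaves the quotient $\approx-\sqrt\hbar\,\bigl(W(\tau(s))-W(\tau(s)-\e^2)\bigr)/\e$. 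This is $\mathcal{N}(0,\hbar)$-distributed for every $\e$ and does not tend to $0$. So ``right derivative'' must be replaced by ``derivative from the side where $\tau^{\varsigma}_\e(s)\ge\tau(s)$'', which is precisely the paper's implicit convention.

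Second, drop ``in $L^1$''. On the backward side with $\varsigma(s)\neq0$, the conditional quotient has $L^1$ norm of order $|\e|^{-1/2}$. Only the unconditional expectation removes the Brownian part. The resulting identity
\[
\frac{d}{d\e}\Big|_{\e=0}\E\big[X^{v,\varsigma}_\e(\tau^{\varsigma}_\e(s))\big]=v(\tau(s))+\varsigma(s)\E\big[DX(\tau(s))\big]
\]
is strictly weaker than the lemma. It is also not quite what the proof of Theorem~\ref{stoch-Maupertuis-prin} uses: there the boundary relation is paired with the random momentum $\pt_{\dot x}L_0$, not merely taken in mean.
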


\begin{proof}
  Without loss of generality, we assume $\tau^{\varsigma}_\e(s)\ge \tau(s)$. It follows from \eqref{variation-diff-2} that
  \begin{equation*}
    \begin{split}
      \text{l.h.s.} &= \lim_{\e\to 0} \E\left[ \frac{X^{v,\varsigma}_\e(\tau^{\varsigma}_\e(s)) - X(\tau(s))}{\e} \bigg| \F_{\tau(s)} \right] \\
      &= \lim_{\e\to 0} \E\left[ \frac{X^{v,\varsigma}_\e(\tau^{\varsigma}_\e(s)) - X(\tau^{\varsigma}_\e(s))}{\e} \bigg| \F_{\tau(s)} \right] \\
      &\quad + \lim_{\e\to 0} \E\left[ \frac{X(\tau^{\varsigma}_\e(s)) - X(\tau(s))}{\tau^{\varsigma}_\e(s)- \tau(s)} \bigg| \F_{\tau(s)} \right] \varsigma(s) \\
      &= \text{r.h.s.}
    \end{split}
  \end{equation*}
  This proves the lemma.
\end{proof}

\begin{theorem}[Stochastic  Maupertuis's principle I]\label{stoch-Maupertuis-prin}
  Let $L_0$ be a regular Lagrangian on $\R\times \R^{2n}$. Let $(X,\id_{[0,T]})\in \A([0,T];q,\mu;e)$. Then, the pair $(X,\id_{[0,T]})$ is a stationary point of $\mathcal I$ if and only if $X$ satisfies the stochastic Euler--Lagrange equation \eqref{stoch-EL}.
\end{theorem}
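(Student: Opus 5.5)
The plan is to compute the first variation of $\mathcal I$ along a general pair $(v,\varsigma)$ by reducing it to the first variation of $\mathcal S$ plus a boundary term. Energy conservation will then let the time-reparametrization contribution combine with the boundary terms. Mirroring the classical argument for Maupertuis's principle \cite[Theorem 3.8.5]{AM78}, write $A_0 = L_0 + E_0$ along each varied pair. Then
\begin{equation*}
\mathcal I[X_\e^{v,\varsigma},\tau_\e^{\varsigma}] = \E\int_{\tau^{\varsigma}_\e(0)}^{\tau^{\varsigma}_\e(T)} L_0\big(t,X_\e^{v,\varsigma}(t),DX_\e^{v,\varsigma}(t)\big)\,dt + e\,\big(\tau^{\varsigma}_\e(T)-\tau^{\varsigma}_\e(0)\big),
\end{equation*}
where we use condition (2) of Definition \ref{variation-2}, namely $\E E_0 = e$ for every $t$ and every $\e$, together with Fubini to move the expectation inside. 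The $\e$-derivative at $\e=0$ of the last term is simply $e(\varsigma(T)-\varsigma(0))$.

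Next I will differentiate the $L_0$-integral with moving endpoints. There are two endpoint contributions, $\E L_0(\cdot)|_{t=T}\varsigma(T)-\E L_0(\cdot)|_{t=0}\varsigma(0)$, using $\tau=\id$. The interior contribution is computed exactly as in the proof of Theorem \ref{stoch-Hamilton-prin}, via \eqref{variation-deriv} and \eqref{eqn-18}. The difference now is that $v$ need not vanish at the endpoints. So when applying Lemma \ref{integration-by-parts} to $\xi=\pt_{\dot x}L_0$ (componentwise, against $f=v$), I keep the boundary term $\E[\pt_{\dot x}L_0\cdot v]_{0}^{T}$. This yields
\begin{equation*}
\frac{d}{d\e}\Big|_{\e=0}\mathcal I = \E\int_0^T \big(\pt_xL_0-\D_t\pt_{\dot x}L_0\big)\cdot v\,dt + \Big[\E\big(\pt_{\dot x}L_0\cdot v\big)+\big(\E L_0+e\big)\varsigma\Big]_{0}^{T}.
\end{equation*}
Since $\E L_0+e=\E A_0=\E(\pt_{\dot x}L_0\cdot DX)$, the boundary bracket becomes $\E\big[\pt_{\dot x}L_0\cdot(v+\varsigma DX)\big]_0^T$. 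Lemma \ref{variation-formula}, combined with the tower property (note that $\pt_{\dot x}L_0$ at time $\tau(s)$ is $\F_{\tau(s)}$-measurable), identifies this as the pairing of the momentum with the total endpoint displacement $\frac{\pt}{\pt\e}\E[X^{v,\varsigma}_\e(\tau^\varsigma_\e(s))|\F_{\tau(s)}]$.

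The main obstacle is showing that this boundary term vanishes. The endpoint laws are fixed, $\mathrm{Law}(X_\e(\tau_\e(0)))=\mu_0$ and $\mathrm{Law}(X_\e(\tau_\e(T)))=\mu_T$, but these are constraints on laws, not pathwise ones, so the displacement $v+\varsigma DX$ is not forced to be zero. I would first treat admissible variations, where the fixed endpoint marginals force the endpoint displacement to vanish as in the setting $\A([0,T];q,\mu;e)$ of the statement (deterministic start $q$). In that case the boundary term drops. If it cannot be made to vanish directly, the fallback is to argue that the marginal constraint restricts the admissible endpoint displacements to those annihilated in expectation against the momentum, which is exactly what consistency with the constraint requires.

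With the boundary term eliminated, stationarity for all $v$ (take $\varsigma=0$ and $v\in H^1_0$) forces the integrand to vanish, which gives \eqref{stoch-EL}. Conversely, if \eqref{stoch-EL} holds, the interior term vanishes and the derivative is zero.
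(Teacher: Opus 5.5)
Your argument is the paper's: rewrite $\mathcal I$ as $\E\int(L_0+e)\,dt$ via $\E E_0=e$, differentiate with moving endpoints, integrate by parts keeping $\E[\pt_{\dot x}L_0\cdot v]_0^T$, and cancel all boundary terms using $\E(L_0+e)=\E(\pt_{\dot x}L_0\cdot DX)$ once the endpoint displacement $v+\varsigma DX$ of Lemma~\ref{variation-formula} vanishes (you group the terms before substituting, while the paper substitutes first; the algebra is the same). The paper handles the step you flag by simply asserting $v(s)+\varsigma(s)DX(s)=0$ at $s=0,T$ from the endpoint constraints. This is genuinely forced at $s=0$ by the deterministic start $q$, but only asserted at $s=T$, where a fixed law makes just the mean displacement vanish. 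So your primary route matches the paper exactly, and your circular ``fallback'' adds nothing and can be dropped.
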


\begin{proof}
  Since all diffusions in $\A([0,T];\mu_0, \mu_T;e)$ have the same average energy $e$, we have
  \begin{equation}\label{eq:02}
    \mathcal I[X,\tau] := \E \int_{\tau(0)}^{\tau(T)} [ L_0\left(t, X(t), DX(t) \right) + e ] dt.
  \end{equation}
  As in \eqref{eqn-18},
  \begin{equation*}
    \begin{split}
      \frac{d}{d\e}\bigg|_{\e=0} I[X_\e^{v,\varsigma},\tau^{\varsigma}_\e] &= \E \int_0^T \frac{d}{d\e}\bigg|_{\e=0} L_0\left(t, X^{v,\varsigma}_\e(t), DX^{v,\varsigma}_\e(t) \right) dt \\
      &\quad + \varsigma(t)\E[ L_0\left(t, X(t), DX(t) \right) + e ]\big|_0^T \\
      &= \E \int_0^T \left[ \pt_x L_0 \left( v(t)\right) + \pt_{\dot x} L_0 \left( \dot v(t) \right) \right] dt \\
      &\quad + \varsigma(t)\E[ L_0\left(t, X(t), DX(t) \right) + e ]\big|_0^T.
    \end{split}
  \end{equation*}
  We apply \eqref{eqn-19} and notice that in the present situation, we do not have $v(0) = v(T) = 0$ in general. Hence,
  \begin{equation*}
    \begin{split}
      \E \int_0^T \pt_{\dot x} L_0 \left( \dot v(t) \right) dt &= \E[ \pt_{\dot x} L_0 \left( v(t) \right) ] \big|_0^T -\E \int_0^T \D_t (\pt_{\dot x} L_0) \left( v(t) \right) dt.
    \end{split}
  \end{equation*}
  One the other hand, since for all $\e$, $X_\e^{v,\varsigma}(\tau^{\varsigma}_\e(0))=q$ and $\P\circ(X_\e^{v,\varsigma}(\tau^{\varsigma}_\e(T)))^{-1}=\mu$. It follows from Lemma \ref{variation-formula} that
  \begin{equation*}
    v(s) + \varsigma(s) DX (s) = 0, \quad \text{for } s=0 \text{ or } s=T.
  \end{equation*}
  Therefore,
  \begin{equation*}
    \begin{split}
      \frac{d}{d\e}\bigg|_{\e=0} I[X_\e^{v,\varsigma},\tau^{\varsigma}_\e] &= \E \int_0^T \left( \pt_x L_0 - \D_t (\pt_{\dot x} L_0)\right) \left( v(t) \right) dt \\
      &\quad + \varsigma(t)\E\left[ L_0\left(t, X(t), DX(t) \right) - (\pt_{\dot x} L_0) \left( DX(t) \right) + e \right]\big|_0^T.
    \end{split}
  \end{equation*}
  By the definition of the energy $E_0$, we know that
  $$\E\left[ L_0\left(t, X(t), DX(t) \right) - (\pt_{\dot x} L_0) \left( DX(t) \right) \right] = -\E E_0\left(t, X(t), DX(t) \right) = -e.$$
  The result follows.
\end{proof}

There can be another version of stochastic Maupertuis's principle. Instead of using classical energy $E_0$, we introduce the $\hbar$-deformed energy, for a vector field $v: \R^n \to \R^n$,
\begin{equation}\label{energy}
  \begin{split}
    E_\hbar(v) &= E_0(v) + \frac{\hbar}{2} \sum_{i=1}^n \pt_{x^i} [\pt_{\dot x^i} L_0(v)] \\
    &= \pt_{\dot x} L_0(v) \cdot v - L_0(v) + \frac{\hbar}{2} \sum_{i=1}^n \pt_{x^i} [\pt_{\dot x^i} L_0(v)],
  \end{split}
\end{equation}
where $\hbar$ is a positive constant, referred to as the deformation parameter.

Now we consider the constraint of constant generalized energy. To be precise, we consider the following path space:
\begin{equation*}
  \begin{split}
    \widetilde \A([0,T];\mu_0, \mu_T;e) := \Big\{ (X, \tau): & \tau\in C^2([0,T],\R), \tau' > 0, \\
    & X \text{ is a diffusion process on } [\tau(0),\tau(T)], \\
    & QX(t) = \hbar I_{d\times d}, \forall t\in [\tau(0),\tau(T)], \text{a.s.}, \\
    &\operatorname{Law}(X(\tau(0))) = \mu_0, \operatorname{Law}(X(\tau(T))) = \mu_T, \\
    & \E E_\hbar(t, X(t), D X(t)) = e, \forall t\in [\tau(0),\tau(T)] \Big\}.
  \end{split}
\end{equation*}

\begin{definition}
  Given $v\in H^1([0,T]; \R^n)$ and $\varsigma\in C^1([0,T],\R)$, by a variation of the pair $(X,\tau)\in \widetilde \A([0,T];\mu_0, \mu_T;e)$ along $(v,\varsigma)$, we mean a family of pairs $\{(X_\e^{v,\varsigma},\tau^{\varsigma}_\e)\}_{\e\in(-\varepsilon,\varepsilon)}$ such that (1) and (3) of Definition \ref{variation-2} hold, and
  \begin{itemize}
    \item[($\tilde 2$)] for each $t\in[\tau^{\varsigma}_\e(0),\tau^{\varsigma}_\e(T)]$ and $\e\in(-\varepsilon,\varepsilon)$, $\E E_\hbar(t, X_\e^{v,\varsigma}(t), D X_\e^{v,\varsigma}(t)) = e$. 
  \end{itemize}
  Define a functional $\widetilde{\mathcal I}: \widetilde \A([0,T];\mu_0, \mu_T;e) \to \R$ by
  \begin{equation}\label{Maupertuis-action-2}
    \widetilde{\mathcal I}[X,\tau] := 
    \E \int_{X[\tau(0), \tau(T)]} \pt_{\dot x} L_0\left(t, X(t), D X(t) \right).
  \end{equation}
  The pair $(X,\tau)\in \widetilde \A([0,T];\mu_0, \mu_T;e)$ is called a stationary point of $\mathcal I$, if
  \begin{equation*}
    \frac{d}{d\e}\bigg|_{\e=0} \widetilde{\mathcal I}[X_\e^{v,\varsigma},\tau^{\varsigma}_\e] = 0, \quad \text{for all } v\in H^1([0,T]; \R^n) \text{ and } \varsigma\in C^1([0,T],\R).
  \end{equation*}
\end{definition}

We have the following version of stochastic Maupertuis's principle, with constant deformed energy, which shares the same stochastic Euler--Lagrange equation as before. The proof is left as an exercise. See Problem \ref{prob3-2}.

\begin{theorem}[Stochastic Maupertuis's principle II]\label{stoch-Maupertuis-prin-2}
  Let $L_0$ be a regular Lagrangian on $\R\times \R^{2n}$. Let $(X,\id_{[0,T]})\in \widetilde \A([0,T];q_0,\mu;e)$. Then, the pair $(X,\id_{[0,T]})$ is a stationary point of $\widetilde{\mathcal I}$ if and only if $X$ satisfy the stochastic Euler--Lagrange equation \eqref{stoch-EL}.
\end{theorem}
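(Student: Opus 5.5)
The plan is to mirror the proof of Theorem \ref{stoch-Maupertuis-prin}, with the stochastic line integral $\widetilde{\mathcal I}$ in place of $\mathcal I$. The first step is to rewrite $\widetilde{\mathcal I}$ as a time integral. Write $p(t,x)=\pt_{\dot x}L_0(t,x,DX(t))$, viewed as a function of $X(t)$ (for a Markov diffusion, $DX(t)=b(t,X(t))$). Interpret the integral in \eqref{Maupertuis-action-2} as a Stratonovich-type integral $\E\int p\circ dX$. Then $dX = DX\,dt + dM$ with $QX=\hbar I$, and It\^o's formula gives
\begin{equation*}
\widetilde{\mathcal I}[X,\tau]=\E\int_{\tau(0)}^{\tau(T)}\Big[\pt_{\dot x}L_0\cdot DX+\frac{\hbar}{2}\sum_i\pt_{x^i}\big(\pt_{\dot x^i}L_0\big)\Big]dt .
\end{equation*}
The martingale part has zero expectation, and the Stratonovich correction produces exactly the $\hbar$-divergence term. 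By \eqref{energy}, this integrand equals $L_0+E_\hbar$. On $\widetilde\A([0,T];\mu_0,\mu_T;e)$ the constraint $\E E_\hbar=e$ therefore yields
\begin{equation*}
\widetilde{\mathcal I}[X,\tau]=\E\int_{\tau(0)}^{\tau(T)}\big[L_0(t,X(t),DX(t))+e\big]dt,
\end{equation*}
which is the same form as \eqref{eq:02}.

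With this identity in hand, the remaining steps are taken verbatim from the proof of Theorem \ref{stoch-Maupertuis-prin}. Differentiate in $\e$ along a variation $(X_\e^{v,\varsigma},\tau_\e^\varsigma)$. Since $\E E_\hbar = e$ is imposed for every $\e$ by ($\tilde 2$), the constant $e$ stays fixed. This gives an interior term $\E\int_0^T[\pt_xL_0(v)+\pt_{\dot x}L_0(\dot v)]dt$, using \eqref{variation-deriv}, plus the endpoint term $\varsigma\,\E[L_0+e]\big|_0^T$. Apply Lemma \ref{integration-by-parts}, keeping the boundary term $\E[\pt_{\dot x}L_0(v)]\big|_0^T$ because $v$ need not vanish at the ends. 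Then use Lemma \ref{variation-formula} together with the fixed endpoint laws to replace $v$ by $-\varsigma DX$ at $s=0,T$. The endpoint contribution becomes $\varsigma\,\E[L_0-\pt_{\dot x}L_0(DX)+e]\big|_0^T$, which equals $\varsigma\,\E[-E_0+e]\big|_0^T$.

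The main obstacle is this endpoint term. It carries the classical energy $E_0$, whereas the constraint fixes $E_\hbar$. The difference is $\frac{\hbar}{2}\varsigma\,\E\,\mathrm{div}_x(\pt_{\dot x}L_0)\big|_0^T$. I would handle it by not substituting the rewritten form near the endpoints. Instead, differentiate the Stratonovich integral directly: varying its upper and lower limits contributes $\varsigma$ times the full Stratonovich integrand, which is $L_0+E_\hbar$ and not $L_0+e_0$. The endpoint term then becomes $\varsigma\,\E[-E_\hbar+e]\big|_0^T$, and this vanishes by ($\tilde 2$). The first variation reduces to $\E\int_0^T(\pt_xL_0-\D_t\pt_{\dot x}L_0)(v)\,dt$. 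Arbitrariness of $v$ (taking $\varsigma=0$ and $v\in H_0^1$ shows necessity, and the computation shows sufficiency) gives equivalence with \eqref{stoch-EL}.
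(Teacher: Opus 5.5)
Your first step, rewriting $\widetilde{\mathcal I}$ via the Stratonovich correction as $\E\int(L_0+E_\hbar)\,dt=\E\int(L_0+e)\,dt$, is exactly the reduction the paper has in mind. The paper leaves the proof as Problem~\ref{prob3-2}, and its hint is this step. You are also right that replaying the proof of Theorem~\ref{stoch-Maupertuis-prin} verbatim does not close: its last step uses $\E E_0=e$, and here it leaves the residual $\frac{\hbar}{2}\varsigma\,\E\operatorname{div}_x(\pt_{\dot x}L_0)\big|_0^T$.

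Your proposed repair, however, changes nothing. The term produced by moving the limits of integration is $\varsigma$ times the \emph{expected} integrand. On the constraint set $\E[L_0+E_\hbar]=\E L_0+e$, which is exactly what the rewritten form already gave. Combine this with the integration-by-parts boundary term $\E[\pt_{\dot x}L_0\cdot v]$ and $v=-\varsigma DX$. The result is $\varsigma\,\E[L_0+E_\hbar-\pt_{\dot x}L_0\cdot DX]\big|_0^T=\varsigma\,\E[E_\hbar-E_0]\big|_0^T$, which is the same obstacle. The asserted endpoint term $\varsigma\,\E[e-E_\hbar]\big|_0^T$ does not follow. The necessity direction is unaffected, since it only needs $\varsigma=0$ and $v\in H^1_0$. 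The sufficiency direction, however, remains unproved.

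The defect is not in the limit term. It is in the substitution $v(s)=-\varsigma(s)DX(s)$ taken from Lemma~\ref{variation-formula}. That relation only records the conditional mean of the endpoint displacement, and pairing it with the momentum drops the It\^o correction. The missing idea is to impose the fixed-endpoint condition at second order. Because the endpoint laws do not move, $\frac{d}{d\e}\big|_{\e=0}\E\,\phi\bigl(X^{v,\varsigma}_\e(\tau^{\varsigma}_\e(s))\bigr)=0$ for every smooth $\phi$. It\^o's formula turns this into
\begin{equation*}
v(s)\cdot\E\nabla\phi(X(s))+\varsigma(s)\,\E\Big[DX(s)\cdot\nabla\phi(X(s))+\frac{\hbar}{2}\Delta\phi(X(s))\Big]=0 .
\end{equation*}
Suppose the momentum field $x\mapsto\pt_{\dot x}L_0(s,x,DX)$ is a gradient $\nabla\phi$, as in Theorem~\ref{SEL-HJB}; in general this is an extra hypothesis you must place on the endpoint variations. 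Then
$\E[\pt_{\dot x}L_0\cdot v(s)]=-\varsigma(s)\,\E\bigl[\pt_{\dot x}L_0\cdot DX+\frac{\hbar}{2}\operatorname{div}_x\pt_{\dot x}L_0\bigr]$.
The endpoint contribution therefore becomes $\varsigma\,(e-\E E_\hbar)\big|_0^T$, which vanishes by $(\tilde 2)$.

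Put differently, the endpoint condition has to be imposed on the Stratonovich pairing of the momentum with the endpoint displacement. That is what makes the deformed energy $E_\hbar$, rather than $E_0$, appear at the boundary. Once you use this second-order endpoint relation in place of the first-order one from Lemma~\ref{variation-formula}, the rest of your computation closes.
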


As an example, we consider the (Euclidean) Lagrangian $L_0(x, \dot x) = \frac{1}{2} |\dot x|^2 + V(x)$. The action and energy are, respectively, $A_0(x, \dot x) = |\dot x|^2$ and $E_0(x, \dot x) = \frac{1}{2} |\dot x|^2 - V(x)$. Theorem \ref{stoch-Maupertuis-prin} states that a diffusion $X$ is a solution of the stochastic Euler--Lagrange equation if and only if
\begin{equation*}
  \delta_e \E \int_{0}^{T} |D X(t)|^2 dt,
\end{equation*}
where $\delta_e$ indicates a variation holding the mean classical energy and endpoints but not the parametrization fixed. This is the same as
\begin{equation*}
  \delta_e \E \int_{0}^{T} |D X(t)| dt,
\end{equation*}
that is, the mean arc length is extremized (subject to constant energy). Moreover, in the same way, as the classical energy is preserved $E_0 \equiv e$, we have now
\begin{equation*}
  \delta_e \E \int_{0}^{T} |D X(t)| \sqrt{e + V(X(t))} dt = \delta_e \E \int_{0}^{T} |D X(t)|_{(e+V)} dt.
\end{equation*}
On the other hand, Theorem \ref{stoch-Maupertuis-prin-2} states that a diffusion $X$ is a solution of the stochastic Euler--Lagrange equation if and only if the following variation vanishes
\begin{equation*}
  \widetilde \delta_e \E \int_{0}^{T} \langle D X(t), \circ d X(t) \rangle,
\end{equation*}
where $\circ\,d$ denotes the Stratonovich stochastic differential, $\widetilde \delta_e$ indicates a variation holding the mean generalized energy and endpoints but not the parametrization fixed.

\section{Stochastic Hamiltonian Mechanics}

This section provides a glimpse of the Hamiltonian formulation within stochastic geometric mechanics.

\subsection{Stochastic Variational Principles on the Phase Space}

Similar to classical mechanics, the stochastic Lagrangian formulation in the previous section can be transformed into a stochastic Hamiltonian one by the Legendre transform. 

Recall that the Legendre transform is a change of variables $(x,\dot x) \mapsto (x,p)$ given by
\begin{equation*}
  \nabla_{\dot x} L_0: \R^{2n} \to \R^{2n}, \quad (x,\dot x) \mapsto (x, \pt_{\dot x}L_0),
\end{equation*}
that is, 
$$p_i = \frac{\pt L_0}{\pt \dot x^i}.$$
If the Legendre transform is a diffeomorphism (in which case $L_0$ is called hyperregular), a Hamiltonian function can be produced from $L_0$, that is,
\begin{equation*}
  H_0(x, p) = p_i \dot x^i - L_0(x, \dot x).
\end{equation*}
In this way, the stochastic action functional in \eqref{action} can be transformed into a stochastic functional on the phase space, as follows,
\begin{equation}\label{action-phase}
  \overline{\mathcal S}[X,p] = \E \int_0^T \left[ p(t) \cdot DX(t) - H_0(X(t),p(t),t) \right] dt.
\end{equation}

\begin{theorem}[Stochastic Hamilton's equations]\label{canonical-reduction}
  Given a smooth hyperregular Hamiltonian function $H_0 : \R^{2n} \times\R \to\R$.
  \begin{itemize}
    \item[(1)] The critical point of the phase-space stochastic functional $\overline{\mathcal S}$ in \eqref{action-phase} among all diffusion processes $(X,p): [0,T] \to \R^{2n}$ with $X(0)=x_0$, $\operatorname{Law}(X(T)) = \mu$ and $QX(t) = \hbar I_{d\times d}$, $\forall t\in [0,T]$, a.s., solves the following stochastic Hamilton's equations:
    \begin{equation}\label{canonical-stoch-Hamilton-eqns}\left\{
      \begin{aligned}
        DX(t) &= \pt_p H_0(X(t),p(t),t), \\
        \D_t p(t) &= -\pt_x H_0(X(t),p(t),t),
      \end{aligned}
      \right.
    \end{equation}
    where $\D_t = \frac{\pt}{\pt t} + D X \cdot \nabla + \frac{\hbar}{2} \Delta$ is the total mean derivative with respect to $X$.
    \item[(2)] The stochastic Hamilton's equations
    \eqref{canonical-stoch-Hamilton-eqns} is equivalent to the stochastic Euler--Lagrange equation \eqref{stoch-EL} via the following Legendre transform:
    \begin{gather*}
      p(t) = \pt_{\dot x} L_0(t, X(t), DX(t)), \\
      H_0(X(t),p(t),t) = p(t)\cdot DX(t) - L_0(t, X(t), DX(t)).
    \end{gather*}
  \end{itemize}
\end{theorem}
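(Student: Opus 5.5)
The plan is to mirror the proof of Theorem \ref{stoch-Hamilton-prin}, now varying the pair $(X,p)$ independently on phase space. For part (1), I will take a critical point $(X,p)$ and consider variations $X^v_\e = X + \e v + o(\e)$ with $v\in H^1_0([0,T];\R^n)$, exactly as in Definition \ref{variation}. Alongside these I will use $p_\e = p + \e w$, where $w$ is an arbitrary adapted process with enough integrability. Since the quadratic mean derivative is unaffected, $QX^v_\e = \hbar I$ still holds. By \eqref{variation-deriv}, $D X^v_\e = DX + \e\dot v$. The boundary constraints $X(0)=x_0$ and $\operatorname{Law}(X(T))=\mu$ are preserved because $v(0)=v(T)=0$; the momentum $p$ carries no boundary constraint.

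Differentiating \eqref{action-phase} at $\e=0$ gives
\begin{equation*}
\frac{d}{d\e}\Big|_{\e=0}\overline{\mathcal S} = \E\int_0^T \Big[ w\cdot\big(DX - \pt_p H_0\big) + p\cdot \dot v - \pt_x H_0\cdot v \Big]dt .
\end{equation*}
I then handle the term $\E\int_0^T p\cdot\dot v\,dt$ componentwise with Lemma \ref{integration-by-parts}, taking $\xi=p^i$ and $f=v^i$. Because $v$ vanishes at both endpoints, this term becomes $-\E\int_0^T Dp\cdot v\,dt$.

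The step I expect to be the main obstacle is identifying $Dp$ with $\D_t p$. Here I will assume, as is implicit in the admissible class, that $p(t)=P(t,X(t))$ for a $C^{1,2}$ field $P$, so that $p$ is a semimartingale driven by $X$. By It\^o's formula together with $QX=\hbar I$, the mean derivative is $Dp = (\pt_t + DX\cdot\nabla + \tfrac{\hbar}{2}\Delta)P$, which is exactly \eqref{total-mean-der}. With this identification, the arbitrariness of $w$ gives the first equation $DX=\pt_p H_0$. The arbitrariness of $v$ then gives $\D_t p = -\pt_x H_0$, which proves \eqref{canonical-stoch-Hamilton-eqns}.

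For part (2), hyperregularity lets me invert the Legendre transform. Set $p=\pt_{\dot x}L_0(t,X,DX)$ and define $H_0 = p\cdot DX - L_0$. The classical identities $\pt_p H_0 = \dot x$ and $\pt_x H_0 = -\pt_x L_0$, evaluated at $\dot x = DX$, turn the first Hamilton equation into the definition of $p$. The second Hamilton equation then reads $\D_t[\pt_{\dot x}L_0] = \pt_x L_0$, which is \eqref{stoch-EL}. Conversely, given a solution of \eqref{stoch-EL}, I define $p$ through the Legendre map and reverse these substitutions. As a consistency check, I will also note that $\overline{\mathcal S}[X,p]$ evaluated at $p=\pt_{\dot x}L_0$ reduces to $\mathcal S[X]$, so the critical points of the two functionals correspond.
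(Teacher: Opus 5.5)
Your proof of part (1) is essentially the paper's: vary $X$ by $\e v$ with $v\in H^1_0([0,T];\R^n)$ and vary $p$ independently, move the derivative off $\dot v$ in $\E\int_0^T p\cdot\dot v\,dt$ using Lemma~\ref{integration-by-parts}, and use the arbitrariness of $\delta p$ and $v$. Your explicit assumption $p(t)=P(t,X(t))$ is what the paper uses tacitly when it writes $\D_t p$ for the mean derivative $Dp$, and your sign $-\E\int_0^T Dp\cdot v\,dt$ is the correct one (the paper's last display has $(\D_t p-\pt_x H_0)\cdot v$, a sign slip), while your Legendre-transform argument for part (2) (via $\pt_p H_0=\dot x$, $\pt_x H_0=-\pt_x L_0$ and hyperregularity) is the standard completion of what the paper leaves as an exercise.
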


\begin{proof}
(1) We have already seen from \eqref{variation-diff} that
$$\delta X = v, \quad \delta DX = \dot v,$$
where $v\in H^1([0,T]; \R^n)$ satisfies $v(0) = v(T) = 0$.
Then, applying Lemma \ref{integration-by-parts}, we get
\begin{equation*}
  \begin{aligned}
    \delta \overline{\mathcal S}[X,p] &= \E \int_0^T \left[ p \cdot DX - H_0 \right] dt \\
    &= \delta \E \int_0^T \left[ \delta p \cdot DX + p \cdot \delta DX - \pt_x H_0 \cdot \delta X - \pt_p H_0 \cdot \delta p \right] dt \\
    &= \E \int_0^T \left[ \delta p \cdot (DX - \pt_p H_0) + p \cdot \dot v - \pt_x H_0 \cdot v \right] dt \\
    &= \E \int_0^T \left[ \delta p \cdot (DX - \pt_p H_0) + (\D_t p - \pt_x H_0) \cdot v \right] dt.
  \end{aligned}
\end{equation*}
The arbitrariness of $\delta p$ and $v$ yields the equation \eqref{canonical-stoch-Hamilton-eqns}. \\
(2) is left as an exercise. See Problem \ref{prob3-3}.
\end{proof}

\subsection{Second-Order Hamilton--Jacobi Equation}\label{sec:2HJ}

\begin{theorem}[Characteristics]
  Let $S\in C^\infty(\R^{2n}\times\R)$. Then the following statements are equivalent: \\
  (a) for every $\R^n$-valued diffusion $X$ satisfying
  \begin{equation}\label{cond-1}
    D X(t) = \pt_p H_0 (\nabla S(t, X(t) ),t), \quad QX(t) = \hbar I_{d\times d},
  \end{equation}
  the $\R^{2n}$-valued process $(X, p = \nabla S\circ X)$ solves the global stochastic Hamilton's equations \eqref{canonical-stoch-Hamilton-eqns}; \\
  (b) $S$ satisfies the following second-order Hamilton--Jacobi equation
  \begin{equation}\label{HJB-4}
    \frac{\pt S}{\pt t} + H_0(\nabla S, t) + \frac{\hbar}{2}\Delta S = f(t),
  \end{equation}
  for some function $f$ depending only on $t$.
\end{theorem}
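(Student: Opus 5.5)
The plan is a direct computation of $\D_t p$ for $p = \nabla S\circ X$ along any diffusion satisfying \eqref{cond-1}, followed by a comparison with the gradient of the left-hand side of \eqref{HJB-4}. Throughout I write $H_0(x,p,t)$, so that $H_0(\nabla S,t)$ means $H_0(x,\nabla S(t,x),t)$. The first Hamilton equation $DX = \pt_p H_0(X,p,t)$ then holds automatically by \eqref{cond-1}. Hence (a) reduces to the single identity
\[
\D_t p = -\pt_x H_0(X,p,t).
\]

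The computation itself is as follows. Since $QX = \hbar I$, the total mean derivative acts on smooth functions of $(t,X(t))$ as in \eqref{total-mean-der}. This follows from It\^o's formula applied to each component $\pt_i S(t,X(t))$: the martingale part has zero conditional mean, and the quadratic variation contributes $\frac{\hbar}{2}\Delta$. This gives
\[
\D_t \pt_i S = \pt_t\pt_i S + \pt_{p_j}H_0(X,\nabla S,t)\,\pt_j\pt_i S + \tfrac{\hbar}{2}\Delta \pt_i S,
\]
evaluated at $(t,X(t))$. Next let $\Phi(t,x) := \pt_t S + H_0(x,\nabla S,t) + \frac{\hbar}{2}\Delta S$ and differentiate in $x_i$, using the chain rule on the $H_0$ term and the symmetry $\pt_j\pt_i S = \pt_i\pt_j S$:
\[
\pt_i\Phi = \pt_t\pt_i S + \pt_{x_i}H_0 + \pt_{p_j}H_0\,\pt_i\pt_j S + \tfrac{\hbar}{2}\Delta\pt_i S .
\]
Comparing the two displays gives the key identity
\[
\D_t p_i + \pt_{x_i}H_0(X,p,t) = \pt_i\Phi(t,X(t)).
\]

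With this identity both implications follow. For (b)$\Rightarrow$(a): if $\Phi = f(t)$, then $\nabla\Phi = 0$, so the second Hamilton equation holds for every such $X$. For (a)$\Rightarrow$(b): we get $\nabla\Phi(t,X(t)) = 0$ a.s. for every diffusion satisfying \eqref{cond-1}. To conclude that $\nabla\Phi \equiv 0$ on all of $\R^n\times\R$, I would use diffusions started from arbitrary deterministic points $X(t_0) = x_0$ at arbitrary times $t_0$. Such diffusions exist, say locally, by solving $dX = \pt_p H_0(\nabla S(t,X),t)\,dt + \sqrt{\hbar}\,dB$ with smooth coefficients. Evaluating at $t = t_0$ then gives $\nabla\Phi(t_0,x_0) = 0$. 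Alternatively, continuity together with full support of the nondegenerate law of $X(t)$ gives the same conclusion. Since $\nabla\Phi \equiv 0$, $\Phi$ depends only on $t$, which is (b).

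The main obstacle is this last step, which relies on the phrase ``for every diffusion'' in (a). One must justify that the class of diffusions allowed in (a) is rich enough to reach every point $(t_0,x_0)$, including local existence when $\pt_p H_0\circ\nabla S$ is not globally Lipschitz. I would handle this with a localization and stopping-time argument, or by evaluating only at the initial instant, where the mean derivative needs only short-time existence. The remaining steps are routine.
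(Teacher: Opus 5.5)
Your proof is correct and follows the paper's argument. You expand $\D_t(\nabla S)$ along $X$ using the total mean derivative and the condition on $DX$, then apply the chain rule $\nabla\bigl(H_0(\nabla S,t)\bigr)=\pt_x H_0+\nabla^2 S\,\pt_p H_0$ to reach $\D_t p+\pt_x H_0=\nabla\bigl(\pt_t S+H_0(\nabla S,t)+\frac{\hbar}{2}\Delta S\bigr)$ evaluated at $(t,X(t))$, and both implications follow from this identity.

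Your treatment of (a)$\Rightarrow$(b) is more careful than the paper's. You need enough diffusions, or full support plus continuity, to pass from $\nabla\Phi(t,X(t))=0$ a.s.\ to $\nabla\Phi\equiv 0$; the paper compresses this into ``the result follows.'' Your $+\frac{\hbar}{2}$ signs are the correct ones, whereas the paper's intermediate lines carry a sign slip.
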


\begin{proof}
  By \eqref{total-mean-der} and \eqref{cond-1},
  \begin{equation*}
    \begin{split}
      \D_t (\nabla S) &= \left(\frac{\pt}{\pt t} + DX \cdot \nabla + \frac{\hbar}{2}\Delta \right) (\nabla S) \\
      &= \nabla \frac{\pt S}{\pt t} + (\pt_p H_0\circ \nabla S) \cdot \nabla^2 S - \frac{\hbar}{2} \Delta \nabla S \\
      &= \nabla\frac{\pt S}{\pt t} + \nabla(H_0\circ \nabla S) - \pt_x H_0 \circ \nabla S - \frac{\hbar}{2}\nabla  \Delta S \\
      &= \nabla\left( \frac{\pt S}{\pt t} + H_0\circ dS + \frac{\hbar}{2} \Delta S \right) - \pt_x H_0 \circ \nabla S.
    \end{split}
  \end{equation*}
  The result follows.
\end{proof}

Clearly, Eq. \eqref{HJB-4} can be interpreted as $\hbar$-deformation of the classical Hamilton--Jacobi equation
\begin{equation*}
  \frac{\pt S}{\pt t} + H_0( x, \nabla S, t ) = 0.
\end{equation*}
Similarly to the deformed energy \eqref{energy}, we can produce a class of second-order Hamiltonian functions $H=H_\hbar$ by deforming a classical one $H_0\in C^\infty(\R^{2n})$ in a canonical way, that is, for a 1-form $p$ on $\R^n$,
\begin{equation*}
  H_\hbar(x,p) = H_0(x,p) + \frac{\hbar}{2} \sum_{i=1}^n \pt_{x^i} p_i,
\end{equation*}
Then the functional \eqref{action-phase} can be rewritten as
\begin{equation*}
  \mathcal S = \E \int_0^T \left[ p_i (DX)^i + \frac{\hbar}{2} \sum_{i=1}^n \frac{\pt p_i}{\pt x^i} - H_\hbar \right] dt = \E \int_0^T \left( p_i \circ dx^i - H_\hbar dt \right),
\end{equation*}
where $\circ\,d$ denotes the Stratonovich stochastic differential.

Now we make a change of coordinates on $\R^{2n}\times\R$ from $(x^i,p_i,t)$ to $(y^i,P_i,t)$, and denote the second-order Hamiltonian by $K_\hbar$ and its classical part by $K_0$.
The general condition for a transformation to be canonical is to preserve the form of stochastic Hamilton's system \eqref{canonical-stoch-Hamilton-eqns}. This is equivalent to preserving the form of the stochastic Hamilton's principle of \eqref{action}. It follows that
\begin{equation*}
  \delta\,\E \int_0^T \left(p_i\circ dx^i - H_\hbar dt\right) = \delta\,\E \int_0^T \left(P_i\circ dy^i - K_\hbar dt\right) =0.
\end{equation*}
Since the underlying process $X$ has zero variation at the endpoints, both equalities will be satisfied if the integrands are related by the following SDE:
\begin{equation}\label{stoch-generating-func}
  p_i\circ dx^i - H_\hbar dt = P_i\circ dy^i - K_\hbar dt + d G,
\end{equation}
In contrast with classical theory of canonical transformations and also \eqref{formal-transf} which are described by equations for forms, the equation \eqref{stoch-generating-func} is understood as a stochastic differential equation.
But as in classical theory \cite{AKN06}, here we can also have all four types of generating functions for \eqref{stoch-generating-func} that are related to each other through classical Legendre transforms. Indeed, canonical transformations here are processed on cotangent bundles, which means they are special case of \eqref{formal-transf} where the canonical transformations on second-order cotangent bundles are induced by classical ones. We take the type one generating function $G = G_1(x,y,t)$. Using It\^o's formula $dG = \frac{\pt G_1}{\pt t}dt + \frac{\pt G_1}{\pt x^i} \circ dx^i + \frac{\pt G_1}{\pt y^i} \circ dy^i$, and vanishing the coefficients of every (stochastic) differentials $\circ dx$, $\circ dy$ and $dt$ in \eqref{stoch-generating-func}, we get
\begin{equation*}
  p_i = \frac{\pt G_1}{\pt x^i}, \quad P_i = -\frac{\pt G_1}{\pt y^i}, \quad K_\hbar - H_\hbar = \frac{\pt G_1}{\pt t},
\end{equation*}
which partially recovers \eqref{formal-relation}. By requiring the new Hamiltonian $K_0$ to be identically zero and writing $G_1$ as $S$, the last equation turns into the following Hamilton--Jacobi--Bellman equations,
  \begin{equation*}
    \frac{\pt S}{\pt t}(x,y,t) + H_0\left( x^i, \frac{\pt S}{\pt x^i}(x,y,t), t \right) + \frac{\hbar}{2}\Delta_x S(x,y,t) + \frac{\hbar}{2}\Delta_y S(x,y,t) = 0,
  \end{equation*}
where $(x,y)$ are regarded as coordinates on the product space.

Type two generating functions are also useful. Let $G = G_2(x,P,t) - y^i P_i$. In the same way as type one, we can get
\begin{equation*}
  p_i = \frac{\pt G_2}{\pt x^i}, \quad y^i = \frac{\pt G_2}{\pt P_i}, \quad K_\hbar - H_\hbar = \frac{\pt G_2}{\pt t}.
\end{equation*}
As an example, we consider the Hamiltonian $H_0(x,p,t) = \frac{1}{2} |p|^2 - p \cdot \nabla S(x,t) +V(x)$. 
We take $G_2(x,P,t) = x \cdot P + S(x,t)$. Then $p = P + \nabla S$, $y=x$ and $K_\hbar - H_\hbar = \frac{\pt S}{\pt t}$. So the new Hamiltonian is $K_0(y,P,t) = \frac{1}{2} |P|^2 + \frac{1}{2}|\nabla S(y,t)|^2 + V(y) + \frac{\hbar}{2}\Delta S(y,t) + \frac{\pt S}{\pt t}(y,t)$. To make $K_0$ be the standard form $K_0(y,P,t) = \frac{1}{2} |P|^2$, we only need to assume that $S$ and $V$ solve the following second-order Hamilton--Jacobi equation
\begin{equation*}
  \frac{\pt S}{\pt t} + \frac{1}{2} |\nabla S|^2 + \frac{\hbar}{2} \Delta S + V = 0.
\end{equation*}

A key observation is that $H_\hbar$ is a $\{\F_t\}$-martingale but $H_0$ is not. A stochastic Noether's theorem in \cite{HZ23} shows that such a martingale is always associated with a symmetry of the second-order Hamilton--Jacobi equation.

Hamilton--Jacobi--Bellman (HJB) equations are fundamental tools of Optimal Control theory, more precisely, of ``Dynamical Programming'', created in the 1950's by R. Bellman and collaborators for the needs of aerospace engineering. Although problems of classical calculus of variations can be solved using it, the impact of HJB equations has never stopped extending far beyond their original motivations. In stochastic Optimal Control \cite{FS06}, they also allow to control Markovian diffusion processes in the form of nonlinear partial differential equations of second-order (in space) for a scalar field $S$ on $\R^n$,
\begin{equation}\label{HJB}
  \frac{\pt S}{\pt t} + H\left(x, \nabla S, \nabla^2 S, t\right) = 0,
\end{equation}
where $H$ is called a second-order Hamiltonian, by analogy with Hamilton--Jacobi equation of classical mechanics. In Eq. \eqref{HJB}, the presence of Hessian operator in $H$ is due to the infinitesimal generator of the underlying diffusion processes, as a consequence of It\^o's correction. On the other hand, HJB equations became essential in recent developments of the mathematics of, for instance, deep learning \cite{PCV19} or geometric studies of hydrodynamical interpretation of quantum mechanics \cite{KMM21}.

Here, we are not going to consider difficulties associated with the fact that solutions of HJB (the ``value functions'') are generally too irregular to be interpreted in a classical sense, or those resulting from the practical need to solve very high-dimensional versions of such PDEs. Instead, we shall summarize a recent work answering the following natural questions about Eq. \eqref{HJB}:
\begin{quote}
If Eq. \eqref{HJB} is a kind of deformation of the classical Hamilton--Jacobi equation, what are the relevant stochastic Lagrangian and Hamiltonian mechanics? And what are the latent geometrical structures?
\end{quote}
Our guide to achieve these goals will be a program of stochastic deformation of classical mechanics founded on an old idea of E. Schr\"odinger (often called in these days, ``Schr\"odinger's problem'' \cite{leonard2014,Mik21}). In substance, this is a statistical physics analog of quantum mechanics, regarded as a stochastic deformation of classical Optimal Transport. The associated solution processes are called Bernstein's reciprocal processes \cite{leonard2014,cruzeiro2000}, and enjoy a special version of time-reversibility, despite the fact that they are generally inhomogeneous. This aspect of the theory will not be elaborated here.

Instead of traditional tools of stochastic analysis on manifolds, founded by It\^o and Malliavin etc., we shall adapt a less familiar approach due to L. Schwartz and P.-A. Meyer, called stochastic (or second-order) differential geometry \cite{Eme07}. This way to deform classical geometric structures into others, compatible with stochastic nature of Brownian randomness, can be regarded as a probabilistic counterpart of the quantization procedure.

\section{Schr\"odinger's Problem v.s. Onsager--Machlup Functional}\label{sec-7-3}

An inspiring example of the framework of stochastic geometric mechanics is Schr\"odinger's problem, which is also a toy model in stochastic optimal transport. Meanwhile, the Onsager--Machlup functional, serving as a deterministic action functional, shares the same Lagrangian as the Schr\"odinger's problem under the auspices of the second-order Hamilton--Jacobi equation.

\subsection{Mechanical Lagrangians}

Consider the following Lagrangian function defined on $[0,T]\times \R^{2n}$:
\begin{equation}\label{Lagrangian}
  L_0(t,x,\dot x) = \frac{1}{2} |\dot x|^2 - V(t,x),
\end{equation}
where $F: [0,T]\times \R^n \to \R$ is a given smooth function. The Hamiltonian function transformed from $L_0$ via the Legendre transform is
\begin{equation*}
  H_0(x,p,t) = \frac{1}{2} |p|^2 + V(t,x).
\end{equation*}
For such a Lagrangian, we can directly figure out the relation between stochastic Euler--Lagrange
equation \eqref{stoch-EL} and second-order Hamilton--Jacobi equation.

\begin{theorem}\label{SEL-HJB}
  Let $L_0$ be as in \eqref{Lagrangian}.
  Let $X$ be an $\R^n$-valued diffusion process over time interval $[0,T]$. If the mean derivative of $X$ is of the form
  \begin{equation}\label{SEL-HJB-cond}
    DX(t) = \nabla S(t,X(t))
  \end{equation}
  for a function $S:\R\times \R^n\to\R$, then $X$ is a solution of the stochastic Euler--Lagrange equation \eqref{stoch-EL} if and only if $S$ solves the following second-order Hamilton--Jacobi equation
  \begin{equation}\label{HJB-2}
    \frac{\pt S}{\pt t} + \frac{1}{2} |\nabla S|^2 + \frac{\hbar}{2} \Delta S + V = f,
  \end{equation}
  with $f$ a function depending only on $t$.
\end{theorem}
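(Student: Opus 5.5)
The plan is to compute both sides of the stochastic Euler--Lagrange equation \eqref{stoch-EL} for the Lagrangian \eqref{Lagrangian} under the ansatz \eqref{SEL-HJB-cond}, and to recognize their difference as a gradient. This mirrors the proof of the characteristics theorem in Section \ref{sec:2HJ}. For this $L_0$ we have $\pt_{\dot x} L_0 = \dot x$ and $\pt_x L_0 = -\nabla V$. Hence \eqref{stoch-EL} reads
\begin{equation*}
\D_t\, DX(t) = -\nabla V(t,X(t)).
\end{equation*}
Here $\D_t$ is the total mean derivative \eqref{total-mean-der}, which is valid because $QX = \hbar I$ for admissible diffusions.

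Substituting $DX(t)=\nabla S(t,X(t))$, the left side becomes $\D_t$ applied to the field $\nabla S$ along $X$. By It\^o's formula this is
\begin{equation*}
\Big(\pt_t + \nabla S\cdot\nabla + \tfrac{\hbar}{2}\Delta\Big)\nabla S \;=\; \nabla\pt_t S + (\nabla^2 S)\nabla S + \tfrac{\hbar}{2}\nabla\Delta S ,
\end{equation*}
evaluated at $(t,X(t))$. The key algebraic step uses the identity $(\nabla^2 S)\nabla S = \nabla\big(\tfrac12|\nabla S|^2\big)$ together with the commutation of $\nabla$ with $\pt_t$ and $\Delta$. This rewrites the left side as $\nabla\big(\pt_t S + \tfrac12|\nabla S|^2 + \tfrac{\hbar}{2}\Delta S\big)$. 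Equation \eqref{stoch-EL} is therefore equivalent to
\begin{equation*}
\nabla\Big(\pt_t S + \tfrac12|\nabla S|^2 + \tfrac{\hbar}{2}\Delta S + V\Big)(t,X(t)) = 0 \quad \text{a.s. for all } t.
\end{equation*}

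For the direction ``HJ $\Rightarrow$ EL'', if \eqref{HJB-2} holds then the bracket equals $f(t)$, whose spatial gradient vanishes, so the display holds at every point and in particular along $X$. For the converse, I would argue that the gradient vanishes on the support of the law of $X(t)$. Since $X$ has nondegenerate additive noise ($QX=\hbar I$), the law of $X(t)$ for $t>0$ has a positive density on all of $\R^n$. This forces the gradient to vanish identically, so the bracket depends only on $t$, and we call it $f(t)$; the endpoint $t=0$ follows by continuity.

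I expect this converse support argument to be the only delicate point. The pointwise vanishing along $X$ has to be upgraded to identical vanishing on $\R^n$, which needs either full support of the marginals or a convention that the statement is understood on the region visited by $X$. I would first try full support, via positivity of the transition density of the elliptic diffusion $dX=\nabla S\,dt+\sqrt{\hbar}\,dB$. Failing that, I would state the equivalence on the support of the law, as is implicit in the proof of the characteristics theorem. Everything else is routine calculus.
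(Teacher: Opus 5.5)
Your proposal is correct and follows the paper's proof essentially step for step. You compute $\D_t(\nabla S)$ with the total mean derivative, rewrite it as $\nabla\bigl(\pt_t S + \tfrac12|\nabla S|^2 + \tfrac{\hbar}{2}\Delta S\bigr)$, and compare with $-\nabla V$. For the converse, the paper simply asserts that the diffusion has full support on $\R^n$, which is exactly the point you flag, and your justification via positivity of the transition density of the nondegenerate diffusion (plus continuity of the gradient) is a reasonable way to fill it in.
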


\begin{proof}
  It is clear that for the Lagrangian \eqref{Lagrangian},
  $$\pt_{\dot x} L_0 = \dot x, \quad \pt_x L_0 = - \nabla V(t,x).$$
  Using condition \eqref{SEL-HJB-cond}, we have
  \begin{equation*}
    \dot x = \nabla S.
  \end{equation*}
  Then, 
  \begin{equation}\label{eqn-10}
    \pt_{\dot x} L_0\left(t, X(t), D X(t) \right) = DX(t) = \nabla S(t,X(t)),
  \end{equation}
  and
  \begin{equation}\label{eqn-11}
    \begin{split}
      \pt_x L_0\left(t, X(t), D X(t) \right) &= - \nabla V(t,X(t)).
    \end{split}
  \end{equation}

  Now we take the total mean derivative $\D_t$ \eqref{total-mean-der} to \eqref{eqn-10}. Using condition \eqref{SEL-HJB-cond} again, we get
  \begin{equation}\label{eqn-12}
    \begin{aligned}
      \D_t \big[ \pt_{\dot x} L_0\left(t, X(t), D X(t) \right) \big] &= \D_t \big[ \nabla S(t,X(t)) \big] \\
      &= \left( \pt_t + DX(t) \cdot \nabla + \frac{\hbar}{2} \Delta \right) \nabla S(t,X(t)) \\
      &= \left( \pt_t + \nabla S \cdot \nabla + \frac{\hbar}{2} \Delta \right) \nabla S(t,X(t)) \\
      &= \nabla \left( \pt_t S + \frac{1}{2} |\nabla S|^2 + \frac{\hbar}{2} \Delta S \right) (t,X(t)).
    \end{aligned}
  \end{equation}
  Compare \eqref{eqn-12} with \eqref{eqn-11}. Since the diffusion process $X$ has full support on $\R^n$, we conclude that the stochastic Euler--Lagrange equation \eqref{stoch-EL} is equivalent to
  \begin{equation*}
    \nabla \left( \pt_t S + \frac{1}{2} |\nabla S|^2 + \frac{\hbar}{2} \Delta S + V \right) = 0.
  \end{equation*}
  The result follows.
\end{proof}


\subsection{Schr\"odinger's Problem}

Theorem \ref{SEL-HJB} strongly suggests some relations between stochastic Lagrangian (and also Hamiltonian) mechanics and Schr\"odinger's problem in the reinterpretation of optimal transport. In the setting of the latter (see, e.g., \cite{cruzeiro2000,leonard2014,Leo14a}), there is a given reversible positive measure $R$ on the path space $\mathcal C = C([0,T], \R^n)$, called a \emph{reference measure}, as well as two probability distributions $\mu_0,\mu_T\in \Pred(\R^n)$. Schr\"odinger's problem aims to minimize the following relative entropy (a.k.a. Kullback--Leibler divergence)
  \begin{equation}\label{entropy}
    H(P|R) =
    \begin{cases}
      \int_{\mathcal C} \log\left( \frac{dP}{dR} \right) dP, & P \ll R, \\
      +\infty, & \text{otherwise},
    \end{cases}
  \end{equation}
over all probability measures $P$ on $\mathcal C$ such that $\mu_0,\mu_T$ are the initial and final time marginal distributions of $P$, i.e., $P_0 = \mu_0$ and $P_T = \mu_T$, where $P_t := P\circ (X(t))^{-1}$ is the time marginal distribution of $P$ and $X(t) : \mathcal C\to \R^n, X(t,\omega) = \omega(t)$ is the coordinate mapping. Denote, respectively, by $X_R$ and $X_P$, the coordinate process $X$ under the measure $R$ and $P$.  Then, Girsanov theorem implies that \cite[Theorem 1]{Leo12} a necessary condition for the \emph{finite entropy condition} $H(P|R) < \infty$ is $QX_P = QX_R$, $P$-a.s.. Furthermore, if $R$ is a diffusion measure, i.e., $X_R$ is a diffusion process, then a similar application of the Girsanov theorem yields that a necessary condition for $H(P|R) < \infty$ is that $P$ is also a diffusion measure, and there exists a time-dependent vector field $u$ such that
$$\left( DX_P(t), QX_P(t) \right) = \left( DX_R(t) + u(t, X(t)), QX_R(t) \right),\quad \forall t\in[0,T], \text{ a.s.}.$$
The solution $P$ of Schr\"odinger's problem, i.e., minimizing \eqref{entropy}, is related to the reference measure $R$ by a time-symmetric version of Doob's $h$-transform \cite[Section 3]{leonard2014}. Its coordinate process $X_P$ is sometimes called a \emph{Schr\"odinger bridge} or Schr\"odinger process. When the reference measure $R$ is Markovian, i.e., the law of a Markov process, the solution process $X_P$ is also called a reciprocal \cite{bernstein1932,Jam75} or Bernstein process \cite{cruzeiro2000,CV15}.

If the reference coordinate process $X_R$ has generator
  \begin{equation*}
    A^{X_R} = \frac{\hbar}{2} \Delta + V,
  \end{equation*}
for some potential function $V$ on $\R^n$, then the density $\mu(t,x) = \frac{dP^*_t}{d\operatorname{Vol}}(x)$ of the minimizer $P^*$ of \eqref{entropy} solves the following Kolmogorov forward equation
  \begin{equation}\label{Kol-forw}\left\{
    \begin{aligned}
      &\frac{\pt}{\pt t} \mu(t,x) + \operatorname{div} (\mu \nabla S) - \frac{\hbar}{2} \Delta \mu(t,x) = 0, \quad (t,x)\in(0,1]\times \R^n, \\
      &\mu(0,x) = \mu_0(x), \quad x\in \R^n.
    \end{aligned}\right.
  \end{equation}
where $S$ solves the second-order Hamilton--Jacobi equation \eqref{HJB-2} with $f \equiv 0$.

Moreover, an analog of the Benamou--Brenier formula was derived (see \cite{leonard2014}). Consider the problem of minimizing the average action
\begin{equation}\label{Benamou--Brenier}
  \int_0^T \int_{\R^n} \left( \frac{1}{2} |v(t,x)|^2 - V(t,x) \right) \rho(t,dx) dt
\end{equation}
among all pairs $(\rho,v)$, where is $\rho=(\rho(t))_{t\in[0,T]}$ is a measurable path in $\Pred(\R^n)$, $v=(v(t))_{t\in[0,T]}$ is a measurable time-dependent vector field and the following constraints are satisfied (in the weak sense of PDEs):
\begin{equation}\label{FP-eqn}\left\{
  \begin{aligned}
    &\frac{\pt}{\pt t} \rho + \operatorname{div} \left(\rho v \right) - \frac{\hbar}{2} \Delta \rho = 0, \\
    &\rho(0) = \mu_0, \ \rho(T) = \mu_T,
  \end{aligned}\right.
\end{equation}
The relation between $\rho$ in \eqref{Benamou--Brenier} and $P$ in \eqref{entropy} is just that $\rho$ is the time marginal of $P$, namely,
\begin{equation}\label{relation-Sch}
  \rho(t) = P_t = P\circ (X(t))^{-1}.
\end{equation}
The minimizer of \eqref{Benamou--Brenier} is the pair $(\mu,\nabla S)$ where $\mu$ solves \eqref{Kol-forw} and $S$ solves the second-order Hamilton--Jacobi equation \eqref{HJB-2} with $f \equiv 0$.

These results are summarized in the following equivalent relations:
\begin{equation}\label{BB-sum}
  \begin{split}
    &\ \inf\left\{ H(P|R): P\in \Pred(\mathcal C), P_0 = \mu_0, P_T = \mu_T \right\} - H\left( \mu_0|R_0 \right) \\
    =&\ \inf\left\{ \int_0^T \int_{\R^n} \left( \frac{1}{2}|v(t,x)|^2 - V(t,x) \right) \rho(t,dx) dt : (\rho,v) \text{ satisfies \eqref{FP-eqn}} \right\} \\
    =&\ \int_0^T \int_{\R^n} \left( \frac{1}{2}|\nabla S(t,x)|^2 - V(t,x) \right) \mu(t,dx) dt.
  \end{split}
\end{equation}

On the other hand, we set that $P$ is a diffusion measure and $QX_P = QX_R = I_{d\times d}$, $P$-a.s., which is a necessary condition for $H(P|R) < \infty$. Then, the generator of $X_P$ is given by
\begin{equation*}
  DX_P(t) \cdot \nabla|_{X(t)} + \frac{\hbar}{2} \Delta|_{X(t)}.
\end{equation*}
From \eqref{FP-eqn} and \eqref{relation-Sch}, one can see that $v(t,X(t)) = D X_P(t)$ and the action \eqref{Benamou--Brenier} equals to
\begin{equation}\label{action-2}
  \E_P \int_0^T \left( \frac{1}{2}|D X(t)|^2 - F(t,X(t)) \right) dt.
\end{equation}
So the minimizing problem turns into minimizing the action \eqref{action-2} over all diffusion measures $P\in\Pred(\mathcal C)$ with $P_0 = \mu_0$, $P_T = \mu_T$ and $QX_P = a_R$, $P$-a.s.. This brings us back to our stochastic variational problem, that is, to minimize the action functional $\mathcal S$ in \eqref{action} over $\A_{g_R}([0,T];\mu_0, \mu_T)$, with Lagrangian $L_0(t, x,\dot x) = \frac{1}{2} |\dot x|^2 - V(t,x)$. Note that in the special case when $P_0=\mu_0$ is Dirac, the relative entropy in \eqref{entropy} and $H(\mu_0|R_0)$ are always infinite, while their difference $H(P|R) - H(\mu_0|R_0)$ can be finite as in \eqref{BB-sum}. Moreover, by Theorem \ref{stoch-Hamilton-prin} and \ref{SEL-HJB}, a necessary condition for $X_P$ to be the minimizer of $\mathcal S$ is that $X_P$ satisfies \eqref{SEL-HJB-cond} and \eqref{HJB-2}, which coincides with \eqref{Kol-forw}.

\begin{remark}
  (i). Compared to the Lagrangian \eqref{Lagrangian} used here for addressing Schr\"odinger's problem, there is another type of Lagrangians used in the Euclidean version of quantum mechanics in \cite[Eq.~(5.4)]{CZ91}. The latter has an additional term of divergence of $b$, which helps to express part of the action functional as a Stratonovich integral. The stochastic Euler--Lagrange equation \eqref{stoch-EL} applied to their Lagrangians recovers the equations of motion in \cite[Theorem 5.3]{CZ91}.

  (ii). In the seminal paper \cite{otto2001}, F.~Otto provided a geometric perspective for numerous PDEs by introducing a Riemannian structure in the Wasserstein space. It is known as Otto's calculus. A similar idea can ascend to V.I.~Arnold, who established a geometric framework for hydrodynamics by studying the Riemannian nature of the infinite-dimensional group of diffeomorphisms \cite{AK21}. The recent paper \cite{gentil2020} formulated Schr\"odinger's problem via Otto calculus, where the equation of motion is given by an infinite-dimensional Newton equation, cf. \cite{KMM21,Von12} on related matters. All these works can be called a ``geometrization'' of (stochastic) dynamics. In contrast, the present framework can be called a ``stochastization'' of geometric mechanics. The difference and relations between our framework and theirs are similar to those between two ways of producing HJ equations for quantum mechanics mentioned in the introduction. More precisely, while second-order HJ equations play a key role in our framework, various HJ equations with density-dependent potential terms were derived by them (see \cite[Corollary 23]{gentil2020} and \cite[Proposition 2.4]{KMM21}).
\end{remark}

We will discuss more topics about Schr\"odinger bridges in the next chapter.

\subsection{Onsager--Machlup Functional}

The Onsager--Machlup functional is a tool used to describe the dynamics of stochastic processes, particularly in the context of nonequilibrium systems \cite{OM53,MO53}. It provides a way to quantify the probability of a given path taken by a stochastic process, with the most probable paths corresponding to those that minimize the functional.

Denote by $\mathcal{C}_0$ the subspace of $\mathcal{C}$ consisting only of continuous functions $\omega: [0,T]\to\R^d$ satisfying $\omega(0) = 0$. Let $\mathcal{H}$ be the Hilbert space of all $\gamma\in \mathcal{C}_0$ that are absolutely continuous and have square-integrable derivatives, equipped with the norm
\begin{equation*}
  \|\gamma\|_{H_0^1} := \|\dot \gamma\|_{L^2} = \int_0^T |\dot \gamma(t)|^2 dt.
\end{equation*}
This $\mathcal{H}_0$ is referred to as the Cameron--Martin subspace of $\mathcal{C}_0$.

\subsubsection*{Onsager--Machlup functional of SDEs}

As we have discussed in Chapter \ref{ch:2}, the Onsager--Machlup theory studies the probability of SDE solutions remaining within tubes around a smooth curve. Especially, in Section \ref{sec:SM-Lag-Ham}, we have investigated the classical Lagrangian and Hamiltonian perspectives of the Onsager--Machlup functional of the time-homogeneous Langevin equation \eqref{firstequation}. 

In this section, we consider the following time-inhomogeneous Langevin equation:
\begin{equation}\label{SDE-Markovian}
\mathrm dX(t)= \nabla S(t,X(t))\mathrm d t + \sqrt\hbar \mathrm dB(t), \quad X (0) =0,
\end{equation}
Let $X$, $B$, $(\Omega, \mathcal F, \mathbf P, \{\mathcal F_t\}_{t\in[0,T]})$ be a weak solution of the Langevin SDE \eqref{SDE-Markovian}. Then by \cite[Theorem 1]{Zeitouni1988},
\begin{equation*}
  \P(\|X -\gamma\|_T < r) \sim \exp\left( - \mathrm{OM}_{\mathrm{SDE}} [\gamma] \right) \P(\|\sqrt\hbar B\|_T < r), \quad\text{as } r \to 0.
\end{equation*}
where $\mathrm{OM}_{\mathrm{SDE}}$ is called the Onsager--Machlup functional of $X$, given by
\begin{equation}\label{OM-1}
  \mathrm{OM}_{\mathrm{SDE}} [\gamma] := \int_0^T L_\mathrm{OM} (t,\gamma(t),\dot \gamma(t))\mathrm dt, \quad \gamma \in \mathcal H,
\end{equation}
and
\begin{equation*}
  L_\mathrm{OM}(t,x,\dot x) := \frac{1}{2\hbar}|\dot x - \nabla S(t,x)|^2 + \frac{1}{2} \Delta S(t,x),
\end{equation*}
is called the OM Lagrangian.

\subsubsection*{Mechanical Lagrangian of OM functional}

If $S$ solves the second-order Hamilton--Jacobi equation \eqref{HJB-2}, then an integration by parts yields, for $\gamma \in \mathcal H$,
\begin{equation}\label{OM-id-drv}
  \begin{split}
    \hbar \mathrm{OM}_{\mathrm{SDE}}[\gamma] &= \int_0^T \left( \frac{1}{2} |\dot\gamma(t)|^2 - \nabla S(t,\gamma(t)) \dot\gamma(t) +\frac{1}{2}|\nabla S(t,\gamma(t))|^2 +\frac{\hbar}{2} \Delta S(t,\gamma(t)) \right)\mathrm{d}t \\
    &= \int_0^T \left( \frac{1}{2} |\dot \gamma(t)|^2 + \pt_t S(t,\gamma(t)) +\frac{1}{2}|\nabla S(t,\gamma(t))|^2 +\frac{\hbar}{2} \Delta S(t,\gamma(t)) \right)\mathrm{d}t \\
    &\quad - S(T, \gamma(T)) + S(0, \gamma(0)) \\
    &= \int_0^T \left( \frac{1}{2} |\dot \gamma(t)|^2 - V(t,\gamma(t)) \right)\mathrm{d}t - S(T, \gamma(T)) + S(0, 0) \\
    &= \int_0^T L_0(t,\gamma(t),\dot \gamma(t)) \mathrm{d}t - S(T, \gamma(T)) + S(0, 0),
  \end{split}
\end{equation}
where $L_0$ is the mechanical Lagrangian defined in \eqref{Lagrangian}.
This indicates that $\hbar \mathrm{OM}_{\mathrm{SDE}}$ can be regarded as the action functional with terminal cost $S(T)$ and running cost with Lagrangian $L_0$.

\subsubsection*{Most probable paths}

We recall from Chapter \ref{ch:2} that a most probable path of the SDE \eqref{SDE-Markovian} is a path in $\mathcal{H}$ that makes the OM functional $\mathrm{OM}_{\mathrm{SDE}}$ achieve its minimum value. 
A necessary (but not sufficient) condition is to vanish the variation of the OM functional $\mathrm{OM}_{\mathrm{SDE}}$ on $\mathcal{H}$, i.e., $\delta \mathrm{OM}_{\mathrm{SDE}} [\gamma] = 0$. 
Using \eqref{OM-id-drv}, one can derive the associated Euler--Lagrange equation, given by
\begin{equation}\label{eur-lag}
\begin{cases}
  \ddot \gamma(t) = - \nabla V(t,\gamma(t)), \quad t \in(0, T), \\
  \gamma(0)=0, \quad \dot \gamma(T) = - \nabla S(T,\gamma(T)). 
\end{cases}
\end{equation}
One can also derive the Euler--Lagrange equation directly from the definition of OM functional \eqref{OM-1}. The equation is exactly the same as \eqref{eur-lag}, providing that $S$ solves the second-order Hamilton--Jacobi equation \eqref{HJB-2}. We leave these two derivations as exercises. See Problem \ref{prob3-4}.

\section{Multiplicative Noise}\label{sec:multip-SGM}

In the previous two sections, we fixed the underlying noise as an additive one, with intensity $\sqrt{\hbar}$. Now we generalize them to the multiplicative noise case, using the same trick as in Section \ref{chapter2.sec2.2}, that is, to use the diffusion matrix to ``curve'' the configuration space $\R^n$. Let $\hbar>0$ be a constant.

\subsection{Stochastic Geometric Mechanics for Multiplicative Noise}

In this subsection, we fix the diffusion matrix as 
$$a(x) = \sigma(x) \sigma^\top(x).$$
As in \eqref{Riem-metric}, we define a Riemannian metric $g$ on $\mathbb{R}^n$ by the inverse of the diffusion matrix:
\begin{equation*}
g_{ij}(x) = [a^{-1}(x)]_{ij}.
\end{equation*}
This metric makes the configuration space $\R^n$ a Riemannian manifold $(\mathbb{R}^n, g)$. Moreover, we equip this Riemannian manifold with the Levi-Civita connection $\nabla$.

We will denote by $|\cdot|$ and $\langle\cdot,\cdot\rangle$ the Riemannian norm and inner product, respectively. Also, denote by $(\Gamma_{jk}^i)$ the Christoffel symbols of $\nabla$. Denote by $R$ the Riemann curvature tensor and $\Ric$ the Ricci $(1,1)$-tensor. Operators such as gradient $\nabla$, divergence $\operatorname{div}$, Laplacian $\Delta$, are all defined with respect to this metric and its Levi-Civita connection.

The mean derivative defined in local coordinates as in \eqref{mean-der} does not transform as a vector field, which can be verified through applying It\^o's formula for changes of coordinates. In order to overcome this problem, we use the connection $\nabla$ to compensate the correction term resulting from It\^o's formula. That is, we define the following $\nabla$-dependent mean derivative, in terms of Christoffel's symbols,
\begin{equation}\label{mean-d}
  (D_\nabla X)^i = (D X)^i + \frac{1}{2} \Gamma^i_{jk}(X) (QX)^{jk}.
\end{equation}
Then $D_\nabla X$ does transform as a vector field (see Problem \ref{prob3-5}).
The quadratic mean derivative \eqref{quad-mean-der} is a $(2,0)$-tensor field.

The stochastic Hamilton's variational principle seeks $\delta \mathcal S =0$ for the following stochastic action functional
\begin{equation}\label{action-mfld}
  \mathcal S[X] := \E \int_0^T L_0\left(X(t), D_\nabla X(t) \right) dt,
\end{equation}
over all diffusions $X$ on $\R^n$ on the time interval $[0,T]$, satisfying $Q X(t) = \hbar a(X(t))$ and with given endpoint marginal distributions $\text{Law}(X(0)) = \mu_0$ and $\text{Law}(X(T)) = \mu_T$. It leads to the following stochastic Euler--Lagrange equation,
  \begin{equation}\label{stoch-EL-mfld}
    \frac{\D^\hbar}{dt} \big( d_{\dot x} L_0\left( X(t), D_\nabla X(t) \right) \big) = d_x L_0\left( X(t), D_\nabla X(t) \right),
  \end{equation}
where 
$$\frac{\D^\hbar}{dt} = \frac{\pt}{\pt t} + \nabla_{D_\nabla X} + \frac{\hbar}{2} \Delta_{\mathrm{LD}}$$
is the damped mean covariant derivative with respect to $X$, and $\Delta_{\mathrm{LD}}$ is the Laplace-de Rham operator on forms, $d_x$ and $d_{\dot x}$ are horizontal and vertical differential operators. The stochastic Euler--Lagrange equation \eqref{stoch-EL-mfld} can also be derived from the following optimal-control type cost functional \cite{CZ03}
\begin{equation*}
  \mathcal J[X] := \E \left[ \int_0^T L_0\left(X(t), D_\nabla X(t) \right) dt + S_T(X(T)) \right],
\end{equation*}
with boundary conditions $X(0) = x_0$, $d_{\dot x} L_0\left(X(T), D_\nabla X(T) \right) = d S_T(X(T))$, where $S_T$ is a given smooth function on $\R^n$. See \cite{Mik21} for more relations between the two action functionals.

Such Lagrangian formulation can also be transformed into a Hamiltonian one by the Legendre transform. Indeed, the stochastic Euler--Lagrange equation \eqref{stoch-EL-mfld} is equivalent to the following stochastic Hamilton's equations on $\R^{2n}$,
\begin{equation*}
  D_\nabla X = \nabla_p H_0, \quad \frac{\D^\hbar}{dt} p = -d_x H_0,
\end{equation*}
subject to $Q X(t) = \hbar a(X(t))$, where $\nabla_p$ is the vertical gradient operator.

Same as Section \ref{sec:2HJ}, we can also deform the Hamiltonian function $H_0$ by introducing
\begin{equation*}
  H_\hbar(\eta, t) := H_0(\eta, t) + \ts{\frac{\hbar}{2}} \nabla \eta(a), \quad \eta \in \mathrm{\Omega}^1(\R^n).
\end{equation*}
Using the Legendre transform and the definition of $D_\nabla$, the action functional \eqref{action-mfld} can be rewritten as
\begin{equation*}
  \mathcal S[X] = \E \int_0^T \left[ p_i(t,X(t)) \circ dX^i(t) - H_\hbar(p(t), t) dt \right],
\end{equation*}
where $\circ\,d$ denotes the Stratonovich stochastic differential. 
It is worth noting that the above action functional generally differs from the one considered in \cite[Section 4]{LO08} (or \cite{LO09}), aside from the expectation placed in front. Specifically, our Hamiltonian $H_\hbar$ is a deformation of the classical Hamiltonian $H_0$---referred to as a second-order Hamiltonian since the additional term is tensorially of second-order. In contrast, the Hamiltonian in \cite{LO08} is a linear combination of classical Hamiltonians with random coefficients---referred to as a random Hamiltonian.

\subsection{Second-Order Differential Geometry}

The first question to ask, about Eqs. \eqref{HJB} is: in what sense the ``Hamiltonian'', say $H$, is a natural deformation of a Hamiltonian $H_0$ in classical mechanics? The first step \cite{Eme07} is to define second-order versions of tangent and cotangent spaces.

A \emph{second-order tangent vector} $A$ at a given point $q\in \R^n$ by
\begin{equation}\label{so-tangent}
  A = A^i \frac{\partial}{\partial x^i}\bigg|_q + A^{jk} \frac{\partial^2}{\partial x^j\partial x^k}\bigg|_q
\end{equation}
for coefficients $A^i$, $A^{jk}$ such that $(A^{jk})$ form a symmetric $(2,0)$-tensor and the expression on the right-hand side is invariant under changes of coordinates. In general, $(A^i)$ is not a vector, which can be seen from changes of coordinates. The \emph{second-order tangent space} $\mathcal T^S_q \R^n$ to $(\R^n,g)$ at $q$ is the set of all second-order tangent vectors at $q$. The second-order tangent bundle is then $\mathcal T^S \R^n = \cup_{q\in \R^n} \mathcal T^S_q \R^n$. Clearly $T\R^n \subset \mathcal T^S \R^n$ as a subbundle. A smooth field of second-order tangent vectors, i.e, a smooth section of $\mathcal T^S \R^n$ is called a second-order vector field.

According to Schwartz and Meyer, any geometric statement for such a second-order tangent vector will have a probabilistic content. To see this, we consider the following It\^o stochastic differential equations (SDEs) on $\R^n$,
\begin{equation*}
  dX^i(t) = b^i(t,X(t))dt + \sigma^i_r(t,X(t)) dB^r(t).
\end{equation*}
Its associated generator is given by $A^X = b^i \frac{\partial}{\partial x^i} + \frac{1}{2} \sum_{r=1}^N \sigma^i_r \sigma^j_r \frac{\partial^2}{\partial x^i\partial x^j}$,
which is a typical example of second-order vector fields. In general, the generators of diffusion processes are called second-order elliptic vector field, due to the positive semi-definiteness of the coefficients of second-order derivatives.

Inspired by mean derivatives, we denote the canonical coordinates on $\mathcal T^S \R^n$ by $(x,Dx,Qx)$, and define their action on $A$ in \eqref{so-tangent} as follows,
\begin{equation*}
  x^i(A) = x^i(q), \quad D^ix(A) = A^i, \quad Q^{jk}x(A) = 2A^{jk}.
\end{equation*}


The objects dual to second-order tangent vectors are \emph{second-order cotangent vectors}, whose general form is:
\begin{equation}\label{2nd-form}
  \alpha = \alpha_i d^2 x^i|_q + \textstyle{\frac{1}{2}} \alpha_{jk} dx^j \cdot dx^k|_q,
\end{equation}
where $(\alpha_i)$ form a covector and $\alpha_{jk}$ is symmetric in $j,k$. The pairing of the above $\alpha$ with second-order tangent vector $A$ in \eqref{so-tangent} is given by
\begin{equation*}
  \langle \alpha, A \rangle = \alpha_i A^i + \alpha_{jk} A^{jk}.
\end{equation*}
The second-order cotangent bundle, i.e., the set of all second-order cotangent vectors on $\R^n$, is represented by $\mathcal T^{S^*} \R^n$. The canonical coordinates on it are denoted by $(x,p,o)$, and defined, when acting on $\alpha$ in \eqref{2nd-form}, as follows,
\begin{equation}\label{coord-cot}
    x^i(\alpha) = x^i(q), \quad p_i(\alpha) = \alpha_i, \quad o_{jk}(\alpha) = \alpha_{jk}.
\end{equation}
There are two basic examples of second-order forms, say, $d^2 f$ and $df\cdot dg$, where $f$ and $g$ are given smooth functions on $\R^n$. They are defined as follows: for $A\in\mathcal T^S \R^n$,
\begin{equation*}
  \langle d^2 f, A \rangle := Af, \qquad \langle df\cdot dg, A \rangle := A(fg) - fAg - gAf =: \Gamma_A(f,g),
\end{equation*}
where $d$ is the classical exterior differential, the operator $d^2$ is called second-order differential, the dot operator $\cdot$ is called symmetric product, $\Gamma_A$ is usually called ``carr\'e du champ'' operator. By construction, the restriction of any second-order form to $T^* \R^n$, the classical cotangent fiber bundle, is a classical form. 

\subsection{Stochastic Hamiltonian Mechanics on General Manifolds}

In classical mechanics, the canonical symplectic structure on the cotangent bundle plays a substantial role in Hamiltonian mechanics. The symplectic 1-form is given by $p_i dx^i$, also known as Poincar\'e's relative integral invariant \cite{AKN06}. Now, the second-order version of Poincar\'e 1-form \cite{HZ23} is given, according to \eqref{2nd-form} and \eqref{coord-cot}, by
\begin{equation*}
  p_i d^2 x^i + \textstyle{\frac{1}{2}} o_{jk} dx^j \cdot dx^k,
\end{equation*}
as a second-order form on the phase space $\mathcal T^{S^*} \R^n$.
Analogous to the classical symplectic 2-form $dx^i \wedge dp_i$, one obtains the second-order version involving an extra set of coordinates $o_{jk}$,
\begin{equation*}
  \Omega = - d^2\left( p_i d^2 x^i + \textstyle{\frac{1}{2}} o_{jk} dx^j \cdot dx^k \right) = d^2 x^i \wedge d^2 p_i + \textstyle{\frac{1}{2}} dx^j\cdot dx^k \wedge d^2 o_{jk}.
\end{equation*}

Associated with a second-order Hamiltonian function $H\in C^\infty(\mathcal T^{S^*} \R^n)$, the second-order Hamiltonian vector field $A_H$ on $\mathcal T^{S^*} \R^n$ is defined by
\begin{equation*}
  \Omega(A_H, B) = d^2H(B), \quad \forall B\in \mathcal T^S \mathcal T^{S*} \R^n,
\end{equation*}
namely, in local coordinates,
\begin{equation*}
  \begin{split}
    A_H =&\ \frac{\pt H}{\pt p_i} \vf{x^i} - \frac{\pt H}{\pt x^i} \vf{p_i} + \frac{\pt H}{\pt o_{jk}} \frac{\pt^2}{\pt x^j \pt x^k} - \left( \frac{\pt^2 H}{\pt x^j \pt x^k} + C_{jk} \right) \vf{o_{jk}} \\
    &\ + A_{jk} \frac{\pt^2}{\pt p_j \pt p_k} + A_{ijkl} \frac{\pt^2}{\pt o_{ij} \pt o_{kl}} + A^j_k \frac{\pt^2}{\pt x^j \pt p_k} + A^j_{kl} \frac{\pt^2}{\pt x^j \pt o_{kl}} + A_{jkl} \frac{\pt^2}{\pt p_j \pt o_{kl}},
  \end{split}
\end{equation*}
where the coefficients $C_{jk}, A_{jk}, A_{ijkl}, A^j_k, A^j_{kl}, A_{jkl}$ are smooth functions satisfying
\begin{equation*}
  C_{jk} \frac{\pt H}{\pt o_{jk}} = A_{jk} \frac{\pt^2 H}{\pt p_j \pt p_k} + A_{ijkl} \frac{\pt^2 H}{\pt o_{ij} \pt o_{kl}} + A^j_k \frac{\pt^2 H}{\pt x^j \pt p_k} + A^j_{kl} \frac{\pt^2 H}{\pt x^j \pt o_{kl}} + A_{jkl} \frac{\pt^2 H}{\pt p_j \pt o_{kl}}.
\end{equation*}

The stochastic Hamilton's equations associated with $A_H$ are given, in local coordinates, by
\begin{equation}\label{S-H}
  \left\{
  \begin{aligned}
    D^i x &= \frac{\pt H}{\pt p_i},  \quad D_i p = - \frac{\pt H}{\pt x^i}, \\
    Q^{jk} x &= 2\frac{\pt H}{\pt o_{jk}}, \quad D_{jk} o = - \left( \frac{\pt^2 H}{\pt x^j \pt x^k} + C_{jk} \right), \\
    C_{ij} \frac{\pt H}{\pt o_{ij}} &= \frac{1}{2} Q_{jk} p \frac{\pt^2 H}{\pt p_j \pt p_k} + \frac{1}{2} Q_{ijkl} o \frac{\pt^2 H}{\pt o_{ij} \pt o_{kl}} + Q^j_k(x,p) \frac{\pt^2 H}{\pt x^j \pt p_k} \\
    &\quad + Q^j_{kl}(x,o) \frac{\pt^2 H}{\pt x^j \pt o_{kl}} + Q_{jkl}(p,o) \frac{\pt^2 H}{\pt p_j \pt o_{kl}}.
  \end{aligned}\right.
\end{equation}
The solution is of the form $(X(t), p(t, X(t)), o(t, X(t)))$, for $X$ an $\R^n$-valued process and $(p,o)$ a time-dependent second-order form.

Notice that the last three equations describe fundamental second-order additions to deterministic Hamiltonian equations. But the mean derivatives $D$ are the only regularization needed in the first two equations. Qualitatively, since $p$ and $o$ are functions of $X(t)$, the last two equations can be simplified, by applying
\begin{equation*}
  D = \frac{\pt}{\pt t} + \frac{\pt H}{\pt p_j} \vf{x^j} + \frac{\pt H}{\pt o_{jk}} \frac{\pt^2}{\pt x^j\pt x^k}
\end{equation*}
to the second and fourth equations, assuming that (the distribution of) $X(t)$ has full support for all $t$.
It follows that
\begin{equation}\label{non-degenerate}
  o_{ij}(t, x) = \frac{\pt p_i}{\pt x^j}(t, x) = \frac{\pt p_j}{\pt x^i}(t, x),
\end{equation}
the second equality being the Maxwell relations for thermodynamics \cite{AM78}. We refer to \eqref{non-degenerate} as an integrability condition of \eqref{S-H}.

Like in classical mechanics, when the second-order Hamiltonian $H$ depends explicitly on time, one extends the phase space to be $\mathcal T^{S^*} \R^n \times\R$, and endows it with the second-order analog of Poincar\'e-Cartan form
\begin{equation}\label{symp-form}
  p_i d^2 x^i + \textstyle{\frac{1}{2}} o_{jk} dx^j \cdot dx^k - H dt.
\end{equation}
Canonical transformations are change of coordinates in the extended phase space $\mathcal T^{S*} \R^n\times\R$ from $(x, p, o,t)$ to $(y, P, O,t)$ that leave stochastic Hamilton's equations \eqref{S-H} invariant, or equivalently, leave the canonical form \eqref{symp-form} invariant up to an exact second-order differential:
\begin{equation}\label{formal-transf}
  \left( p_i d^2 x^i + \frac{1}{2} o_{jk} dx^j\cdot dx^k - H dt \right) = \left( P_i d^2 y^i + \frac{1}{2} O_{jk} dy^j\cdot dy^k - K dt \right) + d^2 G,
\end{equation}
where $K\in C^\infty(\mathcal T^{S^*} \R^n\times\R)$ is the new second-order Hamiltonian after transformation, $d^2$ is the total differential of first-order in time and second-order in space. This implies that the generating function of the canonical transformation $G(x, y, t)$ satisfies
\begin{equation}\label{formal-relation}
  \left\{
  \begin{aligned}
  &p_i = \frac{\pt G}{\pt x^i},\  o_{jk} \frac{\pt x^k}{\pt y^l} = \frac{\pt^2 G}{\pt x^j \pt x^k} \frac{\pt x^k}{\pt y^l} + \frac{\pt^2 G}{\pt x^j \pt y^l}, \\
  &P_i = - \frac{\pt G}{\pt y^i},\  O_{jk} = - \frac{\pt^2 G}{\pt y^j \pt y^k} - \frac{\pt^2 G}{\pt y^j \pt x^l} \frac{\pt x^l}{\pt y^k}, \\
  &K = \frac{\pt G}{\pt t} + H.
  \end{aligned}\right.
\end{equation}
Hamilton–-Jacobi--Bellman (HJB) equation can be introduced by formally letting the new Hamiltonian $K$ vanish. In this case we write the generating function $G$ as $S$. Using \eqref{formal-relation}, we can write the HJB equation, as the general version of \eqref{HJB} on manifolds,
\begin{equation*}
  \frac{\pt S}{\pt t} + H\left( x^i, \frac{\pt S}{\pt x^i}, \frac{\pt^2 S}{\pt x^j \pt x^k}, t \right) = 0.
\end{equation*}


\section*{Problems}
\addcontentsline{toc}{section}{Problems}
%
\begin{prob}
\label{prob3-1}
\textbf{Lagrangian mechanics for a harmonic oscillator (or a spring-mass system).}\\
 For Newton's second law, $\ddot x = -k x$, modeling a harmonic oscillator with mass $m$ and the Hooke spring constant $k>0$, verify that its   Lagrangian mechanics is equivalent to Newton's mechanics.\\
 \textit{Hint:} The Lagrangian is $\frac12 m \dot x^2 - \frac12 kx^2.$ The Newton's second law for this system is $m\ddot x= - kx.$
\end{prob}

\begin{prob}
\label{prob3-2}
\textbf{Stochastic Maupertuis's principle with constant deformed energy.} \\
Prove the stochastic Maupertuis's principle of type II, Theorem \ref{stoch-Maupertuis-prin-2}. \\
\textit{Hint:} Use the definition of stochastic line integrals, to prove that the expression \eqref{Maupertuis-action-2} of $\widetilde{\mathcal I}$ coincides with that of of $\mathcal I$ in \eqref{eq:02}.
\end{prob}

\begin{prob}
\label{prob3-3}
\textbf{Equivalence between stochastic Euler--Lagrange equation and stochastic Hamilton's equations.} \\
Prove Theorem \ref{canonical-reduction}-(i).
\end{prob}

\begin{prob}
\label{prob3-4}
\textbf{The Euler--Lagrange equation of most probable paths.} \\
Let $S$ solve the second-order Hamilton--Jacobi equation \eqref{HJB-2}. Use the two expressions of the OM functional $\mathrm{OM}_{\mathrm{SDE}}$ in \eqref{OM-1} and \eqref{OM-id-drv}, respectively, to derive that the Euler--Lagrange equation for the OM functional $\mathrm{OM}_{\mathrm{SDE}}$ is \eqref{eur-lag}. \\
\textit{Hint:} Use the fact that for a curve $\gamma \in \mathcal H$, $\delta \dot \gamma = \frac{\mathrm d}{\mathrm dt} \delta \gamma$ (as the time parameter $t$ is not varied).
\end{prob}

\begin{prob}
\label{prob3-5}
\textbf{The vectorized mean derivative.} \\
Prove that the modified mean derivative \eqref{mean-d} is a vector field. \\
\textit{Hint:} Use It\^o's formula and the change-of-coordinate formula of Christoffel's symbols.
\end{prob}

\chapter[Schr\"odinger Bridges and Information Geodesics in the Space of Probability Densities]{Schr\"odinger Bridges and Information Geodesics in the Space of Probability Densities}
\label{ch:4}

\abstract{This chapter presents a unified overview of the Schr\"odinger bridge problem (SBP) and its extensions under different noise regimes. Section~4.1
treats the local theory driven by Brownian motion, where the forward
equation is the Fokker--Planck equation and the natural geometry is Otto's
Wasserstein calculus; the section carries the first-order theory from stochastic analysis, Partial Differential Equation (PDE), and geometric viewpoints, completed by functional inequalities, the zero-noise limit, and the
second-order theory of Hessians, curvature, and Newton's equation for entropic interpolations. Sections~4.2 and~4.3 develop the nonlocal theory
in exact structural parallel: first on discrete state spaces, where the master equation is a spatially first-order (single-difference) evolution and yet the full second-order geometry --- entropic Ricci curvature,
Jacobi equations, Hamiltonian flows --- is explicitly computable; then on continuous state spaces under L\'evy noise, where the first-order theory (nonlocal Wasserstein distances, metric trichotomy, localization limits) is by now well developed while the second-order theory remains essentially open. Section~4.4 recounts the same first-/second-order story in the language of information geometry, where finite dimensionality
renders every geometric object explicit. As an illustration to abstract theory, examples are computed from distance to curvature, and the ``translate versus teleport'' comparison of local and nonlocal geodesics.
}


\section*{ Introduction}
\label{sec:intro}

The preceding chapters analyze stochastic systems through their sample
paths: Chapter~2 identified the most probable transition path of a single
trajectory through the Onsager--Machlup action, and Chapter~3 recast the
resulting variational principles as stochastic geometric mechanics on the
state space. The present chapter performs the change of perspective
announced in Chapter~1: we now regard the \emph{law} of the process, a
curve $t\mapsto\rho_t$ in the space of probability densities $\Prob(M)$,
as the fundamental dynamical object. The Schr\"odinger bridge problem, which is to find the most probable evolution of a cloud of particles between two
observed marginal distributions, is the exact analogue. This is on level up of the most probable path problem of Chapter~2, and of the geometric mechanics in Chapter~3. The following dictionary makes the correspondence precise.

\begin{center}\small
\begin{tabular}{@{}L{52mm}L{62mm}@{}}
\toprule
\textbf{State space $M$ (Chs.~2--3)} & \textbf{Density space $\Prob(M)$
(Ch.~4)} \\
\midrule
configuration $q\in M$ & density $\rho\in\Prob(M)$ \\
velocity $\dot q\in T_qM$ & tangent vector $\dot\rho$, realised as a
velocity field via the continuity equation \\
kinetic action $\int\tfrac12|\dot q|^2\dd t$ & Benamou--Brenier action
$\int\!\!\int\tfrac12|v|^2\rho\dd x\dd t$ \\
Hamilton's principle (\S3.2.1) & dynamic Schr\"odinger problem(\S4.1) \\
second-order Hamilton--Jacobi (\S3.3.2) & coupled
Fokker--Planck--HJB  (\S4.1.2) \\
Onsager--Machlup corrections for multiplicative noise (\S2.2.2) & Fisher-information
correction in Newton's equation for bridges (\S4.1.5) \\
most probable path & entropic interpolation $(\rho_t)_{t\in[0,1]}$ \\
\bottomrule
\end{tabular}
\end{center}

Before the roadmap, one clarification is indispensable, because the word
\emph{order} will be used in two entirely different senses and confusing
them is the single most common misreading of this subject.

\begin{remark}[Two notions of order]\label{rem:twoorders}
\emph{Spatial order} refers to the differential order of the forward
equation on the state space. The local Fokker--Planck equation of
Sect.~\ref{sec:local} contains the Laplacian and is spatially second
order. The master equation of Sect.~\ref{sec:discrete} contains only
single differences $\rho_y-\rho_x$ and is, in the sense of the nonlocal
vector calculus introduced below, spatially \emph{first} order. The
nonlocal Fokker--Planck equations of Sect.~\ref{sec:levy} contain the
fractional Laplacian and are of (fractional) second-order type.
\emph{Temporal order}, by contrast, refers to the order of the analysis
of the curve $t\mapsto\rho_t$ in the density space: first-order theory
comprises the metric, geodesics, and gradient flows. While second-order theory comprises Hessians, parallel
transport, curvature, Jacobi fields, and Newton's equations. \emph{Throughout this chapter, ``first order'' and
``second order'' in section titles refer exclusively to temporal order.}
The two notions are independent: the spatially first-order master
equation supports a complete temporally second-order geometry
(Sect.~\ref{subsec:discrete-second}), whereas the spatially second-order
nonlocal Fokker--Planck equation does not
(Sect.~\ref{subsec:levy-second}).
\end{remark}

With this distinction in hand, the architecture of the chapter is
governed by a single question about the spatial operator: local or
nonlocal. Section~\ref{sec:local} develops the local theory under
Brownian motion; Sections~\ref{sec:discrete} and~\ref{sec:levy} develop
the nonlocal theory on discrete and on continuous state spaces
respectively; Section~\ref{sec:infogeo} retells the story in the language
of information geometry. Each of the four sections traverses a stochastic analysis viewpoint, a PDE (or ODE) viewpoint and a
geometric viewpoint, with a completion of the first-order theory by functional
inequalities, and finally the second-order theory. Readers can find that the local
and nonlocal columns of the subject are organized in parallel.

\section[The Local Theory]{The Local Theory: Schr\"odinger Bridges under Brownian Motion}
\label{sec:local}

The Schr\"odinger bridge problem asks for the most likely stochastic
evolution connecting two given probability distributions: among all
random processes with prescribed initial and terminal laws, find the one
that minimises the relative entropy with respect to a reference measure.
The question was posed by Schr\"odinger in 1931--1932
\citep{schrodinger1931, schrodinger_1932}. Erwin Schr\"odinger considered a Gedanken experiment involving a hot gas. He asked: given the observed probability distributions of a system of Brownian particles at two distinct times, what is the most likely evolution? Schr\"odinger formulated this as an entropy minimization problem over path measures subject to marginal constraints. This question has given rise to a rich mathematical theory and a variety of equivalent formulations, making it a cornerstone of modern data science, generative modeling, and control theory. Moreover, the SBP admits equivalent formulations as a controlled stochastic differential equation, a system of forward--backward partial differential equations, a stochastic optimal control problem, and a gradient flow in the Wasserstein space of probability measures. Two surveys anchor our treatment: L\'eonard's account of the entropy-minimisation theory \citep{leonard2014} and the control-theoretic synthesis of Chen, Georgiou
and Pavon \citep{chen2021stochastic, chen2021survey}. This section develops the local theory in five aspects: the stochastic-analysis, PDE, and
geometric viewpoints, the completion of the first-order theory, and the second-order theory, followed by a discussion of how the entire
construction transforms when the base geometry is changed.

\subsection{The Stochastic-Analysis Viewpoint}
\label{subsec:local-stochastic}

We begin with some notations under the general framework in L\'eonard's survey \cite{leonard2014}, which is a foundational reference for understanding the Schr\"odinger Bridge Problem and its connections with optimal transport, large deviations, and stochastic control. 

\textbf{Path Measures.} Let $\mathcal{X}$ be the topological state space and $\Omega = C([0,1]; \mathcal{X})$ the space of continuous paths $\omega: [0,1] \to \mathcal{X}$, or $\Omega = D([0,1]; \mathcal{X})$ the set of all c\`adl\`ag (right-continuous and left-limited) paths. Let $R \in \mathcal{M}_+(\Omega)$ be a positive reference measure. For a path measure $P$, denote the push-forward measure by $P_t = X_{t\#}P \in \mathcal{M}_+(\Omega)$ its marginal at time $t$ ($t\in[0,1]$), where $X_t$ is the time projections defined by $X_t(\omega) := \omega_t \in \mathcal{X}, \omega \in \Omega.$

\textbf{Relative Entropy.} For $P,R \in \mathcal{M}_+(\Omega)$, the relative entropy is
\begin{equation}
H(P \mid R) := 
\begin{cases}
\displaystyle \int_\Omega \log\frac{dP}{dR} \, dP, & \text{if } P \ll R, \\
+\infty, & \text{otherwise}.
\end{cases}
\end{equation}
If $P,R$ are probability measures, $H(P\mid R)=D_{\text{KL}}(P\|R)\ge 0$, with equality iff $P=R$.\\

Now we start with the dynamic Schr\"odinger under Brownian motion.

\begin{definition}[Dynamic Schr\"odinger's Problem]\label{def:dynSB}
The dynamic Schr\"odinger problem associated with the reference path measure is the following entropy minimization problem 
\begin{equation}
\min \left\{ H(P \mid R) : P \in \mathcal{P}_2(\Omega), \; P_0 = \mu_0, \; P_1 = \mu_1 \right\},
\label{eq:dynSB}
\end{equation}
where $\mu_0,\mu_1 \in \mathcal{P}_2(\mathcal{X})$ are given initial and final marginals, and $H(\cdot\mid\cdot)$ is the relative entropy.
    
Its unique minimiser (when finite) is the \emph{Schr\"odinger bridge}
$P^\star$, and the flow of marginals $\rho_t=P^\star_t$ is the
\emph{entropic interpolation} between $\mu_0$ and $\mu_1$.
\end{definition}

\textbf{Girsanov Formula and the energy of the drift.}
When $R$ is the law of a reversible diffusion (for definiteness, Wiener
measure with variance $\eps$) and $P\ll R$ has finite entropy, Girsanov's
theorem represents $P$ as the law of a controlled diffusion
$\dd X_t=v_t\dd t+\sqrt\eps\,\dd W_t$ and yields the decomposition
\begin{equation}
H(P\,|\,R)\;=\;H(P_0\,|\,R_0)\;+\;
\mathbb E_P\!\int_0^1 \frac{|v_t|^2}{2\eps}\dd t .
\label{eq:girsanov}
\end{equation}
Relative entropy on path space is thus, up to the initial term, the
expected kinetic energy of the control: entropy minimisation \emph{is}
a stochastic optimal control problem, namely the steering of the density
from $\mu_0$ to $\mu_1$ with minimal control effort
\citep{dai1996stochastic, chen2016relation}.\\

\textbf{Conditional Sanov Theorem and the Large Deviation.} Schrödinger's original question admits a precise interpretation through
Sanov's theorem and the Gibbs conditioning principle. Let
$ X^1,\ldots,X^N$ 
be independent path-valued random variables with common law
\(R\in\mathcal{P}(\Omega)\), where \(\Omega\) denotes the underlying path
space. Their empirical path measure is defined by
\begin{equation}
L_N
:=
\frac{1}{N}\sum_{i=1}^{N}\delta_{X^i}
\in\mathcal{P}(\Omega).
\end{equation}
Sanov's theorem states that the sequence \((L_N)_{N\geq1}\) satisfies a
large-deviation principle on \(\mathcal{P}(\Omega)\) with rate function
\[
P\longmapsto H(P\mid R),
\]
where
\begin{equation}
H(P\mid R)
:=
\begin{cases}
\displaystyle
\int_{\Omega}
\log\!\left(\frac{dP}{dR}\right)\,dP,
& P\ll R,\\[2mm]
+\infty,
& \text{otherwise},
\end{cases}
\end{equation}
is the relative entropy of \(P\) with respect to \(R\).

Let \(X_t:\Omega\to\mathsf{X}\) denote the canonical evaluation map at
time \(t\), and write
$P_t:=(X_t)_{\#}P$
for the time-\(t\) marginal of a path measure \(P\). Define the marginal constraint set
\begin{equation}
\mathcal{C}
:=
\left\{
P\in\mathcal{P}(\Omega):
P_0=\mu_0,\quad P_1=\mu_1
\right\}.
\end{equation}
Since the event of exact equality of empirical marginals may have
probability zero on a continuous state space, let
\(\mathcal{C}_{\delta}\) be a suitable \(\delta\)-neighborhood of
\(\mathcal{C}\). For example, with a metric \(d\) that metrizes weak
convergence on \(\mathcal{P}(\mathsf{X})\), one may take
\begin{equation}
\mathcal{C}_{\delta}
:=
\left\{
P\in\mathcal{P}(\Omega):
d(P_0,\mu_0)<\delta,\quad
d(P_1,\mu_1)<\delta
\right\}.
\end{equation}

Under the standard regularity assumptions required for the large-deviation
upper and lower bounds, the probability of observing these atypical
endpoint marginals satisfies
\begin{equation}
\lim_{\delta\downarrow0}
\lim_{N\to\infty}
-\frac{1}{N}
\log
R^{\otimes N}
\left(
L_N\in\mathcal{C}_{\delta}
\right)
=
\inf
\left\{
H(P\mid R):
P_0=\mu_0, P_1=\mu_1
\right\}.
\end{equation}

Consequently, the Schrödinger bridge \(P^{\star}\) is the conditional
law-of-large-numbers limit of an exponentially rare marginal
fluctuation. In other words, among all path laws compatible with the
observed endpoint distributions, \(P^{\star}\) is the path law that
deviates least, in relative entropy, from the reference dynamics \(R\).\\

The solution to the SBP can be characterized by a system of integral equations, known as the Schr\"odinger system. The existence and uniqueness of a solution to this system was first established by Robert Fortet in 1938--1940 \cite{fortet1940, essid2018}. Fortet's proof employed an ingenious contraction argument in Hilbert's projective metric \cite{Georgiou2015}. Suppose for simplicity that $R$ has identical marginals
$R_0=R_1=m$.

\begin{definition} (\textbf{The Schr\"odinger System}.) Assume that $R$ has the same one-dimensional marginals: $R_0 = R_1 = m$ (e.g., Lebesgue measure). The Schr\"odinger system consists of two integral equations:
\begin{align}
f_0 g_0(x) &= f_0(x) \, \mathbb{E}_R\bigl( g_1(X_1) \mid X_0=x \bigr) = \frac{d\mu_0}{dm}(x), \label{eq:schsys1} \\
f_1 g_1(y) &= g_1(y) \, \mathbb{E}_R\bigl( f_0(X_0) \mid X_1=y \bigr) = \frac{d\mu_1}{dm}(y), \label{eq:schsys2}
\end{align}
where $f_0,g_1: X\to[0,\infty)$ are unknown functions. The solution $(\hat f_0,\hat g_1)$ gives the optimal path measure:
\begin{equation}
\hat P(d\omega) = \hat f_0(X_0(\omega)) \, \hat g_1(X_1(\omega)) \, R(d\omega).
\end{equation}
The optimal coupling $\hat\pi$ (joint distribution of $(X_0,X_1)$) is
\begin{equation}
\hat\pi(dx,dy) = \hat f_0(x)\,\hat g_1(y)\, R_{01}(dx,dy),
\label{eq:fgproduct}
\end{equation}
where $R_{01}$ is the joint distribution of $(X_0,X_1)$ under $R$.
\end{definition}

Existence and uniqueness (up to the obvious multiplicative rescaling)
were first proved by Fortet \citep{fortet1940} by an ingenious
contraction argument, later understood as a contraction in Hilbert's
projective metric \citep{Georgiou2015}; L\'eonard \citep{leonard2014}
gives the modern treatment under finite-entropy assumptions.

\begin{remark}[Computation]\label{rem:sinkhorn}
Alternately enforcing \eqref{eq:schsys1} and \eqref{eq:schsys2} is the
iterative proportional fitting procedure, known in the discrete setting
as the Sinkhorn algorithm \citep{sinkhorn1967, cuturi2013}; it converges
geometrically. We shall not pursue algorithmic questions in this
chapter and only flag, here and at analogous points below, that
efficient solvers exist.
\end{remark}

When the reference process is reversible (time‐symmetric), the solution of the dynamic SP admits a simple representation. For a reference Markov process with transition density $p(s,x;t,y)$ (with respect to a reference measure $m$), the solution $\mathbb{P}^*$ has transition densities
\[
p^*(s,x;t,y) = \frac{f(t,y)}{f(s,x)}\, p(s,x;t,y),
\]
where $f$ is a positive space–time function. This is an \emph{$(f,g)$-transform} or, in the time‐symmetric case, a \emph{Doob $h$-transform}. 

\textbf{The $(f,g)$-transform.}
The product form \eqref{eq:fgproduct} extends into the time interval.
Define
\begin{equation}
f_t(x)=\mathbb E_R\bigl[f_0(X_0)\,\big|\,X_t=x\bigr],\qquad
g_t(x)=\mathbb E_R\bigl[g_1(X_1)\,\big|\,X_t=x\bigr].
\label{eq:ftgt}
\end{equation}

\begin{theorem}[Marginals and Kolmogorov equations
{\citep{leonard2014}}]\label{thm:fg}
Let $P=f_0(X_0)g_1(X_1)\,R$ with
$\mathbb E_R[f_0(X_0)g_1(X_1)]=1$. Then the marginal of $P$ at time $t$
is $\rho_t=f_t\,g_t\cdot m$, where $f_t$ solves the forward Kolmogorov
equation and $g_t$ the backward Kolmogorov equation associated with the
generator $L$ of $R$:
\begin{equation}
(-\partial_t+L^{*})f=0,\quad f|_{t=0}=f_0;
\qquad
(\partial_t+L)g=0,\quad g|_{t=1}=g_1.
\label{eq:fbkolmo}
\end{equation}
For time-symmetric $R$ this is Doob's $h$-transform performed
simultaneously from both time endpoints.
\end{theorem}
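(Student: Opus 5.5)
We first compute the marginal directly from the product structure and the Markov property, and then derive the two Kolmogorov equations from the definitions of $f_t$ and $g_t$.

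\textbf{Step 1 (marginal).} Fix $t\in[0,1]$ and a bounded test function $\varphi$. We compute
\[
\mathbb E_P[\varphi(X_t)]=\mathbb E_R\bigl[f_0(X_0)\,g_1(X_1)\,\varphi(X_t)\bigr].
\]
Conditioning on $X_t$ gives $\mathbb E_R\bigl[\varphi(X_t)\,\mathbb E_R[f_0(X_0)g_1(X_1)\mid X_t]\bigr]$. The Markov property of $R$ says that past and future are conditionally independent given the present. Hence
\[
\mathbb E_R[f_0(X_0)g_1(X_1)\mid X_t]=\mathbb E_R[f_0(X_0)\mid X_t]\;\mathbb E_R[g_1(X_1)\mid X_t]=f_t(X_t)\,g_t(X_t).
\]
Since $R_t=m$ (we assume $R$ is stationary with invariant measure $m$, as in the reversible case the statement has in mind), we obtain $\mathbb E_P[\varphi(X_t)]=\int\varphi\,f_tg_t\,dm$, that is, $\rho_t=f_tg_t\cdot m$. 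The normalisation $\mathbb E_R[f_0g_1]=1$ ensures that $P$ is a probability measure, so $\rho_t$ is a probability density.

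\textbf{Step 2 (backward equation for $g$).} By the Markov property, $g_t(x)=\mathbb E_R[g_1(X_1)\mid X_t=x]=(T_{t,1}g_1)(x)$, where $T$ is the transition semigroup of $R$. The Kolmogorov backward equation for the semigroup (Section~\ref{sec1.2.5}) then gives $\partial_t g_t=-Lg_t$ with terminal value $g_1$. Equivalently, $g_t(X_t)$ is an $R$-martingale, and It\^o's formula forces the drift term $(\partial_t+L)g$ to vanish.

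\textbf{Step 3 (forward equation for $f$).} Here we write $f_t$ as a density with respect to $m$. For any test function $\varphi$, the tower property gives
\[
\int\varphi\,f_t\,dm=\mathbb E_R[f_0(X_0)\varphi(X_t)]=\int f_0\,T_{0,t}\varphi\,dm.
\]
So $f_t\,m$ is the law at time $t$ of the process started from the initial measure $f_0\,m$. Differentiating in $t$ gives
\[
\frac{d}{dt}\int\varphi f_t\,dm=\int (L\varphi)\,f_t\,dm,
\]
which is the weak form of $\partial_t f=L^{*}f$, the Fokker--Planck equation of Section~\ref{sec1.2.5}. Here $L^{*}$ is the adjoint with respect to $m$.

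\textbf{Step 4 (time-symmetric case).} When $R$ is reversible, $L^{*}=L$ in $L^2(m)$. In that case $f_t$ is the $h$-function for the time-reversed process: conditioning on $X_t$ from the past uses the same semigroup. Thus $P$ is simultaneously a forward $h$-transform by $g$ and a backward $h$-transform by $f$, which is the last claim of the statement.

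The main obstacle is Step 3 together with the use of the invariant measure: one must justify that $R_t=m$ for all $t$ and that $L^{*}$ is taken relative to $m$. One must also justify differentiating under the expectation, which needs integrability of $f_0$ and $g_1$ (finite entropy) and $\varphi$ in the domain of $L$. I would handle this by working weakly with test functions throughout and invoking the standing regularity assumed in the cited reference \citep{leonard2014}.
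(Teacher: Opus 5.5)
Your proof is correct and is essentially the standard argument of \citep{leonard2014}, which the paper cites without giving a proof: the Markov factorisation $\mathbb E_R[f_0(X_0)g_1(X_1)\mid X_t]=f_t(X_t)\,g_t(X_t)$ yields the marginal, and the semigroup representations $g_t=T_{t,1}g_1$ and $f_t\,m=(f_0\,m)T_{0,t}$ yield the two Kolmogorov equations, with $L^{*}$ the $m$-adjoint (equivalently, the backward generator of the stationary $R$). You are right that the paper's hypothesis $R_0=R_1=m$ must be strengthened to $R_t=m$ for all $t$, since otherwise the conclusion is $\rho_t=f_tg_t\,R_t$; your Step~4 becomes fully precise if you run Step~1 at two times $s<t$, which gives $P$ the forward kernel $\frac{g_t(y)}{g_s(x)}\,p(s,x;t,\dd y)$ and, symmetrically, a backward kernel weighted by $f$.
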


\textbf{Entropy-regularized Optimal Transport.}
Projecting \eqref{eq:dynSB} onto the endpoint pair $(X_0,X_1)$ yields
the \emph{static} Schr\"odinger problem, which for the Wiener reference
becomes the entropically regularised Monge--Kantorovich problem
\begin{equation}
\min_{\pi\in\Pi(\mu_0,\mu_1)}
\left\{\int \frac{|x-y|^2}{2}\,\pi(\dd x,\dd y)
\;+\;\eps\,H(\pi\,|\,\mu_0\!\otimes\!\mu_1)\right\},
\label{eq:eot}
\end{equation}
with the noise intensity $\eps$ playing the role of a temperature
\citep{chen2021stochastic, cuturi2013}. The zero-temperature limit
$\eps\to0$ is the subject of Sect.~\ref{subsec:local-first-complete};
its dual, information-geometric reading is taken up in
Sect.~\ref{subsec:ig-projection}.

Thus SBP is an entropy-regularized optimal transport, with $\epsilon$ playing the role of temperature. Furthermore, the entropic interpolation $\mu_t$ (the marginal flow of the SBP) converges to the displacement interpolation (geodesic) in the $2$-Wasserstein space as $\epsilon\to 0$.

\subsection{The Paritial Differential Equation Viewpoint}
\label{subsec:local-pde}

Let the reference dynamics be the It\^o diffusion
$$\dd X_t=b(t,X_t)\dd t+\sqrt{\eps}\,\dd W_t.$$  Writing the Schr\"odinger
factors of Theorem~\ref{thm:fg} as space--time functions
$\varphi(t,x)=g_t(x)$ and $\hat\varphi(t,x)=f_t(x)\,m(x)$, the bridge is
characterised by the \emph{forward--backward Kolmogorov system}
\begin{align}
\partial_t\hat\varphi+\nabla\!\cdot\!\bigl(b\,\hat\varphi\bigr)
-\tfrac{\eps}{2}\Delta\hat\varphi &= 0,
& \hat\varphi(0,\cdot)\,\varphi(0,\cdot)&=\rho_0, \label{eq:fk}\\
\partial_t\varphi+b\!\cdot\!\nabla\varphi
+\tfrac{\eps}{2}\Delta\varphi &= 0,
& \hat\varphi(1,\cdot)\,\varphi(1,\cdot)&=\rho_1, \label{eq:bk}
\end{align}
two \emph{linear} equations coupled only through their boundary
conditions, whose product $\rho_t=\varphi(t,\cdot)\hat\varphi(t,\cdot)$
is the entropic interpolation \citep{chen2021survey, Caluya2019}.

The same object admits a genuinely nonlinear description. Writing
$\rho_t$ for the controlled density and $\phi$ for the value function of
the control problem \eqref{eq:girsanov}, the optimality conditions form
the coupled Fokker--Planck--Hamilton--Jacobi--Bellman system
\begin{equation}
\begin{aligned}
\partial_t\rho+\nabla\!\cdot\!(\rho\,\nabla\phi)
-\tfrac{\eps}{2}\Delta\rho &= 0,\\
\partial_t\phi+\tfrac12|\nabla\phi|^2
+\tfrac{\eps}{2}\Delta\phi &= 0,
\end{aligned}
\qquad \rho(0,\cdot)=\rho_0,\quad \rho(1,\cdot)=\rho_1,
\label{eq:fphjb}
\end{equation}
with two-point boundary conditions in time -the structural signature
that distinguishes bridges from mean-field games, where the data are
initial-terminal \citep{Carmona2018}. The passage between
\eqref{eq:fphjb} and the linear pair \eqref{eq:fk}--\eqref{eq:bk} is the
\emph{Hopf--Cole transformation} $\phi=\eps\log\varphi$
\citep{Hopf1950, Cole1951}: exactly as it linearises the viscous Burgers
equation into the heat equation, it linearises the HJB equation into the
backward Kolmogorov equation. L\'eger and Li \citep{Léger2019} showed
that generalized Hopf--Cole transformations are symplectic submersions
in the Wasserstein symplectic geometry, so that the Schr\"odinger bridge
is a boundary-value Hamiltonian system on the density space --- a
statement whose full meaning becomes clear in
Sect.~\ref{subsec:local-second}.\\

\textbf{Dual Formulation of Static SBP.} Define the primal problem:
\begin{equation}
\text{(P)}\qquad \min \left\{ H(\pi \mid R_{01}) : \pi\in\mathcal{P}(X^2),\; \pi_0=\mu_0,\; \pi_1=\mu_1 \right\}.
\end{equation}
Its dual is:
\begin{equation}
\text{(D)}\qquad \max \left\{ \int_X \phi\, d\mu_0 + \int_X \psi\, d\mu_1 - \log \int_\Omega e^{\phi(X_0)+\psi(X_1)} dR : \phi,\psi\in C_b(X) \right\}.
\end{equation}
Under suitable regularity, strong duality holds: the optimal values coincide. If $(\hat\phi,\hat\psi)$ solves (D), then the optimal coupling is
\begin{equation}
\hat\pi(dx,dy) = e^{\hat\phi(x)+\hat\psi(y)} R_{01}(dx,dy).
\end{equation}
and under mild regularity strong duality holds, with the optimal
potentials $(\hat u,\hat w)=(\log\hat f_0,\log\hat g_1)$ recovering
\eqref{eq:fgproduct} \citep{leonard2014}. The pair
(Fokker--Planck, HJB) in \eqref{eq:fphjb} is precisely the
primal--dual pair of this convex program read dynamically.

\begin{remark}[Computation]
The linearity of \eqref{eq:fk}--\eqref{eq:bk} underlies the dynamic
Sinkhorn recursion (alternate forward and backward solves) and
Wasserstein-proximal schemes \citep{Caluya2019}; we do not discuss these
further.
\end{remark}

\begin{remark}
    The Hopf--Cole transformation is of foundational importance for several reasons:
\begin{itemize}
    \item \textbf{Linearization}: It reduces the coupled Fokker--Planck--HJB system to linear PDEs, enabling analytical solutions in certain cases (e.g., Gaussian bridges) \cite{Chen2021}.
    \item \textbf{Dynamic Sinkhorn}: It underpins the dynamic Sinkhorn (iterative proportional fitting) algorithm, which converges to the optimal bridge by alternating forward and backward integration \cite{Georgiou2015, Caluya2019}.
    \item \textbf{Computational Efficiency}: For problems satisfying the matching condition, the transformation leads to linear PDEs amenable to efficient numerical methods such as finite elements or spectral methods \cite{Kalise2025}.
    \item \textbf{Generalizations}: Recent work extends the Hopf--Cole framework to control-affine systems \cite{Teter2025}, mean-field interactions \cite{Eldesoukey2026}, and quantum Schr\"odinger bridges \cite{Chen2021, Li2025}.
\end{itemize}
\end{remark}


\subsection{The Geometric Viewpoint I: First-Order}
\label{subsec:local-otto}

The space of probability measures is fundamental in statistics, physics, and machine learning. In the late 1990s, Felix Otto proposed to endow this infinite-dimensional space with a formal Riemannian metric \cite{otto2001}. This framework, known as \textbf{Otto calculus}, reveals a deep geometric structure underlying evolution equations such as the heat equation and the porous medium equation, interpreting them as gradient flows \cite{jko1998}. Otto acknowledged inspiration from earlier work by David Kinderlehrer and discussions with   McCann and   Villani \cite{villani2009}.

\textbf{Wasserstein Space as a Riemannian Manifold.} Otto proposed that the tangent space at a probability density $\rho(x)$ (with $\int \rho = 1$) consists of gradient fields:
\[
T_\rho \mathcal{P}_2(\mathbb{R}^d) \cong \{ v = \nabla \phi \}.
\]
A more analytical justification comes from the continuity equation, central to optimal transport \cite{ambrosio2008}. A curve $\rho_t$ of probability densities is absolutely continuous in the $2$-Wasserstein space if and only if it solves the continuity equation in the weak sense:
\[
\partial_t \rho_t + \nabla \cdot (v_t \rho_t) = 0,
\]
which motivates identifying $T_{\rho_t}\mathcal{P}_2$ with the set of vector fields $v_t$ that satisfy this equation, interpreted as the velocities of the transporting particles \cite{ambrosio2008}.

\textbf{The Riemannian Metric.} For two tangent vectors $v, w \in T_\rho \mathcal{P}_2$, their inner product is
\[
\langle v, w \rangle_\rho := \int_{\mathbb{R}^d} v(x) \cdot w(x) \, \rho(x) \, dx.
\]
This is the $L^2$ inner product with respect to the measure $\rho$ \cite{otto2001}. The geodesic distance induced by this metric is exactly the quadratic Wasserstein distance $W_2$ \cite{villani2009}:
\[
W_2(\mu_0, \mu_1)^2 = \inf_{\rho_t, v_t} \left\{ \int_0^1 \int |v_t(x)|^2 \rho_t(x) \, dx \, dt \;\middle|\; \partial_t \rho_t + \nabla \cdot (v_t \rho_t)=0,\ \rho_0=\mu_0,\ \rho_1=\mu_1 \right\}.
\]
This is the Benamou–Brenier formula, which serves as the dynamic formulation of $W_2$.

Otto calculus allows us to study gradient flows, Newton equations, and functional Hessians in this infinite-dimensional space. 

Consider a functional $\mathcal{F}(\rho)$ on $\mathcal{P}_2$. Its gradient flow with respect to the $W_2$ metric is
\[
\partial_t \rho_t = - \operatorname{grad}_{W_2} \mathcal{F}(\rho_t),
\]
which is the PDE that steepest descends $\mathcal{F}$.

For a given $\mathcal{F}$, the Wasserstein gradient is given by
\[
\operatorname{grad}_{W_2} \mathcal{F}(\rho) = -\nabla \cdot (\rho \nabla \delta_\rho \mathcal{F}),
\]
where $\delta_\rho \mathcal{F}$ is the first variation.

The Jordan–Kinderlehrer–Otto (JKO) scheme is a time-discretization that forms a rigorous variational formulation \cite{jko1998}:
\[
\rho_{k+1} = \arg\min_{\rho \in \mathcal{P}_2} \left( \mathcal{F}(\rho) + \frac{1}{2\tau} W_2^2(\rho, \rho_k) \right).
\]
As $\tau \to 0$, the discrete solutions converge to the gradient flow.

For example, the Boltzmann entropy $\mathcal{F}(\rho) = \int \rho \log \rho \, dx$ yields $\operatorname{grad}_{W_2} \mathcal{F} = -\Delta \rho$, i.e., the heat equation. The porous medium equation arises from $\mathcal{F}(\rho) = \int \frac{\rho^m}{m-1} dx$ \cite{otto2001}.

Otto calculus has been extended in several important directions:

\textbf{Rigorous Framework and Synthetic Ricci Curvature.} The formal calculations of Otto have been made rigorous within the theory of \emph{metric measure spaces} \cite{ambrosio2008} and \emph{synthetic Ricci curvature} \cite{villani2009}. The Lott–Sturm–Villani theory defines Ricci curvature via displacement convexity of the entropy, a concept deeply connected to Otto calculus.

\textbf{Mean-Field Games and Generative Modeling.} There is a growing connection between the PDEs of optimal transport and mean-field games. Recent unified solvers handle optimal transport, Schrödinger bridges, and MFGs within a single computational framework. In machine learning, Otto calculus provides a theoretical foundation for generative models; for example, the JKO scheme can be implemented as a normalizing flow \cite{jko1998}.

\textbf{Information Geometry and Other Cost Functions.} Recent work unifies Otto's Wasserstein geometry with classical information geometry, leading to the concept of the Kantorovich–Wasserstein–Otto metric using deformed logarithms. Extensions also replace $W_2$ with more general transport costs on manifolds.

Otto calculus provides a powerful and versatile geometric framework for probability distributions. Its impact spans pure mathematics, physics, and computational sciences. Key references include the original papers by Otto \cite{otto2001} and Jordan–Kinderlehrer–Otto \cite{jko1998}, the comprehensive books by Villani \cite{villani2009} and Ambrosio–Gigli–Savaré \cite{ambrosio2008}, as well as recent advances by Gentil–Leonard–Ripani \cite{gentil2020}, Conforti \cite{conforti2019second}, and Karatzas et al. \cite{karatzas2021}.

\subsection{The Geometric Viewpoint II: Second-Order}
\label{subsec:local-second}

\textbf{Hessians in Otto calculus.}
The second-order structure of $\Prob_2$ begins with the Hessian of a
functional along geodesics,
\begin{equation}
\Hess_{W_2}\mathcal F(\rho)[\nabla\phi,\nabla\phi]
=\frac{\dd^2}{\dd s^2}\Big|_{s=0}\mathcal F(\rho_s),
\qquad \rho_s=(\mathrm{Id}+s\nabla\phi)_\#\rho ,
\label{eq:hess}
\end{equation}
made rigorous by Gigli's second-order analysis on
$(\Prob_2,W_2)$ \citep{gigli2012}, which also constructs the Levi-Civita
connection and parallel transport; Lott \citep{lott2008} computed the
curvature tensor of the Wasserstein manifold in explicit form. For the
entropy, the fundamental identity is
\begin{equation}
\Hess_{W_2}\Ent(\rho)[\nabla\phi,\nabla\phi]
=\int_{\R^d}\Bigl(\|\Hess\,\phi\|_{\mathrm{HS}}^2
+\Ric(\nabla\phi,\nabla\phi)\Bigr)\rho\dd x ,
\label{eq:hessent}
\end{equation}
the integrated form of the \emph{Bochner formula}. In the language of
Markov semigroups the same formula reads
$\Gamma_2(\phi)=\|\Hess\phi\|^2+\Ric(\nabla\phi,\nabla\phi)$ for the
iterated carr\'e du champ
$\Gamma_2(\phi)=\tfrac12(L\Gamma(\phi)-2\Gamma(\phi,L\phi))$, and the
Bakry--\'Emery condition $\Gamma_2\ge K\Gamma$ is equivalent, by
\eqref{eq:hessent}, to the geodesic $K$-convexity of the entropy. The
Lott--Sturm--Villani theory \citep{villani2009, ambrosio2008} turns this
equivalence into a \emph{synthetic definition} of Ricci lower bounds,
the curvature-dimension condition $CD(K,N)$, valid on metric measure
spaces with no smooth structure at all. This is the sense in which the
optimal constants of Sect.~\ref{subsec:local-first-complete} are
curvatures: $K$ in $\mathrm{LSI}(K)$, $\mathrm T_2(K)$, and HWI is a
Ricci lower bound of the underlying space.

\textbf{Newton's equation for entropic interpolations.}
Geodesics are curves of zero acceleration. What, then, is the
acceleration of an entropic interpolation? Differentiating the velocity
field $v_t=\nabla\phi_t$ of the bridge along itself, Conforti
\citep{conforti2019second} and Gentil--L\'eonard--Ripani \citep{gentil2020}
proved that the entropic interpolation satisfies the \emph{Newton
equation}
\begin{equation}
\partial_t v_t+v_t\!\cdot\!\nabla v_t
=\frac{\eps^2}{4}\,\nabla\,
\frac{\delta\mathcal I}{\delta\rho}(\rho_t),
\qquad
\mathcal I(\rho)=\int|\nabla\log\rho|^2\rho\dd x ,
\label{eq:newton}
\end{equation}
a second-order ODE on the Wasserstein manifold whose force term is the
gradient of the Fisher information. Three readings of
\eqref{eq:newton} organise much of what follows. \emph{Geometrically},
it exhibits the bridge as a Riemannian trajectory in a potential
$-\tfrac{\eps^2}{4}\mathcal I$: as $\eps\to0$ the force vanishes and the
trajectory straightens into the $W_2$ geodesic, quantifying
\eqref{eq:gammaOT} at second order. \emph{Physically}, the right-hand
side is the gradient of the Bohm quantum potential, foreshadowing the
quantum correspondence of Sect.~\ref{subsec:local-extensions}.
\emph{Structurally}, the $O(\eps^2)$ correction is the density-space
counterpart of the Onsager--Machlup corrections met in \S2.2.2: in both
cases, randomness leaves the first-order variational principle intact
and reappears exactly once, in the second-order structure --- compare
also the second-order Hamilton--Jacobi equation of \S3.3.2. The
trajectorial Otto calculus of Karatzas, Schachermayer and Tschiderer
\citep{karatzas2021} provides a pathwise refinement: the gradient-flow
and convexity properties above hold along almost every trajectory of the
underlying diffusion and recover the ensemble statements upon averaging.

Equation \eqref{eq:newton} is the reference point for the two nonlocal
sections: on graphs an exact analogue exists and is a finite-dimensional
Hamiltonian system (Sect.~\ref{subsec:discrete-second}); in the
continuous nonlocal setting, writing any analogue at all is an open
problem (Sect.~\ref{subsec:levy-second}).


\subsection{Functional Inequalities
and the Zero-Noise Limit}
\label{subsec:local-first-complete}

The gradient-flow reading of the heat equation
(Sect.~\ref{subsec:local-otto}) turns a family of classical functional
inequalities into statements about the behaviour of entropy along the
flow, and ties their sharp constants to a single geometric quantity: a
lower bound on the Ricci curvature. We first fix the three functionals
involved, then state the inequalities and the implications among them
(Fig.~\ref{fig:inequalities}), and finally record the curvature
criterion that generates the whole picture.

Throughout, the reference measure is
$\dd\mu=e^{-U}\dd x$, and for a probability density $\nu\ll\mu$ we write
the relative entropy, the Fisher information, and the quadratic
Wasserstein distance
\begin{equation}
H(\nu\,|\,\mu)=\int\frac{\dd\nu}{\dd\mu}\log\frac{\dd\nu}{\dd\mu}\dd\mu,
\qquad
I(\nu\,|\,\mu)=\int\Bigl|\nabla\log\frac{\dd\nu}{\dd\mu}\Bigr|^2\dd\nu,
\qquad
W_2(\nu,\mu).
\label{eq:HIW}
\end{equation}
Entropy measures information content, the Fisher information measures its
dissipation rate along the heat flow, and $W_2$ measures transport cost;
the inequalities below are precisely the statements that these three
quantities constrain one another.\\

We Now move on to several important inequalities.\\

\textbf{Logarithmic Sobolev Inequality.} The \emph{logarithmic Sobolev inequality} $\mathrm{LSI}(K)$ bounds
entropy by Fisher information,
\begin{equation}
H(\nu\,|\,\mu)\;\le\;\frac{1}{2K}\,I(\nu\,|\,\mu);
\label{eq:LSI}
\end{equation}
along the heat semigroup it is equivalent to exponential decay of the
entropy at rate $2K$. \\

\textbf{Talagrand Transport Inequality.} The \emph{Talagrand transport inequality}
$\mathrm T_2(K)$ bounds transport by entropy,
\begin{equation}
\frac{K}{2}\,W_2^2(\nu,\mu)\;\le\;H(\nu\,|\,\mu),
\label{eq:T2}
\end{equation}
so that closeness in entropy forces closeness in transport distance. \\

\textbf{HWI inequality.} The \emph{HWI inequality} interpolates between the two,
\begin{equation}
H(\nu\,|\,\mu)\;\le\;W_2(\nu,\mu)\,\sqrt{I(\nu\,|\,\mu)}
-\frac{K}{2}\,W_2^2(\nu,\mu),
\label{eq:HWI}
\end{equation}
relating all three functionals of \eqref{eq:HIW} in a single line. Based on its name, this can also be called
the relative entropy–Wasserstein distance–Fisher information inequality. Finally, we linearize any of
these around $\mu$ produces the \emph{Poincar\'e inequality} (spectral
gap)
\begin{equation}
\operatorname{Var}_\mu(f)\;\le\;\frac{1}{\lambda}\int|\nabla f|^2\dd\mu,
\qquad
\lambda=\inf_{f\perp 1}\frac{\int|\nabla f|^2\dd\mu}
{\operatorname{Var}_\mu(f)},
\label{eq:poincare}
\end{equation}
the weakest of the four and the one governing the asymptotic rate of
convergence to equilibrium.\\

\begin{figure}[t]
\centering
\includegraphics[width=0.86\textwidth]{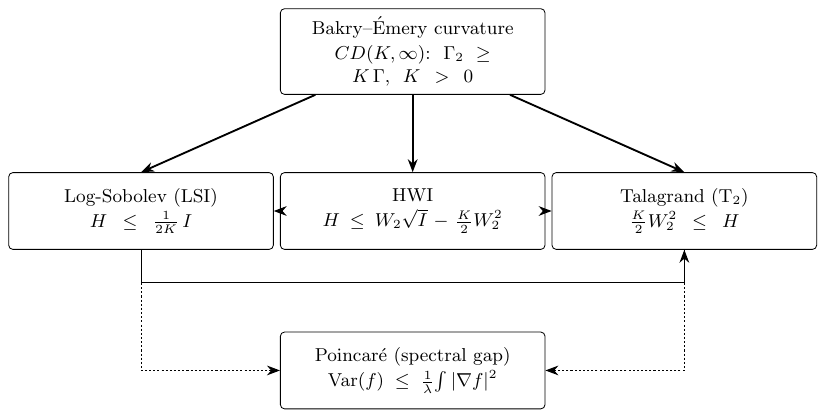}
\caption{Implication map of the functional inequalities generated by a
positive curvature lower bound. \emph{Thick arrows} $(K>0)$: the
Bakry--\'Emery criterion $CD(K,\infty)$ implies each of
$\mathrm{LSI}(K)$, HWI, and $\mathrm T_2(K)$ with the \emph{same}
constant $K$. \emph{Dashed arrows}: the HWI inequality \eqref{eq:HWI}
degenerates to $\mathrm{LSI}(K)$ on setting $W_2=0$ (bounding
$W_2\sqrt I\le\tfrac1{2K}I+\tfrac K2 W_2^2$), and to $\mathrm T_2(K)$ on
setting $I=0$; in this sense HWI contains both. \emph{Thin solid arrow}:
$\mathrm{LSI}(K)\Rightarrow\mathrm T_2(K)$, the Otto--Villani theorem,
proved by transporting entropy along the $W_2$-geodesic and integrating.
\emph{Dotted arrows}: both $\mathrm{LSI}(K)$ and $\mathrm T_2(K)$ imply
the Poincar\'e inequality \eqref{eq:poincare} with $\lambda\ge K$, by
linearisation around $\mu$. None of the reverse implications holds in
general: Poincar\'e does not imply LSI, and $\mathrm T_2$ does not imply
LSI without an additional curvature hypothesis. The four inequalities
are therefore ordered strictly by strength, LSI\,/\,HWI $>\mathrm
T_2>$ Poincar\'e.}
\label{fig:inequalities}
\end{figure}

\textbf{The curvature criterion.} All the downward arrows of Fig.~\ref{fig:inequalities} issue from a
single hypothesis, expressed through the carr\'e du champ calculus of
the generator $L=\Delta-\nabla U\cdot\nabla$ of the reference diffusion.
Writing the first- and second-order Dirichlet forms
\begin{equation}
\Gamma(f)=\tfrac12\bigl(L(f^2)-2fLf\bigr)=|\nabla f|^2,
\qquad
\Gamma_2(f)=\tfrac12\bigl(L\Gamma(f)-2\Gamma(f,Lf)\bigr),
\label{eq:gamma}
\end{equation}
the Bochner formula computes the second form explicitly,
\begin{equation}
\Gamma_2(f)=\|\nabla^2 f\|_{\mathrm{HS}}^2
+\bigl(\operatorname{Ric}+\nabla^2 U\bigr)(\nabla f,\nabla f),
\label{eq:bochner}
\end{equation}
exhibiting the \emph{Bakry--\'Emery Ricci tensor}
$\operatorname{Ric}_U=\operatorname{Ric}+\nabla^2 U$, which folds the
convexity of the potential into an effective curvature. The
\emph{Bakry--\'Emery criterion}, or curvature-dimension condition
$CD(K,\infty)$, is the inequality
\begin{equation}
\Gamma_2(f)\;\ge\;K\,\Gamma(f)\qquad\text{for all }f,
\label{eq:BE}
\end{equation}
which, after discarding the nonnegative Hessian term in
\eqref{eq:bochner}, is exactly $\operatorname{Ric}_U\ge K$. The parameter
$K$ is the curvature lower bound; when $K>0$ it is simultaneously the
constant appearing in \eqref{eq:LSI}, \eqref{eq:T2}, \eqref{eq:HWI}, and
a lower bound for the spectral gap in \eqref{eq:poincare}. The symbol
$\infty$ is the dimensional parameter $N$ of the finer condition
$CD(K,N)$, whose defining inequality carries the extra term
$\tfrac1N(Lf)^2$ on the right of \eqref{eq:BE}; sending $N\to\infty$
removes it and yields the clean, dimension-free criterion used here,
while finite $N$ additionally produces Sobolev inequalities and
Bonnet--Myers diameter bounds. This is the precise content of the slogan
that the sharp constants of the first-order theory are curvatures.

\textbf{Comparison with the nonlocal setting.}
Formula \eqref{eq:bochner} is the analytic engine of the entire diagram:
it is what allows $\Gamma_2$ to be read as ``Hessian plus curvature'',
and it is the pointwise density of the entropy Hessian on Wasserstein
space computed in \eqref{eq:hessent} of Sect.~\ref{subsec:local-second}.
Its availability rests on the diffusion (chain-rule) property of $L$. For
the jump generators of Sect.~\ref{sec:levy} the identity
\eqref{eq:bochner} has no counterpart --- $\Gamma_2$ does not close, and
$\alpha$-stable semigroups satisfy no $CD(K,\infty)$ for any $K$ --- so
the top node of Fig.~\ref{fig:inequalities} is absent, and with it the
whole cascade of curvature-based inequalities. On finite graphs
(Sect.~\ref{sec:discrete}) the top node is recovered by a different
route: bypassing Bochner, one \emph{defines} the entropic Ricci bound as
the convexity constant of the entropy along $\WW$-geodesics
(Sect.~\ref{subsec:discrete-second}) and thereby restores discrete
analogues of $\mathrm{LSI}$, $\mathrm T_2$, and $\mathrm{HWI}$, with the
same implication pattern as in Fig.~\ref{fig:inequalities}.


\textbf{$\Gamma$-convergence to optimal transport.}
Let $\mathcal E_\eps(\mu_0,\mu_1)$ denote the entropic cost, the optimal
value of \eqref{eq:dynSB} for the Wiener reference of variance $\eps$
(suitably normalised). As $\eps\to0^+$,
\begin{equation}
\eps\,\mathcal E_\eps(\mu_0,\mu_1)\;
\xrightarrow{\ \Gamma\ }\;\tfrac12 W_2^2(\mu_0,\mu_1),
\label{eq:gammaOT}
\end{equation}
minimisers converge to optimal transport plans, and the entropic
interpolation $\rho_t^\eps$ converges to McCann's displacement
interpolation --- the $W_2$ geodesic \citep{leonard2014, chen2021stochastic}. The Schr\"odinger problem is thus a canonical
regularisation of optimal transport, and conversely optimal transport is
the zero-temperature shadow of the bridge; the stability of this picture
under perturbations of the reference measure \citep{ghosal2022stability} will be
essential when the reference becomes a L\'evy process in
Sect.~\ref{sec:levy}. The nonlocal analogue of \eqref{eq:gammaOT} ---
the localisation limit --- is Theorem~\ref{thm:localization} below, and
its numerical face is Fig.~\ref{fig:localization}.

\subsection{Extension: Changing the Base Geometry}
\label{subsec:local-extensions}

The constructions of this section used remarkably little about the
Wiener reference: an entropy, a Markov semigroup, and a Girsanov-type
energy identity. The variational principle \eqref{eq:dynSB} is therefore
\emph{stable under changes of the base geometry}, and the standard
extensions of the theory are best understood as three instances of this
one principle: change the underlying space, change the time axis, or
change the level of description of the particles.

\textbf{Changing the space: Riemannian and sub-Riemannian bridges.}
Replacing $\R^d$ by a Riemannian manifold replaces the Wiener measure by
the law of manifold Brownian motion; the entire apparatus ---
Schr\"odinger system, $(f,g)$-transform, FP--HJB coupling --- carries
over with the Laplace--Beltrami operator, and the resulting diffusion
Schr\"odinger bridges generate data constrained to spheres, matrix
manifolds, and beyond \citep{thornton2022}. More is true: on a
sub-Riemannian manifold, where admissible velocities are restricted to a
bracket-generating horizontal distribution, hypoellipticity guarantees
smooth strictly positive transition densities, the forward--backward
characterisation of the bridge persists, and the vanishing-noise limit
recovers deterministic sub-Riemannian optimal transport
\citep{thornton2022}. The lesson is structural: the bridge problem needs
only a well-behaved heat kernel, not ellipticity.

\textbf{Changing the time axis: the quantum correspondence.}
The forward--backward pair underlying \eqref{eq:fk}--\eqref{eq:bk} can
be written, for a bridge in a potential $V$, as
\begin{equation}
\partial_t\varphi=-\tfrac12\Delta\varphi+V\varphi,
\qquad
\partial_t\hat\varphi=\tfrac12\Delta\hat\varphi-V\hat\varphi,
\qquad
\rho=\varphi\,\hat\varphi .
\label{eq:euclideanpair}
\end{equation}
Under the Wick rotation $t\mapsto it$ these become the forward and
backward Schr\"odinger equations of quantum mechanics, the factors
$\varphi,\hat\varphi$ become the conjugate wavefunctions
$\psi,\psi^{*}$, and the Born rule $\rho=|\psi|^2$ becomes the product
formula of Theorem~\ref{thm:fg}. The correspondence runs deep: the
control-affine bridge problem transforms into a two-point boundary value
problem for a Schr\"odinger PDE with a \emph{complex} potential whose
real part generalises the Bohm potential --- the same object that drives
Newton's equation \eqref{eq:newton} --- and whose imaginary part encodes
absorption \citep{teter2024weyl}. Bridges with genuinely quantum (unitary)
dynamics admit closed-form Gaussian solutions \citep{bordyuh2025}; see
also Problem~\ref{prob:quantum}.

\textbf{Changing the particle level: mean-field bridges.}
Replacing independent particles by a weakly interacting population
replaces the reference by a McKean--Vlasov diffusion
$\dd X_t=\bigl(b(X_t)+\nabla U\!*\!\rho_t(X_t)\bigr)\dd t
+\sqrt{2\eps}\,\dd W_t$, and the mean-field Schr\"odinger bridge seeks
the minimum-effort steering of the population between prescribed
marginals \citep{Eldesoukey2026}. The optimality system is
\eqref{eq:fphjb} augmented by nonlocal interaction terms,
\begin{equation}
\begin{aligned}
\partial_t\rho+\nabla\!\cdot\!\bigl(\rho\,
(\nabla\phi+\nabla U\!*\!\rho)\bigr)&=\eps\Delta\rho,\\
\partial_t\phi+\tfrac12|\nabla\phi|^2
+\nabla U\!*\!\rho\cdot\nabla\phi&=-\eps\Delta\phi,
\end{aligned}
\label{eq:mfsb}
\end{equation}
a nonconvex problem in general; a generalized Hopf--Cole transform
linearises the interaction under symmetry and Lipschitz assumptions on
$U$, and Sinkhorn-type recursions then apply \cite{Eldesoukey2026}. The
same coupled system appears in variational mean-field games
\citep{Carmona2018}, with the boundary conditions --- two-point in time
for bridges, initial-terminal for games --- marking the only structural
difference. Conceptually, the mean-field extension changes the base
space from $\mathcal X$ to (a fibre over) $\Prob(\mathcal X)$: it is
again the same entropy principle, one level higher still.

\section[The Nonlocal Theory I: Probability Simplex]{The Nonlocal Theory I: Probability Simplex}
\label{sec:discrete}

We now leave the local world. The nonlocal setting in which the second-order theory is complete, is that of  a Markov jump process on a finite graph. Recall from Remark~\ref{rem:twoorders} that the
forward equation here, the master equation, is spatially \emph{first}
order: it involves only single differences $\rho_y-\rho_x$. This section
shows that spatial order is no obstacle to temporal order: the discrete
setting supports the full arc of Sect.~\ref{sec:local}, from the
stochastic viewpoint through functional inequalities to explicit
curvature, Jacobi equations, and Hamiltonian flows. Everything is
finite-dimensional, and everything can be computed.

Throughout, $\mathcal X$ is a finite set, $q_{xy}\ge0$ are jump rates of
an irreducible continuous-time Markov chain, reversible with respect to
a probability measure $\pi$ (detailed balance:
$\pi_xq_{xy}=\pi_yq_{yx}$). Densities are vectors
$\rho\in\Prob(\mathcal X)$, identified with points of the simplex.

\subsection{The Stochastic-Analysis Viewpoint}
\label{subsec:discrete-stoch}

The stochastic side of the theory transfers verbatim from
Sect.~\ref{subsec:local-stochastic}: with $R$ the law of the reversible
chain on path space $D([0,1];\mathcal X)$, the dynamic Schr\"odinger
problem \eqref{eq:dynSB} is well posed, the Schr\"odinger system
\eqref{eq:schsys1}--\eqref{eq:schsys2} becomes a pair of equations for
vectors $f_0,g_1\in\R_{\ge0}^{\mathcal X}$ coupled through the
transition matrix $R_{01}=e^{Q}$ (with $Q$ the rate matrix), and the
optimal bridge is the $(f,g)$-transform
$P^\star=f_0(X_0)g_1(X_1)\,R$, an $h$-transformed, time-inhomogeneous
jump chain with the \emph{same jump structure} as the reference. Fortet's
contraction argument applies directly, and convergence of the
alternating scheme of Remark~\ref{rem:sinkhorn} is geometric in
Hilbert's projective metric \citep{Georgiou2015}; in matrix language the
scheme is diagonal matrix scaling, and we shall say no more about
computation.

\textbf{The logarithmic-mean chain rule.}
The ODE-level structure is more interesting, because it is here that the
nonlocal world first genuinely differs from the local one. The local
theory rests on the chain rule $\nabla\rho=\rho\,\nabla\log\rho$, which
dictates the mobility $\rho$ in the continuity equation. On a graph there is no chain rule: a difference
of values is not a value times a difference of logarithms. The
resolution is an interpolation. For $a,b>0$ let
\begin{equation}
\Lam(a,b)=\frac{a-b}{\log a-\log b}\qquad(\Lam(a,a)=a)
\label{eq:logmean}
\end{equation}
denote the \emph{logarithmic mean}; then, identically,
\begin{equation}
\rho_y-\rho_x=\Lam(\rho_x,\rho_y)\,
\bigl(\log\rho_y-\log\rho_x\bigr).
\label{eq:chainrule}
\end{equation}
Identity \eqref{eq:chainrule} is the discrete chain rule, and the
logarithmic mean is the price of nonlocality.

\begin{theorem}[Master equation as gradient flow
{\citep{maas2011, mielke2011, chow2012}}]\label{thm:masterGF}
Define, for each edge, the mobility
$w_{xy}(\rho)=\Lam(\rho_x/\pi_x,\rho_y/\pi_y)\,q_{xy}\pi_x$ (in the
uniform case simply $\Lam(\rho_x,\rho_y)q_{xy}$). Then the master
equation
\begin{equation}
\dot\rho_x=\sum_{y}\bigl(q_{yx}\rho_y-q_{xy}\rho_x\bigr)
\label{eq:master}
\end{equation}
is the gradient flow of the relative entropy
$H(\rho\,|\,\pi)=\sum_x\rho_x\log\frac{\rho_x}{\pi_x}$ with respect to
the transport metric of Definition~\ref{def:maasmetric} below, built
from the mobility $w_{xy}$. The logarithmic mean is the \emph{unique}
mean for which this statement holds.
\end{theorem}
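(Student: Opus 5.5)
The plan is to take the transport metric of Definition~\ref{def:maasmetric} in its standard Onsager form and compute the gradient flow of $H(\cdot\,|\,\pi)$ directly. Tangent vectors at $\rho$ are discrete divergences. Concretely, for a potential $\psi\in\R^{\mathcal X}$ set
\[
(K(\rho)\psi)_x=\sum_y w_{xy}(\rho)\,(\psi_x-\psi_y),
\]
so that $\dot\rho=-K(\rho)\psi$, with the metric $\langle\dot\rho,\dot\rho\rangle_\rho=\tfrac12\sum_{x,y}w_{xy}(\rho)(\psi_x-\psi_y)^2$. The factor $\tfrac12$ accounts for each edge being counted twice. Since the Riemannian gradient of a functional $\mathcal F$ is $K(\rho)\,\frac{\partial\mathcal F}{\partial\rho}$, the gradient flow reads $\dot\rho=-K(\rho)\,\nabla\mathcal F(\rho)$. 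I will first check that $K(\rho)$ is symmetric and positive semidefinite on the interior of the simplex. Both properties follow from $w_{xy}=w_{yx}$, which in turn follows from detailed balance $\pi_xq_{xy}=\pi_yq_{yx}$ together with the symmetry of $\Lam$.

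Next I compute the flow for $\mathcal F=H(\cdot\,|\,\pi)$. Here $\partial_{\rho_x}H=\log(\rho_x/\pi_x)+1$, and the constant drops out under differences. Writing $u_x=\rho_x/\pi_x$, the $x$-component of $-K(\rho)\nabla H$ is
\[
-\sum_y \Lam(u_x,u_y)\,q_{xy}\pi_x\,(\log u_x-\log u_y).
\]
The discrete chain rule \eqref{eq:chainrule} collapses each summand to $-q_{xy}\pi_x(u_x-u_y)=-q_{xy}\rho_x+q_{xy}\pi_x u_y$. Detailed balance rewrites $q_{xy}\pi_x u_y=q_{yx}\pi_y u_y=q_{yx}\rho_y$, so the result is exactly \eqref{eq:master}. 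This is the heart of the existence direction, and it is short once the metric is fixed.

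For uniqueness, I will suppose that a mobility $w_{xy}=\theta(u_x,u_y)q_{xy}\pi_x$, with $\theta$ a symmetric mean, produces \eqref{eq:master} as the gradient flow of $H$, and that this holds for all chains and all $\rho$. The same computation then forces $\theta(a,b)(\log a-\log b)=a-b$ on every edge. Choosing a two-point chain makes this hold for all $a\neq b>0$, so $\theta=\Lam$. The main obstacle I expect is making precise the class of competing metrics over which uniqueness is claimed. I would restrict to edge-wise mobilities depending only on the pair $(u_x,u_y)$ through a function $\theta$, which is the natural reading of the statement. A secondary technical point is the boundary of the simplex, where $\log$ is singular. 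I would handle it by proving the identity on the interior and noting that the master equation instantly enters and stays in the interior for an irreducible chain.
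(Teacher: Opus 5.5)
Your existence argument is the paper's one-line proof written out in full. The paper likewise takes $\log(\rho/\pi)$ as the gradient of $H$, applies the mobility-weighted discrete divergence, lets the chain rule \eqref{eq:chainrule} turn logarithmic differences back into linear ones, and uses detailed balance to land on \eqref{eq:master}. Your passage from the flux action of Definition~\ref{def:maasmetric} to the potential (Onsager) form rests on the standard fact that the optimal flux for a given $\dot\rho$ is $V_{xy}=w_{xy}(\psi_x-\psi_y)$; this is worth stating explicitly. The paper does not prove the rigidity claim (it is Problem~\ref{prob:chainrule}), and your route to it is the natural one, but one step fails as written.

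On a two-point chain you cannot reach every pair $a\neq b$. Since $\rho_1+\rho_2=1=\pi_1+\pi_2$, one has $\pi_1(u_1-1)+\pi_2(u_2-1)=0$, hence $(u_1-1)(u_2-1)\le 0$. So the identity $\theta(a,b)(\log a-\log b)=a-b$ is obtained only for pairs straddling $1$, and a non-homogeneous mean could differ from $\Lam$ on pairs lying on the same side of $1$. There are two easy repairs.

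\begin{enumerate}
\item Assume $\theta$ is positively $1$-homogeneous, as the paper assumes of $\Theta$ in Sect.~\ref{subsec:levy-geom}. Then rescale $(a,b)\mapsto(ca,cb)$ with $c$ chosen so that $ca<1<cb$.
\item Use a three-state path $x$--$y$--$z$, which is reversible for any prescribed $\pi$ (e.g.\ $q_{xy}=1/\pi_x$, $q_{yx}=q_{yz}=1/\pi_y$, $q_{zy}=1/\pi_z$). Make $x$ a leaf and pick $\pi_x a+\pi_y b<1$. Then $\rho_z$ absorbs the excess mass, and the equation at $x$ isolates the single edge $xy$ with $(u_x,u_y)=(a,b)$.
\end{enumerate}
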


The proof is one line once \eqref{eq:chainrule} is available: the
gradient of the entropy is $\log(\rho/\pi)$, its discrete gradient along
an edge is $\log(\rho_y/\pi_y)-\log(\rho_x/\pi_x)$, and multiplying by
the mobility turns logarithmic differences back into linear differences
--- that is, into \eqref{eq:master}. The rigidity claim will matter
below: for metric-topological questions other interpolations
(arithmetic, geometric) are perfectly admissible and are used in
Sect.~\ref{sec:levy}, but the gradient-flow identification pins the
interpolation down uniquely. This is the discrete precursor of the
dichotomy running through the whole nonlocal theory.

\subsection{The Geometric Viewpoint I: First-Order}
\label{subsec:discrete-geom}

\begin{definition}[Discrete transport metric
{\citep{maas2011, mielke2011}}]\label{def:maasmetric}
A discrete vector field is an antisymmetric edge function
$V_{xy}=-V_{yx}$; the discrete continuity equation is
$\dot\rho_x+\sum_y V_{xy}=0$. For
$\rho^0,\rho^1\in\Prob(\mathcal X)$ set
\begin{equation}
\WW(\rho^0,\rho^1)^2
=\inf_{(\rho_t,V_t)}
\left\{\int_0^1\sum_{x<y}
\frac{V_{xy}(t)^2}{w_{xy}(\rho_t)}\,\dd t
\;:\;\dot\rho_x+\sum_y V_{xy}=0,\
\rho_{t=0}=\rho^0,\ \rho_{t=1}=\rho^1\right\},
\label{eq:discBB}
\end{equation}
with $w_{xy}$ the logarithmic-mean mobility of
Theorem~\ref{thm:masterGF}.
\end{definition}

Formula \eqref{eq:discBB} is a verbatim discrete Benamou--Brenier
formula: kinetic action, mobility-weighted, minimised under a continuity
constraint. Its analytical content, however, is finite-dimensional: on
the interior of the simplex, $\WW$ is the geodesic distance of a genuine
Riemannian metric with smooth, explicit metric tensor. Geodesics solve
finite-dimensional Hamiltonian ODEs; existence and (interior) smoothness
are elementary. The metric degenerates at the boundary of the simplex,
but --- in sharp contrast with the continuous nonlocal setting of
Theorem~\ref{thm:trichotomy} below --- the boundary remains at
\emph{finite} distance: Dirac masses can be connected, as
Example~\ref{ex:twopoint} computes explicitly.

The construction extends beyond linear equations. Replacing the
logarithmic mean in \eqref{eq:discBB} by the mean
$\varsigma_\phi(a,b)$ adapted to a convex function $\phi$, Erbar and
Maas \citep{erbarmass2014} exhibited the discrete porous medium
equation
\[
\dot\rho_x=\sum_y q_{xy}\bigl(\phi(\rho_y)-\phi(\rho_x)\bigr)
\]
as the gradient flow of the corresponding internal energy: the entire
gradient-flow zoo of Sect.~\ref{subsec:local-otto} has a discrete
counterpart, mean by mean.

Finally, the discrete and local geometries are not merely analogous but
connected: on the lattice $\tfrac1N\mathbb Z^d\cap[0,1]^d$ with
nearest-neighbour rates, the metrics $\WW_N$ converge in the
Gromov--Hausdorff sense to $W_2$ on $\Prob([0,1]^d)$ as $N\to\infty$
\citep{gigli2013}. This is the discrete route back to the local world;
the continuous nonlocal route --- localisation of the jump kernel ---
is Theorem~\ref{thm:localization}.

\subsection{The Geometric Viewpoint II: Second-Order}
\label{subsec:discrete-second}

Because $(\mathring{\Prob}(\mathcal X),\WW)$ is a finite-dimensional
Riemannian manifold with an explicit metric tensor, the second-order
theory is a matter of calculus rather than of definition. Three layers
deserve record.

\textbf{Curvature.} Following Lott--Sturm--Villani in reverse, Erbar and Maas
\citep{erbar2012} \emph{define} the entropic Ricci curvature of the
chain as the best constant $\kappa$ such that the entropy is
$\kappa$-convex along $\WW$-geodesics,
\begin{equation}
\Hess_{\WW} H(\rho\,|\,\pi)\;\ge\;\kappa\,\WW\text{-metric},
\label{eq:entropicricci}
\end{equation}
and then, crucially, \emph{compute} it: the Hessian on the left is an
explicit finite-dimensional quadratic form (no Bochner formula is
needed --- this is the decisive advantage of finite dimension), and
lower bounds follow for concrete chains. For the two-point space with
unit rates, $\kappa=2$ exactly (verified numerically in
Example~\ref{ex:twopoint}); for the discrete hypercube $\{0,1\}^n$,
$\kappa=2/n$; discrete tori and birth--death chains are likewise
explicit \citep{erbar2012}. Through
Sect.~\ref{subsec:discrete-fi}, each such bound yields MLSI, Talagrand
and HWI inequalities with explicit constants.

\textbf{Connection, parallel transport, Jacobi fields.}
On the interior of the simplex the Christoffel symbols of the metric
\eqref{eq:discBB} are explicit rational expressions in $\rho$ (rational
in $\rho$ and in the values $\Lam(\rho_x,\rho_y)$ and their partial
derivatives), so parallel transport, geodesic deviation, Jacobi fields
and conjugate points are all governed by finite-dimensional linear ODEs
along geodesics. Nothing here is deep --- and that is precisely the
point to be contrasted with Sect.~\ref{subsec:levy-second}, where none
of these objects is even defined.

\textbf{Hamiltonian flows and Newton's equation.}
The discrete analogue of the second-order dynamics of
Sect.~\ref{subsec:local-second} is furnished by Wasserstein Hamiltonian
flows on graphs \citep{li2019wassersteinhamiltonian}: for a Hamiltonian
$\mathcal H(\rho,\phi)=\tfrac12\sum_{x<y}w_{xy}(\rho)
(\phi_x-\phi_y)^2+\mathcal V(\rho)$ the canonical equations
\begin{equation}
\dot\rho=\partial_\phi\mathcal H,\qquad
\dot\phi=-\partial_\rho\mathcal H
\label{eq:graphhamilton}
\end{equation}
reproduce, for $\mathcal V=0$, the geodesic equations of
\eqref{eq:discBB}, and for the Fisher-information potential
$\mathcal V=-\tfrac{\eps^2}{4}\mathcal I$ they reproduce the discrete
Schr\"odinger bridge: the graph analogue of Newton's equation
\eqref{eq:newton} is not an open problem but a system of $2|\mathcal X|$
ODEs. Every term of \eqref{eq:newton} --- acceleration, Fisher force,
$\eps^2$ scaling --- has an exact finite-dimensional counterpart.

\subsection{Functional Inequalities}
\label{subsec:discrete-fi}

The triangle of inequalities of
Sect.~\ref{subsec:local-first-complete} transfers to graphs with the
metric $\WW$ in the role of $W_2$. The modified logarithmic Sobolev
inequality $\mathrm{MLSI}(\lambda)$ asserts
$H(\rho\,|\,\pi)\le\frac{1}{2\lambda}\mathcal I(\rho\,|\,\pi)$ with the
discrete Fisher information
$\mathcal I(\rho\,|\,\pi)=\tfrac12\sum_{x,y}
(\rho_x-\rho_y)\bigl(\log\tfrac{\rho_x}{\pi_x}
-\log\tfrac{\rho_y}{\pi_y}\bigr)q_{xy}\pi_x$
(here written for the uniform case), and is equivalent to exponential
entropy decay along the master equation. The discrete Talagrand
inequality $\mathrm T_{\WW}(\lambda)$:
$\tfrac{\lambda}{2}\WW(\rho,\pi)^2\le H(\rho\,|\,\pi)$, and a discrete
HWI inequality interpolate exactly as in the local case, and the
implications $\mathrm{Ric}\ge\kappa>0\Rightarrow\mathrm{MLSI}(\kappa)
\Rightarrow\mathrm T_{\WW}(\kappa)$ hold with the \emph{entropic Ricci
curvature} of the next subsection in the role of the Bakry--\'Emery
bound \citep{erbar2012}. The parallel with
Sect.~\ref{subsec:local-first-complete} is exact, including its closing
sentence: the optimal constants are curvatures, and on a finite graph
the curvature can actually be computed.

\subsection{Examples}
\label{subsec:discrete-example}

\begin{example}[The two-point space]\label{ex:twopoint}
Let $\mathcal X=\{1,2\}$ with unit symmetric rate $q=1$ and uniform
$\pi$. A density is $\rho=(1-x,x)$ with $x\in(0,1)$, a discrete vector
field is a single number $V$, the continuity equation is $\dot x=V$, and
the action \eqref{eq:discBB} collapses to a one-dimensional Riemannian
metric on the interval:
\begin{equation}
g(x)=\frac{1}{q\,\Lam(x,1-x)},
\qquad
\WW(\rho^a,\rho^b)=\int_a^b\frac{\dd x}{\sqrt{q\,\Lam(x,1-x)}} .
\label{eq:twopointmetric}
\end{equation}
Three computations, each elementary, exhibit the entire first- and
second-order theory of this section in miniature; the code is listed in
Appendix~A.1 and the results are collected in Fig.~\ref{fig:twopoint}.

\emph{(1) The boundary is reachable.} As $x\to0$,
$\Lam(x,1-x)\sim1/\log(1/x)$, so the integrand of
\eqref{eq:twopointmetric} grows only like $\sqrt{\log(1/x)}$ and
remains integrable:
\[
\WW(\delta_1,\delta_2)=\int_0^1\Lam(x,1-x)^{-1/2}\dd x
= 1.558707\ (\pm3\cdot10^{-9}).
\]
Contrast this with the continuous nonlocal trichotomy of
Theorem~\ref{thm:trichotomy}: for an integrable kernel with
$\Theta(1,0)=0$, the distance between two distinct Dirac masses on
$\R^d$ is \emph{infinite}. The same formula, evaluated on a finite graph
and on $\R^d$, gives opposite answers --- the cleanest single
illustration of why the discrete and continuous nonlocal theories
diverge.

\emph{(2) Explicit geodesics.} With the arclength
$\ell(x)=\int_{1/2}^x\sqrt g$, the constant-speed geodesic from $a$ to
$b$ is $x(t)=\ell^{-1}\bigl((1-t)\ell(a)+t\ell(b)\bigr)$. The middle
panel of Fig.~\ref{fig:twopoint} shows several geodesics; the visible
slowdown near the endpoints of the Dirac-to-Dirac geodesic is the
boundary degeneration of the metric.

\emph{(3) Curvature by hand.} Along the near-Dirac geodesic
($a=10^{-4}$, $b=1-10^{-4}$), the entropy
$H(x)=x\log x+(1-x)\log(1-x)+\log2$ is strictly convex in $t$, and with
the constant-speed parametrisation the estimate
\[
\kappa_{\mathrm{est}}
=\inf_t\frac{H''(t)}{\WW(a,b)^2}=2.0000
\]
reproduces, to four digits, the Erbar--Maas value $\kappa=2$ for the
two-point space \citep{erbar2012}.

\end{example}

\begin{figure}[t]
\centering
\includegraphics[width=\textwidth]{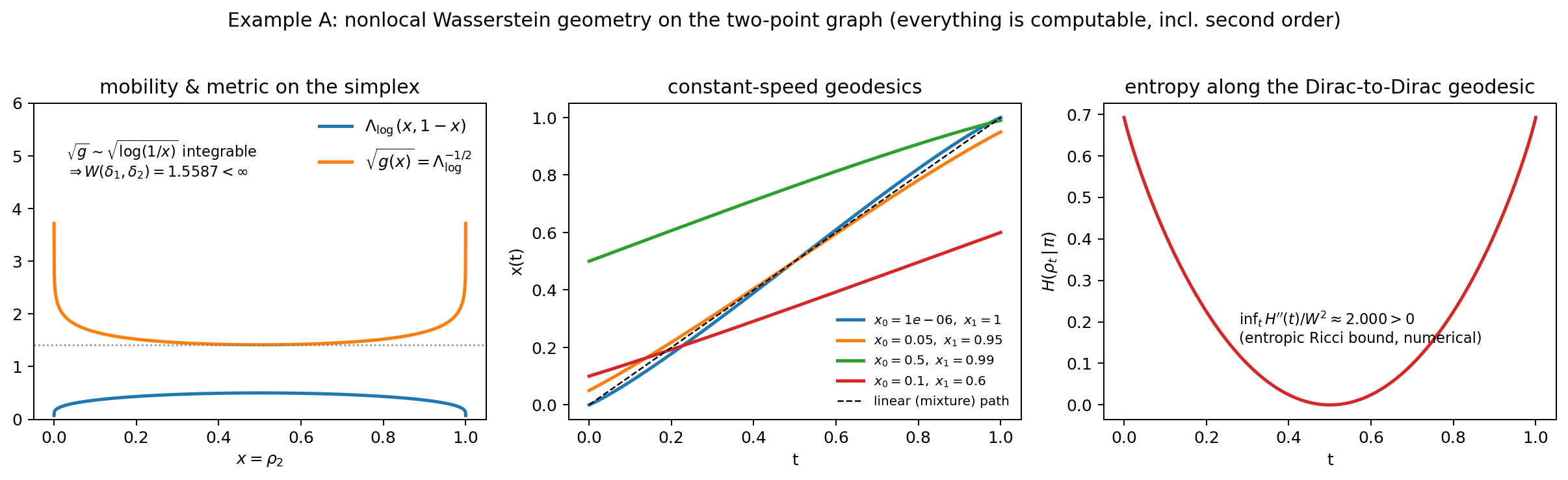}
\caption{The two-point space (Example~\ref{ex:twopoint}).
\emph{Left:} the logarithmic-mean mobility and the metric factor
$\sqrt g$; the integrable boundary blow-up yields
$\WW(\delta_1,\delta_2)=1.5587<\infty$. \emph{Middle:} constant-speed
geodesics; the dashed line is the linear (mixture) path, which is not a
geodesic. \emph{Right:} the entropy along the Dirac-to-Dirac geodesic
is strictly convex, with $\inf_t H''/\WW^2=2.000$, matching the
Erbar--Maas entropic Ricci constant.}
\label{fig:twopoint}
\end{figure}

\section[The Nonlocal Theory II: Continuous State Spaces under L\'evy Noise]{The Nonlocal Theory II: Continuous State Spaces under L\'evy
Noise}
\label{sec:levy}

The continuous nonlocal theory replaces the Brownian reference of
Sect.~\ref{sec:local} by a L\'evy process --- a pure-jump process or a
jump-diffusion --- and the finite graph of Sect.~\ref{sec:discrete} by
$\R^d$. The forward equations are now partial \emph{integro-}differential
equations of (fractional) second-order type, such as the fractional heat
equation (Remark~\ref{rem:twoorders}). The section follows the arc of
its two predecessors: bridges exist and are well understood at the
stochastic level (Sect.~\ref{subsec:levy-stoch}); the Partial Integral-Differential Equation (PIDE)
characterisation extends, including killing and unbalanced marginals
(Sect.~\ref{subsec:levy-pide}); the first-order geometry --- nonlocal
Wasserstein distances, their surprising topological trichotomy, and the
localisation limit back to $W_2$ --- is by now a mature theory
(Sect.~\ref{subsec:levy-geom}), completed by first functional
inequalities (Sect.~\ref{subsec:levy-fi}). The second-order theory,
however, breaks: Sect.~\ref{subsec:levy-second} records what exists,
what fails, and why, in the form of open problems. The worked example of
Sect.~\ref{subsec:levy-example} makes the difference between local and
nonlocal geodesics visible in a single picture.

%

\subsection{The Stochastic-Analysis Viewpoint}
\label{subsec:levy-stoch}

The constructive theory of bridges for jump processes is due to Privault
and Zambrini~\citep{privault2004}, motivated by Euclidean quantum
mechanics in the momentum representation, where L\'evy generators arise
naturally. Their construction proceeds in two stages. Fix a reference
measure $\lambda$ for which the perturbed L\'evy generator $H=U+V(\nabla)$
and its $\lambda$-adjoint $H^\dagger$ are mutually adjoint, and let
$h(t,k,u,\dd l)$, $h^\dagger(s,\dd j,t,k)$ be the integral kernels of the
Feynman--Kac semigroups $e^{-(u-t)H}$, $e^{-(t-s)H^\dagger}$. The first
theorem builds a process from a pair of positive boundary functions.

\begin{theorem}[{Th.~3.2; \cite{privault2004}}]\label{thm:levy_bridge}
Let $\eta_r^\ast,\eta_v:\R^d\to\R_+$ be $\lambda$-a.e.\ strictly positive
and normalised so that, for some $t\in[r,v]$ (and then for all such $t$),
$\int_{\R^d}\eta_t^\ast\eta_t\dd\lambda=1$, where
$\eta_t^\ast=e^{-(t-r)H^\dagger}\eta_r^\ast$ and
$\eta_t=e^{-(v-t)H}\eta_v$. Then there exists an $\R^d$-valued process
$(z_t)_{t\in[r,v]}$ whose law at time $t$ has $\lambda$-density
$\rho_t=\eta_t^\ast\eta_t$, which is both forward and backward Markov,
with transition kernels
\begin{equation}
p(t,k,u,\dd l)=\frac{\eta_u(l)}{\eta_t(k)}\,h(t,k,u,\dd l),
\qquad
p^\ast(s,\dd j,t,k)=\frac{\eta_s^\ast(j)}{\eta_t^\ast(k)}\,
h^\dagger(s,\dd j,t,k),
\end{equation}
initial and terminal laws $\eta_r\eta_r^\ast\cdot\lambda$ and
$\eta_v\eta_v^\ast\cdot\lambda$, and boundary functions solving the
adjoint equations
$-\partial_t\eta_t^\ast=H^\dagger\eta_t^\ast$,
$\partial_t\eta_t=H\eta_t$. Moreover $(z_t)_{t\in[r,v]}$ is a Bernstein
process.
\end{theorem}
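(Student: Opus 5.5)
The plan is to build the process directly from its finite-dimensional distributions, in the spirit of the $(f,g)$-transform of Theorem~\ref{thm:fg}. First, the normalisation $\int\eta_t^\ast\eta_t\,\dd\lambda$ does not depend on $t$. By the semigroup property and the mutual adjointness of $H$ and $H^\dagger$ with respect to $\lambda$,
\[
\int (e^{-(t-r)H^\dagger}\eta_r^\ast)\,(e^{-(v-t)H}\eta_v)\,\dd\lambda=\int \eta_r^\ast\, e^{-(v-r)H}\eta_v\,\dd\lambda ,
\]
and the right-hand side is free of $t$. So normalising at one time normalises at all times, and each $\rho_t=\eta_t^\ast\eta_t$ is a probability density with respect to $\lambda$. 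The adjoint evolution equations $-\partial_t\eta_t^\ast=H^\dagger\eta_t^\ast$ and $\partial_t\eta_t=H\eta_t$ follow immediately from the definition of $\eta^\ast,\eta$ as orbits of the Feynman--Kac semigroups.

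Next, for $r\le t_1<\dots<t_n\le v$, define the joint law
\[
\mu_{t_1\dots t_n}(\dd k_1\cdots\dd k_n)=\eta_{t_1}^\ast(k_1)\,\lambda(\dd k_1)\,\prod_{i=1}^{n-1}h(t_i,k_i,t_{i+1},\dd k_{i+1})\;\eta_{t_n}(k_n).
\]
This family is consistent. Marginalising an interior point uses Chapman--Kolmogorov for $h$. Marginalising the last point uses $\int h(t_{n-1},k,t_n,\dd l)\eta_{t_n}(l)=\eta_{t_{n-1}}(k)$. Marginalising the first point uses the duality $\int\eta^\ast_{t_1}(j)\lambda(\dd j)h(t_1,j,t_2,\dd k)=\eta^\ast_{t_2}(k)\lambda(\dd k)$, which is the kernel form of $e^{-(t_2-t_1)H^\dagger}$ being the $\lambda$-adjoint of $e^{-(t_2-t_1)H}$; this is where the relation $h^\dagger(s,\dd j,t,k)\lambda(\dd k)=\lambda(\dd j)h(s,j,t,\dd k)$ enters. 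Kolmogorov's extension theorem then gives a process with one-time density $\rho_t$ and the stated initial and terminal laws.

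Rewriting the joint density telescopically shows the forward Markov property with kernel $p(t,k,u,\dd l)=\eta_u(l)h(t,k,u,\dd l)/\eta_t(k)$. Expressing $h$ through $h^\dagger$ in the same product and telescoping from the right gives the backward kernel $p^\ast$. The Bernstein (reciprocal) property follows from the product structure: conditioning on the values at $s<u$, the law in between is the $h$-bridge of the Feynman--Kac kernel, with $\eta^\ast,\eta$ cancelling, hence independent of the outside. A c\`adl\`ag version is needed because $H$ is a L\'evy generator. I would obtain it by noting that on $[r,v)$ the forward process is an $h$-transform of the Feynman--Kac process, which has c\`adl\`ag paths.

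The main obstacle is rigor at the kernel level. The Feynman--Kac kernels for a general potential $U$ and L\'evy symbol $V(\nabla)$ must exist, be positive, and satisfy the $\lambda$-duality. One must also ensure $\eta_t\in(0,\infty)$ $\lambda$-a.e. so that the divisions in $p$ and $p^\ast$ make sense. I would treat these as standing hypotheses on $\lambda$, $H$ and $H^\dagger$, as in \citep{privault2004}, and prove positivity via the strict positivity of $\eta_r^\ast,\eta_v$ together with irreducibility of the kernels.
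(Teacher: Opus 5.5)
Your argument is correct and is essentially the construction behind the cited result; note that the chapter quotes Th.~3.2 of \cite{privault2004} without proof. Your steps are: product-form finite-dimensional distributions $\eta^\ast_{t_1}\lambda\,h\cdots h\,\eta_{t_n}$; consistency from Chapman--Kolmogorov together with the $\lambda$-duality $\lambda(\dd j)\,h(s,j,t,\dd k)=h^\dagger(s,\dd j,t,k)\,\lambda(\dd k)$; Kolmogorov extension; and telescoping in $\eta$ (resp.\ $\eta^\ast$) to read off the forward (resp.\ backward) kernel.

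Two steps can be shortened. First, the Bernstein property needs no separate bridge computation, since every Markov process is Bernstein (as recorded in Definition~\ref{def:bernstein}). Second, the statement does not claim a c\`adl\`ag modification, so your final path-regularity step is optional.
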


\begin{definition}[Bernstein (reciprocal) process;
{\cite{privault2004}}]\label{def:bernstein}
A process $(z_t)_{t\in[r,v]}$ is a \emph{Bernstein process} if, for all
$r\le s<t<u\le v$,
\begin{equation}
\mathbb P\bigl(z_t\in\dd k\mid\mathcal P_s\vee\mathcal F_u\bigr)
=\mathbb P\bigl(z_t\in\dd k\mid z_s,z_u\bigr),
\end{equation}
where $(\mathcal P_t)$ and $(\mathcal F_t)$ are the forward and backward
filtrations generated by the process. Every Markov process is a
Bernstein process; the converse fails.
\end{definition}

To run the construction from prescribed marginals rather than from
boundary functions, one inverts the product structure; this is the
existence theorem for the Schr\"odinger system, going back to Fortet,
Beurling and Jamison.

\begin{theorem}[{\citealp[Th.~3.3]{privault2004}}]\label{thm:levymarginals}
Let $\pi_r,\pi_v$ be probability measures on $\R^d$, and suppose
$h(r,i,v,m)$ is continuous in $(i,m)$ and strictly positive. Then there
exist measures $\eta_r^\ast(\dd i)$ and $\eta_v(\dd m)$ such that
\begin{equation}
\pi_r(\dd i)=\eta_r^\ast(\dd i)\!\int_{\R^d}\!h(r,i,v,m)\,\eta_v(\dd m),
\qquad
\pi_v(\dd m)=\eta_v(\dd m)\!\int_{\R^d}\!h(r,i,v,m)\,\eta_r^\ast(\dd i).
\end{equation}
\end{theorem}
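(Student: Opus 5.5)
The plan is to recast the two equations as a static Schr\"odinger problem in product form and solve it first in a bounded situation, then pass to a limit. We look for a coupling
\[
\gamma(\dd i,\dd m)=h(r,i,v,m)\,\eta_r^\ast(\dd i)\,\eta_v(\dd m)
\]
of $\pi_r$ and $\pi_v$. Indeed, the two displayed identities say exactly that the first and second marginals of $\gamma$ are $\pi_r$ and $\pi_v$. So the statement is equivalent to: there is a coupling $\gamma\in\Pi(\pi_r,\pi_v)$ that has density of the form $a(i)b(m)$ with respect to $h(r,i,v,m)\,\mu(\dd i)\,\nu(\dd m)$ for some reference product measure. This is the Fortet--Beurling--Jamison formulation, and it parallels the Schr\"odinger system \eqref{eq:schsys1}--\eqref{eq:schsys2} of Sect.~\ref{subsec:local-stochastic}.

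\emph{Step 1: compact, bounded case.} First assume $\pi_r,\pi_v$ are supported in a compact set $K$. By continuity and strict positivity, $0<c\le h(r,\cdot,v,\cdot)\le C$ on $K\times K$. We then run the alternating (Sinkhorn/Fortet) iteration of Remark~\ref{rem:sinkhorn}:
\[
\eta_r^{\ast,(n+1)}=\frac{\dd\pi_r}{\int h\,\eta_v^{(n)}},
\qquad
\eta_v^{(n+1)}=\frac{\dd\pi_v}{\int h\,\eta_r^{\ast,(n+1)}}.
\]
The two-sided bound on $h$ makes the positive integral operator with kernel $h$ a strict contraction in Hilbert's projective metric, with Birkhoff coefficient $\tanh(\tfrac14\log(C/c)^2)<1$. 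The composite map is therefore a contraction on the cone of positive functions modulo scaling, as in \citep{Georgiou2015}. Its fixed point gives $\eta_r^\ast,\eta_v$, with densities with respect to $\pi_r,\pi_v$ bounded above and below. Normalisation is a matter of fixing the free scalar.

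\emph{Step 2: approximation.} For general $\pi_r,\pi_v$, fix increasing compacts $K_N$. Let $\pi_r^N,\pi_v^N$ be the normalised restrictions (using a common mass, for instance by conditioning $\pi_r\otimes\pi_v$ on $K_N\times K_N$), and let $\gamma_N$ be the product-form couplings from Step 1. These are couplings of tight families, hence tight, so a subsequence converges weakly to some $\gamma\in\Pi(\pi_r,\pi_v)$. It remains to show that $\gamma$ still has the form $a(i)b(m)h\,\dd\mu\,\dd\nu$.

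\emph{Main obstacle: closedness of the product-form class.} I expect this last point to be the crux. The plan is Beurling's cross-ratio argument. A measure $\gamma\ll\mu\otimes\nu$ has density $a(i)b(m)h(i,m)$ exactly when the ratio $\frac{\dd\gamma}{h\,\dd(\mu\otimes\nu)}$ satisfies the rectangle identity
\[
g(i,m)\,g(i',m')=g(i,m')\,g(i',m).
\]
Equivalently, for all rectangles, $\gamma^{\otimes2}$ and its ``swapped'' image have densities related by the continuous positive factor
\[
\frac{h(i,m)h(i',m')}{h(i,m')h(i',m)}.
\]
This is a linear identity between $\gamma_N\otimes\gamma_N$ and its coordinate swap, tested against continuous bounded functions with compact support. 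Since $h$ is continuous and positive, the cross-ratio factor is continuous and locally bounded, so the identity passes to the weak limit $\gamma$. One then recovers $a$ and $b$ by disintegration, choosing a reference point $(i_0,m_0)$ in the support and setting
\[
a(i)\propto g(i,m_0),\qquad b(m)\propto g(i_0,m)/g(i_0,m_0).
\]
Absolute continuity of $\gamma$ with respect to $\pi_r\otimes\pi_v$ in the limit is itself the delicate part. I would try to secure it through a uniform bound on the relative entropy $H(\gamma_N\mid h\,\pi_r\otimes\pi_v/Z)$, which is available if $\log h$ is $\pi_r\otimes\pi_v$-integrable on the relevant sets. Failing that, I would fall back on Beurling's original measure-theoretic argument, which needs only continuity and positivity of $h$. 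Setting $\eta_r^\ast=a\,\mu$ and $\eta_v=b\,\nu$ then yields the two displayed equations.
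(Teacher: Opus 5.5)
Your argument is correct, and it takes a genuinely different route from the paper, which gives no proof. The statement is quoted from Privault--Zambrini, who in turn take it from Beurling's theorem in Jamison's formulation.

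Your Step~1 is the Fortet/Hilbert-metric contraction mentioned in Sect.~\ref{subsec:local-stochastic}. It needs the two-sided bound on $h$, hence compact supports; the Birkhoff coefficient should be written unambiguously as $\tanh(\tfrac12\log(C/c))=(C-c)/(C+c)$. What the citation supplies beyond Step~1 is the passage to arbitrary probability measures on $\R^d$. You carry this out by truncation, tightness and a closedness lemma.

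The Beurling--Jamison theorem buys more than existence: it also gives uniqueness of $(\eta_r^\ast,\eta_v)$ up to the rescaling $(c\,\eta_r^\ast,c^{-1}\eta_v)$. Your route yields existence only, which is all the statement asks. In exchange it is elementary, constructive on compacts, and uses nothing beyond continuity and strict positivity of $h$. Along the way it shows that couplings of the form $h\,\mu\otimes\nu$ are closed under narrow limits.

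The step you flag as the crux is not actually delicate, and you should drop both fallbacks. The entropy bound would import an integrability hypothesis on $\log h$ that the statement does not contain. Absolute continuity with respect to $\pi_r\otimes\pi_v$ is never needed. Stay at the level of measures:
\begin{itemize}
\item Put $\tilde\gamma=h^{-1}\gamma$. This is a Radon measure because $h^{-1}$ is locally bounded.
\item Test your limiting identity against $\chi/\bigl(h(i,m)h(i',m')\bigr)$ with $\chi\in C_c(\R^{4d})$. This shows that $\tilde\gamma\otimes\tilde\gamma$ is invariant under $S(i,m,i',m')=(i,m',i',m)$.
\item Choose a compact rectangle $A_0\times B_0$ with $\tilde\gamma(A_0\times B_0)>0$, and evaluate the invariance on $A\times B\times A_0\times B_0$. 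This gives $\tilde\gamma(A\times B)\,\tilde\gamma(A_0\times B_0)=\tilde\gamma(A\times B_0)\,\tilde\gamma(A_0\times B)$.
\item By the $\pi$--$\lambda$ theorem on compact rectangles, $\tilde\gamma=\mu\otimes\nu$ with $\mu(A)=\tilde\gamma(A\times B_0)$ and $\nu(B)=\tilde\gamma(A_0\times B)/\tilde\gamma(A_0\times B_0)$.
\end{itemize}
This replaces your recipe $a(i)\propto g(i,m_0)$, which evaluates an a.e.-defined density at a single point and is not meaningful.

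With $\eta_r^\ast=\mu$ and $\eta_v=\nu$, the marginal constraints are exactly the two displayed equations. Moreover, $\int h(i,m)\,\nu(\dd m)>0$ for every $i$, and symmetrically for the other factor. Hence the equations also give $\eta_r^\ast\sim\pi_r$ and $\eta_v\sim\pi_v$ a posteriori.
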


Combining Theorems~\ref{thm:levy_bridge} and~\ref{thm:levymarginals}
produces, for any admissible pair of marginals, a Markovian bridge of the
L\'evy process: solve the coupled system for the boundary functions, then
feed them into the first theorem. Two features of this construction
deserve emphasis, and we state them as prose rather than as theorems
because they summarise the thrust of the paper rather than quote a single
result. First, the resulting process is the $h$-transform of the
reference L\'evy process from both time endpoints, so its law at time $t$
factorises as $\eta_t^\ast\eta_t$ --- the probabilistic image of writing a
quantum density as $\psi^\ast\psi$, which is the point of contact with
Euclidean quantum mechanics; the adjoint pair
$-\partial_t\eta_t^\ast=H^\dagger\eta_t^\ast$,
$\partial_t\eta_t=H\eta_t$ is the nonlocal counterpart of the
forward--backward heat system \eqref{eq:fk}--\eqref{eq:bk} of the
Brownian theory. Second, Privault and Zambrini further show that the
processes so constructed are, under suitable conditions on the Bernstein
kernel, \emph{exactly} the reciprocal Markov processes with jumps, and
they characterise them variationally through a stochastic control problem
whose value function solves a Hamilton--Jacobi--Bellman equation ---
the jump analogue of the second-order structure of
Sect.~\ref{subsec:local-second}, not a relative-entropy minimisation.\\

We now give several types of bridges as examples. \\

(i) \emph{Brownian}: for $\nu=0$ one recovers the classical Brownian
bridge, Gaussian with mean $(1-t/T)x+(t/T)y$ and covariance
$t(T-t)/T\cdot\Sigma$. \\

(ii) \emph{Poisson}: the bridge of a rate-$\lambda$
Poisson process between counts $n_0$ and $n_T$ is an inhomogeneous
Poisson process with intensity
$\lambda(t)=\lambda\,p(T-t,\,n_T-N_t)/p(T-t\!+\!t,\,n_T-n_0)$
expressed through Poisson kernels; it is Markov and reversible.\\

(iii) \emph{Stable}: bridges of symmetric $\alpha$-stable processes
inherit self-similarity, with transition densities obtained by
conditioning the stable kernel on the endpoints. These three examples
will reappear as the reference processes of the worked example in
Sect.~\ref{subsec:levy-example}.\\

\textbf{The quantum motivation.}
For the Euclidean Schr\"odinger equation in momentum representation,
$\partial_t\eta=-(\Psi(p)+V)\eta$ with pseudo-differential kinetic symbol
$\Psi$, there is a Feynman--Kac representation
\begin{equation}
\eta(t,p)=\mathbb E\Bigl[\eta(T,\xi_T)\,
e^{\int_t^T c(s,\xi_s)\dd s}\,\Big|\,\xi_t=p\Bigr]
\label{eq:levyFK}
\end{equation}
over a L\'evy process $\xi$ with exponent $\Psi$
\citep{garbaczewski1995, privault2004}: the bridges of
Theorem~\ref{thm:levy_bridge} provide probabilistic propagators for
nonlocal Schr\"odinger operators, generalising the diffusion picture of
Sect.~\ref{subsec:local-extensions} to nonlocal kinetic energies.

\subsection{The Partial Integro-Differential Equation Viewpoint}
\label{subsec:levy-pide}

Consider the controlled jump-diffusion
\begin{equation}
\dd X_t=b(t,X_t,u_t)\dd t+\sigma(t,X_t)\dd W_t
+\int_{\R^d}\gamma(t,X_{t^-},z)\,\tilde N(\dd t,\dd z),
\label{eq:jumpdiff}
\end{equation}
with $\tilde N$ the compensated Poisson random measure. Its value
function solves the backward integro-differential HJB equation
$\partial_t\Phi+\sup_u L^u\Phi=0$ with the nonlocal generator
\begin{equation}
L^u\Phi=b\!\cdot\!\nabla\Phi
+\tfrac12\mathrm{Tr}(\sigma\sigma^{\!\top}\nabla^2\Phi)
+\int_{\R^d\setminus\{0\}}\!\!
\bigl[\Phi(x+\gamma)-\Phi(x)
-\gamma\!\cdot\!\nabla\Phi\,\mathbf 1_{|z|\le1}\bigr]\nu(\dd z),
\label{eq:nonlocalgen}
\end{equation}
and a verification theorem holds under standard smoothness hypotheses
\citep{oksendal2019}. Dually, the density solves the forward
Fokker--Planck PIDE, spatially of (fractional) second-order type, whose
jump term transports mass nonlocally --- in parallel with the local pair
\eqref{eq:fphjb}.\\

\textbf{Bridges for jump-diffusions.}
Zlotchevski and Chen \citep{zlotchevski2024} study the Schr\"odinger
bridge problem --- the minimisation of the relative entropy
$\mathrm{KL}(\mathbf P\,\|\,\mathbf R)$ over path measures with prescribed
endpoint marginals --- when the reference $\mathbf R$ is the law of a jump
diffusion. Writing $(\mathfrak f,\mathfrak g)$ for the solution of the
associated Schr\"odinger system and
$\mathfrak h(t,x)=\int\mathfrak g(y)\,P_{t,T}(x,\dd y)$, their central
structural result is that the bridge is an $h$-transform of a reference
path measure,
\begin{equation}
\hat{\mathbf P}=\frac{\mathfrak h(T,X_T)}{\mathfrak h(0,X_0)}\,\mathbf P ,
\label{eq:zchtransform}
\end{equation}
and that, when $\mathfrak h$ is harmonic, $\hat{\mathbf P}$ solves the
martingale problem for a jump-diffusion generator $L^{\mathfrak h}$
obtained from $L$ by a shift of the drift and a rescaling of the L\'evy
measure by $\mathfrak h(t,x+\gamma)/\mathfrak h(t,x)$. Thus the bridge
remains a jump diffusion of the same type as the reference; the
$h$-transform structure of the Brownian theory
(Theorem~\ref{thm:fg}) survives intact in the nonlocal world. When the
reference admits a sufficiently regular transition density, the pair
$(\varphi,\hat\varphi)$ representing the bridge solves a forward--backward
system of PIDEs generalising the local Schr\"odinger system, and the
marginal density of $\hat{\mathbf P}$ is the product $\varphi\hat\varphi$
\citep{zlotchevski2024}. What one loses relative to the Brownian case is
only the Hopf--Cole \emph{gain}: the transform still linearises the
problem, but the resulting equations are nonlocal. A companion
paper~\citep{zlotchevski2025} extends the analysis to jump diffusions
with regime switching.\\


There are also some variants and extensions.\\

\textbf{Killing and unbalanced marginals.}
When the marginals carry unequal mass, no conservative process can
connect them. Chen, Georgiou and Pavon \citep{chen2021unbalanced}
resolved this by allowing a killing rate $V(t,X_t)$ --- a jump to an
absorbing graveyard state --- and showed that the most likely evolution
of diffusing-and-vanishing particles is again a Schr\"odinger bridge
after embedding into a conservative system on the augmented state
space. Since killing \emph{is} a jump, unbalanced transport is
naturally a chapter of the nonlocal theory, with applications from
single-cell biology to population dynamics.

\textbf{Space-fractional transport.}
A parallel line of work makes the \emph{spatial} operator nonlocal. Bai,
Guo, Li, Zheng and Zhu \citep{bai2024} introduce a \emph{nonlocal
dispersive optimal transport} (NDOT) intended to describe anomalous
transport in heterogeneous media, where agents exhibit both long-range
spatial jumps and long-time memory. The classical continuity constraint
of the Benamou--Brenier problem is replaced by a space--time fractional
partial differential equation combining a Caputo derivative in time with
a fractional Laplacian in space, and the transport problem minimises a
fractional kinetic energy under this constraint. The authors solve the
NDOT formulation numerically with a general-proximal primal--dual hybrid
gradient (G-Prox PDHG) algorithm, together with a preconditioner derived
from the discretisation of the space--time fractional PDE to accelerate
convergence; their experiments report reduced kinetic-energy cost
relative to integer-order transport in heterogeneous media. As with the
Schr\"odinger bridge, we record the model for its structural place in
the nonlocal programme and leave the computation aside.

\textbf{Quantum bridges.}
A different nonlocal ingredient enters when the interpolating dynamics is
governed by the Schr\"odinger equation rather than a stochastic
differential equation. Bordyuh, Clevert and Bertolini \citep{bordyuh2025}
formulate a \emph{quantum Schr\"odinger bridge problem} (QSBP) from a
Lagrangian, dynamical-optimal-transport perspective, and show that its
evolution equations are driven by the Bohm (quantum) potential, which
supplies the nonlocality that distinguishes the quantum bridge from
classical stochastic dynamics. Solving the Fokker--Planck and
Hamilton--Jacobi equations that arise in this Lagrangian formulation,
they derive exact closed-form solutions between Gaussian endpoints: as in
the classical bridge the solution remains a Gaussian process, but the
covariance evolves differently on account of the quantum potential. The
construction is aimed at generative modelling, and, in keeping with the
rest of this chapter, we note it without pursuing the algorithmic side.

\textbf{Time-fractional bridges.}
A distinct nonlocal variant replaces the temporal derivative rather than
the spatial one. Li, Li and Wang \citep{li2025multiscale} formulate a
\emph{multiscale nonlocal-in-time Schr\"odinger bridge problem}, in
which the interpolating flow is constrained not by the ordinary
continuity equation but by a time-fractional Wasserstein gradient flow
with advective flux, the Caputo derivative $\partial_t^\alpha$
($\alpha\in(0,1)$) encoding memory and subdiffusive transport. Writing
the first-order optimality conditions for the resulting control problem
yields a strongly coupled nonlinear system consisting of a forward
time-fractional Fokker--Planck equation and a backward time-fractional
Hamilton--Jacobi--Bellman equation --- the time-fractional analogue of
the forward--backward pair that governs the classical bridge. Relative
to that classical analogue, the cost functional carries an additional
term involving the fractional derivative, which is the source of the
genuinely multiscale-in-time behaviour; after a change of variables the
problem can be recast as a minimisation constrained by a time-fractional
transport equation. The authors solve the system numerically with a
G-Prox primal--dual hybrid gradient scheme whose preconditioner is built
from the fractional time discretisation. As with the spatial variants
above, the construction sits within the nonlocal programme of this
chapter --- here the nonlocality is in time rather than in space --- and
we again leave the computational details aside.

\subsection{The Geometric Viewpoint I: First-Order}
\label{subsec:levy-geom}

The geometric viewpoint asks for the analogue, under a jump reference,
of Otto's construction. The answer --- a nonlocal Benamou--Brenier
formula --- was developed by Erbar \citep{erbar2014} for the
gradient-flow structure and by Slep\v cev and Warren
\citep{slepcev2023nonlocal} in the general metric setting; we follow the latter.\\

\textbf{Nonlocal vector calculus.}
Fix a symmetric jump kernel $J(x,y)=J(y,x)\ge0$ and an interpolation
function $\Theta:[0,\infty)^2\to[0,\infty)$, symmetric, jointly concave,
positively $1$-homogeneous. The nonlocal gradient of a function is
$\ngrad\phi(x,y)=\phi(y)-\phi(x)$; the nonlocal divergence of an
antisymmetric flux $V(x,y)=-V(y,x)$ is
$\ndivg V(x)=\int V(x,y)J(x,y)\dd y$, and a nonlocal integration by
parts holds. A curve of measures satisfies the nonlocal continuity
equation
\begin{equation}
\partial_t\rho_t+\ndivg V_t=0 .
\label{eq:nlcontinuity}
\end{equation}

\begin{definition}[Nonlocal Wasserstein distance
{\citep{slepcev2023nonlocal, erbar2014}}]\label{def:nlW}
\begin{equation}
\begin{aligned}
\WW_{J,\Theta}(\mu_0,\mu_1)^2
=\inf\Bigl\{\frac12\int_0^1\!\!\iint
&\frac{|V_t(x,y)|^2}{\Theta\bigl(\rho_t(x),\rho_t(y)\bigr)}
\,J(x,y)\dd x\dd y\dd t\;:\\[-1mm]
&\ \partial_t\rho_t+\ndivg V_t=0 ,\ \rho_0=\mu_0,\ \rho_1=\mu_1\Bigr\}.
\end{aligned}
\label{eq:nlBB}
\end{equation}
\end{definition}

For the logarithmic mean $\Theta=\Lam$, the jump semigroup with kernel
$J$, which is \ the fractional heat semigroup for
$J(x,y)=|x-y|^{-d-2s}$, is the gradient flow of the entropy in this
metric \citep{erbar2014}, extending
Theorem~\ref{thm:masterGF} from graphs to $\R^d$; for
translation-invariant kernels the induced distance is comparable to a
fractional Sobolev norm. The framework simultaneously contains the
discrete metric of Definition~\ref{def:maasmetric} (take a graph as base
space) and, as we shall see, the classical $W_2$ as a limit. Notice,
however, the discrete lesson at work: only $\Theta=\Lam$ yields the
gradient-flow identification, while the metric theory below is
developed for general $\Theta$.\\

\textbf{The topology of $\WW_{J,\Theta}$.}
The topology induced by $\WW_{J,\Theta}$ depends delicately on the
weight kernel near the origin and on the boundary value $\Theta(1,0)$ of
the interpolation, through the cost of expelling mass from a Dirac. The
key estimate is the following (in the notation of
Definition~\ref{def:nlW}, $J$ is the weight kernel and $\Theta$ the
interpolation, both assumed isotropic and to satisfy the standing
structural assumptions).

\begin{proposition}[{\cite[Prop.~1.1]{slepcev2023nonlocal}}]
\label{prop:expel}
Let $\nu$ be any compactly supported probability measure singular to
$\delta_0$. Depending on $J$ and $\Theta$:
\begin{enumerate}
\item[\rm(i)] if $\Theta(1,0)=0$ and $\int_{B(0,1)}J(|y|)\dd y<\infty$,
then $\WW_{J,\Theta}(\delta_0,\nu)=\infty$;
\item[\rm(ii)] if $\Theta(1,0)>0$ and $\int_{B(0,1)}J(|y|)\dd y<\infty$,
then $\infty>\WW_{J,\Theta}(\delta_0,\nu)\ge\bigl(2\int_{\R^d}J(|y|)\dd
y\bigr)^{-1/2}$;
\item[\rm(iii)] if $J$ has an algebraic singularity at the origin, i.e.\
$J(|y|)\ge c\,|y|^{-d-s}$ for some $s>0$ and small $|y|$, then
$\WW_{J,\Theta}(\delta_0,m_{B(0,\delta)})\le C\delta^{s/2}$, so that
$\inf_{\nu\perp\delta_0}\WW_{J,\Theta}(\delta_0,\nu)=0$.
\end{enumerate}
\end{proposition}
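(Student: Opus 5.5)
The plan is to treat the three cases separately. All three rest on the nonlocal continuity equation \eqref{eq:nlcontinuity} with the action \eqref{eq:nlBB}. The unifying observation is that any admissible curve $(\rho_t,V_t)$ starting at $\delta_0$ must move mass out of the atom at the origin. Since $\rho_t$ stays a measure, we decompose $\rho_t=m(t)\delta_0+\rho_t^{\perp}$ and track the atomic mass $m(t)$, which goes from $m(0)=1$ to $m(1)=0$. By the structural assumptions, $\Theta$ is $1$-homogeneous and concave, so the interpolation $\Theta(\rho_t(0),\rho_t(y))$ between the atom and a diffuse density is read as $m(t)\,\Theta(1,0)$ on the singular part (taking the recession function). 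In addition, only flux between $0$ and $y$, weighted by $J(|y|)$, can change $m$. This gives $\dot m(t)=-\int V_t(0,y)J(|y|)\dd y$, with the flux out of the atom being absolutely continuous in $y$.

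For (i), suppose $\Theta(1,0)=0$. The recession value of $\Theta$ at $(\text{atom},\text{diffuse})$ then vanishes, and the only way to have finite action is $V_t(0,\cdot)=0$ $J$-a.e. Hence $\dot m\equiv0$ and $m(1)=1$, contradicting $\nu\perp\delta_0$. To make this rigorous I would regularize: mollify, or note that finite action forces $\int |V_t(0,y)|J\dd y\le (\text{action density})^{1/2}\bigl(\int \Theta(m,\rho_t(y))J\dd y\bigr)^{1/2}$. Integrability of $J$ near $0$ keeps the second factor finite, and homogeneity makes it $O(\text{mass of }\rho^\perp)$-small. A Gronwall-type argument on $1-m(t)$ then shows it stays zero.

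For (ii), the lower bound comes from Cauchy--Schwarz applied to $\dot m$:
\[
|\dot m|^2\le\Bigl(\int\frac{|V_t(0,y)|^2}{\Theta(m,\rho_t(y))}J\dd y\Bigr)\Bigl(\int\Theta(m,\rho_t(y))J\dd y\Bigr).
\]
We bound $\Theta(m,\rho)\le m\Theta(1,0)+\rho\,\Theta(0,1)$ by concavity and homogeneity. The dominant contribution is $m\,\Theta(1,0)\int J$, and the factor $1/2$ and the symmetry of the double integral supply the constant. Integrating $|\dot m|/\sqrt m$ from $1$ to $0$ and applying Cauchy--Schwarz in time yields $\WW\ge(2\int J)^{-1/2}$ after normalization. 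For finiteness, we construct an explicit curve: we emit mass from $0$ with the flux $V_t(0,y)\propto m(t)\,\phi(y)$ for a fixed profile $\phi$ reaching a small ball. Its action is finite because $\Theta(1,0)>0$. We then connect the resulting diffuse measure to $\nu$ using finite-distance estimates between absolutely continuous compactly supported measures, taken from the standing theory or from a chaining of small jumps.

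For (iii), we give an upper bound by construction, using scaling. We interpolate from $\delta_0$ to the uniform measure $m_{B(0,\delta)}$ by a self-similar family (for instance $\rho_t=(1-t)\delta_0+t\,m_{B(0,\delta)}$) with flux supported on $\{0\}\times B(0,\delta)$. We estimate the action, using $J\ge c|y|^{-d-s}$ and a rescaling $y=\delta z$, which produces the factor $\delta^{s}$ and hence a distance $\le C\delta^{s/2}$. Letting $\delta\to0$ gives the infimum zero.

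The main obstacle is handling $\Theta$ evaluated at an atom versus a density in (i) and (ii), that is, making the recession/interpolation at the singular part precise in the weak formulation of Definition~\ref{def:nlW}. I would first try to reduce to a one-dimensional ODE for $m(t)$ via the concavity bound. In (iii), a further subtlety is that the naive linear interpolation may have infinite action near $t=0$ if $\Theta(1,0)=0$. In that case I would instead use a dyadic cascade of shells $B(0,2^{-k}\delta)$, summing a geometric series of costs $\sim(2^{-k}\delta)^{s/2}$.
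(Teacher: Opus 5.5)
\paragraph{Main gap: the atom balance law.} Your opening observation is the right one, and you should use it literally. In the measure formulation the mobility on the edges $\{0\}\times\R^d$ is \emph{exactly} $m(t)\,\Theta(1,0)$, because $\mathcal{L}\otimes\rho_t$ gives no mass to $\{0\}\times\R^d$. So you never need to compare $\Theta(m,\rho_t(y))$ with anything.

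The genuine gap is the balance law $\dot m(t)=-\int V_t(0,y)J(|y|)\dd y$. You state it as a general fact, and (i) rests entirely on it. It is not a general fact. In case (iii) with $\Theta(1,0)=0$, your own dyadic cascade dissolves the atom at $t=0^+$ through ever shorter jumps. Meanwhile the flux on the edges at $0$ vanishes identically, since every $m_{B(0,2^{-k}\delta)}$ is absolutely continuous.

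The hypothesis $\int_{B(0,1)}J<\infty$ is precisely what separates (i) from (iii), and you never use it for this purpose. Your Gronwall sketch cannot substitute for it. Even granting that your second factor is $O(1-m)$, Cauchy--Schwarz only gives $\frac{d}{dt}(1-m)\le C\sqrt{A_t}\,\sqrt{1-m}$, with $A_t$ the action density. Like $y'=\sqrt{y}$, this does not force $1-m\equiv0$.

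\paragraph{The missing argument for (i).} Concavity, symmetry and $\Theta(1,1)=1$ give $\Theta(a,b)\le\frac{a+b}{2}$. Hence the total mobility is at most $\int_{\R^d}J$, which is finite under the standing assumptions. Cauchy--Schwarz then yields $\int_0^1\!\iint J\,\dd|V_t|\,\dd t\le(2\mathcal{A}\int J)^{1/2}<\infty$ for any curve of finite action $\mathcal{A}$.

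Now test the weak continuity equation with continuous $0\le\varphi_\epsilon\le1$, $\varphi_\epsilon(0)=1$, $\mathrm{supp}\,\varphi_\epsilon\subset B(0,\epsilon)$, and let $\epsilon\to0$ by dominated convergence. This shows that $m(t)-m(0)$ equals the time-integrated flux on edges incident to $0$. That flux is zero when $\Theta(1,0)=0$, so $m\equiv1$, contradicting $\nu\perp\delta_0$. The same finiteness of $J|V_t|$ is what legitimises your ODE for $m$ in (ii).

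\paragraph{Lower bound in (ii).} The inequality $\Theta(m,\rho)\le m\Theta(1,0)+\rho\,\Theta(0,1)$ is backwards. Concavity plus $1$-homogeneity make $\Theta$ superadditive, so the reverse holds; for the logarithmic or geometric mean your version would force $\Theta\le0$.

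With the exact mobility $m\Theta(1,0)$ your argument does work. In the normalisation of Definition~\ref{def:nlW} it yields $\WW(\delta_0,\nu)\ge2\bigl(\Theta(1,0)\int J\bigr)^{-1/2}$. This becomes the $\Theta$-free bound only after you use $\Theta(1,0)\le\frac12$. Alternatively, the total-mobility estimate above shows that total variation changes at rate at most $(A_t\int J/2)^{1/2}$, which already implies the stated bound.

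\paragraph{Finiteness in (ii).} A flux $V_t(0,\cdot)\propto m(t)\phi$ empties the atom only exponentially, never at $t=1$. Take instead, e.g., $m(t)=(1-t)^2$, for which $\int_0^1\dot m^2/m\,\dd t<\infty$. Also, $\nu$ is not assumed absolutely continuous. The last leg therefore needs either the $\sqrt{\mathrm{TV}}$ upper bound of Theorem~\ref{thm:swtopology}, or a time-reversed emission step into the singular part of $\nu$ (possible because $\Theta(1,0)>0$).

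\paragraph{Case (iii).} The linear interpolation has infinite action at every time if $\Theta(1,0)=0$. If $\Theta(1,0)>0$, it diverges like $\int\dd t/(1-t)$ near $t=1$, which a reparametrisation fixes.

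The cascade is the right route for general $\Theta$, but it needs three ingredients you do not supply:
\begin{itemize}
\item The dilation bound $\WW_J(\rho_0^\lambda,\rho_1^\lambda)\le\lambda^{s/2}\WW_K(\rho_0,\rho_1)$ for $\lambda\le1$ and $K=c|y|^{-d-s}\mathbf{1}_{\{|y|\le r_0\}}$. This uses $\lambda^{-d-s}K(\cdot/\lambda)\le K\le J$ and monotonicity of the action in the kernel.
\item Finiteness of $\WW_K(m_{B(0,1)},m_{B(0,2)})$. When $\Theta(1,0)=0$ this means pushing mass into a region of zero density, which works because $\Theta(a,b)\ge\min(a,b)$.
\item Lower semicontinuity of $\WW$, to pass from $m_{B(0,2^{-k}\delta)}$ to $\delta_0$.
\end{itemize}
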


Slep\v cev and Warren emphasise that Proposition~\ref{prop:expel} is
\emph{not} a trichotomy: the intermediate case in which
$\int_{B(0,1)}J(|y|)\dd y=\infty$ but $J$ grows no faster than $|y|^{-d}$
near the origin is left open. The two integrable regimes (i)--(ii) are
sharpened into a global topological statement.

\begin{theorem}[{\citealp[Thm.~1.2]{slepcev2023nonlocal}}]\label{thm:swtopology}
Suppose $J$ and $\Theta$ satisfy the standing assumptions.
If $\Theta(1,0)>0$ and $\int_{B(0,1)}J(|y|)\dd y<\infty$, then on a
compact domain there is a constant $C$ with
\[
\tfrac1C\,\mathrm{TV}(\mu,\nu)^{1/2}\le\WW_{J,\Theta}(\mu,\nu)
\le C\,\mathrm{TV}(\mu,\nu)^{1/2}
\qquad\text{for all }\mu,\nu ,
\]
so $\WW_{J,\Theta}$ metrizes the \emph{strong} topology. Conversely, if
$\int_{B(0,1)}J(|y|)\dd y=\infty$ while $J$ has finite second moment,
then one has only the lower bound
$W_1(\mu,\nu)\le C\,\WW_{J,\Theta}(\mu,\nu)$; and if moreover
$J(|y|)\ge c\,|y|^{-d-s}$ for some $s>0$ near the origin, then on a
compact domain $\WW_{J,\Theta}$ metrizes the \emph{weak} topology.
\end{theorem}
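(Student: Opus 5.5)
The theorem has two halves, and I would prove them separately. In each, the upper bound comes from building an explicit competitor in the nonlocal Benamou--Brenier problem \eqref{eq:nlBB}, and the lower bound comes from testing the nonlocal continuity equation \eqref{eq:nlcontinuity} against suitable functions.

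\emph{Strong-topology case} ($\Theta(1,0)>0$, $\int_{B(0,1)}J<\infty$). The first step is the lower bound $\mathrm{TV}^{1/2}\lesssim\WW_{J,\Theta}$. Take an admissible pair $(\rho_t,V_t)$ and a test function $\phi$ with $|\phi|\le1$. Then
\[
\Bigl|\int\phi\,\dd(\mu_1-\mu_0)\Bigr|=\Bigl|\tfrac12\int_0^1\!\!\iint\ngrad\phi\,V_t\,J\Bigr|.
\]
Cauchy--Schwarz bounds the right-hand side by
\[
\Bigl(\int_0^1\!\!\iint\frac{|V|^2}{\Theta}J\Bigr)^{1/2}\Bigl(\int_0^1\!\!\iint|\ngrad\phi|^2\,\Theta(\rho_t(x),\rho_t(y))\,J\Bigr)^{1/2}.
\]
By $1$-homogeneity and concavity, $\Theta(a,b)\le\Theta(1,1)\,\tfrac{a+b}{2}$. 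On a compact domain $J$ is integrable at the origin and bounded elsewhere, so $\int J(x,y)\,\dd y\le \|J\|_{L^1}$ uniformly. The second factor is therefore at most $C$ times the total mass, which is $1$. Hence $\mathrm{TV}\le C\,\WW$, and this is a linear bound, not only a square-root one.

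To get the stated $\mathrm{TV}^{1/2}$ rate I will refine the lower bound by restricting the flux to the moving set where the two measures differ. The sharper estimate should come from observing that only a mass of size $\mathrm{TV}$ needs to move, so the second factor scales like $\mathrm{TV}$. Combined with the Cauchy--Schwarz step this gives $\mathrm{TV}\le C\,\WW\cdot\mathrm{TV}^{1/2}$, which is the desired inequality.

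The second step is the upper bound. Write $\mu=\lambda+\mu'$ and $\nu=\lambda+\nu'$ with common part $\lambda=\mu\wedge\nu$, so that $|\mu'|=|\nu'|=m$ with $m\simeq\mathrm{TV}$. Only the mismatched mass $m$ is moved, along a path whose flux is a product flux $V(x,y)\propto \mu'(x)\nu'(y)-\nu'(x)\mu'(y)$ (suitably mollified) acting across the domain. The key input is $\Theta(1,0)>0$: homogeneity gives $\Theta(a,b)\ge\Theta(1,0)\max(a,b)$-type lower bounds along the path. With this, the action is of order $m^2/m=m$, so $\WW\le C\,m^{1/2}$. This is the same mechanism as the finite-distance estimate in Proposition~\ref{prop:expel}(ii), and I will copy that construction with $\delta_0$ replaced by the mismatched mass.

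\emph{Weak-topology case} ($\int_{B(0,1)}J=\infty$, finite second moment). The bound $W_1\le C\,\WW$ follows from the same duality with $\phi$ $1$-Lipschitz. Then $|\ngrad\phi|^2\le|x-y|^2$, and the finite second moment of $J$ makes the second factor bounded. For the weak topology, the direction ``$\WW\to0$ implies weak convergence'' then follows from this $W_1$ bound. For the converse direction I will use Proposition~\ref{prop:expel}(iii). Approximate $\mu$ and $\nu$ by empirical or block measures on a grid of size $\delta$. Smearing each atom into the ball $B(\cdot,\delta)$ costs $C\delta^{s/2}$ by (iii). Rearranging masses between nearby balls then costs little, by the same local flux construction glued with a triangle inequality.

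The main obstacle is this last gluing step. Summing the local costs requires subadditivity of $\WW^2$ under mixtures, which follows from the joint convexity of $(V,\rho)\mapsto|V|^2/\Theta(\rho,\rho)$; that convexity comes from the concavity of $\Theta$. I will establish this convexity first and then control the number of cells against $\delta^{s}$. A second point to check carefully is the $\mathrm{TV}^{1/2}$ refinement of the lower bound in the strong-topology case.
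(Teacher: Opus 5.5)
Your duality step is correct, and it already gives the right lower bound, $\mathrm{TV}(\mu,\nu)\le C\,\WW_{J,\Theta}(\mu,\nu)$. The gap is the proposed refinement to $\mathrm{TV}^{1/2}\le C\,\WW_{J,\Theta}$, and no refinement can close it, because that inequality is false.

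Your argument for it also fails on its own terms. The second Cauchy--Schwarz factor $\int_0^1\!\iint|\ngrad\phi|^2\,\Theta(\rho_t(x),\rho_t(y))\,J$ involves the whole density $\rho_t$ (total mass one) of an \emph{arbitrary} competitor path. A lower bound must hold for every admissible path, so you cannot restrict the flux to the mismatched mass.

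Here is a counterexample. Let $D$ have unit volume and take $J\equiv1$ on $D\times D$ (e.g.\ $J=\mathbf 1_{\{|z|\le R\}}$ with $R\ge\operatorname{diam}D$). Take $\Theta(a,b)=\tfrac{a+b}{2}$, so $\Theta(1,0)=\tfrac12>0$. Set $\mu=\mathbf 1_D\,\dd x$ and $\nu=(1+\eps f)\mathbf 1_D\,\dd x$, with $f=\pm1$ on two halves of $D$.
\begin{itemize}
\item The path $\rho_t=(1+t\eps f)\mathbf 1_D$ with flux $V_t(x,y)=-\eps\bigl(f(x)-f(y)\bigr)$ on $D\times D$ solves \eqref{eq:nlcontinuity}.
\item Since $\Theta(\rho_t(x),\rho_t(y))\ge1-\eps$, the action of this path is at most $\eps^2/(1-\eps)$.
\item Hence $\WW_{J,\Theta}(\mu,\nu)\le\eps/\sqrt{1-\eps}$, while $\mathrm{TV}(\mu,\nu)\simeq\eps$, so $\WW_{J,\Theta}/\mathrm{TV}^{1/2}\to0$.
\end{itemize}

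The correct estimate is $\frac1C\mathrm{TV}\le\WW_{J,\Theta}\le C\,\mathrm{TV}^{1/2}$, and both exponents are sharp. The linear one is attained by the example above. The square root is attained by an atom of mass $m$ in a diffuse background. The atom can only shed mass through pairs $(0,y)$, where the mobility is $\Theta(1,0)$ times its current mass $a(t)$. Cauchy--Schwarz then forces an action $\gtrsim\int_0^1\dot a^2/a\,\dd t\ge4m$.

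Heuristically, $\WW_{J,\Theta}$ behaves here like a Hellinger distance $H$, and $H^2\lesssim\mathrm{TV}\lesssim H$ explains both exponents. The $\mathrm{TV}^{1/2}$ lower bound in the statement as quoted appears to be a misquotation of Slep\v{c}ev--Warren. Your linear bound together with the square-root upper bound already shows that $\WW_{J,\Theta}$ and $\mathrm{TV}$ induce the same topology, so you should stop after your first step.

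Your upper-bound construction also needs repair, although the scaling $\WW_{J,\Theta}^2\sim m$ is right. There are three problems with the product flux $V=c\,(\mu'\otimes\nu'-\nu'\otimes\mu')$:
\begin{itemize}
\item Its divergence is $\ndivg V=c\,[\mu'\,(J*\nu')-\nu'\,(J*\mu')]$. This is proportional to $\mu'-\nu'$ only when $J*\nu'$ and $J*\mu'$ equal one and the same constant on $\operatorname{supp}\mu'$ and $\operatorname{supp}\nu'$ respectively.
\item Even for constant $J$, its action blows up as $\mu'$ and $\nu'$ concentrate. A direct flux from one atom to another has infinite action.
\item Letting mass leave an atom at a constant rate, $a(t)=m(1-t)$, makes $\int_0^1\dot a^2/a\,\dd t$ diverge.
\end{itemize}

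Instead, route the mismatched mass through a diffuse intermediate measure, roughly $m$ times normalized Lebesgue measure on a cell. Use a Hellinger-type profile such as $a(t)=m(1-t)^2$, for which $\int_0^1\dot a^2/a\,\dd t=4m$. The superadditivity bound $\Theta(a,b)\ge\Theta(1,0)(a+b)$ then gives action $\lesssim m$ per step. Decompose over cells smaller than the range on which $J$ is bounded below, and chain with the triangle inequality and the subadditivity of $\WW_{J,\Theta}^2$. This gives $\WW_{J,\Theta}\le C\sqrt m$ on a compact domain, and it is the mechanism of Proposition~\ref{prop:expel}(ii) that you meant to copy.

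The weak-topology half is sound in outline. When $\Theta(1,0)=0$ you need densities bounded below before moving mass between cells, for example by first mixing in $\eps$ times the uniform density. Your subadditivity argument controls the cost of that step.
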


Read together, these results show that the geometry of $\WW_{J,\Theta}$
is governed by two independent switches --- whether the kernel $J$ is
integrable at the origin, and whether the interpolation $\Theta$
vanishes on the boundary. When $J$ is integrable and $\Theta(1,0)=0$, as
for the logarithmic mean, the space is highly disconnected: expelling a
Dirac costs infinite action, so a delta mass cannot be moved at finite
cost. When $J$ is integrable but $\Theta(1,0)>0$, as for the arithmetic
mean, the distance is bi-Lipschitz to $\sqrt{\mathrm{TV}}$ and metrizes
the strong topology. When $J$ has a non-integrable algebraic singularity
$J(|y|)\gtrsim|y|^{-d-s}$, as for the $s$-stable jump kernel driving
fractional diffusion, it metrizes the weak topology, exactly as the
classical Wasserstein distance does. The disconnectedness of the first
regime has no analogue on a finite graph
(Sect.~\ref{subsec:discrete-geom}): it is a purely infinite-dimensional
phenomenon, and the first sign that second-order analysis will be
delicate.\\

\textbf{The Localization Limit.}
The precise bridge back to the local world rescales the kernel:
$J_\eps(x-y)=\eps^{-d}J\bigl((x-y)/\eps\bigr)$ concentrates jumps at
scale $\eps$.

\begin{theorem}[{\citealp[Thm.~1.3]{slepcev2023nonlocal}}]
\label{thm:localization}
Assume $J$ has finite second moment $M_2(J)$, and let $\mu,\nu$ be
supported in a fixed domain. There is a constant $C$ such that for all
$\eps\in(0,1]$
\[
\sqrt{\tfrac{M_2(J)}{2d}}\;\eps\,\WW_{J_\eps,\Theta}(\mu,\nu)
\;\le\;\bigl(1+\sqrt\eps\bigr)^2 W_2(\mu,\nu)+C\sqrt\eps ,
\]
\[
W_2(\mu,\nu)^2\;\le\;\tfrac{M_2(J)}{2d}\,\eps^2\,
\WW_{J_\eps,\Theta}(\mu,\nu)^2+C\sqrt\eps .
\]
In particular, on measures supported in a compact domain,
$\sqrt{M_2(J)/2d}\;\eps\,\WW_{J_\eps,\Theta}\to W_2$ as $\eps\to0$ in the
sense of Gromov--Hausdorff. The upper bound is proved by convolving the
optimal $W_2$ interpolation into an exact solution of the nonlocal
continuity equation; the lower bound follows from a dual formulation via
nonlocal Hamilton--Jacobi \emph{subsolutions},
\begin{equation}
\partial_t\phi(t,x)+\frac12\int
\bigl(\phi(t,y)-\phi(t,x)\bigr)^2 J(x,y)\dd y\;\le\;0 ,
\label{eq:nlHJ}
\end{equation}
the nonlocal replacement of the eikonal constraint
$\partial_t\phi+\tfrac12|\nabla\phi|^2\le0$ of the local duality.
\end{theorem}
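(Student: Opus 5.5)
Two inequalities are needed, and the plan handles them separately. The upper bound on $\eps\,\WW_{J_\eps,\Theta}$ comes from a competitor construction. The lower bound on $W_2$ comes from the Hamilton--Jacobi duality \eqref{eq:nlHJ}. In both directions the normalising constant $M_2(J)/2d$ enters through the same Taylor computation. For isotropic $J$ with finite second moment, and a smooth $\phi$,
\[
\frac12\int(\phi(y)-\phi(x))^2J_\eps(x-y)\dd y=\frac{\eps^2M_2(J)}{2d}\,|\nabla\phi(x)|^2+O(\eps^3\|\nabla^2\phi\|\,\|\nabla\phi\|).
\]
The second moment tensor $\int z\otimes z\,J(z)\dd z$ equals $(M_2/d)\,\mathrm{Id}$, which is where the $d$ comes from.

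For the upper bound, take the McCann displacement interpolation $(\rho_t,\nabla\varphi_t)$ between $\mu$ and $\nu$, and mollify it at scale $\sqrt\eps$. Set $\tilde\rho_t=\rho_t*\eta_{\sqrt\eps}$ and $\tilde m_t=(\rho_t\nabla\varphi_t)*\eta_{\sqrt\eps}$. This gives smooth, strictly positive densities and fluxes, at a $W_2$-cost of $(1+\sqrt\eps)$ type factors at the endpoints; the endpoints are joined by short heat-flow segments whose cost is $O(\sqrt\eps)$.

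Next I must convert the local flux into a nonlocal one. I would set $V_t(x,y)=\frac{d}{M_2\eps^2}\,\Theta(\tilde\rho_t(x),\tilde\rho_t(y))\,(\psi_t(y)-\psi_t(x))$. Here $\psi_t$ solves the weighted elliptic problem $\ndivg V_t=\nabla\cdot\tilde m_t$ exactly; equivalently, one solves a nonlocal Poisson equation perturbatively around $\varphi_t$. Because $\Theta$ is $1$-homogeneous and $\Theta(a,a)=a$, the nonlocal weighted Dirichlet form of $\psi_t$ matches $\frac{M_2\eps^2}{2d}\int|\nabla\psi_t|^2\tilde\rho_t$ up to relative errors $O(\sqrt\eps)$, since $\tilde\rho_t$ varies only on scale $\sqrt\eps\gg\eps$. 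Integrating in time gives the first inequality.

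For the lower bound, I would use the dual formula for $\WW_{J_\eps,\Theta}$. Concavity of $\Theta$ gives $\Theta(a,b)\le\frac{a+b}{2}$ type control, so any $\phi$ satisfying \eqref{eq:nlHJ} has $\int\phi_1\dd\nu-\int\phi_0\dd\mu\le\frac12\WW_{J_\eps,\Theta}^2$ in the rescaled units.

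I then take a Hopf--Lax (inf-convolution) solution $\phi_t$ of the local Hamilton--Jacobi equation that attains the $W_2$ Kantorovich duality. I regularise it by a small semiconcavity and mollification step, and rescale time by the factor $\frac{M_2\eps^2}{d}$, so that the Taylor expansion above turns it into a nonlocal subsolution up to an error term. The error is absorbed by subtracting $C\sqrt\eps\,t$ from $\phi$, using boundedness of the domain. Compact support gives uniform Lipschitz bounds, and semiconcavity controls the second-order remainder, yielding $W_2^2\le\frac{M_2}{2d}\eps^2\WW^2+C\sqrt\eps$.

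Gromov--Hausdorff convergence then follows immediately from the two-sided bounds.

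The main obstacle is the upper-bound step of realising the local flux as a nonlocal flux with the sharp constant. The interpolation $\Theta(\tilde\rho(x),\tilde\rho(y))$ must be replaced by $\tilde\rho(x)$ with only $O(\sqrt\eps)$ relative loss, uniformly in $t$, even where $\tilde\rho$ is small. I would first try a lower bound $\tilde\rho\gtrsim e^{-C/\eps}$ from the Gaussian mollifier, combined with the logarithmic Lipschitz bound $|\nabla\log\tilde\rho|\lesssim\eps^{-1/2}$, so that $\Theta(\tilde\rho(x),\tilde\rho(y))/\tilde\rho(x)=1+O(\sqrt\eps)$ on jumps of size $\eps$.
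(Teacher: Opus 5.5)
Your lower bound follows the paper's route: weak duality against nonlocal Hamilton--Jacobi subsolutions built from a regularised Hopf--Lax solution. The upper bound, however, has a genuine gap at its central step.

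You define $V_t=\frac{d}{M_2\eps^2}\,\Theta\,\ngrad\psi_t$ by solving $\ndivg V_t=\nabla\cdot\tilde m_t$, and then assert that its action is $1+O(\sqrt\eps)$ times the local one ``since $\tilde\rho_t$ varies only on scale $\sqrt\eps$''. Slow variation of the weight is not what that Taylor expansion needs. It needs $\psi_t$ itself to be smooth on scales $\gg\eps$, i.e.\ quantitative regularity for inverting the degenerate, $\tilde\rho_t$-weighted operator $\psi\mapsto\ndivg(\Theta\,\ngrad\psi)$. That is precisely what \S\ref{subsec:levy-second} records as unavailable: for integrable $J$ the operator is of order zero, so it gives no control of $\psi_t$ below scale $\eps$, and its coercivity degenerates where $\tilde\rho_t$ is small.

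Linearising around $\varphi_t$ does not help either. The displacement potential has no uniform $C^{1,1}$ bound: its Hessian bounds degenerate as $t\to0,1$ and off $\operatorname{supp}\rho_t$, where your mollified density lives. Also, $\tilde m_t/\tilde\rho_t$ is not a gradient, so the residual is not small.

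The missing idea is to skip the elliptic problem and write the nonlocal flux explicitly in terms of the local one. This is the route the paper indicates: convolving the optimal $W_2$ interpolation into an exact solution of the nonlocal continuity equation. For instance, for radial $J$, let $\kappa_\eps$ be the radial probability density with $\nabla\kappa_\eps(z)=-\frac{d}{\eps^2M_2}\,z\,J_\eps(z)$. The antisymmetric flux $V(x,y)=\frac{d}{\eps^2M_2}(y-x)\cdot\bigl(m(x)+m(y)\bigr)$ then satisfies $\ndivg V=\nabla\cdot(m*\kappa_\eps)$ exactly, so $\rho_t*\kappa_\eps$ solves the nonlocal continuity equation. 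The inequality $\frac{(\alpha+\beta)^2}{(a+b)/2}\le2\bigl(\frac{\alpha^2}{a}+\frac{\beta^2}{b}\bigr)$ then bounds the action by $\frac{2d}{\eps^2M_2}\int|m|^2/\rho$, which is exactly the sharp constant. Your $\Theta\,\ngrad\psi_t$ minimises the action among fluxes with that divergence, so this competitor would bound it too — but then solving for $\psi_t$ is superfluous.

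What remains is to compare $\Theta$ at the convolved density with the arithmetic mean of $\rho(x),\rho(y)$; this is your ``main obstacle'', and your proposed fix for it is false. Gaussian mollification at scale $\sqrt\eps$ gives $|\nabla\log\tilde\rho|$ of order $1/\eps$ in general, not $\eps^{-1/2}$. For $\rho=\frac12(\delta_{-1}+\delta_1)$ one has $|\nabla\log\tilde\rho(\tfrac12)|\approx\frac1{2\eps}$, so the ratio is not $1+O(\sqrt\eps)$ on jumps of size $\eps$. The lower bound $\tilde\rho\gtrsim e^{-C/\eps}$ does not help with this. A mollifier with exponential tails, $\kappa(z)\propto e^{-|z|/\sqrt\eps}$, does work: $(\rho*\kappa)(x+h)\ge e^{-|h|/\sqrt\eps}(\rho*\kappa)(x)$ for every $\rho$, at $W_2$-cost $O(\sqrt\eps)$.

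Two further points. First, joining $\mu$ to its mollification must be paid for in $\WW_{J_\eps,\Theta}$, and the $W_2$-length of a heat-flow segment says nothing about that cost. For singular $\mu$ this cost is governed by Proposition~\ref{prop:expel} (it is infinite when $\Theta(1,0)=0$ and $J$ is integrable), so this is where hypotheses on $(J,\Theta)$ must enter. Second, in the lower bound, \eqref{eq:nlHJ} together with the action of Definition~\ref{def:nlW}, Young's inequality and $\Theta(a,b)\le\frac{a+b}{2}$ gives $\int\phi_1\dd\nu-\int\phi_0\dd\mu\le\frac14\WW_{J_\eps,\Theta}^2$. The $\frac12$ you quote costs a factor $2$ in $M_2(J)/2d$.
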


\begin{remark}
First, \eqref{eq:nlHJ} is a \emph{subsolution} condition
only: no strong nonlocal geodesic (eikonal) equation is currently
available, which is the first-order shadow of the second-order gap in
Sect.~\ref{subsec:levy-second}. Second, the theorem is the continuous
counterpart of the discrete Gromov--Hausdorff limit of
Sect.~\ref{subsec:discrete-geom}; its numerical face is
Fig.~\ref{fig:localization}, computed with the tools of
Sect.~\ref{subsec:levy-example}.
\end{remark}

\begin{figure}[t]
\centering
\includegraphics[width=\textwidth]{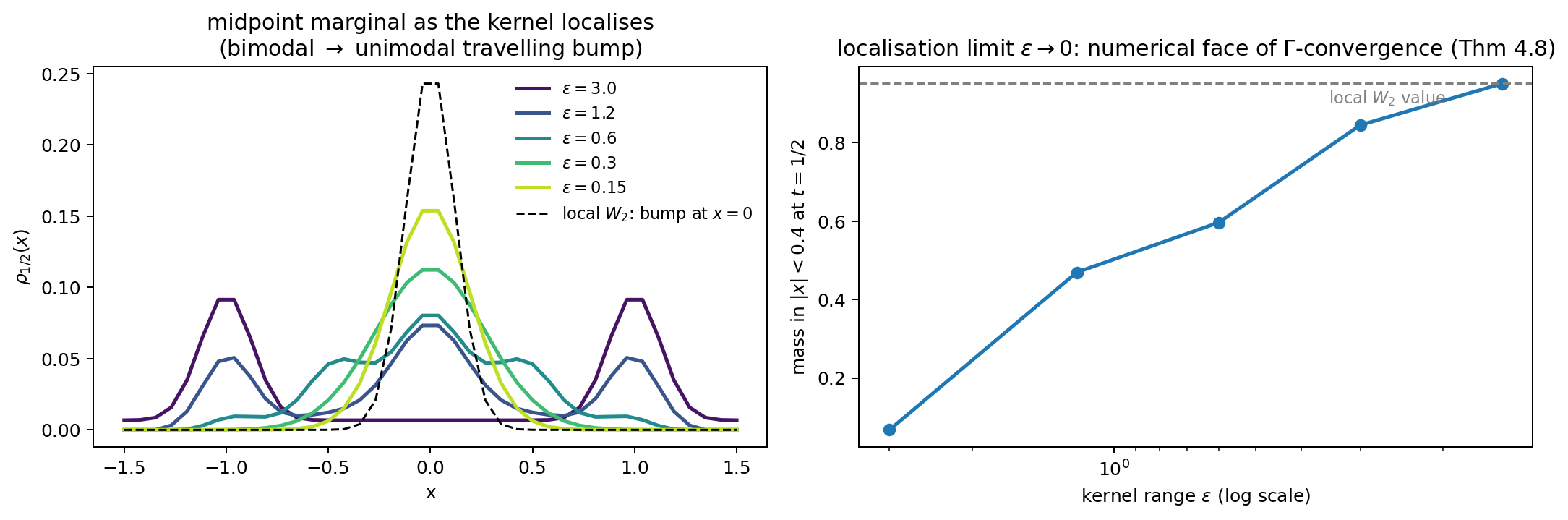}
\caption{Numerical face of Theorem~\ref{thm:localization}. Geodesics
between two separated bumps for the window kernel
$J_\eps=\mathbf 1\{|x-y|\le\eps\}$. \emph{Left:} the midpoint marginal
deforms continuously from the bimodal ``teleport'' profile to the
unimodal travelling bump of $W_2$ as $\eps$ shrinks. \emph{Right:} mass
in the central region at $t=\tfrac12$ versus $\eps$; at $\eps=0.15$
(nearest-neighbour range on the grid) the local value $0.951$ is
recovered exactly. Setting and code: Sect.~\ref{subsec:levy-example}
and Appendix~A.3.}
\label{fig:localization}
\end{figure}

\textbf{Neighbouring geometries.}
Four constructions extend the same variational template in different
directions, and we record them for orientation. \emph{Covariance-modulated
transport} \citep{burger2025} weights the kinetic action by the inverse
covariance of the current measure,
$\mathcal E=\tfrac12\int(v-\bar v)^{\!\top}\Sigma_\rho^{-1}(v-\bar v)
\dd\rho$, yielding the gradient-flow geometry of mean-field ensemble
Kalman methods, Gaussianity-preserving and weakly equivalent to $W_2$.
\emph{Configuration spaces}: on the space of point configurations,
Dello~Schiavo, Herry and Suzuki \citep{delloschiavo2023} built a
nonlocal continuity equation from the difference operator of the
Poisson space and proved a Benamou--Brenier formula together with Ricci
bounds for Poisson measures --- a rare instance where curvature
statements survive in an infinite-dimensional nonlocal setting, thanks
to the exact structure of the Poisson measure. \emph{Coarse extrinsic
curvature}: Li \citep{li2025coarse} used $W_1$-costs of small balls to
define a coarse mean curvature of submanifolds converging to the
classical one, bridging optimal transport and extrinsic geometry.
\emph{Graph limits}: for kernels supported on graphs the framework
recovers discrete transport distances, closing the triangle with
Sect.~\ref{sec:discrete}.

\subsection{The Geometric Viewpoint II: What Exists, What Breaks, and Open
Problems}
\label{subsec:levy-second}

On
the local side, the second-order theory rests on the Bochner formula;
on graphs, on finite dimensionality. The continuous nonlocal setting
has neither, and each of the four pillars of
Sects.~\ref{subsec:local-second}
and~\ref{subsec:discrete-second} fails for its own reason.\\

\textbf{No Bochner formula.}
For a jump generator $Lf(x)=\int(f(y)-f(x))J(x,y)\dd y$ the carr\'e du
champ is $\Gamma(f)(x)=\tfrac12\int(f(y)-f(x))^2J(x,y)\dd y$, but the
iterated form $\Gamma_2$ does not close: no identity of the type
\eqref{eq:hessent} expresses it through a Hessian plus a curvature term.
Worse, $\alpha$-stable semigroups satisfy \emph{no} Bakry--\'Emery
condition $CD(K,\infty)$ for any $K\in\R$: the analytic engine of the
local second-order theory is not merely unavailable but provably
absent.

\textbf{No tangent-space regularity.}
Second-order calculus in Gigli's sense \citep{gigli2012} presupposes a
robust first-order identification of tangent vectors. Locally, the
elliptic equation $-\nabla\!\cdot\!(\rho\nabla\phi)=\dot\rho$ delivers
it; on graphs, linear algebra does. In the nonlocal continuum, recovering
$\phi$ from $\dot\rho$ means inverting the degenerate, $\rho$-dependent
nonlocal operator $\phi\mapsto\ndivg(\Theta\,\ngrad\phi)$, for which no
regularity theory exists; the tangent space admits only a weak
description, and connections, parallel transport and Jacobi fields are
\emph{not defined}.

\textbf{Topological pathologies.}
Entire regimes of kernels produce
totally disconnected or TV-type geometries in which geodesic calculus
is meaningless; even in the weak-topology regime, geodesics are known
only through the subsolution duality \eqref{eq:nlHJ}, without an
eikonal equation to differentiate.

The deepest obstruction may be structural. Large-deviation analysis of
jump processes produces \emph{cosh-type} generalized gradient
structures \citep{Peletier2022}: the dissipation potential natural to
jumps is not quadratic, so the geometry most compatible with path-space
relative entropy --- that is, with the Schr\"odinger problem itself ---
is of Finsler rather than Riemannian type. If so, the quadratic
nonlocal Wasserstein metric of Definition~\ref{def:nlW} is a convenient
proxy, and a Levi-Civita connection is not merely hard to build but the
wrong object to ask for.

These observations sharpen into a list. (Compare each item with its
solved discrete counterpart in Sect.~\ref{subsec:discrete-second}.)

\begin{openproblem}\label{op:connection}
Construct a second-order calculus (connection, parallel transport,
Jacobi equation) on $(\Prob(\R^d),\WW_{J,\Lam})$ for a nontrivial class
of kernels, or prove an obstruction.
\end{openproblem}
\begin{openproblem}\label{op:gamma2}
Identify a substitute for the $\Gamma_2$-criterion adapted to jump
generators --- e.g.\ convexity of the entropy along
$\WW_{J,\Lam}$-geodesics, or a cosh-type curvature notion --- strong
enough to yield displacement convexity and sharpened functional
inequalities in Sect.~\ref{subsec:levy-fi}.
\end{openproblem}
\begin{openproblem}\label{op:newton}
Derive a Newton equation for L\'evy-driven Schr\"odinger bridges: the
analogue of \eqref{eq:newton} with the Fisher information replaced by a
nonlocal information functional.
\end{openproblem}
\begin{openproblem}\label{op:smallnoise}
Prove (or disprove) that the small-noise limit of the L\'evy
Schr\"odinger bridge is the $\WW_{J,\Theta}$-geodesic. The local proof
of \eqref{eq:gammaOT} uses $\Gamma$-convergence machinery whose
nonlocal counterpart, given the cosh-structure of
\citep{Peletier2022}, cannot be assumed.
\end{openproblem}

This list redeems, in the honest currency of open problems, the promise
of curvature, Jacobi fields and conjugate points made at the opening of
this chapter.

\subsection{Functional Inequalities
in Nonlocal Wasserstein Space}
\label{subsec:levy-fi}

The pattern of Sects.~\ref{subsec:local-first-complete}
and~\ref{subsec:discrete-fi} continues, with partial results. Erbar's
identification of jump semigroups as entropy gradient flows
\citep{erbar2014} yields energy-dissipation identities --- the
first-order half of the LSI story --- and, for stationary point
processes, Huesmann and Stange \citep{huesmann2025} proved that the
Ornstein--Uhlenbeck semigroup is the gradient flow of the specific
relative entropy for a nonlocal transport metric and derived HWI- and
Talagrand-type inequalities in that setting, together with the
existence of solutions of the nonlocal continuity equation between
arbitrary stationary point processes. What is conspicuously missing is
the third vertex of the local triangle: inequalities whose \emph{proof}
requires convexity of the entropy along geodesics --- displacement
convexity, HWI in its curvature form, sharpened Talagrand constants ---
because those proofs consume precisely the second-order input that, as
the next subsection explains, the continuous nonlocal setting does not
possess.

\subsection{Examples}
\label{subsec:levy-example}

\begin{example}[Translate versus teleport]\label{ex:teleport}
How differently do local and nonlocal geodesics actually move mass? Take
two narrow Gaussian bumps at $\mp L$ on $[-1.5,1.5]$
($L=1$, $\sigma=0.12$), discretised on $N=40$ grid points, and compute
the geodesic of three metrics with one convex solver, namely the flux
form of \eqref{eq:nlBB} with $T=12$ time steps:
\begin{equation}
\min_{\rho\ge0,\,V}\ \sum_{t,e}\Delta t\,
\frac{V_{t,e}^2}{J_e\,\Theta_{t,e}}
\quad\text{s.t.}\quad
\frac{\rho_{t+1}-\rho_t}{\Delta t}+\mathrm DV_t=0,\quad
\rho_0,\rho_T\ \text{fixed},
\label{eq:convexB}
\end{equation}
with $\Theta_{t,e}$ the arithmetic mean of the four adjacent densities.
Since $\Theta$ is affine in $\rho$, each summand is representable by a
rotated second-order cone and the program is solved to certified global
optimality. The three metrics are: \emph{local} (nearest-neighbour
edges, $J=1/h^2$: a discretisation of \eqref{eq:BB}); and
\emph{nonlocal} with L\'evy-type weights $J(x,y)=h\,|x-y|^{-(1+2s)}$ on
the complete graph, for $s=0.75$ and $s=0.25$. (On the choice of
$\Theta$: the arithmetic mean keeps the program convex; by the
discussion after Definition~\ref{def:nlW}, the interpolation is free
for metric questions, and the phenomenon below is insensitive to it.)

Figure~\ref{fig:teleport-heat} shows the space--time densities. The
local geodesic is a \emph{travelling ridge}: mass sweeps through every
intermediate location, matching the analytic displacement interpolation.
For $s=0.25$ the geodesic is two \emph{vertical columns}: mass decays in
place at $-1$ and grows in place at $+1$; the intermediate region is
never substantially occupied. The case $s=0.75$ interpolates. The
single-snapshot diagnostic is the midpoint marginal
(Fig.~\ref{fig:teleport-mid}) and the mass it places in the central
region $|x|<0.4$:
\begin{center}
\begin{tabular}{@{}lccc@{}}
\toprule
 & local $W_2$ & nonlocal $s=0.75$ & nonlocal $s=0.25$\\
\midrule
mass in $|x|<0.4$ at $t=\tfrac12$ & $0.951$ & $0.381$ & $0.155$\\
\bottomrule
\end{tabular}
\end{center}
We record the resulting criterion:
\begin{quote}
\emph{Bimodality of the midpoint marginal is the observable fingerprint
of nonlocality.}
\end{quote}
Dynamically, this is the transport-geometric shadow of a fact
established for sample paths in Chapter~2: exit rates of
$\alpha$-stable-driven double-well systems are polynomial in the noise
amplitude \citep{imkeller2006} rather than Kramers-exponential, because
L\'evy paths do not climb barriers --- they jump over them. For
applications, the criterion supplies the mechanism behind the
early-warning discussion of Problem~\ref{prob:ews}: under L\'evy
perturbations the warning signal is not rising variance but the
premature condensation of mass at the target basin
\citep{zhang2025, xu2026}. Finally, shrinking the kernel range
continuously deforms the teleport geodesic into the travelling one ---
this is Fig.~\ref{fig:localization}, already displayed beside
Theorem~\ref{thm:localization}. Code: Appendices A.2--A.3.
\end{example}

\begin{figure}[t]
\centering
\includegraphics[width=\textwidth]{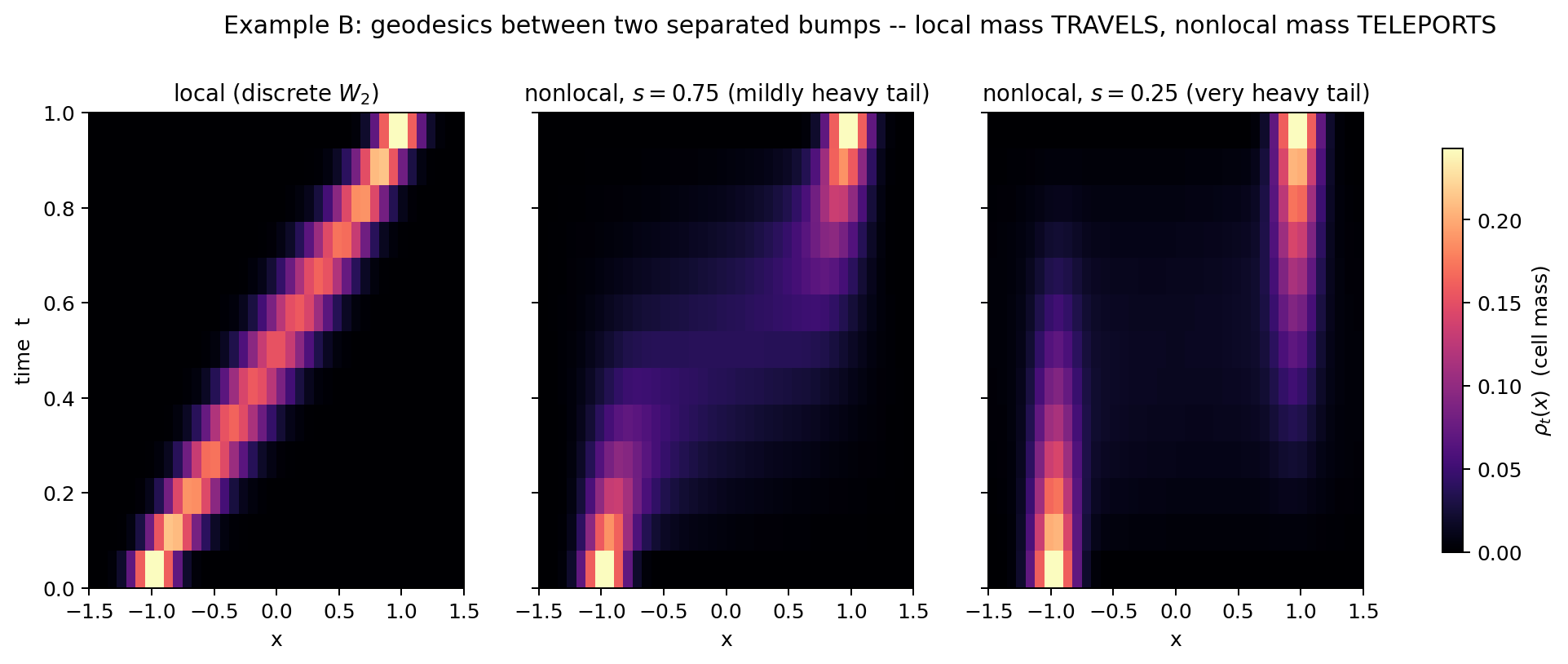}
\caption{Example~\ref{ex:teleport}: space--time densities $\rho_t(x)$ of
the geodesic between two separated bumps. Local mass \emph{travels}
(left: diagonal ridge); heavy-tailed nonlocal mass \emph{teleports}
(right: two vertical columns); $s=0.75$ interpolates.}
\label{fig:teleport-heat}
\end{figure}

\begin{figure}[t]
\centering
\includegraphics[width=\textwidth]{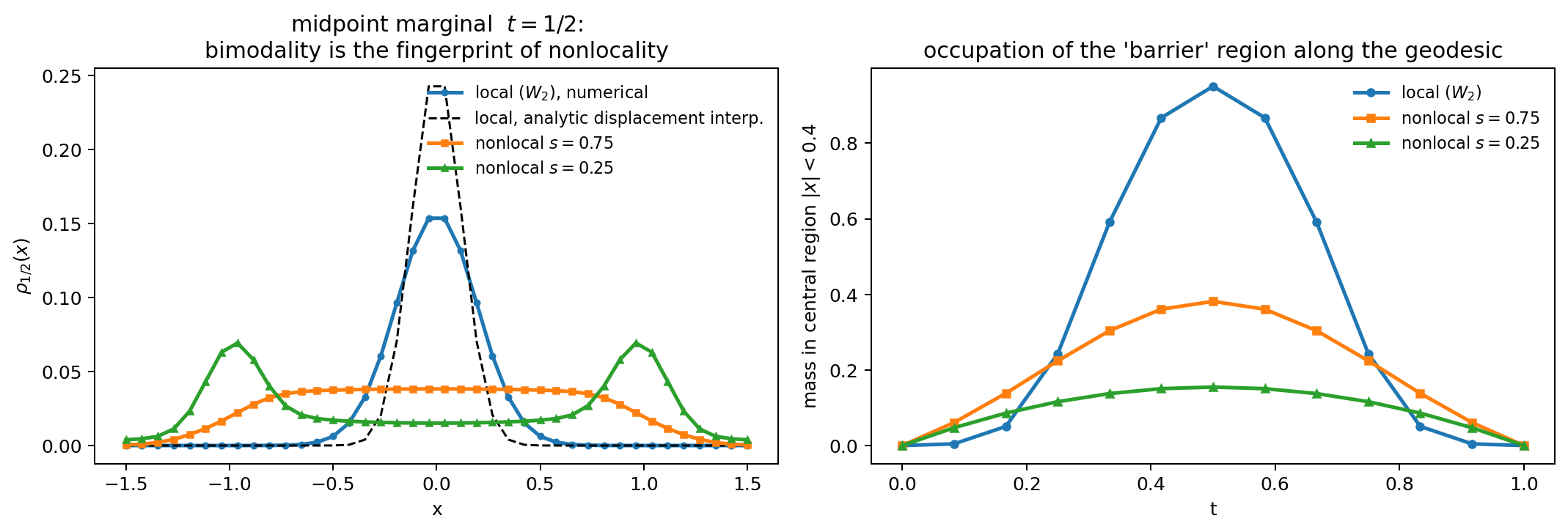}
\caption{Example~\ref{ex:teleport}, the quantitative fingerprint.
\emph{Left:} midpoint marginals $\rho_{1/2}$; the local solution is
unimodal at $x=0$ (dashed: analytic displacement interpolation; the mild
widening of the numerical curve is grid diffusion), while the
heavy-tailed marginal is bimodal with peaks at the endpoint locations
$\pm1$. \emph{Right:} mass in the central ``barrier'' region along the
geodesic: the local curve necessarily exhibits a crossing peak; the
nonlocal curves are flattened.}
\label{fig:teleport-mid}
\end{figure}


\section[Information Geometry]{Information-Geometry}
\label{sec:infogeo}
 
The transport geometries of Sects.~\ref{sec:local}--\ref{sec:levy}
measure the cost of \emph{moving} mass. There is a second, older
geometry on spaces of probability measures that instead measures the
cost of \emph{telling distributions apart}: \emph{information geometry},
founded by Rao, Chentsov and Amari \citep{amari2000,amari2016}. The
Schr\"odinger problem sits precisely at the junction of the two, because
its objective is a relative entropy --- the canonical divergence of
information geometry --- while its constraint set is transport-theoretic.
This section develops the information-geometric language from first
principles, assuming no prior exposure, and then shows how the
Schr\"odinger bridge, the Sinkhorn algorithm, and the second-order
``acceleration'' picture of Sect.~\ref{subsec:local-second} all reappear
as information-geometric objects. 
 
\subsection{Foundations: Fisher Metric, Dual Connections and
Divergences}
\label{subsec:ig-foundations}
 
We begin with the definitions, in the concrete setting of a
finite-dimensional parametric model; no differential geometry is
presupposed beyond the notion of a smooth surface carrying a metric.\\
 
\textbf{Statistical manifolds and the Fisher metric.} A \emph{statistical model} is a family
$S=\{p(\cdot\,;\theta):\theta\in\Xi\subset\R^n\}$ of probability
densities smoothly parametrised by $\theta$. Treating $\theta$ as a
coordinate system makes $S$ an $n$-dimensional manifold, each point of
which \emph{is} a probability distribution. Write
$\ell_\theta(x)=\log p(x;\theta)$ for the log-likelihood and
$\partial_i=\partial/\partial\theta_i$. The \emph{score}
$\partial_i\ell_\theta$ has mean zero,
$\mathbb E_\theta[\partial_i\ell_\theta]=0$, because $\int p\,\dd x=1$;
its covariance is the central object of the theory.
 
\begin{definition}[Fisher information metric]\label{def:fisher}
The \emph{Fisher metric} on $S$ is the Riemannian metric with components
\begin{equation}
g_{ij}(\theta)=\mathbb E_\theta\!\left[
\partial_i\ell_\theta\,\partial_j\ell_\theta\right]
=-\,\mathbb E_\theta\!\left[\partial_i\partial_j\ell_\theta\right].
\label{eq:fisher}
\end{equation}
Equivalently $g_{ij}=4\int\partial_i\sqrt p\,\partial_j\sqrt p\,\dd x$,
so $g$ is (four times) the pullback of the flat $L^2$ metric under the
square-root embedding $p\mapsto\sqrt p$.
\end{definition}
 
The Fisher metric is essentially \emph{the} metric of statistics:
\v Cencov's (Chentsov's) theorem states that, up to a constant, $g$ is
the unique Riemannian metric on $S$ invariant under sufficient
statistics --- under any transformation of the sample space preserving
information content \citep{fujiwara2023}. The geodesic distance it
induces is the \emph{Fisher--Rao distance}, the intrinsic Riemannian
distance between distributions in the family.\\

A metric alone does not fix a notion of ``straight line''; that requires
an affine \emph{connection}. Information geometry carries a
one-parameter family of them, tied together by a single symmetric
$3$-tensor.
 
\begin{definition}[Amari--Chentsov tensor and $\alpha$-connections]
\label{def:alphaconn}
The \emph{Amari--Chentsov tensor} is the totally symmetric $3$-tensor
\begin{equation}
T_{ijk}(\theta)=\mathbb E_\theta\!\left[
\partial_i\ell_\theta\,\partial_j\ell_\theta\,
\partial_k\ell_\theta\right].
\label{eq:amaritensor}
\end{equation}
For each $\alpha\in\R$ the \emph{$\alpha$-connection}
$\nabla^{(\alpha)}$ has Christoffel symbols of the first kind
\begin{equation}
\Gamma^{(\alpha)}_{ij,k}(\theta)
=\mathbb E_\theta\!\left[\Bigl(\partial_i\partial_j\ell_\theta
+\tfrac{1-\alpha}{2}\,\partial_i\ell_\theta\,\partial_j\ell_\theta
\Bigr)\partial_k\ell_\theta\right]
=\Gamma^{(0)}_{ij,k}-\tfrac{\alpha}{2}\,T_{ijk},
\label{eq:alphaconn}
\end{equation}
with $\nabla^{(0)}$ the Levi-Civita connection of $g$.
\end{definition}
 
Three values carry names: the \emph{exponential} ($e$-) connection
$\nabla^{(1)}$, the \emph{mixture} ($m$-) connection $\nabla^{(-1)}$,
and their average, the Levi-Civita connection $\nabla^{(0)}$. The
defining relation is \emph{duality}: for every $\alpha$, the pair
$\nabla^{(\alpha)},\nabla^{(-\alpha)}$ is \emph{dual with respect to
$g$},
\begin{equation}
\partial_k\,g(X,Y)=g\bigl(\nabla^{(\alpha)}_kX,\,Y\bigr)
+g\bigl(X,\,\nabla^{(-\alpha)}_kY\bigr).
\label{eq:dualconn}
\end{equation}
A triple $(g,\nabla^{(\alpha)},\nabla^{(-\alpha)})$ obeying
\eqref{eq:dualconn} is a \emph{dualistic structure} --- the minimal data
of information geometry.\\
 
\textbf{Dual flatness and Legendre duality.} The structure becomes computationally decisive when both dual
connections are flat at once.
 
\begin{definition}[Dually flat manifold]\label{def:duallyflat}
$(S,g,\nabla^{(1)},\nabla^{(-1)})$ is \emph{dually flat} if there is a
chart $\theta$ in which $\nabla^{(1)}$ has vanishing Christoffel symbols
(an \emph{$e$-affine} chart). Duality then forces a chart $\eta$ in
which $\nabla^{(-1)}$ is flat (an \emph{$m$-affine} chart), and the two
are Legendre conjugate: there is a convex potential $\psi(\theta)$ with
\begin{equation}
\eta_i=\partial_i\psi(\theta),\qquad
\theta_i=\partial_i\varphi(\eta),\qquad
\psi(\theta)+\varphi(\eta)=\textstyle\sum_i\theta_i\eta_i,
\label{eq:legendre}
\end{equation}
and $g_{ij}=\partial_i\partial_j\psi$ in the $\theta$-chart.
\end{definition}
 
The prototype is an \emph{exponential family}
$p(x;\theta)=\exp(\sum_i\theta_iT_i(x)-\psi(\theta))$: the natural
parameter $\theta$ is $e$-affine, the expectation parameter
$\eta=\mathbb E_\theta[T]$ is $m$-affine, the log-partition function
$\psi$ is the convex potential, and $g$ is its Hessian. In such a chart
the $e$-geodesic between two distributions is a straight line in
$\theta$ (a log-linear, geometric-mean path), the $m$-geodesic a
straight line in $\eta$ (a mixture, arithmetic-mean path); both are
closed-form --- the first structural advantage of finite dimensionality.
 
\textbf{Divergences and the generalized Pythagorean theorem.} A \emph{divergence} $D(p\,\|\,q)\ge0$ (zero iff $p=q$, generally
asymmetric and not a metric) measures directed dissimilarity. On a
dually flat manifold the \emph{canonical divergence} is the
\emph{Bregman divergence} of the potential,
\begin{equation}
D(p\,\|\,q)=\psi(\theta_p)+\varphi(\eta_q)
-\textstyle\sum_i\theta_p^i\,\eta_{q,i},
\label{eq:bregman}
\end{equation}
which for exponential families \emph{is} the Kullback--Leibler
divergence $\KL(p\,\|\,q)=\int p\log(p/q)$. Thus KL is not imported into
geometry from outside; it is the intrinsic divergence of the dually flat
structure. It lies inside the \emph{$\alpha$-divergence} family
\begin{equation}
D^{(\alpha)}(p\,\|\,q)=\frac{4}{1-\alpha^2}
\Bigl(1-\int p^{\frac{1-\alpha}2}\,q^{\frac{1+\alpha}2}\,\dd x\Bigr),
\qquad\alpha\neq\pm1,
\label{eq:alphadiv}
\end{equation}
which returns $\KL(q\,\|\,p)$ as $\alpha\to1$, $\KL(p\,\|\,q)$ as
$\alpha\to-1$, and the symmetric Hellinger divergence at $\alpha=0$
(Problem~\ref{prob:alphadiv}). Every $D^{(\alpha)}$ has the Fisher
metric as its second-order Taylor coefficient at $p=q$ and the
$\alpha$-connections as its third-order coefficients: divergence and
dualistic structure are two faces of one object. The pay-off is a single
theorem.
 
\begin{theorem}[Generalized Pythagorean theorem;
{\citealp{amari2016}}]\label{thm:pythagoras}
Let $(S,g,\nabla^{(1)},\nabla^{(-1)})$ be dually flat with canonical
divergence \eqref{eq:bregman}. If the $m$-geodesic from $p$ to $r$ meets
the $e$-geodesic from $r$ to $q$ orthogonally at $r$, then
\begin{equation}
D(p\,\|\,q)=D(p\,\|\,r)+D(r\,\|\,q).
\label{eq:pythagoras}
\end{equation}
Hence the divergence projection of a point onto an $e$-flat (resp.\
$m$-flat) submanifold is unique and characterised by orthogonality.
\end{theorem}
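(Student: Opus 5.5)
The plan is to establish a single algebraic identity, the \emph{three-point identity} for the canonical divergence \eqref{eq:bregman}, and then read orthogonality off it. Using the Legendre relation \eqref{eq:legendre} in the form $\psi(\theta_r)+\varphi(\eta_r)=\sum_i\theta_r^i\eta_{r,i}$, I will expand $D(p\,\|\,r)+D(r\,\|\,q)-D(p\,\|\,q)$. The $\psi(\theta_p)$ and $\varphi(\eta_q)$ terms cancel, and the two terms $\psi(\theta_r)+\varphi(\eta_r)$ collapse to $\sum_i\theta_r^i\eta_{r,i}$. This should leave
\begin{equation*}
D(p\,\|\,q)=D(p\,\|\,r)+D(r\,\|\,q)-\textstyle\sum_i\bigl(\theta_p^i-\theta_r^i\bigr)\bigl(\eta_{q,i}-\eta_{r,i}\bigr).
\end{equation*}
The exact index placement and signs depend on which argument of $D$ carries $\theta$ and which carries $\eta$ in \eqref{eq:bregman}. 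I will fix these carefully, since the statement's assignment of the $m$- and $e$-geodesics to the two legs must match them.

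The second step identifies the correction term with the Fisher inner product at $r$ of the two geodesic velocities. On a dually flat manifold, the geodesic that is straight in the $\theta$-chart (the $e$-geodesic) through $r$ and $p$ has tangent at $r$ with $\theta$-components $\theta_p-\theta_r$. The geodesic straight in the $\eta$-chart (the $m$-geodesic) through $r$ and $q$ has tangent at $r$ with $\eta$-components $\eta_q-\eta_r$.

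Since $g_{ij}=\partial_i\partial_j\psi=\partial\eta_i/\partial\theta_j$, a vector with $\theta$-components $u^i$ has $\eta$-components $g_{ij}u^j$. Hence the pairing $\sum_i u^i w_i$ between $\theta$-components of one vector and $\eta$-components of another is exactly $g(u,w)$. Orthogonality at $r$ therefore kills the correction term and yields \eqref{eq:pythagoras}. If the statement's labelling ($m$-geodesic from $p$, $e$-geodesic to $q$) is reversed relative to \eqref{eq:bregman}, I will apply the same argument to the dual divergence $D^*(p\,\|\,q)=D(q\,\|\,p)$, which is the canonical divergence with the roles of $(\theta,\psi)$ and $(\eta,\varphi)$ swapped.

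For the projection corollary, I take an $e$-flat submanifold $M$, i.e.\ an affine subspace in $\theta$. Let $r\in M$ be a point where the geodesic from $p$ meets $M$ orthogonally. For any $q\in M$, the $e$-geodesic from $r$ to $q$ lies in $M$, so the theorem gives $D(p\,\|\,q)=D(p\,\|\,r)+D(r\,\|\,q)\ge D(p\,\|\,r)$. Equality holds iff $D(r\,\|\,q)=0$, i.e.\ $q=r$, and this gives uniqueness. Conversely, a minimiser is a critical point, and the first variation of the three-point identity forces orthogonality. The $m$-flat case is symmetric.

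The main obstacle is the bookkeeping of duality conventions: which chart is $e$- versus $m$-affine, which slot of $D$ is which, and the Legendre pairing. A mismatch silently swaps the geodesic types. I will resolve this first by checking against the exponential-family example, where $D$ is $\KL$.
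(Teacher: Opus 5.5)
The paper does not prove this theorem; it only cites Amari. Your route is the standard argument: the three-point identity from \eqref{eq:legendre}, plus the observation that pairing $\theta$-components against $\eta$-components is the Fisher inner product. Both computations you wrote down are correct.

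What you defer as ``bookkeeping'' is, however, exactly where the proposal breaks, and your own identity already settles it. In \eqref{eq:bregman} the first slot carries $\theta$ and the second carries $\eta$. So the correction term $\sum_i(\theta_p^i-\theta_r^i)(\eta_{q,i}-\eta_{r,i})$ is $g$ evaluated on the \emph{$e$}-geodesic from $r$ toward $p$ and the \emph{$m$}-geodesic from $r$ toward $q$, which is what your second step actually uses. The theorem assumes the opposite pair, $\sum_i(\eta_{p,i}-\eta_{r,i})(\theta_q^i-\theta_r^i)=0$, and that does not kill your correction term. Concretely, take two independent Poisson variables, $\psi(\theta)=e^{\theta_1}+e^{\theta_2}$ and $\eta_i=e^{\theta_i}$, with $\theta_r=0$, $\eta_p=(1+a,1-a)$, $\theta_q=(b,b)$. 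The stated orthogonality holds, yet $D(p\,\|\,q)-D(p\,\|\,r)-D(r\,\|\,q)=-(e^b-1)\log(1-a^2)\neq0$ for $0<|a|<1$, $b\neq0$. So with \eqref{eq:bregman} read literally the statement is false. ``Orthogonality at $r$ kills the correction term'' is valid only under the swapped hypothesis.

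Your fallback does not repair this. Running the argument for $D^*(p\,\|\,q)=D(q\,\|\,p)$ under the stated hypothesis yields $D(q\,\|\,p)=D(r\,\|\,p)+D(q\,\|\,r)$. That is just your main result with $p$ and $q$ renamed, not \eqref{eq:pythagoras}; a Pythagorean relation for $D^*$ is not one for $D$.

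The real source is an inconsistency in the paper. For an exponential family, $\KL(p\,\|\,q)=\psi(\theta_q)+\varphi(\eta_p)-\sum_i\theta_q^i\eta_{p,i}=D(q\,\|\,p)$, not $D(p\,\|\,q)$ as the text claims. The theorem's labelling is the correct one for $\KL(p\,\|\,q)$, i.e.\ for $D^*$. Your planned exponential-family check would have exposed this.

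You then have to say explicitly which corrected statement you prove. Either keep \eqref{eq:bregman} and interchange $e$ and $m$ in the hypothesis, or keep the hypothesis and take $D(p\,\|\,q):=\KL(p\,\|\,q)$.

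The projection corollary must follow the same choice. Your version ($e$-flat $M$, foot reached by the $m$-geodesic from $p$, minimising over the second slot) is right for $\KL(p\,\|\,\cdot)$. For $D(p\,\|\,\cdot)$ as in \eqref{eq:bregman}, the good case is instead an $m$-flat $M$ with the foot reached by the $e$-geodesic from $p$. Once the convention is fixed, the rest of your corollary argument goes through: the bound $D(p\,\|\,q)\ge D(p\,\|\,r)$ with equality iff $q=r$, and the first-variation converse.
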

 
Projections onto flat submanifolds are therefore well-posed and
additive --- the geometric engine behind maximum likelihood, the EM
algorithm, and, as we now show, the Schr\"odinger bridge.
 
\subsection{The Schr\"odinger Bridge as an Entropic Projection}
\label{subsec:ig-projection}
 
\textbf{The static bridge is an $e$-projection.}
The static Schr\"odinger problem minimises $\KL(\pi\,\|\,R_{01})$ over
couplings $\pi\in\Pi(\mu_0,\mu_1)$ with prescribed marginals, $R_{01}$
being the reference two-time joint law; this is the entropic
optimal-transport problem \eqref{eq:eot}. In the language just built,
each marginal constraint $\{\pi:\pi_0=\mu_0\}$ and $\{\pi:\pi_1=\mu_1\}$
is \emph{$m$-flat} (linear in $\pi$), and the minimiser of
$\KL(\cdot\,\|\,R_{01})$ over their intersection is, by
Theorem~\ref{thm:pythagoras}, the \emph{$e$-projection} of $R_{01}$ onto
it. That this projection lies on the $e$-flat family
$\{f(x)g(y)R_{01}(x,y)\}$ is exactly the Schr\"odinger factorisation
\eqref{eq:fgproduct}: the bridge's product structure and its
characterisation as an $e$-projection are the same statement.\\
 
\textbf{Sinkhorn as alternating projection.}
The iterative proportional fitting / Sinkhorn recursion
(Remark~\ref{rem:sinkhorn}) alternately enforces the two marginals; each
half-step is a divergence projection onto one $m$-flat constraint set,
so the algorithm is an \emph{alternating projection} between them. Modin
\citep{modin2024} gives the geometric reading of this algorithm,
emphasising that Sinkhorn is naturally viewed as a discretisation, by
standard fixed-point iteration, of a nonlinear integral equation on
measures, and connecting it in an appendix to Beurling's theorem on
product measures --- the same result underlying the existence of the
potentials $f,g$. The classical Hilbert-metric contraction proof of
convergence thereby acquires a differential-geometric reading.\\
 
\textbf{An $\eps$-family interpolating Fisher--Rao and Wasserstein.}
The entropic cost carries a temperature $\eps$ (the diffusivity of the
reference). As $\eps\to0$ the entropy is negligible beside the transport
cost, the problem contracts to Monge transport, and the geometry is the
Wasserstein geometry of Sect.~\ref{subsec:local-otto} --- the
$\Gamma$-limit \eqref{eq:gammaOT}. As $\eps\to\infty$ the entropy
dominates, the optimal coupling decorrelates toward the independent
product, and a second-order expansion of the entropic cost at coinciding
marginals returns the Fisher metric \eqref{eq:fisher}. The Schr\"odinger
problem thus \emph{interpolates} between Fisher--Rao (distinguishability)
and Wasserstein (transport) geometry, with $\eps$ the interpolation
parameter --- the static counterpart of the dynamic zero-noise limit of
Sect.~\ref{subsec:local-first-complete}.

The projection picture is not only conceptual; it organizes a wave of
recent algorithms that solve the bridge by restricting it to a
finite-dimensional statistical submanifold and inheriting the closed-form
calculus. We record the principal
instances; computation is left to the cited works.\\

\textbf{Gaussian and Gaussian-mixture bridges.}
When both marginals are Gaussian, the Schr\"odinger bridge is available
in closed form, its drift and covariance given by explicit
matrix formulas \citep{bunne2022}. This is the exponential-family
special case of the projection strategy: the bridge stays inside the
finite-dimensional Gaussian manifold, whose dually flat structure makes
every object explicit. The \emph{Light Schr\"odinger Bridge}
\citep{gushchin2024} extends this to arbitrary marginals by
parametrising the Schr\"odinger potentials as \emph{log-sum-exp of
quadratics} --- equivalently, Gaussian-mixture potentials --- and
optimising a single Kullback--Leibler objective in the energy-based-model
formulation of entropic transport. The Gaussian-mixture parametrisation
delivers a closed-form drift and lightspeed conditional sampling,
dispensing with the simulation and MCMC inner loops of earlier
diffusion-bridge solvers. In the language of this section, the potentials
are confined to a mixture family --- an $m$-flat submanifold on which the
$e$-projection is computable in closed form.\\
 
\textbf{Multi-marginal bridges and variational inference on path
space.}
The projection view extends to many marginals. Jiang \citep{jiang2026}
identifies continuous-time variational inference on path space ---
minimise $\KL(Q\,\|\,P)$ over path measures $Q$ matching observations at
several intermediate times --- with a \emph{multi-marginal} Schr\"odinger
bridge, the optimal drift being a $\log\varphi$-transform with $\varphi$
solving a backward Kolmogorov equation, in direct extension of
Theorem~\ref{thm:fg}. The variational problem endows path space with an
``Onsager--Fokker'' metric in which the inverse diffusion tensor plays
the role of the Fisher metric \eqref{eq:fisher}, tying the information
geometry of this section to the Onsager--Machlup action of the
fluctuation theory. Multi-marginal bridges are the natural setting
for trajectory inference from snapshots, where data arrive at several
time points and the intervening dynamics must be reconstructed.\\

\subsection{Dynamics on Statistical Manifolds: Accelerated Flows and Transport Hessians}
\label{subsec:ig-second}
 
\textbf{The bridge accelerates along the Fisher gradient.} The second-order theme of Sect.~\ref{subsec:local-second} --- that the
Schr\"odinger bridge obeys a Newton-type law --- has a sharp
information-geometric form. Conforti \citep{conforti2019second} proved that,
in the Riemannian structure of optimal transport, the entropic
interpolation $(\rho_t)$ solves a \emph{second-order} equation whose
right-hand side is the Wasserstein gradient of the Fisher information:
loosely,
\begin{equation}
\ddot\rho_t=\tfrac12\,\nabla_{W_2}\,I(\rho_t),
\qquad
I(\rho)=\int|\nabla\log\rho|^2\dd\rho ,
\label{eq:conforti}
\end{equation}
``the acceleration of the bridge is the gradient of the Fisher
information''. This is the same acceleration law as the Newton equation
\eqref{eq:newton} of the local theory, now with the Fisher information
--- the information-geometric functional par excellence --- as
potential. From \eqref{eq:conforti} Conforti derives a quantitative
description of the bridge dynamics and a new functional inequality for
the entropic transportation cost that generalises Talagrand's transport
inequality, together with convexity of the Fisher information along the
bridge whenever the associated reciprocal characteristic is convex. The
first-order Otto picture and the entropic-cost functional inequalities
of Sect.~\ref{subsec:local-first-complete} are, from this vantage, two projections
of one second-order object.\\
 
\textbf{Why the statistical manifold is easy.}
On the finite-dimensional statistical manifold the second-order theory
is not merely available but classical. The curvatures of the
$\alpha$-connections are explicit in derivatives of
\eqref{eq:alphaconn}; the antisymmetric part
$\Gamma^{(-\alpha)}_{ij,k}-\Gamma^{(\alpha)}_{ij,k}=\alpha\,T_{ijk}$ is
governed entirely by the Amari--Chentsov tensor \eqref{eq:amaritensor};
and on a dually flat manifold the $\nabla^{(\pm1)}$-curvatures
\emph{vanish identically}, so that $e$- and $m$-geodesics are straight
lines in their respective affine charts and parallel transport, Jacobi
fields and projections reduce to linear algebra. The contrast with the
open second-order problem of Sect.~\ref{subsec:levy-second} has a
one-word explanation, the last row of Table~\ref{tab:roadmap-b}:
\emph{finite dimensionality}. A parametric model is a
finite-dimensional surface inside the infinite-dimensional density
space, and its curvature theory is ordinary differential geometry ---
exactly as the graph simplex of Sect.~\ref{subsec:discrete-second} was.\\
 
 
A programme of W.~Li and collaborators, \emph{transport information
geometry}, systematically fuses the transport and information geometries
of this chapter at the second-order level. Its constructions pull
information-geometric objects back through the Otto submersion, turning
statistical quantities into density-space dynamics.\\
 
\textbf{Accelerated information gradient flows.} Nesterov's accelerated gradient method discretises the damped
second-order ODE $\ddot x+\gamma_t\dot x+\nabla f(x)=0$. Wang and Li
\citep{wangli2022} lift this ODE to probability space for several
information metrics --- Fisher--Rao, Wasserstein-$2$, Kalman--Wasserstein
and Stein --- obtaining \emph{accelerated information gradient} (AIG)
flows. For the Wasserstein-$2$ metric the flow reads
\begin{equation}
\partial_t\rho_t+\nabla\!\cdot\!(\rho_tv_t)=0,\qquad
\partial_tv_t+\gamma_tv_t+\tfrac12\nabla|v_t|^2
=-\nabla\frac{\delta\mathcal F}{\delta\rho}(\rho_t),
\label{eq:aig}
\end{equation}
whose convective term $\tfrac12\nabla|v_t|^2$ is the curvature
correction of Wasserstein geometry --- the same term as in the Newton
equation \eqref{eq:newton} and in \eqref{eq:conforti}. Wang and Li prove
convergence properties of the accelerated flow for both the Fisher--Rao
and the Wasserstein-$2$ metrics; the Fisher--Rao instance yields a
momentum form of mean-field Langevin dynamics. The construction is the
dynamical, density-space realisation of the acceleration that Nesterov's
method achieves in Euclidean space.\\
 
\textbf{Transport Hessians, Bregman divergences, and natural
gradients.} Beyond gradient flows, the \emph{transport Hessian} of a functional, which is the second variation in Wasserstein geometry, defines new metrics and
dynamics \citep{li2021hessian}: the transport-Hessian flow of the
entropy is a Stein-type mean-field flow, and related transport-Hessian
structures reproduce classical equations of mathematical physics as
second-order objects of density-space geometry. Companion constructions
include \emph{transport Bregman divergences} \citep{li2021bregman},
whose entropy instance is a transport analogue of KL with closed Gaussian
formulas, and the \emph{Wasserstein natural gradient} \cite{limontufar2018, chen_li2020}: the pullback of the $W_2$ metric
tensor to parameter space, which replaces Amari's Fisher--Rao natural
gradient by a transport-aware preconditioner for optimisation over
parametric models. Finally, geodesic convexity of the KL divergence in
the pulled-back geometry yields \emph{Ricci curvature lower bounds} for
parametric statistical models \citep{limontufar2020}. This is the
Lott--Sturm--Villani theory of Sect.~\ref{subsec:local-first-complete} transported
onto finite-dimensional families, with explicit constants for
exponential families. In every case the pattern of this section repeats:
finite dimensionality converts second-order geometry into computable
linear algebra.
 


\subsection{Information Geometry of L\'evy Processes}
\label{subsec:levy-information-geometry}

The nonlocal Wasserstein geometry discussed in the preceding sections
treats a probability density as a point in an infinite-dimensional
transport space.  A different geometric viewpoint arises when one fixes
a parametric family of L\'evy processes and regards the model parameters
as coordinates on a finite-dimensional statistical manifold. The former is equipped with a Fisher metric and a pair of dual affine
connections derived from statistical divergences, whereas the latter is
equipped with a nonlocal transport metric derived from a
continuity equation and a kinetic action.

Choi develops the information geometry of L\'evy processes by first
deriving the $\alpha$-divergence between suitable equivalent process
laws and then differentiating this divergence to obtain the Fisher
information matrix and the associated $\alpha$-connections
\cite{ChoiLevyGeometry,ChoiTemperedStableGeometry}.  This construction
extends earlier calculations for tempered stable processes to a broader
class of L\'evy models.\\

A parametric family of L\'evy triplets
$(b_\theta,\sigma^2_\theta,\nu_\theta)$ is a statistical manifold, and
Choi \citep{choi2025a} has shown that its entire dualistic structure is
computable directly from the triplet, with the jump measure $\nu$
contributing an integral term alongside the familiar drift/diffusion
part.
 
\begin{theorem}[Information geometry from the L\'evy triplet;
{\citealp{choi2025a}}]\label{thm:levyIG}
Let $\{(b(\theta),\sigma^2,\nu_\theta)\}$ be a smooth family of L\'evy
triplets with mutually equivalent jump measures, and write
$k_\theta=\dd\nu_\theta/\dd\nu$ for a reference $\nu$. Deriving the
$\alpha$-divergence directly from the triplets yields:
\begin{enumerate}
\item[\rm(i)] a Fisher information metric with a diffusion part and a
jump part,
\begin{equation}
g_{ij}(\theta)=\frac{\partial_ib\,\partial_jb}{\sigma^2}
+\int_{\R\setminus\{0\}}\!\partial_i\log k_\theta\,
\partial_j\log k_\theta\;\dd\nu_\theta ;
\label{eq:levyfisher}
\end{equation}
\item[\rm(ii)] an $\alpha$-connection
\begin{equation}
\Gamma^{(\alpha)}_{ij,k}(\theta)=\int_{\R\setminus\{0\}}\!
\Bigl(\partial_i\partial_j\log k_\theta
+\tfrac{1-\alpha}{2}\,\partial_i\log k_\theta\,\partial_j\log k_\theta
\Bigr)\partial_k\log k_\theta\;\dd\nu_\theta
\;+\;(\text{diffusion part}),
\label{eq:levyconn}
\end{equation}
\end{enumerate}
completing the dualistic structure
$(g,\nabla^{(\alpha)},\nabla^{(-\alpha)})$ of the L\'evy manifold. The
$\alpha=-1$ divergence recovers the Kullback--Leibler divergence rate
between the two processes.
\end{theorem}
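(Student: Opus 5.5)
The plan is to compute the $\alpha$-divergence between the laws $P_\theta$, $P_{\theta'}$ of the two L\'evy processes on $[0,t]$ explicitly, and then read off $g$ and $\nabla^{(\alpha)}$ as its second- and third-order Taylor coefficients at $\theta'=\theta$. This is the route sketched after \eqref{eq:alphadiv}, where every $D^{(\alpha)}$ has the Fisher metric as its second-order coefficient and the $\alpha$-connection as its third-order coefficient.

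The first step is the likelihood ratio. With common $\sigma^2$ and mutually equivalent jump measures, the L\'evy--It\^o decomposition and the Girsanov theorem for jump processes (the path-space analogue of Lemma~\ref{lemma 3.1}) give an exponential-martingale density. It has the form
\[
\frac{\dd P_{\theta'}}{\dd P_\theta}\Big|_{\mathcal F_t}
=\exp\Bigl(\beta W^c_t-\tfrac12\beta^2\sigma^2 t+\!\int_0^t\!\!\int\log\tfrac{k_{\theta'}}{k_\theta}\,N(\dd s,\dd z)-t\!\int(\tfrac{k_{\theta'}}{k_\theta}-1)\,\dd\nu_\theta\Bigr).
\]
Here $W^c$ is the continuous martingale part and $\beta$ is the drift mismatch divided by $\sigma^2$, corrected by $\int_{|z|<1}z(\dd\nu_{\theta'}-\dd\nu_\theta)$. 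Raising this to the power $\frac{1+\alpha}{2}$ and using the exponential formula for Poisson random measures together with Gaussian moments, $\int p^{\frac{1-\alpha}2}q^{\frac{1+\alpha}2}$ becomes $\exp(-t\,\Lambda_\alpha(\theta,\theta'))$. The exponent $\Lambda_\alpha$ splits into a quadratic diffusion term $\frac{1-\alpha^2}{8}\beta^2\sigma^2$ and a jump term
\[
\int\Bigl(\tfrac{1-\alpha}{2}+\tfrac{1+\alpha}{2}\tfrac{k_{\theta'}}{k_\theta}-\bigl(\tfrac{k_{\theta'}}{k_\theta}\bigr)^{\frac{1+\alpha}{2}}\Bigr)\dd\nu_\theta .
\]
It is then natural to work with the divergence rate $D^{(\alpha)}=\frac{4}{1-\alpha^2}\Lambda_\alpha$, obtained by linearising in $t$ or dividing by $t$.

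The second step expands in $\theta'=\theta+\delta$. Writing $u=\log(k_{\theta'}/k_\theta)$, the jump integrand is $\frac{1-\alpha}2+\frac{1+\alpha}2e^u-e^{\frac{1+\alpha}2u}$. Its Taylor series begins $\frac{1-\alpha^2}{8}u^2+O(u^3)$, and $u=\delta^i\partial_i\log k+\frac12\delta^i\delta^j\partial_i\partial_j\log k+\dots$. Matching the quadratic coefficient gives \eqref{eq:levyfisher}: the jump part is $\int\partial_i\log k\,\partial_j\log k\,\dd\nu_\theta$, and the diffusion part is $\partial_ib\,\partial_jb/\sigma^2$ once the drift is taken in the form absorbing the compensator. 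For the connection I would use Eguchi's relation $\Gamma^{(\alpha)}_{ij,k}=-\partial_i\partial_j\partial'_k D^{(\alpha)}|_{\theta'=\theta}$. The cubic terms coming from $u^3$ and from the cross term $u\cdot\partial\partial\log k$ should produce exactly the integrand in \eqref{eq:levyconn}. This mirrors Definition~\ref{def:alphaconn} with the score $\partial_i\ell$ replaced by $\partial_i\log k_\theta$ and the expectation replaced by $\dd\nu_\theta$.

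Duality \eqref{eq:dualconn} then follows because $\Gamma^{(\alpha)}+\Gamma^{(-\alpha)}=2\Gamma^{(0)}$, the Levi-Civita symbols of $g$; this identity is checked by differentiating \eqref{eq:levyfisher} under the integral. The KL statement follows by letting $\alpha\to-1$ in $\frac{4}{1-\alpha^2}\Lambda_\alpha$, which by l'H\^opital gives $\int(\frac{k_{\theta'}}{k_\theta}\log\frac{k_{\theta'}}{k_\theta}-\frac{k_{\theta'}}{k_\theta}+1)\dd\nu_\theta$ plus the Gaussian term, the standard KL rate. Depending on convention this may be $\alpha\to1$, and I would align it with \eqref{eq:alphadiv}.

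The main obstacle is justifying the differentiation under the jump integrals. $\nu_\theta$ is infinite near $0$, and equivalence, the Hellinger-type condition $\int(\sqrt{k_{\theta'}/k_\theta}-1)^2\dd\nu_\theta<\infty$, only makes the divergence finite. I would impose local uniform integrability of $|\partial\log k|^3$ and $|\partial^2\log k|\,|\partial\log k|$ against $\nu_\theta$ and use dominated convergence on the bound $|e^u-1-u|\lesssim u^2e^{|u|}$. A secondary subtlety is bookkeeping the small-jump compensator inside the drift, so that the diffusion part appears cleanly as $\partial b\,\partial b/\sigma^2$.
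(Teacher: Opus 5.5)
Your proposal is correct and follows essentially the route the paper describes: derive the $\alpha$-divergence between the equivalent L\'evy path laws directly from the triplets (Girsanov density plus the exponential formula), then read off $g$ and $\nabla^{(\alpha)}$ by Eguchi-type differentiation; your caveat that the diffusion part is $\partial_ib\,\partial_jb/\sigma^2$ only when the drift absorbs the small-jump compensator is well taken. Two small slips: under the convention \eqref{eq:alphadiv}, the limit $\alpha\to-1$ gives the rate $\int(r-1-\log r)\,\dd\nu_\theta$ with $r=k_{\theta'}/k_\theta$ (the rate of $\KL(P_\theta\,\|\,P_{\theta'})$), whereas the expression you wrote is the $\alpha\to1$ limit; and \eqref{eq:dualconn} does not follow from $\Gamma^{(\alpha)}+\Gamma^{(-\alpha)}=2\Gamma^{(0)}$ alone but also needs $\Gamma^{(\alpha)}-\Gamma^{(-\alpha)}$ to be totally symmetric, which holds here because it equals $-\alpha\int\partial_i\log k_\theta\,\partial_j\log k_\theta\,\partial_k\log k_\theta\,\dd\nu_\theta$ (alternatively, duality is automatic for any divergence via Eguchi's relations).
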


Equations \eqref{eq:levyfisher} and
\eqref{eq:levyconn} should be interpreted as
information-geometric formulas on the parameter manifold
$\mathcal{M}$.  They are not nonlocal Wasserstein metric tensors on the
space of probability densities.  In particular, the tangent vectors in
this subsection are parameter variations
\[
\dot\theta
=
\dot\theta^i\frac{\partial}{\partial\theta^i},
\]
rather than density variations represented by a local or nonlocal
continuity equation.\\

\textbf{Tempered Stable and Financially Relevant Models.} The general formulas are evaluated in
\cite{ChoiLevyGeometry,ChoiTemperedStableGeometry} for several L\'evy
families used in financial modeling.  The examples include generalized
tempered stable processes, classical tempered stable processes, the CGMY
model, and variance gamma processes. Choi also treats variance
gamma processes through a limiting or regularized calculation adapted
to the singular behavior of their L\'evy measures.  For these concrete
models, the resulting metric and connection coefficients can be written
in closed or semi-closed form in terms of the model parameters and
special functions. The cited works identify $e$-flat structures for particular model
families, including the classical tempered stable and variance gamma
examples considered there.  This is a model-dependent conclusion and
should not be interpreted as a general dual-flatness theorem for all
L\'evy-process families.  Likewise, the computation of the Fisher
metric and the $\alpha$-connections does not, by itself, provide a
general closed-form solution of the corresponding geodesic equations.

The information-geometric structure has direct statistical
applications.  The Fisher metric determines the Jeffreys prior
\begin{equation}
\pi_J(\theta)
\propto
\sqrt{\det g(\theta)}.
\label{eq:levy-jeffreys-prior}
\end{equation}
For model families with an appropriate flat structure, the geometry can
be used in penalized likelihood methods for reducing the bias of
maximum-likelihood estimators.  In addition, the Laplace--Beltrami
operator associated with the Fisher metric,
\begin{equation}
\Delta_g f
=
\frac{1}{\sqrt{\det g}}
\partial_i
\left(
\sqrt{\det g}\,
g^{ij}\partial_j f
\right),
\label{eq:levy-laplace-beltrami}
\end{equation}
enters the construction of Bayesian predictive priors.  In Komaki-type
shrinkage constructions, a positive superharmonic function with respect
to $\Delta_g$ is used to modify the Jeffreys prior and improve predictive
performance under suitable conditions.

The geometric results above provide a finite-dimensional differential
geometry for distinguishing nearby L\'evy models.  They should be
carefully separated from the continuous nonlocal Wasserstein geometry
developed earlier in this chapter.  The distinction may be summarized
as follows:
\begin{equation}
\begin{array}{c|c|c}
&
\text{information geometry}
&
\text{nonlocal transport geometry}
\\
\hline
\text{underlying point}
&
P_\theta\ \text{in a parameterized model}
&
\rho\ \text{in a density space}
\\
\text{tangent variable}
&
\dot\theta
&
\dot\rho+\overline{\operatorname{div}}V=0
\\
\text{metric}
&
\text{Fisher information}
&
\text{nonlocal kinetic-action metric}
\\
\text{connection}
&
\alpha\text{-connections}
&
\text{not generally available}
\\
\text{principal purpose}
&
\text{statistical distinguishability}
&
\text{transport and density evolution}
\end{array}
\label{eq:levy-two-geometries}
\end{equation}

\section*{Problems}
\addcontentsline{toc}{section}{Problems}

\begin{problem}[Gaussian Schr\"odinger bridges]\label{prob:gaussian}
Let $\mu_0=\mathcal N(m_0,\Sigma_0)$ and
$\mu_1=\mathcal N(m_1,\Sigma_1)$ on $\R^d$, with standard Brownian
reference. Show that the entropic interpolation remains Gaussian,
$\rho_t=\mathcal N(m_t,\Sigma_t)$, for all $t\in[0,1]$; identify the
straight-line motion of the mean and the curved trajectory of the
covariance, and compare with the displacement interpolation as
$\eps\to0$. \emph{Hint:} linear reference dynamics preserve quadratic
forms in the forward--backward system
\eqref{eq:fk}--\eqref{eq:bk}; see \citep{bunne2022}.
\end{problem}

\begin{problem}[$\alpha$-divergences unify the KL pair]
\label{prob:alphadiv}
Prove that the $\alpha$-divergence \eqref{eq:alphadiv} converges to the
forward KL divergence as $\alpha\to1$ and to the reverse KL divergence
as $\alpha\to-1$, and interpret the two limits (mode-covering versus
mode-seeking) for a model fitted by minimising
$D^{(\alpha)}$. \emph{Hint:} l'H\^opital in $\alpha$.
\end{problem}

\begin{problem}[The quantum--stochastic duality]\label{prob:quantum}
Apply the Wick rotation $t\mapsto it$ to the Euclidean pair
\eqref{eq:euclideanpair} and show that it becomes the forward/backward
quantum Schr\"odinger equations, with
$\rho=\varphi\hat\varphi$ turning into Born's rule $|\psi|^2$. What
does the duality assert about most probable evolutions of classical
clouds versus quantum wave mechanics?
\end{problem}

\begin{problem}[Bending the metric]\label{prob:bending}
Explain, at the level of the metric tensor, how the ``speed limit'' for
transporting mass between two separated clusters changes when the
local Otto geometry of Sect.~\ref{subsec:local-otto} is replaced by the
nonlocal geometry of Definition~\ref{def:nlW} with an
$\alpha$-stable kernel. Reconcile your answer with
Example~\ref{ex:teleport} and with the localisation limit of
Theorem~\ref{thm:localization}.
\end{problem}

\begin{problem}[Early warning on the density manifold]\label{prob:ews}
Classical early-warning signals for critical transitions track critical
slowing down (rising variance and autocorrelation) and are unreliable
under non-Gaussian forcing. Discuss how casting the transition as a
finite-time Schr\"odinger bridge between the healthy and the tipped
distributions provides an alternative indicator --- the decay of the
required steering energy / bridge distance --- that remains meaningful
for L\'evy perturbations, in the light of
Example~\ref{ex:teleport}. See
\citep{zhang2025, xu2026}.
\end{problem}

\begin{problem}[Two-point space by hand]\label{prob:twopoint}
For the two-point space of Example~\ref{ex:twopoint}: (i) verify that
$\sqrt{g(x)}\sim\sqrt{\log(1/x)}$ as $x\to0$ and deduce
$\WW(\delta_1,\delta_2)<\infty$; (ii) parametrise the geodesics by
arclength and reproduce the middle panel of Fig.~\ref{fig:twopoint};
(iii) compute $H''$ along the Dirac-to-Dirac geodesic and verify the
entropic Ricci bound $\kappa=2$ of \citep{erbar2012}.
\end{problem}

\begin{problem}[The chain rule forces the logarithmic mean]
\label{prob:chainrule}
Prove identity \eqref{eq:chainrule} and deduce
Theorem~\ref{thm:masterGF}. Then show the rigidity statement: if the
mobility $w_{xy}=\theta(\rho_x,\rho_y)q_{xy}$ with a continuous
symmetric mean $\theta$ makes the master equation \eqref{eq:master} the
gradient flow of $H(\cdot\,|\,\pi)$, then $\theta=\Lam$.
\end{problem}

\begin{problem}[Newton's equation for Gaussian marginals]
\label{prob:newtoncheck}
For the Gaussian bridge of Problem~\ref{prob:gaussian} in $d=1$,
compute the velocity field $v_t$ and verify Newton's equation
\eqref{eq:newton} directly, identifying the Fisher-information force in
terms of $\Sigma_t$. Check that the force is $O(\eps^2)$ and that the
trajectory straightens to the displacement interpolation as
$\eps\to0$.
\end{problem}






 


\bibliographystyle{plain}
\bibliography{ref}

\backmatter


\end{document}